\tikzset{snake it/.style={decorate, decoration=snake}}
\theoremstyle{plain}
\newtheorem{theorem}{Theorem}[section]
\newtheorem{definition}[theorem]{Definition}
\newtheorem{prop}[theorem]{Proposition}
\theoremstyle{definition}
\newtheorem{exmp}[theorem]{Examples}
\newtheorem{cor}[theorem]{Corollary}
\theoremstyle{remark}
\newtheorem{rem}[theorem]{Remark}
\newtheorem{lem}[theorem]{Lemma}
\newtheorem{ques}[theorem]{Question}
\DeclareFontFamily{OT1}{rsfs}{}
\DeclareFontShape{OT1}{rsfs}{n}{it}{<-> rsfs10}{}
\DeclareMathAlphabet{\curly}{OT1}{rsfs}{n}{it}
\def\Q{\mathbb{Q}}
\def\Z{\mathbb{Z}}
\def\C{\mathbb{C}}
\def\P{\mathbb{P}}
\def\O{\mathcal{O}}
\def\a{\alpha}
\def\b{\beta}
\def\m{\mu}
\def\sm{\Sigma}
\def\m{\mathcal}
\def\f{\mathfrak}
\def\c{\mathscr}
\def\bb{\mathbb}
\def\del{\partial}
\def\td{\widetilde}
\def\Spec{\textrm{Spec}}
\def\deg{\textrm{deg}}
\def\ch{\textrm{ch}}
\def\c{\textrm{c}}
\def\rk{\textrm{rank}}
\def\dim{\textrm{dim}}
\def\ker{\textrm{ker}}
\def\res{\mathrm{res}}
\def\para{\textrm{par-}}
\def\parb{\textrm{par}}
\def\uxi{{\underline{\xi}}}
\def\vxi{\vec{\xi}}
\def\vm{\vec{m}}
\def\va{\vec{\a}}
\def\Sigmam{\Sigma(\vm)}
\def\Sigmamxi{\Sigma(\vm)_{\vxi}}
\def\lmmu{L(\vm)_\mu}
\def\bmxi{B(\vm)_{\vxi}}
\def\hvxi{\mathcal{H}(\vec{m})_{\vec{\xi}}}
\def\mvxi{\mathcal{M}(\vec{m})_{\vec{\xi}}}
\def\hpar{\mathrm{Higgs}^{\rm{par}}}
\def\hspar{\mathrm{Higgs}^{\rm{s-par}}}
\def\brelativesheaf{\bm{\mathcal{M}}(\vm)}
\def\brelativehiggs{\bm{\mathcal{H}}(\vm)}
\def\um{{\underline{m}}}
\def\nm{\mathcal{N}(\vm)}
\def\Tot{\mathrm{Tot}}
\newsavebox{\leftbox} \newsavebox{\rightbox}%
\NewDocumentCommand{\lrboxbrace}{s O{\{} O{\}} O{0.05\linewidth} m O{0.5\linewidth} m}{% \lrboxbrace[<lbrace>][<rbrace>][<lwidth>]{<ltext>}[<rwidth>]{<rtext>}
  \begin{lrbox}{\leftbox}% Left box
    \IfBooleanTF{#1}% starred/unstarred
      {\begin{varwidth}{#4}#5\end{varwidth}}
      {\begin{minipage}{#4}#5\end{minipage}}
  \end{lrbox}
  \begin{lrbox}{\rightbox}% Right box
    \IfBooleanTF{#1}% starred/unstarred
      {\begin{varwidth}{#6}#7\end{varwidth}}
      {\begin{minipage}{#6}#7\end{minipage}}
  \end{lrbox}
  \ensuremath{\usebox\leftbox\left\{\,\usebox\rightbox\,\right\}}
}
\begin{document}
\title{Relative spectral correspondence for parabolic Higgs bundles and Deligne-Simpson problem}

\author{Jia Choon Lee and Sukjoo Lee}
\address{Institute of Geometry and Physics, University of Science and Technology of China, 96 Jinzhai Road, Hefei 230026 P.R. China}
\email{jiachoonlee@ustc.edu.cn}

\address{Institute for Basic Science - Center for Geometry and Physics, 79 Jigok-ro 127 beon-gil, Nam-gu, Pohang, Gyeongbuk, KOREA 37673
}
\email{sukjoo216@ibs.re.kr}

\begin{abstract}
In this paper, we generalize the spectral correspondence for parabolic Higgs bundles established by Diaconescu--Donagi--Pantev to the relative setting. We show that the relative moduli space of $\vec{\xi}$-parabolic Higgs bundles on a curve can be realized as the relative moduli space of pure dimension one sheaves on a family of holomorphic symplectic surfaces. This leads us to formulate the image of the relative moduli space under the Hitchin map in terms of linear systems on the family of surfaces. Then we explore the relationship between the geometry of these linear systems and the so-called $OK$ condition introduced by Balasubramanian--Distler--Donagi in the context of six-dimensional superconformal field theories. As applications, we obtain (a) the non-emptiness of the moduli spaces and (b) the Deligne--Simpson problem and its higher genus analogue. In particular, we prove a conjecture proposed by Balasubramanian--Distler--Donagi that the $OK$ condition is sufficient for solving the Deligne--Simpson problem.
\end{abstract}

\baselineskip=14.5pt
\maketitle

\setcounter{tocdepth}{2}

\tableofcontents
\section{Introduction}

\subsection{Overview} 
In the 1980s, Nigel Hitchin introduced the moduli space of Higgs bundles (often called Hitchin system) and showed that it carries an integrable system structure via the so-called Hitchin fibration. The generic fibers of the Hitchin fibration can be described by the Beauville-Narasimhan-Ramanan (BNR) \cite{bnr} or spectral correspondence, which identifies Higgs bundles with line bundles (or more generally, torsion-free sheaves) on spectral curves lying in the cotangent bundle of the base curve. 

In fact, the spectral correspondence extends beyond the generic fibers \cite{simpsonmoduli}: it can be extended to an isomorphism between the moduli space of Higgs bundles and the moduli space of pure dimension one sheaves on the total space of the cotangent bundle. This perspective of Hitchin systems in terms of moduli of sheaves on a surface has led to many applications, for example, 
it has been used in \cite{chuangdiaconescupan}\cite{Diaconescu_2018}\cite{CDDSP}\cite{maulikshen-chiindependence}\cite{Kinjo_2024} to connect topological invariants of the Hitchin system to enumerative geometry. 

In this work, we focus on a variant of Higgs bundles—parabolic Higgs bundles—which originated from the tame non-abelian Hodge correspondence (tame NAHC for short) developed by Simpson \cite{simpson-noncompact}. The additional parabolic structure enables the specification of ordered eigenvalues of the residues of the Higgs field at fixed marked points. We are interested in the relative moduli space of parabolic Higgs bundles as these ordered eigenvalues vary over a base. This relative viewpoint has been useful in recent work, for example in the study of the topological \cite{hausel2022pw} \cite{maulik2023perverse} or motivic \cite{maulik2024algebraiccycleshitchinsystems} aspects  of the Hitchin systems through specialization arguments. 

The first goal of this work is to generalize the spectral correspondence for the parabolic Higgs bundles, established by Diaconescu--Donagi--Pantev \cite{Diaconescu_2018}, to the setting of relative moduli spaces of parabolic Higgs bundles, which we refer to as the \emph{relative spectral correspondence}. This construction produces a family of surfaces together with a corresponding family of linear systems. Then we explore the connections between the geometry of these linear systems and the so-called $OK$ condition introduced by Balasubramanian--Distler--Donagi \cite{baladistlerdonagi}, motivated by six-dimensional superconformal field theories. As a by-product, we found a new perspective on the classical multiplicative Deligne–Simpson problem through these surfaces and linear systems. In particular, our approach provides a geometric realization of the $OK$ condition and shows its sufficiency for the Deligne–Simpson problem, as conjectured in \emph{loc. cit.}

\subsection{Relative spectral correspondence}
Let $C$ be a smooth complex curve of genus $g$ and $D=p_1+\cdots+p_n$ be a reduced divisor. We reserve $r$ and $d$ for the rank and degree of a vector bundle over $C$, respectively. To each point $p_i$, we assign a partition of $r$, $\um_i=(\um_{i,1}, \cdots, \um_{i, \ell(i)})$ satisfying $\um_{i,1}\geq \cdots \geq\um_{i, \ell(i)}$. We encode this data as $\vec{m}=(\um_1, \cdots, \um_n)$. Given such data, we define a \emph{(quasi-)parabolic Higgs bundle} that is a Higgs bundle $(E, \Phi:E \to E\otimes K_C(D))$ over $C$ with a filtration associated to $\um_i$ on each restriction $E_{p_i}$ preserved by $\Phi|_{E_{p_i}}$ (see Definition \ref{def:parabolicHiggs}).

Consider a collection of sections  $\vec{\xi}=(\uxi_1, \cdots, \uxi_n)$ where $\uxi_i=(\xi_{i,1},\cdots, \xi_{i,\ell(i)}) \in H^0(p_i, K_C(D)_{p_i})^{\times \ell(i)}$, satisfying the residue condition $\sum_{i,j}m_{i,j}\mathrm{res}_{p_i}(\xi_{i,j})=0$. The collection of such sections is denoted by $\nm$. Then for $\vec{\xi}\in \nm$, we say a parabolic Higgs bundle is \emph{$\vec{\xi}$-parabolic} if the induced Higgs field on the $j$-th associated graded piece of the filtration on $E_{p_i}$ is the identity map scaled by $\xi_{i,j}$ (see Definition \ref{def:xi-parabolic}). When $\xi_{i,j}=0$ for all $i,j$, they are usually known as strongly parabolic Higgs bundles. Therefore, $\vxi$-parabolic Higgs bundles can be seen as a variation of strongly parabolic Higgs bundles by deforming the nilpotent residue condition. 

    In our previous work \cite{LeeLee2024}, we construct a family of moduli space of (irregular) $\vec{\xi}$-parabolic Higgs bundles over $\nm$, $\bm{\pi}_\m{N}:\brelativehiggs\to \nm$. Moreover, we show that this is a smooth family (see \cite[Section 2]{LeeLee2024}).\footnote{In this article, we only focus on the regular parabolic case where the divisor $D$ is reduced.} For $\vxi\in \nm$, we denote $\hvxi$ the fiber of $\bm{\pi}_\m{N}$ at $\vxi$.

    When all values $\xi_{i,j}(p_i)$'s are distinct and non-zero for each $i \in I$, a condition that we refer to as \emph{generic}, Diaconescu--Donagi--Pantev \cite{Diaconescu_2018} established the spectral correspondence for $\vxi$-parabolic Higgs bundles. For simplicity, we often abuse notation by writing $\xi_{i,j}=\xi_{i,j}(p_i)$. To elaborate, consider the projectivization of the twisted canonical line bundle $\pi:M:=\mathbb{P}(K_C(D)\oplus \m{O}_C) \to C$, and blow up $M$ at each point $(p_i, \xi_{i,j})$, in increasing lexicographic order of $(i,j)$ starting from $(i,j) = (1,1)$. Since the $\xi_{i,j}$'s are distinct, the order of the blow-ups does not matter. Denote the exceptional divisor appearing in the  $(i,j)$-th blow-up by $E_{i,j}$. Let $Z_{\vec{\xi}}$ be the resulting surface and $p_{\vec{\xi}}: Z_{\vec{\xi}} \to M$ be the blow-up map. Finally, we remove the strict transform of the fibers $M_{p_i}$ and the infinity section $C_\infty$ from $Z_{\vec{\xi}}$, and the resulting open surface is denoted by $S_{\vec{\xi}}$.

    To define the moduli of pure dimension one sheaves, consider the curve class $\Sigma(\vec{m})_{\vec{\xi}} := r p_{\vec{\xi}}^* C_0 - \sum_{i,j} m_{i,j} E_{i,j}$, where $C_0$ is the strict transform of the zero section of $\pi$. An effective curve $\Sigma \in |\Sigma(\vec{m})_{\vec{\xi}}|$ is characterized as a degree $r$ branched cover of $C_0$ lying in $S_\uxi$ that intersects each exceptional divisor $E_{i,j}$ exactly $m_{i,j}$ times. One can then define a moduli space $\mvxi$ of pure dimension one sheaves on $S_{\vec{\xi}}$ whose support is $\Sigma(\vec{m})_{\vec{\xi}}$, after choosing a suitable stability condition. Then the spectral correspondence in \cite{Diaconescu_2018} says that, for a generic $\vxi$, there is an isomorphism of moduli spaces: 
        \[
        \mvxi \cong \hvxi.
        \]
    In fact, their work applies more generally for irregular Higgs bundles (see also the work of Kontsevich--Soibelman \cite{Kontsevich-Soibelman} and Szab\'{o} \cite{Szab2017TheBG}).
        
        On the other hand, there is also a spectral correspondence introduced by Su--Wang--Wen \cite{Suwangwen2022}\cite{su2022topologicalmirror} for strongly parabolic Higgs bundles with integral spectral curves. This is in fact the extreme opposite of the case of generic $\vxi$ studied in \cite{Diaconescu_2018}, because all $\xi_{i,j}=0$. While the work of \cite{su2022topologicalmirror} is not formulated in terms of surfaces, one can still spot some similarities between the constructions of \cite{Diaconescu_2018} and \cite{su2022topologicalmirror} which involve some pattern of blow-ups. Hence, a natural question is how to establish a family version of this spectral correspondence over the whole base $\nm$ which specializes to both of their constructions. One of the main results of this article is to address this question. 
        \begin{ques}
            Is there a family of  holomorphic symplectic surfaces over $\nm$ such that the relative moduli spaces of sheaves on this family is isomorphic to $\brelativehiggs$ over $\nm$?
        \end{ques}
    We aim to construct appropriate surfaces and curve classes for non-generic $\vxi$ in a way that fits into a family over $\nm$. For such a non-generic $\vxi$, the eigenvalues $\xi_{i,j}$ can be repeated so that the blow-up process must be handled with care. A natural approach is to construct these objects as limits of those associated with generic $\vec{\xi}$. Therefore, the desired family of surfaces will be constructed as a sequence of blow-ups starting from the trivial family of surfaces. A key observation is that the order of blow-ups becomes crucial, and the choice of the curve class must be adjusted accordingly when we perform the blow-ups in a family. To illustrate this idea, we assume $i = 1$ and simplify the notation by dropping the subscript. For instance, let $D = p$ and temporarily write $\vec{m} = \um_1 = \um$ and $\vec{\xi} = \uxi_1 = \uxi$. 

We perform the blow-up construction globally by starting from the trivial family of surfaces, $\pi_\m{N}:\bm{M} := M \times \nm \to \nm$, and blowing up the $\ell$ tautological sections $\bm{\xi}_j := \{(p, \xi_j(p)) \times \uxi \mid \uxi \in \nm\}$ iteratively, as follows: First, blow up $\bm{\xi}_1$, and let $\bm{p}_1: \bm{M}_1 \to \bm{M}$ be the corresponding blow-up map. Next, blow up $\bm{M}_1$ along the strict transform of $\bm{\xi}_2$, and denote the resulting map by $\bm{p}_2^1: \bm{M}_2 \to \bm{M}_1$. We continue this construction iteratively, yielding the following chain of blow-ups:
        \[
        \begin{tikzcd}
            \bm{Z}=\bm{M}_\ell \arrow[r, "\bm{p}_{\ell}^{\ell-1}"] & \bm{M}_{\ell-1} \arrow[r] & \cdots \arrow[r, "\bm{p}_2^1"] & \bm{M}_1 \arrow[r, "\bm{p}_1"] & \bm{M} \arrow[d, "\bm{\pi}_\m{N}"]\\
            &&&& \nm
        \end{tikzcd}
        \]
        We denote the composition $\bm{p}_{j}^{i}\circ \bm{p}_{k}^{j}$ by $\bm{p}_{k}^{i}$. Also, let $\bm{E}_i$ be the exceptional divisor appearing at the $i$-th blow up. In the end, we remove the strict transform of $M_p \times \nm$ and $C_\infty \times \nm$ from $\bm{Z}$ to obtain a family of holomorphic symplectic surfaces $\bm{S}\subset \bm{Z}$. 

        To construct a relative moduli of sheaves, we choose the relative divisor class in $\bm{S}$, \[\bm{\Sigma}({\um})=r\bm{f}^*\bm{C}_0-\sum_{j=1}^\ell m_j\bm{\Xi}_j,\quad \textrm{where}\quad \bm{\Xi_j}=(\bm{p}_\ell^j)^*\bm{E}_j.\]
        By specializing the pair $(\bm{S}, \bm{\sm}({\um}))$ to any (including non-generic) $\uxi\in \nm$, we obtain the natural pair for establishing the corresponding spectral correspondence (see Figure \ref{fig:gensurf} and Figure \ref{fig:sparsurf} for examples). By choosing a suitable stability condition and numerical data, one can construct a relative moduli space $\brelativesheaf \to \nm$ of pure dimension one sheaves on $\bm{S}$ whose support is $\bm{\Sigma}({\um})$.

       In the general setup where $D = p_1 + \cdots + p_n$, the above construction of a family of  holomorphic symplectic surfaces and the relative moduli space extends naturally. These are described in Sections \ref{sec:construction of surfaces} and \ref{sec:relativemoduli of pure}, respectively. 
        \begin{theorem}[Relative spectral correspondence, Theorem \ref{spectral-corr}]\label{thm:intro-rel spec}
        There is a closed embedding of relative moduli spaces 
         \begin{equation}\label{eq:introrelsp}
        \begin{tikzcd}
             \bm{Q}: \brelativehiggs \arrow[rr, hook] 
            \arrow[rd, swap] && \brelativesheaf\arrow[ld] \\
            & \nm &
        \end{tikzcd}
    \end{equation}
    In particular, this embedding is an isomorphism for generic $\vxi\in \nm$. 
    \end{theorem}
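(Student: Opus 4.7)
The plan is to construct the morphism $\bm{Q}$ as a universal spectral cokernel, refined by iterated elementary modifications along the exceptional divisors $\bm{E}_j$, and then to verify injectivity and properness so as to conclude that $\bm{Q}$ is a closed embedding.

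First, I would build $\bm{Q}$ functorially. Given a family of $\vec{\xi}$-parabolic Higgs bundles $(E,\Phi,E^\bullet)$ parametrized by a scheme $T\to\nm$, I would begin on $\bm{M}\times_{\nm}T$ with the tautological fibre coordinate $\lambda$ on $\pi:\mathbb{P}(K_C(D)\oplus\mathcal{O}_C)\to C$, producing the usual BNR spectral sheaf $F_0:=\mathrm{coker}(\lambda\cdot\mathrm{Id}-\pi^*\Phi)$. Its support is the spectral curve $\det(\lambda-\Phi)=0$, whose class on $\bm{M}$ is $rC_0+\sum_{i,j}m_{i,j}M_{p_i}$ before any blow-up, with multiplicity $m_{i,j}$ concentrated along the tautological section $\bm{\xi}_{i,j}$.

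Next, I would successively pull back and modify. Along each blow-up $\bm{p}_j^{j-1}$, I would perform an elementary modification of $(\bm{p}_j^{j-1})^*F_{j-1}$ along $\bm{E}_j$ dictated by the $j$-th step of the parabolic filtration: the graded piece $E^j_{p_i}/E^{j-1}_{p_i}$ of rank $m_{i,j}$ determines how many copies of $\bm{E}_j$ the support acquires, while the strict transform of the spectral curve loses one copy of the vertical fibre $M_{p_i}$. After iterating over $j=1,\dots,\ell(i)$ for each $i$, and removing the strict transforms of $M_{p_i}$ and $C_\infty$ to land inside $\bm{S}$, the result is a sheaf $\bm{Q}(E,\Phi,E^\bullet)$ of pure dimension one in the class $\bm{\Sigma}(\vm)$. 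The prescribed order of the blow-ups is essential: it matches the order of the parabolic flag, and the replacement of $\bm{E}_j$ by its total transform $\bm{\Xi}_j=(\bm{p}_\ell^j)^*\bm{E}_j$ in the class formula absorbs the additional contribution produced by later blow-ups centred on loci lying over earlier exceptional divisors. Choosing a polarization on $\bm{S}$ close to the numerical class of the zero section $\bm{C}_0$ makes the induced Gieseker stability pull back to parabolic stability on $C$, so that $\bm{Q}$ defines a morphism to $\brelativesheaf$ over $\nm$.

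To prove that $\bm{Q}$ is a closed embedding, I would construct an explicit left inverse on its image. Given a pure one-dimensional sheaf $\mathcal{F}$ on $\bm{S}$ of class $\bm{\Sigma}(\vm)$, pushforward along the composition $\bm{S}\to\bm{M}\to C$ recovers a rank-$r$ bundle $E$ on $C$, while multiplication by $\lambda$ on $\bm{M}$ induces the Higgs field $\Phi$; the parabolic flag at $p_i$ is read off from the orders of vanishing of sections of $\mathcal{F}$ along the successive proper transforms of the exceptional divisors, the chosen blow-up order yielding an ordered flag rather than an unordered decomposition. Combined with properness of both moduli spaces over $\nm$ (from \cite{LeeLee2024} and standard GIT constructions for sheaves on surfaces), this produces injectivity on $T$-valued points. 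An injectivity check on tangent spaces, obtained by matching the deformation--obstruction theory of parabolic Higgs bundles via $\mathrm{RHom}$ of the Higgs complex on $C$ with that of sheaves on $\bm{S}$ via $\mathrm{RHom}_{\bm{S}}(\mathcal{F},\mathcal{F})$ through Grothendieck--Verdier duality on the resolution furnished by the iterated modifications, then upgrades this to a closed embedding.

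The main obstacle I expect will be the family-theoretic bookkeeping of the iterated elementary modifications: verifying flatness of each $F_j$ over $T\times_{\nm}\nm$ even where several $\xi_{i,j}$ coincide, confirming that the chosen blow-up order is compatible with the filtration order in a way that is functorial in $T$, and checking that the construction restricts to that of \cite{Diaconescu_2018} over the generic stratum. Once the functorial construction is in place over all of $\nm$, the fiberwise isomorphism on the generic stratum is the content of \emph{loc.\ cit.}, and the closed-embedding statement over the whole base follows from the explicit inverse on the image together with the tangent-space comparison.
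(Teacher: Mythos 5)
Your construction of $\bm{Q}$ is in the same spirit as the paper's (BNR cokernel on $\bm{M}$, then modifications along the exceptional divisors step by step, matching the blow-up order with the flag order), but the argument you give for the closed-embedding statement has a genuine gap: you derive injectivity/closedness "combined with properness of both moduli spaces over $\nm$", and neither moduli space is proper over $\nm$. The relative Higgs moduli $\brelativehiggs\to\nm$ is never proper (the Higgs field can run off to infinity), and $\brelativesheaf$ is by construction only the open part of a projective relative moduli space lying over the open locus $\bm{B}(\vm)\subset\overline{\bm{B}(\vm)}$ of supports avoiding $F_{i,\vxi}$ and $\td{C}_{\infty,\vxi}$. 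What one actually needs is properness of the morphism $\bm{Q}$ itself; in the paper this is obtained from the compatibility of $\bm{Q}$ with the two structural maps: $\brelativehiggs\to\bm{A}(\vm)$ is proper (closedness of the $\vxi$-parabolic condition plus Yokogawa's properness of the parabolic Hitchin map), $\brelativesheaf\to\bm{A}(\vm)$ is separated (Fitting support is proper, $\iota$ is a closed embedding), and a map factoring a proper morphism through a separated one is proper. Combined with a left inverse on the image (obtained by pushing the chain of twists back down through the blow-ups and then to $C$ — the analogue of your "read off the flag from vanishing orders," but proved via the single-blow-up recovery statement so that it works even when eigenvalues collide and quotients sit over the same point), proper plus monomorphism gives the closed embedding. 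As written, your tangent-space comparison cannot substitute for the missing properness.

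Two further points where your sketch would fail as stated. First, the stability transfer: a polarization "close to the class of the zero section" does not see the parabolic weights at all; the statement concerns $\va$-stable parabolic Higgs bundles, and the correspondence requires the $\bm\beta$-twisted $\bm{A}$-Gieseker stability with $\beta_{i,j}=\alpha_{i,j}+N$ and $\kappa,N\gg0$, together with a separate estimate for subsheaves whose support lies partly or wholly on the exceptional divisors — exactly the configurations that appear at non-generic $\vxi$ and that make this step more than bookkeeping. Second, the class bookkeeping on $\bm{M}$ is off: the spectral curve of a $\vxi$-parabolic Higgs bundle lies in $|rC_0|$ (it does not contain the fibers $M_{p_i}$, so its class is not $rC_0+\sum_{i,j}m_{i,j}M_{p_i}$, and no vertical fiber is "lost" at each blow-up); the multiplicities $m_{i,j}$ enter only through the intersection numbers with the exceptional divisors after blowing up, which is how the first Chern class $r\,p_{\vxi}^*C_0-\sum_{i,j}m_{i,j}\Xi_{i,j,\vxi}$ and the Euler characteristic are tracked through each elementary modification.
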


Our construction of $\bm{Q}$ begins with a parabolic analogue of the classical BNR correspondence over each $\vxi \in \nm$,  which transform a $\vxi$-parabolic Higgs bundle on $C$ into a parabolic sheaf on $M$. We then iteratively apply a transformation at each blow-up which reduces the length of the parabolic filtration by one, ultimately yielding a pure dimension one sheaf on the final surface $Z_{\vxi}.$

Let us also highlight the distinguished features and difficulties that arise in the non-generic case. Conceptually, this iterative process works by encoding the parabolic structures in the exceptional divisors. In the generic case, where the parabolic structure on a Higgs bundle is uniquely determined by the ordered eigenvalues, this corresponds geometrically to the support of sheaves intersecting the exceptional divisors at a finite set of distinct points. In contrast, for non-generic $\vxi\in \nm$, a fixed Higgs bundle can be equipped with different parabolic structures (e.g. for $\Phi=0$, any flag is allowed). In this case, a curve in $\Sigmamxi$ may have components at the exceptional divisors. The different possible parabolic structures correspond to the variations of sheaves supported on the exceptional divisors. However, the presence of sheaves with support on the exceptional divisors also introduces some technical complications, which is the main challenge in extending the work of \cite{Diaconescu_2018} to the non-generic case. In particular, many exact sequences in the generic case arguments are no longer guaranteed to be exact. To address this, we reformulate the original construction by reducing it to the case of a single blow-up in Section \ref{s:sheaf local}, which allows for a more careful analysis. Our resulting correspondence for parabolic sheaves along a single blow-up is a key technical result of independent interest. Finally, we show that over the open locus of integral curves, these issues are resolved and  we obtain an isomorphism between the moduli spaces (see Theorem \ref{thm:open locus}). 

    \begin{rem}
       There is another version of the parabolic spectral correspondence developed in \cite{CDDP}, which considers holomorphic symplectic orbifolds instead of holomorphic symplectic surfaces. While this yields a spectral correspondence for a fixed $\xi \in \nm$, it appears difficult to extend this construction to the relative setting. This suggests that one may characterize the essential image of the relative embedding in terms of resolutions of the corresponding orbifolds. We will not pursue this approach in the present paper and leave it for future investigation. 
    \end{rem}

Furthermore, the spectral correspondence is compatible with two important structural morphisms on each side. On the Higgs side, for each $\vxi \in \nm$, there is the Hitchin map $h:\hvxi \to A:= \bigoplus_{\mu=1}^r H^0(C, K_C(D)^{\otimes \mu})$ that sends a $\vxi$-parabolic Higgs bundle to the characteristic polynomial of its Higgs field. Clearly, this cannot be surjective as there are constraints on the eigenvalues over each $p_i$. For strongly parabolic Higgs bundles ($\vxi=0$), the image of the Hitchin map has been studied previously by Baraglia--Kamgarpour \cite{baragliaKamgarpour} and Su--Wang--Wen \cite{Suwangwen2022} in the case $g \geq 2$. Consider the subsheaves of $\left(K_C(D)\right)^{\otimes \mu}$:
\begin{equation}\label{eq:L-sheaf}
    \lmmu
    :=\left(K_C(D)\right)^{\otimes \mu} \otimes \O_C\left(-\sum_{i\in I}  \gamma_{P^i}\left(\mu\right)p_i)\right)
\end{equation}
where $\gamma_{P^i}(\mu)$ is the level function associated to the partition $\um_i=P^i$ (we write $P^i$ to emphasize its role as a partition) for $i\in I$. Then they show that when $g \geq 2$, the image of strongly parabolic Hitchin map is given by 
\begin{equation*}
    A({\vec{m}})_0 = \bigoplus_{\mu=1}^r H^0\left(C, \lmmu\right) \subset A.
\end{equation*}
When $\um_i=(1,\dots,1)$ for all $i$, the image of the Hitchin map for $\vxi\neq 0$ can be described as the fibers of $A \to A/A({\vm})_0$, where each $\uxi_i$ is identified as a set of unordered eigenvalues which sits in $A/A({\vm})_0$ (see e.g. \cite{markmanspectral}\cite{logaresMartens}) However, this description does not generalize directly for partitions $\um_i\neq (1,\dots,1)$. Instead, we characterize the image of $\hvxi\to A$ from the viewpoint of $\mvxi$.

    On the surface side, for each $\vec{\xi} \in \nm$, there is the Fitting support map $\mvxi\to \bmxi$, which sends a pure dimension one sheaf to its Fitting support. Here, $\bmxi$ is defined as the affine subspace of effective curves in $|\Sigma({\vec{m}})_{\vec{\xi}}|$ lying in $S_{\vec{\xi}}$. In the relative setting, this extends to a  family of base spaces $\bm{B}({\vec{m}})$ over $\nm$, which gives rise to the relative support map
    \[\bm{supp}:\brelativesheaf \to \bm{B}({\vec{m}}).\] 
    To better understand the supports represented by curves in $\bmxi$, we provide a detailed characterization in terms of curves with prescribed multiplicities (determined by $\vm$) at the centers (determined by $\vxi$) of the sequence of blow-ups (Section \ref{subsec:linearsystem}). In turn, this leads to another characterization of $\bmxi$ in terms of the Hitchin base $A$ (Proposition \ref{evaluation description}), which will be used later in our study of non-emptiness and the Deligne--Simpson problem. Over $\nm$, this characterization gives rise to a map 
    \[\bm{\iota}: \bm{B}({\vm})\hookrightarrow \bm{A}(\vm):= A\times \nm.\]
    As a consistency check, we verify that the image $A({\vm})_0$ of the strongly parabolic Hitchin map described in \cite{baragliaKamgarpour}\cite{Suwangwen2022} is compatible with the base $B({\vm})_0$ (Corollary \ref{identification of hitchin bases}). 
    
    Now, we can compare the two structural morphisms on both sides. 
\begin{cor}[Corollary \ref{cor:sp commutativity}]\label{cor:intro relmaps}
        The relative spectral correspondence \eqref{eq:introrelsp} identifies the Hitchin map with the support map. In other words, there is a commutative diagram over $\nm$,
        \[\begin{tikzcd}
         \bm{\m{H}}(\vm) \arrow[r, hook, "\bm{Q}"]\arrow[d] & \bm{\m{M}}(\vm) \arrow[d]\\
        \bm{A}(\vm) & \bm{B}(\vm) \arrow[l, hook', "\iota"]
    \end{tikzcd}\]
    \end{cor}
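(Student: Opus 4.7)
The plan is to reduce the verification of the square to a fiberwise check over each $\vxi \in \nm$, since all four morphisms are constructed uniformly in families and commute with the projection to $\nm$. So it suffices to show, for each fixed $\vxi$, that the induced square on fibers — with $Q_\vxi: \hvxi \hookrightarrow \mvxi$ on top, the parabolic Hitchin map $h$ on the left, the Fitting support map on the right, and $\iota_\vxi: \bmxi \hookrightarrow A$ on the bottom — commutes.

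The key geometric input is the construction of $Q_\vxi$ from Theorem \ref{spectral-corr}. Starting from a $\vxi$-parabolic Higgs bundle $(E, \Phi)$, one first forms its classical BNR transform on $M = \P(K_C(D) \oplus \O_C)$, whose Fitting support is the compactified spectral curve $X_\Phi$ cut out by $\det(\lambda \cdot \mathrm{Id} - \Phi) = 0$. By definition of the Hitchin map, the coefficients of this characteristic polynomial read in the affine trivialization $M \setminus C_\infty \simeq \Tot(K_C(D))$ are precisely $h(E, \Phi) \in A$. The sheaf $Q_\vxi(E, \Phi)$ on $Z_\vxi$ is then produced by iteratively applying the elementary transformation along each blow-up at $(p_i, \xi_{i,j})$, each step imposing the prescribed multiplicity $m_{i,j}$ along the new exceptional divisor $E_{i,j}$. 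Consequently, the Fitting support lands in the class $\Sigma({\vm})_\vxi = r\, p_\vxi^* C_0 - \sum_{i,j} m_{i,j} E_{i,j}$ on $Z_\vxi$, and its pushforward as a divisor along $p_\vxi : Z_\vxi \to M$ coincides with $X_\Phi$.

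Finally, I would invoke the explicit description of $\iota_\vxi$ furnished by Proposition \ref{evaluation description}, which reads off the coefficients of the defining equation of an effective divisor $\Sigma \in \bmxi$ in the affine coordinates of $M \setminus C_\infty$. Combined with the previous step, this yields $\iota_\vxi(\mathrm{supp}(Q_\vxi(E, \Phi))) = h(E, \Phi)$, establishing fiberwise commutativity and hence the desired commutative square over $\nm$.

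The main technical obstacle is the non-generic case, in which $Q_\vxi(E, \Phi)$ can acquire components supported on exceptional divisors. One must verify that such components are contracted by $p_\vxi$ to points and therefore contribute trivially to the pushforward divisor on $M$, so that the identity $(p_\vxi)_* \mathrm{supp}(Q_\vxi(E, \Phi)) = X_\Phi$ still holds. This is where the careful local single-blow-up analysis of Section \ref{s:sheaf local}, together with the explicit determination of the support class $\Sigma({\vm})_\vxi$ from Section \ref{subsec:linearsystem}, do the real work.
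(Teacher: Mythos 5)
Your argument is correct and is essentially the paper's: the paper disposes of Corollary \ref{cor:sp commutativity} with the one-line observation that commutativity holds by the very construction of $\bm{Q}$ (the BNR step produces a sheaf on $M$ whose Fitting support is the spectral curve, and each blow-up step removes exactly $m_{i,j}$ copies of the exceptional class, so the resulting support maps under $\iota$ back to the Hitchin image), which is precisely what you unwind fiberwise. Your closing remark slightly overstates the role of Sections \ref{s:sheaf local} and \ref{subsec:linearsystem} — they carry the weight of Theorem \ref{spectral-corr} itself, while the commutativity, exceptional components included, follows from Proposition \ref{l:local pullback 2} and the fact that contracted components push forward trivially, as you note.
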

\noindent Hence, it is natural to refer to $\bm{B}({\vm})\to \nm$ as the \emph{family of parabolic Hitchin bases associated to $\vm$}. 
\begin{rem}
  In general, the image of the Hitchin map for symplectic leaves of parabolic Higgs bundles is difficult to describe, as it is not defined by linear equations. However, the result above provides a linear description of this image, which may be of independent interest. 
\end{rem}

    \subsection{The $OK$ condition, flatness and non-emptiness}
    Although the bases $\bmxi$ have been defined and characterized in various ways, its non-emptiness is not immediately clear from these descriptions. For instance, determining whether there exists a curve passing through specified points on a given surface with prescribed multiplicities is far from trivial. Even if the non-emptiness is granted, it remains a priori unclear whether the dimension of $\bmxi$ behaves well as $\vxi$ varies, or whether it may jump in the non-generic case $\vxi$. Interestingly, these questions are closely related to the so-called \emph{OK condition} proposed by Balasubramanian--Distler--Donagi \cite{baladistlerdonagi} in the context of 6d superconformal field theories (SCFT). 

    In the study of 4d $\m{N}=2$ theories arising from compactification of 6d $(2,0)$ SCFTs on a punctured Riemann surface (base curve), the Coulomb branch of the theory is described by the base of a Hitchin system defined on the base curve. In \cite{baladistlerdonagi}, they consider the setup where the base curves vary over the Deligne-Mumford moduli space of stable pointed curves.  In this setting, they show that the $OK$ condition ensures that the family of Hitchin bases (hence Coulomb branches) vary in a well-behaved manner, fitting together into a vector bundle over the moduli space of curves. \begin{definition}
    We say that the collection of line bundles $L({\vm})_\mu$, for $\mu=2,\dots,r$ satisfies the \emph{$OK$ condition} if     \begin{equation}\label{eq:H1assumption}
        H^1\left(C, \lmmu\right)=0.
    \end{equation}
    If it is clear from the context, we simply say the $OK$ condition holds. 
\end{definition}
    While our base curve is fixed and the varying parameters are the eigenvalues, we find that the OK condition ensures the family of parabolic Hitchin bases behaves well in a family. It would be interesting to explore the connection between our work and theirs, as well as with the more recent work\cite{donagi2024meromorphichitchinfibrationstable}.
    \begin{prop}[Lemma \ref{affinebase} and Proposition \ref{constantdim}]
    Suppose that the expected dimension $\mathrm{expdim}(\bmxi)\geq 0$ and $\bmxi$ is non-empty for every $\vxi\in \nm$. If the OK condition holds,
     then the family of parabolic Hitchin bases $\bm{B}({\vm})\to \nm$ associated to $\vm$ forms an affine bundle. 
    \end{prop}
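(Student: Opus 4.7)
The plan is to realize $\bm{B}(\vm) \to \nm$ as the preimage of a section under a surjective morphism of trivial vector bundles over $\nm$, and then conclude the affine bundle structure directly from this torsor description.

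First, I would use Proposition \ref{evaluation description} to describe each fiber $\bmxi$ as an affine subspace of the fixed vector space $A$. Concretely, the inclusion $\lmmu \hookrightarrow K_C(D)^{\otimes \mu}$ induces a ``polar part at the punctures'' map
\[
\mathrm{ev}: A \longrightarrow V := \bigoplus_{\mu \geq 2} H^0\!\left(C,\, K_C(D)^{\otimes \mu}/\lmmu\right),
\]
together with an element $s(\vxi) \in V$ assembled from the elementary symmetric functions of the scalars $\xi_{i,j}(p_i)$ (the coefficients of $\prod_j (t - \xi_{i,j})^{m_{i,j}}$, i.e., the prescribed principal parts of the coefficients of the characteristic polynomial of $\Phi$ at each $p_i$). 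Then $\bmxi = \mathrm{ev}^{-1}(s(\vxi))$, where the $\mu = 1$ component is already absorbed into the defining relation $\vxi \in \nm$. Since elementary symmetric functions are polynomial in their inputs, $\vxi \mapsto (s(\vxi), \vxi)$ is a morphism $\bm{s}: \nm \to V \times \nm$, and in the relative setting this globalizes to a morphism of trivial bundles $\bm{\mathrm{ev}}: \bm{A}(\vm) \to V \times \nm$ with $\bm{B}(\vm) = \bm{\mathrm{ev}}^{-1}(\bm{s})$.

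Next, I would deploy the OK condition. Applying $H^\ast(C, -)$ to the short exact sequence
\[
0 \to \lmmu \to K_C(D)^{\otimes \mu} \to K_C(D)^{\otimes \mu}/\lmmu \to 0,
\]
the vanishing $H^1(C, \lmmu) = 0$ for $\mu \geq 2$ yields surjectivity $H^0(K_C(D)^{\otimes \mu}) \twoheadrightarrow H^0(K_C(D)^{\otimes \mu}/\lmmu)$ with kernel $H^0(\lmmu)$. Summing over $\mu \geq 2$ shows $\mathrm{ev}$ is surjective with kernel $B(\vm)_0 = \bigoplus_\mu H^0(\lmmu)$. Consequently $\bm{\mathrm{ev}}$ is a surjection of trivial vector bundles over $\nm$ whose kernel is the trivial subbundle $B(\vm)_0 \times \nm$. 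The preimage $\bm{B}(\vm) = \bm{\mathrm{ev}}^{-1}(\bm{s})$ of a section is then a torsor for this kernel, i.e., an affine bundle over $\nm$ with associated vector bundle $B(\vm)_0 \times \nm$. The non-emptiness hypothesis ensures every fiber $\bmxi$ meets $\bm{s}$, so $\bm{B}(\vm) \to \nm$ is surjective and of the expected rank $\dim B(\vm)_0$.

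The main obstacle I expect is not the cohomological input but the first step: extracting from Proposition \ref{evaluation description} the explicit form of $\mathrm{ev}$ and confirming that $s(\vxi)$ depends morphically on $\vxi$. This requires carefully tracking, at each puncture $p_i$ and each degree $\mu$, how the prescribed factorization of the characteristic polynomial (with possibly coinciding eigenvalues in the non-generic case) translates into a well-defined element of the quotient $H^0(K_C(D)^{\otimes \mu}/\lmmu)$ of the correct dimension $\gamma_{P^i}(\mu)$ at each $p_i$. Once that identification is cleanly made, the surjectivity from the OK condition plus the torsor argument assemble immediately into the affine bundle statement.
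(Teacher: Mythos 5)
There is a genuine gap at the very first step, and it is exactly the point you flagged as ``the main obstacle'': the identification $\bmxi = \mathrm{ev}^{-1}(s(\vxi))$ for the \emph{fixed} truncation map $\mathrm{ev}\colon A \to \bigoplus_{\mu} H^0(C, K_C(D)^{\otimes\mu}/\lmmu)$ is false in general. By Proposition \ref{evaluation description}, $\bmxi$ is cut out by the conditions $\partial_y^u\partial_{x_i}^a q_s(p_i,\xi^\circ_{i,j})=0$ for $(u,a)\in G(P^{\xi^\circ_{i,j}})$, and for $a\geq 1$ and $\xi^\circ_{i,j}\neq 0$ each such condition couples the derivatives $s_1^{(a)}(p_i),\dots,s_r^{(a)}(p_i)$ of \emph{all} coefficients, weighted by powers of $\xi^\circ_{i,j}$; it is not a condition on the truncation of $s_\mu$ modulo $\lmmu$ alone. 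Concretely, take $r=3$, one puncture $p$ with $\um=(2,1)$ and $\uxi=(\xi,\xi)$, $\xi\neq 0$: the four defining equations are $s_1(p)=-3\xi$, $s_2(p)=3\xi^2$, $s_3(p)=-\xi^3$ and
\begin{equation*}
\xi^2 s_1'(p)+\xi s_2'(p)+s_3'(p)=0,
\end{equation*}
whereas your $\mathrm{ev}^{-1}(s(\vxi))$ would impose $s_3'(p)=0$ (the jet map only records $s_1(p),s_2(p),s_3(p),s_3'(p)$ since $\gamma_{P}(1)=\gamma_P(2)=1$, $\gamma_P(3)=2$). The correct conditions are the vanishing of jets of the \emph{translated} coefficients of $q_s(x,\overline{y}+\xi^\circ_{i,j})$, and this translation mixes different $\mu$'s in a $\vxi$-dependent way (and cannot even be performed simultaneously for several distinct eigenvalues at one puncture). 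In particular the linear part of the affine fiber $\bmxi\subset A$ genuinely varies with $\vxi$ — it is a graph over ``free'' jet variables as in Lemma \ref{graphlike} — so $\bm{B}(\vm)$ is not the preimage of a section under a fixed surjection of trivial bundles with constant kernel $B(\vm)_0\times\nm$, and the torsor argument collapses. Your map only agrees with the truth when $\vxi=0$ or when all $\um_i=(1,\dots,1)$ with distinct eigenvalues.

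Note also that your use of the OK condition (surjectivity of $H^0(K_C(D)^{\otimes\mu})\to H^0(K_C(D)^{\otimes\mu}/\lmmu)$) addresses only the uncoupled jet problem, not the actual coupled system. The paper takes a different route precisely to avoid this: Lemma \ref{affinebase} shows each fiber is affine (complement in $|\Sigmamxi|$ of the divisors meeting $\td{C}_{\infty,\vxi}$), and Proposition \ref{constantdim} obtains constant fiber dimension by combining the $\C^*$-equivariance of the family of surfaces with upper semicontinuity toward $\vxi=0$, where a Riemann--Roch computation using $H^1(C,\lmmu)=0$ shows $\dim|\Sigma(\vm)_0|$ equals the expected dimension; local freeness of the pushforward then yields the affine bundle structure (Remark \ref{affine bundle}). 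The coupled linear system you would need to control is analyzed in the paper only later (generalized Vandermonde matrices, pivot/free variables) for the non-emptiness results, not for the affine bundle statement. If you want to pursue your strategy, you would have to replace $\mathrm{ev}$ by the $\vxi$-dependent family of linear maps encoding the translated-jet conditions and prove that its rank is constant over $\nm$ — which is essentially the content the paper extracts instead from the $\C^*$-action and the computation at $\vxi=0$.
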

    In fact, the $OK$ condition is also sufficient for the non-emptiness of $\bmxi$. 
    
    \begin{theorem}[Theorem \ref{thm:nonemptiness of Hitchin bases}] \label{thm:intro-nonemptiness of Hitchin bases}
    Suppose that the OK condition holds. Then $\bmxi\neq \emptyset$ for any $\vec{\xi} \in \nm$. In particular, $\bmxi \neq \emptyset$ in the following cases:
    \begin{enumerate}
        \item When $n\geq 3$ and $g=0$, if the inequalities
        \begin{equation}\label{eq:intro-defect}
             \sum_{i=1}^n \gamma_{P^i}(\mu) < (n-2)\mu +2
        \end{equation}
        hold for $\mu=2, \dots, r$. 
        \item When $n \geq 1$ and $g = 1$, if at least one of the partitions $P^i$ is not the singleton partition $m_1=r$. 

        \item When $n\geq 1$, $g\geq 2$. 
    \end{enumerate}
    \end{theorem}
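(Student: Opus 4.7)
The plan is to reduce the non-emptiness of $\bmxi$ to the $OK$ condition via the linear-algebraic characterization $\bm{\iota}: \bmxi \hookrightarrow A$ of Proposition \ref{evaluation description}. Under this embedding, $\bmxi$ appears as an affine subspace of $A = \bigoplus_{\mu=1}^r H^0(C, K_C(D)^{\otimes\mu})$ cut out by jet-matching conditions at the points of $D$, with associated direction subspace
\[
A(\vm)_0 = \bigoplus_{\mu=1}^r H^0(C, L(\vm)_\mu).
\]
Non-emptiness is therefore the solvability of a prescribed inhomogeneous linear system supported at $D$.

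For each $\mu = 2, \dots, r$, the defining short exact sequence
\begin{equation*}
    0 \to L(\vm)_\mu \to K_C(D)^{\otimes\mu} \to T_\mu \to 0
\end{equation*}
has a torsion cokernel $T_\mu$ supported on $D$, with stalk length $\gamma_{P^i}(\mu)$ at each $p_i$. Sections of $T_\mu$ encode exactly the jet at $p_i$ (to order $\gamma_{P^i}(\mu)$) of the $\mu$-th elementary symmetric polynomial of the $\xi_{i,j}$ counted with multiplicities $m_{i,j}$, which is precisely the datum the condition in Proposition \ref{evaluation description} forces $a_\mu$ to realize. The long exact sequence
\begin{equation*}
    H^0(C, K_C(D)^{\otimes\mu}) \to H^0(C, T_\mu) \to H^1(C, L(\vm)_\mu)
\end{equation*}
becomes surjective under $OK$, so every prescribed jet datum lifts to a global section. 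The remaining case $\mu = 1$ is free, as its sole constraint is the residue equation $\sum_{i,j} m_{i,j}\mathrm{res}_{p_i}(\xi_{i,j}) = 0$ already built into the definition of $\nm$.

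It then remains to verify $OK$ in the three listed regimes by a degree computation
\begin{equation*}
    \deg L(\vm)_\mu = \mu(2g-2+n) - \sum_{i=1}^n \gamma_{P^i}(\mu).
\end{equation*}
For case (1) ($g=0$), $L(\vm)_\mu \cong \O_{\P^1}\bigl((n-2)\mu - \sum_i\gamma_{P^i}(\mu)\bigr)$, and $H^1 = 0$ on $\P^1$ is equivalent to degree $\geq -1$, which is precisely the stated inequality. For case (3) ($g \geq 2$), the general bound $\gamma_{P^i}(\mu) \leq \mu$ gives $\deg L(\vm)_\mu \geq 2\mu(g-1) > 2g-2$ for $\mu \geq 2$, so $H^1 = 0$ by Serre duality.

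Case (2) ($g = 1$) is the most delicate, since on an elliptic curve one needs $\deg L > 0$ or $\deg L = 0$ together with $L \not\cong \O_C$. The extreme scenario where all $P^i$ are the singleton $(r)$ forces $L(\vm)_\mu \cong \O_C$ and fails $OK$, which is why the hypothesis must exclude it. Assuming some $P^i \neq (r)$, one verifies via the combinatorics of the level function that the divisor $\sum_i (\mu - \gamma_{P^i}(\mu))p_i$ is a nonzero effective divisor for every $\mu \geq 2$, so $\deg L(\vm)_\mu > 0$ and $H^1$ vanishes. I expect the main obstacles to be the precise unpacking of the evaluation conditions as sections of the torsion sheaf $T_\mu$ in the second step, and the combinatorial analysis of $\gamma_{P^i}(\mu)$ ruling out accidental triviality of $L(\vm)_\mu$ on the elliptic curve in case (2).
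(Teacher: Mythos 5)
Your reduction breaks at the second step. The conditions cut out by Proposition \ref{evaluation description} do \emph{not} decouple into separate jet conditions on each $s_\mu$: for each derivative order $a\geq 1$ at $p_i$, the equations indexed by the level domains involve the $a$-th derivatives of \emph{all} of $s_1,\dots,s_r$ simultaneously, because the multiplicity conditions are imposed on the translated polynomial $q_s(x_i,\bar y+\xi^\circ_{i,j})$, and the translation mixes the coefficients (see Example \ref{exmp:linear constraints 1}, where every equation contains all of $s_1(p),\dots,s_6(p)$). Concretely, take $n=1$, $r=2$, $\um=(2)$, $\uxi=(\xi)$ with $\xi\neq 0$: the conditions are $s_1(p)=-2\xi$, $s_2(p)=\xi^2$ and $s_2'(p)=-\xi\, s_1'(p)$, not $s_2'(p)=0$. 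So $\bmxi$ is an affine subspace of $A$ of the same codimension as $A(\vm)_0$, but it is \emph{not} the translate of $A(\vm)_0$ by the constant jets of $(-1)^\mu e_\mu(\vxi)$ (that only happens at $\vxi=0$, which is Corollary \ref{identification of hitchin bases}); your prescribed set is neither contained in $\bmxi$ nor does it contain it. Consequently, surjectivity of $H^0(C,K_C(D)^{\otimes\mu})\to H^0(C,T_\mu)$ for each $\mu$ separately does not produce an element of $\bmxi$. If you repair the decoupled approach by prescribing the full jet of every $s_\mu$ to order $m_{i,1}$ (so that $q_s\equiv\prod_j(y-\xi_{i,j})^{m_{i,j}}$ modulo $x_i^{m_{i,1}}$, which \emph{is} sufficient), you need $H^1\bigl(C,K_C(D)^{\otimes\mu}(-\sum_i m_{i,1}p_i)\bigr)=0$ for all $\mu$, which is strictly stronger than the $OK$ condition and fails under the stated hypotheses in case (1) for small $\mu$.

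What the paper does instead is exploit the triangular structure of the coupled system: for each $(a,i)$ the pivot block formed by the highest-$\mu$ columns is an invertible generalized Vandermonde matrix, so the solution set is the graph of an affine map from the free to the pivot variables (Lemma \ref{graphlike}); one then inducts on $\mu$, at each step lifting the pivot jet data --- which depend on the free derivatives of the sections $s_1,\dots,s_{\mu-1}$ already chosen --- to a global section using controllability of $K_C(D)^{\otimes\mu}$ up to order $t(\mu,i)$, and the combinatorial identity $t(\mu,i)=\gamma_{P^i}(\mu)$ (Propositions \ref{prop:intesection} and \ref{thm:criterion}) is exactly what makes the required controllability equivalent to the $OK$ condition. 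Your identification of $A(\vm)_0$ as the relevant direction space and your degree/Serre-duality verifications of $OK$ in the three regimes (including the level-function argument ruling out $L(\vm)_\mu\cong\O_C$ on an elliptic curve) are correct and agree with the paper, but the core implication ``$OK$ $\Rightarrow \bmxi\neq\emptyset$'' is not established by your argument.
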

    
     Having established the non-emptiness of $\bmxi$, we can now use this result to deduce the non-emptiness of the moduli space $\hvxi$.  Specifically, to obtain an element in the $\hvxi$, it suffices to find an integral curve $\sm$ in $\bmxi$. By applying the spectral correspondence $\mvxi\to \hvxi$ to a line bundle on $\sm$, we obtain a stable $\vxi$-parabolic Higgs bundle in $\hvxi$. Given the advantage of having the parabolic Hitchin bases $\bmxi$ in families, we begin by first constructing such an element for the strongly parabolic case, where $\vxi=0$, and then deforming it to an integral curve in $\bmxi$ for other $\vxi\neq 0$. However, in order to guarantee the existence of integral curves, we need to slightly strengthen the $OK$ condition. See also Remark~\ref{rem:optimalcond nonempty} for a variation of the following proposition. 
  
    \begin{prop}[Proposition \ref{nonemptiness of Higgs}]
        For every $\vxi\in \nm$, the moduli space of stable $\vec{\xi}$-parabolic Higgs bundle $\hvxi$ is non-empty in the following cases
    \begin{enumerate}
        \item When $n\geq 3$ and $g=0$, if the inequalities
        \begin{equation}\label{eq:intro-defect}
             \sum_{i=1}^n \gamma_{P^i}(\mu) < (n-2)\mu +1
        \end{equation}
        hold for $\mu=2, \dots, r$
        \item When $n \geq 2$ and $g = 1$, if at least two of the partitions $P^i$ are not the singleton partition $m_{i,1}=r$.
        \item When $n\geq 1$ and $g\geq 2.$
    \end{enumerate}
    \end{prop}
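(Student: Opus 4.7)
The plan is to reduce the non-emptiness of $\hvxi$ to the existence of an \emph{integral} curve in the parabolic Hitchin base $\bmxi$, and then to produce such a curve for every $\vxi\in\nm$ by first handling the strongly parabolic case $\vxi=0$ and deforming in the family $\bm{B}(\vm)\to\nm$.

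First, I would invoke Theorem \ref{thm:intro-rel spec} together with its refinement (Theorem \ref{thm:open locus}) stating that, over the open locus of integral Fitting supports, the embedding $\bm{Q}$ becomes an isomorphism. Given any integral curve $\Sigma\in\bmxi$, a line bundle on $\Sigma$ of appropriate degree defines a pure dimension one sheaf on $S_{\vxi}$ that is simple (since $\Sigma$ is integral, any torsion-free rank one sheaf on $\Sigma$ has only scalar endomorphisms), hence stable for any polarization, and therefore lies in $\mvxi$. Its preimage under $\bm{Q}$ is a stable $\vxi$-parabolic Higgs bundle in $\hvxi$. Thus it suffices to exhibit an integral curve in $\bmxi$ for every $\vxi\in\nm$.

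Next, I would construct such a curve in the strongly parabolic case $\vxi=0$, using the explicit presentation $B(\vm)_0=\bigoplus_{\mu=2}^{r} H^0(C,\lmmu)$. Here integrality of the spectral curve corresponds to irreducibility of the induced characteristic polynomial. The strengthened inequality in case (1), the two non-trivial partitions in case (2), and the positive-genus assumption in case (3) are each calibrated so that the relevant linear systems $|\lmmu|$ contain enough global sections to realize such a polynomial. The argument is a dimension count: one bounds the locus of reducible characteristic polynomials, indexed by splittings $r=r_1+r_2$ together with the induced refinements of the partitions $P^i$, against the total dimension of $B(\vm)_0$ computed via the $OK$ vanishing; the strict inequality relative to the non-emptiness bound of Theorem \ref{thm:intro-nonemptiness of Hitchin bases} provides exactly the extra degree of freedom required, and for $g\ge 2$ reduces to a standard Bertini-type statement.

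Finally, I would transport integrality to arbitrary $\vxi\in\nm$ using the affine bundle structure of $\bm{B}(\vm)\to\nm$ guaranteed by the $OK$ condition (Lemma \ref{affinebase} and Proposition \ref{constantdim}). Since integrality is an open condition on the total space and is non-empty in the fiber over $\vxi=0$ by the previous step, the integral locus is a non-empty open subset of $\bm{B}(\vm)$. \emph{The main obstacle} is upgrading this openness to the claim that every fiber $\bmxi$ contains an integral curve, not merely a dense open set of fibers in $\nm$. I would address this by rerunning the reducible-stratum dimension count directly in each $\bmxi$: the affine bundle structure fixes the relative dimension of $\bmxi$ to equal that of $B(\vm)_0$, and the strata of reducible or non-reduced curves in $\bmxi$ are parameterized by the same combinatorial splitting data as in the strongly parabolic case, so their dimensions obey the same bound. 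The strict inequality in the hypothesis then forces these strata to be proper closed subsets of $\bmxi$ for every $\vxi$, so each $\bmxi$ contains an integral curve. Combined with the reduction in the first step, this yields non-emptiness of $\hvxi$ in all three cases.
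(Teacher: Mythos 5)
Your overall strategy (integral curve in $\bmxi$ $\Rightarrow$ stable sheaf $\Rightarrow$ stable $\vxi$-parabolic Higgs bundle via Theorem \ref{thm:open locus}, constructed first at $\vxi=0$ and then propagated over $\nm$) is the paper's strategy, and your Step 1 is fine. The gap is in how you produce the integral member. At $\vxi=0$ you identify integrality of a member of $B(\vm)_0$ with irreducibility of the characteristic polynomial of the spectral curve downstairs, but these are not equivalent: a member of $\bmxi$ is the effective divisor $p_{\vxi}^*C_s-\sum_{i,j}m_{i,j}\Xi_{i,j,\vxi}$ on the blown-up surface, and if $C_s$ passes through a blow-up center with multiplicity strictly larger than required, this divisor acquires exceptional-divisor components and is not integral even though $C_s$ is. Ruling this out is exactly what the strengthened hypotheses are for: they are calibrated so that $H^1(C,\lmmu\otimes\O(-p_i))=0$ for $\mu=2,\dots,r$ and all $i$, which (i) gives $H^0(C,L(\vm)_r)\neq 0$ so an integral cyclic cover $y^r+s_r=0$ exists, and (ii) gives $\dim H^0(C,\lmmu\otimes\O(-p_i))<\dim H^0(C,\lmmu)$, so a generic $s$ has vanishing orders exactly $\gamma_{P^i}(\mu)$ at the minimal indices and the member upstairs is the strict transform (Lemma \ref{lem:integral at zero}, Remark \ref{rem:local condition(minimal)}). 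Your dimension count of the ``reducible characteristic polynomial'' locus does not address the exact-multiplicity requirement, is not actually carried out, and does not explain the case distinctions (e.g.\ why two non-singleton partitions are needed for $g=1$, which is precisely the degree condition forcing $H^1(C,\lmmu\otimes\O(-p_i))=0$ on an elliptic curve).

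The same issue undermines your fix for Step 3. You propose to rerun a reducible-stratum count in each fiber $\bmxi$, claiming the non-integral strata are ``parameterized by the same combinatorial splitting data'' as at $\vxi=0$; but for non-generic $\vxi$ the degenerate members of $\bmxi$ include curves containing exceptional divisors, so the strata are not governed only by splittings $r=r_1+r_2$ of spectral covers, and no bound is actually established. The paper avoids this entirely: by the $OK$-type hypotheses, $\bm{B}(\vm)\to\nm$ has constant fiber dimension (Proposition \ref{constantdim}), the integral locus is open in the total space, its image in $\nm$ is open and contains $0$, and the $\C^*$-equivariance of the family of surfaces (scaling $\vxi$) makes this image $\C^*$-invariant, hence equal to all of $\nm$ (Proposition \ref{deformation of curves}). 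You should replace your fiberwise count by this equivariance argument, and replace your Step 2 count by the cyclic-cover construction together with the $H^1(C,\lmmu\otimes\O(-p_i))=0$ vanishing; as written, the proposal does not establish the existence of an integral member of $\bmxi$ either at $\vxi=0$ or for general $\vxi$.
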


    \subsection{Multiplicative Deligne--Simpson problem}\label{sec:intro DSP}
    As another application of the non-emptiness of $\bmxi$, we obtain a new approach to the classical multiplicative Deligne--Simpson problem (DSP for short), which asks the following question:
    \begin{ques}
       Given conjugacy classes $C_i$ in $GL_r(\C)$ for $i=1,\dots, n$, when does there exist  irreducible solutions to the equation $T_1\cdots T_n =  Id_r$ with $T_i\in C_i$? Here, an irreducible solution means that the matrices $T_j$ have no common invariant subspace. 
   \end{ques}
    
    This problem has been studied by many authors, starting from Simpson, who first obtained a sufficient and necessary condition in the case when one of the $C_i$ is regular i.e. semisimple with distinct eigenvalues \cite{simpson-productofmatrices}. We recall this in Appendix \ref{comparison result}. Katz later studied the rigid case by middle convolution \cite{katz}. A  sufficient and necessary condition in the case of multiplicatively generic eigenvalues (see Definition \ref{def:multiplicative generic}) was given by Kostov (see \cite{kostovsurvey} for a survey). More generally, Crawley-Boevey and Shaw gave a sufficient (conjecturally necessary) condition in terms of quivers without the genericity assumption on the eigenvalues via representations of certain multiplicative preprojective algebra \cite{crawleyboeveyshaw}. 

    Note that the DSP can also be reformulated as finding an irreducible local system on $\P^1\setminus \{p_1,\dots, p_n\}$ whose monodromy transformation $T_i$ around the puncture $p_i$ has the prescribed conjugacy class $C_i$ for $i=1,\dots,n$. So, a natural generalization of DSP is to replace $\P^1$ by curves of higher genus.  This higher genus analogue of DSP has been addressed in the work of Hausel--Letellier--Rodriguez-Villegas \cite[Section 5.2]{HLRV2013} \cite[Corollary 3.15]{letellier} for multiplicatively generic eigenvalues (see Definition \ref{def:multiplicative generic}), based on the quiver-theoretic approach of Crawley-Boevey and Shaw. 

In \cite[Appendix B]{baladistlerdonagi}, it is conjectured that the $OK$ condition is sufficient for solving the DSP ($g=0$) and the authors verify that the $OK$ condition is numerically equivalent to Simpson's criterion in the special case when one of the $C_i$ is semisimple with distinct eigenvalues. For completeness, we include a direct combinatorial proof of this numerical equivalence in Appendix \ref{comparison result}. 

Moreover, another challenge is to understand how the $OK$ condition leads to actual solutions of the DSP from a geometric viewpoint. Since the $OK$ condition guarantees the non-emptiness of $\bmxi$, our approach turns this into the necessary geometric input to construct such solutions.

Identify $C_i$ with a collection of partitions $\{P^{\lambda_{i,1}}, \dots, P^{\lambda_{i,e(i)}}\}$, labeled by the eigenvalues $\lambda_{i,j}$ of $C_i$. Then we define the following partition of $r$
\[
P^i := \widehat{P}^{\lambda_{i,1}} \cup \dots \cup \widehat{P}^{\lambda_{i,e(i)}}
\]
where $\widehat{P}^{\lambda_{i,j}}$ is the conjugate partition of $P^{\lambda_{i,j}}$. 

\begin{theorem}[Theorem~\ref{thm:DSP2}]
Let $n\geq 3$. Let $C_1, \dots, C_n \subset GL_r(\mathbb{C})$ be a collection of conjugacy classes whose collection of eigenvalues is multiplicatively generic. 
    Suppose that the following conditions hold: 
    \begin{enumerate}
    \item  $\prod_{i=1}^n\det(C_i)=1$.  
        \item $\sum_{i=1}^n \gamma_{P^i}(\mu) <(n-2)\mu+ 2$ for $\mu=2,\dots, r$.
    \end{enumerate}
    Then the DSP is solvable for the tuple of conjugacy classes $(C_1,\dots,C_n)$. 
\end{theorem}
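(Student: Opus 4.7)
My plan is to transport the Deligne--Simpson problem to the moduli of $\vxi$-parabolic Higgs bundles on $C=\P^1$ via tame non-abelian Hodge (NAHC), and then exhibit a stable representative geometrically using the relative spectral correspondence. Starting from the conjugacy classes, I would choose logarithms $\xi_{i,j}\in\C$ of the eigenvalues $\lambda_{i,j}$ of $C_i$ together with parabolic weights $\alpha_{i,j}\in[0,1)$ matched so that tame NAHC sends the $\vxi$-parabolic data to filtered local systems with monodromy eigenvalues $\lambda_{i,j}$. The partitions $P^i=\widehat{P}^{\lambda_{i,1}}\cup\dots\cup\widehat{P}^{\lambda_{i,e(i)}}$ already encode the Jordan-block structure of $C_i$, so the parabolic type matches $C_i$. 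Condition (1), $\prod_i\det(C_i)=1$, translates to the residue-sum vanishing $\sum_{i,j}m_{i,j}\,\mathrm{res}_{p_i}(\xi_{i,j})=0$, placing $\vxi\in\nm$. On $\P^1$ one has $\deg\lmmu=(n-2)\mu-\sum_i\gamma_{P^i}(\mu)$, so condition (2) is exactly $\deg\lmmu\geq-1$, i.e. $H^1(\P^1,\lmmu)=0$---the OK condition. Theorem~\ref{thm:intro-nonemptiness of Hitchin bases} then yields $\bmxi\neq\emptyset$.

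The next and main step is to upgrade non-emptiness of $\bmxi$ to the existence of an \emph{integral} spectral curve $\Sigma\in\bmxi$. By Corollary~\ref{cor:intro relmaps}, such $\Sigma$ is the spectral support of a pure one-dimensional sheaf on $S_{\vxi}$, and a line bundle on an integral $\Sigma$ yields, via the spectral isomorphism on the integral locus, a stable $\vxi$-parabolic Higgs bundle. I would argue contrapositively: any reducible decomposition $\Sigma=\Sigma'+\Sigma''$ of a member of $|\bm{\Sigma}(\um)_{\vxi}|$ forces the rank-$r$ branched cover of $\P^1$ to split, so the sub-cover $\Sigma'$ of some rank $0<r'<r$ selects a proper sub-multiset of the data $\{(\xi_{i,j},m'_{i,j})\}$ at each puncture with $0\leq m'_{i,j}\leq m_{i,j}$. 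Requiring $\Sigma'\subset S_{\vxi}$ forces a residue-vanishing condition $\sum_{i,j}m'_{i,j}\,\mathrm{res}_{p_i}(\xi_{i,j})=0$ on the sub-cover which, upon exponentiation, produces a non-trivial multiplicative relation among a proper sub-collection of the $\lambda_{i,j}$ with multiplicities---precisely what is forbidden by the multiplicative-genericity hypothesis. Hence every $\Sigma\in\bmxi$ must be integral, and the previous paragraph guarantees that such curves exist.

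To conclude, I take a line bundle of suitable degree on such an integral $\Sigma$; the relative spectral correspondence (Theorem~\ref{thm:intro-rel spec}) produces a stable $\vxi$-parabolic Higgs bundle in $\hvxi$. Applying tame NAHC and extracting monodromies $(T_1,\dots,T_n)$ yields a tuple with $T_i\in C_i$ and $T_1\cdots T_n=\mathrm{Id}_r$; stability of the Higgs bundle translates into irreducibility of the resulting local system (multiplicative genericity once more ruling out invariant subspaces), so the tuple is an irreducible solution of the DSP. The principal obstacle is the reducibility-versus-multiplicative-genericity analysis in the second paragraph: this is precisely the gap between the present hypothesis $(n-2)\mu+2$ (which suffices only for non-emptiness of the base $\bmxi$) and the stricter $(n-2)\mu+1$ appearing in the unconditional non-emptiness of $\hvxi$. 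The full combinatorial force of multiplicative genericity is channelled there to rule out splittings of $\Sigma$, marrying the eigenvalue combinatorics to the surface geometry of $S_{\vxi}$.
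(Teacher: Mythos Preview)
Your broad strategy—translate via tame NAHC to the Higgs side, use the OK condition to get $\bmxi\neq\emptyset$, find an integral spectral curve, take a line bundle on it—is the paper's strategy. But two steps in your execution have genuine gaps.

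\textbf{Integrality.} Your contrapositive argument assumes a reducible $\Sigma=\Sigma'+\Sigma''$ in $\bmxi$ must split the degree-$r$ cover of $\P^1$ into sub-covers of positive rank. This is false: members of $\bmxi$ can contain exceptional-divisor components (this is precisely the phenomenon the paper emphasizes for non-generic $\vxi$), and an exceptional component maps to a point in $\P^1$, so it contributes no residue relation and multiplicative genericity does not exclude it. The paper argues differently (Lemma~\ref{lem:integral curves(generic)}): additive genericity of $\vxi$, inherited from multiplicative genericity of the $\lambda_{i,j}$, forces the image $C_s\subset\Tot(K_{\P^1}(D))$ to be integral; one then needs $C_s$ to have \emph{exact} (not merely $\geq$) multiplicities at the blow-up centers so that the corresponding member of $\bmxi$ is the strict transform and carries no exceptional components. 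Exactness is an open condition on $\vxi$, and the paper buys the freedom to perturb $\vxi$ inside $\nm_{add}$ by introducing auxiliary filtration weights $\beta_{i,j}$ on the local-system side—this is the real role of those weights.

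\textbf{Conjugacy class of the monodromy.} This is the more serious gap. The $\vxi$-parabolic Higgs bundle produced by Theorem~\ref{spectral-corr} has residue diagram consisting of the trivial partitions $(1,\dots,1)$ on each graded piece—that is the content of Definition~\ref{def:xi-parabolic}. Under NAHC this pins down the eigenvalues of each $T_i$ together with a filtration, but \emph{not} the Jordan type of $T_i$ on the whole fiber; you cannot conclude $T_i\in C_i$ once $C_i$ is non-semisimple. The paper does not apply Theorem~\ref{spectral-corr} here. It pushes the line bundle forward along $Z_{\vxi}\to\P^1$ with the trivial filtration, and uses Proposition~\ref{conjugacy class} (resting on the local computation of Appendix~\ref{sec:local behaviors}) to show that the conjugacy class of $\res_{p_i}\Phi$ is recorded by the conjugate partitions $\widehat{P}^{\xi^\circ_{i,j}}$—so the curve class $\Sigmamxi$ itself forces the correct Jordan type. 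Only then is a parabolic filtration on $E_{p_i}$ constructed by hand from the Jordan basis of $\Phi_i$, realizing the target residue diagram $P^{\alpha_{i,j},\tau'_{i,j}}=P^{\lambda_{i,j}}$. The introduction flags this explicitly: the parabolic structure needed for the DSP is not the one coming from the relative spectral correspondence.
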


This result answers the question posed in \cite{baladistlerdonagi}: the $OK$ condition \eqref{eq:H1assumption} is sufficient for solving the DSP under the multiplicatively genericity assumption (Remark \ref{rem:bdd conjecture}). Finally, aside from an exceptional case in $g=1$, this approach also recovers the result in \cite{HLRV2013}\cite{letellier} concerning the higher genus analogue of DSP (Theorem \ref{thm:DSP higher genus}) for $g\geq 1$. 

Viewing the DSP as the problem of finding irreducible local systems on $\P^1\setminus \{p_1,\dots,p_n\}$ with prescribed monodromies, we can apply the tame non-abelian Hodge correspondence (tame NAHC) developed by Simpson \cite{simpson-noncompact} to convert the DSP into the existence problem of stable parabolic Higgs bundles of parabolic degree $0$ with prescribed residues data of the Higgs field. In fact, the original approach of Simpson to the DSP in \cite{simpson-productofmatrices} proceeds by constructing parabolic Higgs bundles explicitly through systems of Hodge bundles. Here we proceed by constructing parabolic Higgs bundles through spectral correspondence. 

As in our approach to the non-emptiness of $\hvxi$, whenever $\bmxi$ is non-empty and contains at least an integral member $\sm\subset S_{\vxi}$, pushing a line bundle over $\sm$ forward to the base curve $C$  yields a stable parabolic Higgs bundle on $C$. Crucially, the geometry of the surface $S_{\vxi}$ together with the choice of curve class defining $\bmxi$ uniquely determines the conjugacy classes of the residues of the Higgs field at the marked points $p_i$. This is the necessary step to ensure that the corresponding local system has the prescribed conjugacy classes of monodromies. The local analysis of the conjugacy classes is studied in Lemma~\ref{prop:localconjugate} in the appendix. However, the construction of parabolic structures in this case differs from that of the relative spectral correspondence (Theorem~\ref{thm:intro-rel spec}) and some technical modification are required to achieve the desired conjugacy classes.

\begin{rem}
\begin{enumerate}
    \item The approach to solving the DSP in fact can be used to show the existence of stable filtered local systems with generic eigenvalues and filtered weights. This implies that the $OK$ condition is also sufficient for the existence of stable filtered local systems with prescribed residue diagrams. 
    \item  A spectral correspondence approach to the DSP when all the eigenvalues are zero was previously proposed by Wen \cite{wen2021parabolichiggsbundlesprojecitive}, building on the spectral correspondence developed in \cite{Suwangwen2022}. 
    \item  (Irregular DSP) The original parabolic spectral correspondence in \cite{Diaconescu_2018} is established for irregular parabolic Higgs bundles. By suitably generalizing their spectral correspondence, we expect our approach here should also provide solutions to the irregular version of DSP which addresses the existence of Stokes data with prescribed local data, as posed by Boalch in \cite[Section 9.4]{boalchStokes}. 
\end{enumerate}
\end{rem}

\subsection{Notations}\label{sec:notation}

\begin{itemize}
    \item $C$: a smooth projective curve of genus $g \geq 0$. 
    \item $D=p_1+\cdots+p_n$: a reduced effective divisor with $n \geq 1$. 
    \item $r$ is used to denote the rank of vector bundles.
    \item Parabolic data: 
    \begin{itemize}
        \item $\Vec{m}=(\um_1, \cdots, \um_n)$ where each component $\um_i=(m_{i,1}, \cdots, m_{i,\ell(i)})$ is a partition of $r$. In other words, \[m_{i,1} \geq m_{i,2} \geq \cdots \geq m_{i,\ell(i)}\quad \textrm{and  }\um_i=(m_{i,1}, \cdots, m_{i,\ell(i)})\]
        When we wish to emphasize its role as a partition instead of a vector, we will write $P^i= \um_i $.
        \item $\Vec{\xi}=(\uxi_1, \cdots, \uxi_n)$ where each component $\uxi_i=(\xi_{i,1}, \cdots, \xi_{i,\ell(i)})$ is a collection of eigenvalues.
        
    \end{itemize}
    \item (Distinct part of $\vxi$) The entries in $\uxi_i$ might be repeated, we define the distinct part $\uxi^\circ_i=(\xi_{i,1}^\circ, \cdots, \xi_{i,e(i)}^\circ)$ to be the tuple of distinct entries in $\uxi_i$, listed in the order of their first appearance.
  
    \item (Partitions and Young diagrams) We often represent a partition $P=(m_1,\dots,m_{\ell})$ using the Young diagram. We call $\ell$ the number of rows in the Young diagram, $m_1$ the number of columns, and $|P|=r=\sum_j m_j$ the number of boxes in the Young diagram.
    \item (Conjugate partition) To each partition $P= (m_1,\dots,m_\ell)$, we define the conjugate partition $\hat{P}:=(n_1,\dots, n_{m_1})$ by reflecting its Young diagram across the main diagonal. More formally, $n_j = \#\{k|m_k\geq j\}$ for $j=1,\dots, m_1.$
        \item (Level functions) To each Young diagram of $P$, we can write down two fillings of the boxes: (1) number the boxes in strictly increasing order starting from the top-left corner with 1, then proceed from top to bottom within each column and from left to right across columns (2) number the boxes in the $j$-th column by $j$. For example, 
\begin{center}\label{Young diagram}
        (1)\quad \begin{ytableau}
            1&4&6&8\\
            2&5&7\\
            3
        \end{ytableau} \qquad  (2)\quad \begin{ytableau}
            1&2&3&4\\
            1&2&3\\
            1
        \end{ytableau}
    \end{center}
    By pairing the numbers in the boxes (of the same position) of the two fillings, we define the level function $\gamma_P:\{1,\dots,r\} \to \{1,\dots, m_1\}, j\mapsto \gamma_P(j)$ associated to the Young diagram. In the example, we have 
    \begin{equation*}
    \gamma_P(1) =\gamma_P(2) = \gamma_P(3) = 1,\quad  \gamma_P(4)=\gamma_P(5)=2,\quad \gamma_P(6)= \gamma_P(7)=3 ,\quad \gamma_P(8) = 4.
    \end{equation*}

    \item (Level domain) Given a partition $P=(m_1, \dots ,m_\ell)$, we denote by 
    \[G(P) = \{(u,a)\in \Z^2|0\leq u<|P|, 0\leq a< \gamma_P(|P|- u)\}\]
    We call $G(P)$ the level domain associated to the partition $P$. 
    \item (Union of partitions) Given partitions $P^1$ and $P^2$, we define the union of the partitions $P^1\cup P^2$ by combining all the parts of both partitions and arranging them in non-increasing order. For example, $P^1= (2,1), P^2= (4,3,1,1,1)$ then $P^1\cup P^2 =(4,3,2,1,1,1,1)$.
    \item For each $i$, the parabolic data $(\um_i, \uxi_i)$ determines a decomposition of the partition $P^i$ into subpartitions indexed by $\uxi^\circ_i$. For each $\xi_{i,j}^\circ \in \uxi^\circ_i$, we define $P^{\xi_{i,j}^\circ}$ to be the subpartition of $P^i$ consisting of the collection of $m_{i,k}$'s such that $\xi_{i,k} = \xi_{i,j}^\circ$. Then we have 
    \[ P^i = P^{\xi^\circ_{i,1}}\cup\dots\cup P^{\xi^\circ_{i,e(i)}} \]
  
    \item (Example) Suppose we have a single marked point (so $n = 1$). Let $\ell(1) = 6$ and define
\[
\vm = \um = (3, 2, 2, 1, 1, 1), \quad \vxi= \uxi = (\xi_1, \xi_1, \xi_2, \xi_3, \xi_2, \xi_1).
\]
We also write $ P=\um = (3, 2, 2, 1, 1, 1)$. The \emph{distinct part} of $\uxi$ is
\[
\uxi^\circ = (\xi_1^\circ, \xi_2^\circ, \xi_3^\circ) = (\xi_1, \xi_2, \xi_3),
\]
So, $e(1) = 3$. The subpartitions of $P$ corresponding to each $\xi_j^\circ \in \uxi^\circ$:
\begin{itemize}
    \item $P^{\xi_1^\circ}=(m_1,m_2,m_6) = (3, 2, 1)$
    
    \item $P^{\xi_2^\circ}=(m_3,m_5)= (2, 1)$
    
    \item $P^{\xi_3^\circ}= (m_4) = (1)$
\end{itemize}
Then the full partition $P$ is decomposed as:
\[
P = P^{\xi_1^\circ} \cup P^{\xi_2^\circ} \cup P^{\xi_3^\circ}
\]
The level domain $P^{\xi_1^\circ}$ associated to the partition $P^{\xi_1^\circ}=(3,2,1)$ is visualized as 
\begin{center}
\begin{tabular}{c|cccccc}
$a=2$ & $\bullet$ &  & &&& \\
$a=1$ & $\bullet$ & $\bullet$& $\bullet$&&&\\
$a=0$ & $\bullet$ &$\bullet$ &$\bullet$&$\bullet$&$\bullet$&$\bullet$ \\
\hline
 & $u=0$ & $u=1$ & $u=2$ & $u=3$ & $u=4$ &$u=5$ \\
\end{tabular}
\end{center}
where each dot corresponds to a pair $(u,a)\in G(P^{\xi^\circ_i})$.

\end{itemize}
\section{Moduli spaces of parabolic Higgs bundles}
\subsection{Parabolic Higgs bundles}\label{sec:par Higgs}
Let $C$ be a smooth projective curve of genus $g\geq 0$ and $D= p_1+ \dots + p_n$ be a reduced effective divisor  with $n \geq1 $. Throughout this paper, we reserve the letter $r$ (resp. $d$) the rank (resp. degree) of Higgs bundles. Let $I=\{1, \cdots, n\}$ be the index set of marked points and for each point $p_i$, we denote the associated partition of $r$ by $\um_i=(m_{i,1}, \cdots ,m_{i,\ell(i)})$. We use poly-multivector notation and denote the collection of partitions of $r$ by $\Vec{m}=(\um_1, \cdots, \um_n)$. We write the index set of such partition by $J_i=\{1, \cdots, \ell(i)\}$ for $i \in I$. 

\begin{definition}\label{def:parabolicHiggs}
   A parabolic Higgs bundle on $C$ of type $\Vec{m}$ with poles at $D$ is a quadraple $(E, E^\bullet_D,\Phi,\vec{\a})$ where
    \begin{enumerate}
        \item A Higgs bundle $(E, \Phi)$ where $\Phi: E\to E\otimes K_C(D)$.
        \item A quasi-parabolic structure of type $\um_i$ at each $p_i$:
        \begin{equation*}
            E^\bullet_{p_i}:  0 = E^{\ell(i)}_{p_i}\subset E^{\ell(i)-1}_{p_i} \subset ... \subset E^{1}_{p_i}\subset E^0_{p_i} = E_{p_i}
        \end{equation*}
        such that $\Phi_{p_i}(E^j_{p_i}) \subset E^j_{p_i}\otimes_D {K_C(D)}_{p_i}$ and $\dim (E^{j-1}_{p_i}/E^{j}_{p_i})= m_{i,j}$. We simply refer this condition as $\Phi_D$ preserving $E_D^\bullet$, where $E_D^\bullet$ is denoted as a collection of filtrations $E_{p_i}^\bullet$ for all $i \in I$. 
        
        \item A collection of parabolic weights $\Vec{\alpha}=(\underline{\a}_1, \cdots, \underline{\a}_n)$ where $\underline{\a}_i= (\a_{i,1},...,\a_{i,\ell(i)})\in \Q^{\ell(i)}$: 
        \begin{equation*}
            1> \a_{i,\ell(i)} > \a_{i,\ell(i)-1}>...>\a_{i,1} \geq 0
        \end{equation*}
        for each $i \in I$. 
        \end{enumerate}
        \end{definition} 
        When $\Phi_{p_i}(E^{j-1}_{p_i}) \subset E^{j}_{p_i}\otimes_D K_C(D)_{p_i}$ for all $i,j$ in condition $(2)$, we call it a \emph{strongly parabolic Higgs bundle}.
    \begin{rem}
        We reverse the filtration indices used in our previous paper \cite{LeeLee2024}. This modification aligns with the spectral correspondence argument in \cite{Diaconescu_2018} for the relative setting.
    \end{rem}
    
     Recall that in \cite[Section 1]{Maruyama-Yokogawa} and \cite[Section 1]{Yokogawa-compactification},  the parabolic Euler characteristic and the (reduced) parabolic Hilbert polynomial of $E_*:= (E, E_D^\bullet,\vec{\a}) $ are defined as 
    \begin{equation*}
        \para \chi({E_*}) = \chi(E )+\sum_{i=1}^n\sum_{j=1}^{\ell(i)}\a_{i,j}\chi(E^{j-1}_{p_i}/E^{j}_{p_i}) , \qquad \para P_{E_*}(t) = \frac{\para \chi(E_*(t))}{\rk(E)}
    \end{equation*}
    where $E^{j-1}_{p_i}/E^{j}_{p_i}$ is viewed as a torsion sheaf on the curve $C$ in the expression and $E_*(t) = (E\otimes \O(t), E^\bullet_D\otimes \O(t),\vec{\a}).$  
    
    \begin{definition}
    A parabolic Higgs bundle $(E_*, \Phi) $ is said to be $\vec{\a}$-(semi)stable if for any nontrivial proper subbundle $0\subset F\subset E$ preserved by $\Phi$, we have 
    \begin{equation*}
        \para P_{F_*}(t) < \para P_{E_*}(t)  \quad \textrm{for } t\gg 0 \quad (\textrm{resp.  }\leq )
    \end{equation*}
    where $F_*= (F, F^\bullet_D, \vec{\a})$ is defined by the induced filtration $F^\bullet_D= F_D\cap E^\bullet_D$ that is preserved by $\Phi_D$. 
        
    \end{definition}
    \begin{rem}
        In the literature, the stability of parabolic Higgs bundles is often defined in terms of the usual slope by replacing the degree 
        with the parabolic degree $\mathrm{pardeg}(E):= \deg(E) + \a_{i,j}\sum \dim( E_{p_i}^{j-1}/E_{p_i}^j)$. It can be checked that the two definitions are equivalent. 
    \end{rem}
    Due to the  work of Yokogawa \cite[Theorem 4.6]{Yokogawa-compactification}, there exists a coarse moduli space for $\vec{\a}$-stable parabolic Higgs bundles of rank $r$, degree $d$ such that $\chi(E^{j-1}_{p_i}/E^{j}_{p_i})=m_{i,j}$ for all $i\in I,j\in J_i$. We will denote this moduli space by $\hpar(\vm)$. Moreover, one can define the coarse moduli space of $\vec{\a}$-stable strongly parabolic Higgs bundles as a closed subscheme in $\hpar(\vm)$, which we denote by $\hspar(\vm)$.

    One can fix the polar part $\Phi_D$ of the Higgs fields by choosing a collection of sections $\uxi_i=(\uxi_{i,1}, \cdots, \uxi_{i,\ell(i)}) \in  H^0(p_i, {K_C(D)}_{p_i})^{\times \ell(i)}$ for $i \in I$ and denote by $\vec{\xi}=(\uxi_1, \dots, \uxi_n)$ the collection of such sections.  We have the residue maps   
    \[\mathrm{res}_{p_i}:H^0(p_i,K_C(D)_{p_i})\to H^0(D, K_C(D)|_D) \to H^1(C, K_C)\cong \C\]
    We define the following: 
    \begin{equation*}
    \nm=\m{N}(\vec{m},D) := \left\{ \vec{\xi}= (\uxi_{i})_{i\in I}\left\vert \xi_{i,j}\in H^0(p_i, {K_C(D)}_{p_i}),  \sum_{i=1}^{n} \sum_{j=1}^{\ell(i)} \res_{p_i} (\xi_{i,j})= 0 \right.\right\}
\end{equation*}
\begin{definition}\label{def:xi-parabolic}\cite{Diaconescu_2018}
    For $\vec{\xi}\in \nm$, we call a parabolic Higgs bundle $(E ,E^\bullet_D, \Phi,\vec{\a})$ \textit{a (regular) $\vec{\xi}$-parabolic Higgs bundle} if the induced morphism of $\O_{p_i}$-modules $\textrm{gr}_j\Phi_{p_i}:= \Phi_{p_i, j}: E^{j-1}_{p_i}/E^{j}_{p_i}\to E^{j-1}_{p_i}/E^{j}_{p_i} \otimes {K_C(D)}_{p_i}$ satisfies the following condition
    \begin{equation}\label{xi-parabolic}
        \Phi_{p_i, j}= \mathrm{Id}_{E^{j-1}_{p_i}/E^{j}_{p_i}}\otimes \xi_{i,j}, \quad 1\leq j\leq \ell(i), \quad i \in I.
    \end{equation} 
\end{definition}
    Note that a $\vxi$-parabolic Higgs bundle $(E, E^\bullet_D,\Phi,\va)$ induces a rank one Higgs bundle $(\det(E), \Phi_0)$ where $\Phi_0:=\Phi\wedge \dots \wedge \mathrm{Id}+ \dots +\mathrm{Id}\wedge\dots\wedge \Phi\in H^0(C, K_C(D))$. By restricting $\Phi_0$ to $D$, we see that the identity $\sum_{i=1}^{n} \sum_{j=1}^{\ell(i)} \res_{p_i} (\xi_{i,j})= 0$ must hold. A priori, each $\xi_{i,j}\in H^0(p_i, K_C(D)_{p_i})$ is a section. When the context is clear, we will, by slight abuse of notation, identify $\xi_{i,j}$ with its residue $\mathrm{res}(\xi_{i,j})\in \C$.

In \cite{LeeLee2024}, we construct a relative moduli space of $\vec{\xi}$-parabolic Higgs bundles on $C$. 

    \begin{theorem}\cite{LeeLee2024}\label{existence-of-coarse-moduli}
        Fix the numerical data: a rank $r\geq 1$, a degree $d\in \Z$ and a collection of partitions $\vec{m}$ with a parabolic structure $\vec{\a}$. There exists a relative coarse moduli scheme $\bm{\pi}_\m{N}:\bm{\m{H}}(C,D;r,d,\vec{\a}, \vec{m})\to \nm$. In fact, every fiber of $\bm{\pi}_\m{N}$ is smooth.
        \end{theorem}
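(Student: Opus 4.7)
The plan is a two-step construction: first realize $\brelativehiggs$ as a closed subscheme of Yokogawa's absolute moduli scheme $\hpar(\vec{m})$ equipped with a natural morphism to $\nm$, then establish smoothness of the fibers via deformation theory.

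For the construction, Yokogawa's coarse moduli $\hpar(\vec{m})$ carries a universal parabolic Higgs bundle $(\dE, \dE^\bullet_D, \Phi)$ after passing to an \'etale cover if necessary. For each index pair $(i,j)$ the Higgs field induces an endomorphism $\Phi_{p_i,j}$ of the $\O_{p_i}$-module $\dE^{j-1}_{p_i}/\dE^{j}_{p_i}$ twisted by $K_C(D)_{p_i}$, giving a section of a relative endomorphism bundle over $\hpar(\vec{m})$. The \emph{scalar} locus, where every $\Phi_{p_i,j}$ equals $\mathrm{Id}\otimes\xi_{i,j}$ for some $\xi_{i,j}\in H^0(p_i, K_C(D)_{p_i})$, is a closed subscheme $X\subset \hpar(\vec{m})$, and reading off the scalars gives a morphism
\[
\mathbf{ev}:X\longrightarrow \prod_{i,j}H^0(p_i,K_C(D)_{p_i}).
\]
Applying the residue theorem to $\mathrm{tr}(\Phi)\in H^0(C,K_C(D))$ forces the image of $\mathbf{ev}$ to land in the affine subspace $\nm$. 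Setting $\brelativehiggs:=X$ and $\bm{\pi}_\m{N}:=\mathbf{ev}$ produces the relative moduli scheme, with fiber over $\vxi$ literally the subscheme $\hvxi$ of $\vxi$-parabolic Higgs bundles.

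For smoothness, fix $\vxi\in\nm$ and a $\vxi$-parabolic Higgs bundle $(E,E^\bullet_D,\Phi)\in \hvxi$. Its infinitesimal deformations (with $\vxi$ held fixed) are governed by the hypercohomology of the two-term complex
\[
\mathcal{K}^\bullet:\ \mathcal{E}nd^{\mathrm{par}}(E)\xrightarrow{\,[\Phi,-]\,}\mathcal{E}nd^{\mathrm{spar}}(E)\otimes K_C(D),
\]
where $\mathcal{E}nd^{\mathrm{par}}(E)$ consists of endomorphisms preserving the flag and $\mathcal{E}nd^{\mathrm{spar}}(E)$ consists of those inducing zero on each graded piece. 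The key point is that the strongly parabolic target records precisely those deformations that leave every $\xi_{i,j}$ unchanged. The Zariski tangent space is $\mathbb{H}^1(\mathcal{K}^\bullet)$ and the obstruction space is $\mathbb{H}^2(\mathcal{K}^\bullet)$, so smoothness reduces to showing $\mathbb{H}^2(\mathcal{K}^\bullet)=0$.

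I would derive this vanishing from Serre duality for two-term complexes, using the standard identification $\mathcal{E}nd^{\mathrm{par}}(E)^\vee\cong \mathcal{E}nd^{\mathrm{spar}}(E)\otimes \O_C(D)$ (and vice versa). These identifications make the Serre-dual complex take the same shape as $\mathcal{K}^\bullet$ shifted, so that $\mathbb{H}^2(\mathcal{K}^\bullet)^\vee\cong \mathbb{H}^0(\mathcal{K}^\bullet)$, i.e.\ the space of parabolic endomorphisms commuting with $\Phi$ whose induced map on each graded piece vanishes. By $\vec{\a}$-stability any $\Phi$-commuting endomorphism is scalar, and the strong-parabolic condition then forces this scalar to be zero. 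The main obstacle is the careful bookkeeping of parabolic versus strongly parabolic sheaves under Serre duality—ensuring the duality interchanges the two in exactly the way needed for the deformation complex to behave well—after which the fiberwise smoothness follows from this standard cohomological vanishing.
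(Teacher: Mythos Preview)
The paper does not supply its own proof of this theorem: it is quoted verbatim from the authors' earlier work \cite{LeeLee2024}, so there is nothing in the present paper to compare against. Your outline is the standard one and is almost certainly what is carried out in that reference, but there is a genuine gap in the final vanishing step.

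Your identification of the deformation complex $\mathcal{K}^\bullet:\mathcal{E}nd^{\mathrm{par}}(E)\xrightarrow{[\Phi,-]}\mathcal{E}nd^{\mathrm{spar}}(E)\otimes K_C(D)$ is correct, and the self-duality under Serre duality (via $(\mathcal{E}nd^{\mathrm{par}})^\vee\cong\mathcal{E}nd^{\mathrm{spar}}(D)$) does give $\mathbb{H}^2(\mathcal{K}^\bullet)^\vee\cong\mathbb{H}^0(\mathcal{K}^\bullet)$. But your description of $\mathbb{H}^0$ is wrong: it is the space of \emph{parabolic} endomorphisms commuting with $\Phi$, with no strongly-parabolic constraint. (The strongly-parabolic term sits in degree $1$, not degree $0$, so it does not impose a condition on $\mathbb{H}^0$.) Stability then forces such an endomorphism to be a scalar, but scalars are parabolic and commute with $\Phi$, so $\mathbb{H}^0\cong\C$, hence $\mathbb{H}^2\cong\C\neq 0$. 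Your sentence ``the strong-parabolic condition then forces this scalar to be zero'' is precisely where the argument breaks.

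The fix is standard: either split $\mathcal{E}nd = \mathcal{E}nd_0 \oplus \O_C$ by trace, observe that the trace-free complex genuinely has $\mathbb{H}^0=\mathbb{H}^2=0$ by the same duality, and handle the scalar summand (deformations of $(\det E,\mathrm{tr}\,\Phi)$) separately as manifestly unobstructed; or show directly that the obstruction map into the one-dimensional $\mathbb{H}^2$ factors through trace and vanishes. Either way you recover smoothness, but the argument as you wrote it does not go through.
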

    
    \noindent If it is clear from the context, we abbreviate to write this moduli space as $\bm{\m{H}}(\vm)$.

    \begin{exmp}
        When $n=1$ and $\um_1$ is the trivial flag $(\um=(r))$, the base $\nm=\{0\}$ so that $\bm{\m{H}}(\vm)$ consists of stable holomorphic Higgs bundles $(E,\Phi)$.
    \end{exmp} 
    \begin{exmp}
        When $\vec{m}=\vec{1}$, meaning that each $\um_i$ is the full flag, the moduli space $\bm{\m{H}}(\vm)$ becomes isomorphic to the moduli space of parabolic Higgs bundles $\hpar(\vec{1})$ \cite[Proposition 2.8]{LeeLee2024}.
    \end{exmp}

  \begin{rem}\label{r:comparison}
    For a fixed partition $\vec{m}$, there is a morphism $G(\vm):\bm{\m{H}}(\vm)\to \hpar(\vm)$ defined to be the composition 
    \[
    G(\vec{m}): \bm{\m{H}}(\vm) \hookrightarrow \hpar(\vm) \times \nm \twoheadrightarrow \hpar(\vm)
    \]
    where the first morphism is a canonical inclusion and the second morphism is the projection. In general, the morphism $G(\vec{m})$ is not surjective because the block diagonal part of a Higgs field $\Phi_D$ over $D$ may not be diagonal. 
    \end{rem}

\subsection{Parabolic Hitchin maps and the $OK$ condition}
The Hitchin morphisms for the moduli of parabolic Higgs bundles have been considered before \cite{Yokogawa-compactification}, \cite{logaresMartens}, \cite{baragliaKamgarpour}, \cite{Suwangwen2022}. Define the Hitchin base 
\begin{equation}\label{usual hitchin base}
    A = \bigoplus_{\mu=1}^r H^0(C, (K_C(D))^{\otimes \mu}).
\end{equation}
The Hitchin map $h(\vm)^{\rm{par}}: \hpar(\vm) \to A $ is defined by sending a parabolic Higgs bundle to the coefficients of the characteristic polynomial of the underlying Higgs bundle with coefficients in $K_C(D)$. 

As in the introduction, consider the affine subspace 
\begin{equation*}
    A(\vm)_0 = \bigoplus_{\mu=1}^r H^0\left(C, \lmmu\right) \subset A
\end{equation*}
where $\lmmu$ is defined in \eqref{eq:L-sheaf}. Recall that the set of line bundles $\lmmu$, for $\mu=2,\dots,r$ satisfies the \emph{OK condition} if $H^1(C, \lmmu)=0$ for all $2 \leq \mu \leq r$. By Serre duality, one can see that this condition holds when 
\begin{enumerate}
    \item $g=0, n \geq 3$, $\sum_{i=1}^n \gamma_{P^i}(\mu) < (n-2)\mu +2$;
    \item $g \geq 1, n \geq 1$, except in the case when $g=1$ and $\gamma_{P_i}(\mu)=\mu \text{ for all $i$ }$. 
\end{enumerate}

In particular when $g \geq 2$, it is shown in the work of \cite{baragliaKamgarpour}, \cite{Suwangwen2022} that the image of $\hspar$ under the Hitchin map is $A(\vm)_0$. We will denote the corresponding Hitchin map by $h(\vm)^{\rm{s-par}}: \hspar(\vm) \to A(\vm)_0$.

Combining our previous discussion, we get the following commutative diagram
\[
    \begin{tikzcd}
        \bm{\m{H}}(\vec{m})\arrow[r, "G(\vec{m})"]\arrow[dd]&\hpar({\vec{m}})\arrow[d,"h^{\rm{par}}"] &\\
        &A\arrow[d, two heads, "\mathrm{pr}_{\vec{m}}"] \\
        \m N(\vec{m}) \arrow[d, "\iota_{\vec{m}}"] & A/A(\vec{m})_0 \arrow[d, two heads, "\mathrm{pr}^{\vec{1}}_{\vec{m}}"]\\
        \m N(\vec{1})\arrow[r, "/\mathfrak{S}_n"]&  A/A({\vec{1}})_0 
    \end{tikzcd}
\]
where $\mathrm{pr}$'s are the canonical projections and the the bottom isomorphism follows from the $OK$ condition. For later use, we denote by $\mathrm{pr}_{\vec{1}}$ the composition $\mathrm{pr}^{\vec{1}}_{\vec{m}} \circ \mathrm{pr}_{\vec{m}}$.

\section{Family of surfaces}\label{sec:family of surfaces}
\subsection{Construction of holomorphic symplectic surfaces}\label{sec:construction of surfaces}
The goal of this section is to introduce a family of surfaces associated to the data $(C,D,\vec{m},\vec{\xi})$. Consider the projectivization $M:=\mathbb{P}(K_C(D) \oplus \mathcal{O}_C)$ of the twisted canonical line bundle $K_C(D)$. We write $M^\circ=\mathrm{Tot}(K_C(D))$ for the total space of the twisted line bundle.\footnote{While it is more common to denote $\mathrm{Tot}(K_C(D))$ by $M$ and $\mathbb{P}(K_C(D)\oplus \mathcal{O}_C)$ by $\overline{M}$, we adopt a different convention here, as our focus is primarily on the projectivized space.} We will use the bold face to indicate the objects in families. For example, let $\bm{M}:=M \times \nm$ (resp. $\bm{C}:=C \times \nm$) be the trivial family of the projective surfaces (resp. the base curves) over $\nm$. We denote the projection to $\nm$ by $\bm{\pi}_{\m{N}}$. We also write $\bm{\pi_C}:\bm{M} \to \bm{C}$ for the canonical projection over $\nm$. There are tautological sections $\{\bm{\xi}_{i,j}=\xi_{i,j} \times \nm|i \in I, j \in J_i\}$ of $\bm{\pi}_{\m{N}}$ where we abuse to write $\xi_{i,j}(p_i)$ simply as $\xi_{i,j}$. These are non-singular codimension two subschemes in $\bm{M}$ that are flat over $\nm$ whose fiber at $\vec{\xi}$ is $(\xi_{i,j}, \vec{\xi}) \in \bm{M}$. 

We construct a family of projective surfaces over $\nm$ as follows. We begin with $\bm{\uxi_1}=(\bm{\xi}_{1,1}, \cdots, \bm{\xi}_{1,\ell(1)})$. Blow up $\bm{M}$ along the subscheme  $\bm{\xi}_{1,1}$. Denote the resulting blow-up by $\bm{p}_{1,1}:\bm{M}_{1,1} \to \bm{M}$ and the exceptional divisor by $\bm{E}_{1,1}$. Next, we take the strict transform of $\bm{\xi}_{1,2}$ in $\bm{M}_{1,1}$ which we denote by $\bm{\xi}_{1,2}$ as well. Then we further blow up $\bm{M}_{1,1}$ along $\bm{\xi}_{1,2}$. Denote the resulting blow-up by $\bm{p}^{1,1}_{1,2}:\bm{M}_{1,2} \to \bm{M}_{1,1}$  and the exceptional divisor by $\bm{E}_{1,2}$. Let $\bm{p}_{1,2}=\bm{p}^{1,1}_{1,2} \circ\bm{p}_{1,1}$. We proceed this iteratively until the $\ell(1)$-th step and obtain the following sequence of blow-ups
    \begin{equation*}
        \begin{tikzcd}
            \bm{Z}_1:= \bm{M}_{1,\ell(1)} \arrow[r, "\bm{p}^{1,\ell(1)-1}_{1,\ell(1)}"] \arrow[rrrd,swap, "\bm{p}_{1,\ell(1)}"]& \cdots \arrow[r, "\bm{p}^{1,2}_{1,3}"] & \bm{M}_{1,2} \arrow[r,"\bm{p}^{1,1}_{1,2}"] \arrow[rd,"\bm{p}_{1,2}"]& \bm{M}_{1,1} \arrow[d,"\bm{p}_{1,1}"] \\
            &&& \bm{M}
        \end{tikzcd}
    \end{equation*}

Next, we apply the same construction for $\bm{\uxi}_{2}$. Since $p_1 \neq p_2 \in C$, the strict transform of $\bm{\xi}_{2,j}$ along $\bm{p}_{1,\ell(1)}$ is the same as the pullback. Thus, we abuse to denote $\bm{\xi}_{2,j}$ its inverse image, and apply the same construction. Then we get a sequence of blow-ups 
\begin{equation*}
    \begin{tikzcd}
            \bm{Z}_2:= \bm{M}_{2,\ell(2)} \arrow[r, "\bm{p}^{2,\ell(2)-1}_{2,\ell(2)}"] \arrow[rrrd,swap, "\bm{p}^{1,\ell(1)}_{2,\ell(2)}"]& \cdots \arrow[r, "\bm{p}^{2,2}_{2,3}"] & \bm{M}_{2,2} \arrow[r,"\bm{p}^{2,1}_{2,2}"] \arrow[rd,"\bm{p}^{1, \ell(1)}_{2,2}"]& \bm{M}_{2,1} \arrow[d,"\bm{p}^{1, \ell(1)}_{2,1}"] \\
            &&& \bm{M}_{1,\ell(1)}
    \end{tikzcd}
\end{equation*}
By applying this construction for the rest of the tautological sections $\bm{\uxi}_3, \cdots, \bm{\uxi}_n$ iteratively, we obtain a tower of blown-up surfaces 
\[\bm{p}^{i,j}_{i',j'}:\bm{M}_{i',j'} \to \bm{M}_{i,j}\]
for $i' >i$ or $i'=i, j'>j$ such that $\bm{p}^{i',j'}_{i'',j''} \circ \bm{p}^{i,j}_{i',j'}=\bm{p}^{i,j}_{i'',j''}$ for all valid pairs. We also write a canonical projection to $\bm{M}$ as $\bm{p}_{i,j}:\bm{M}_{i,j} \to \bm{M}$ and $\bm{f}_{i,j}:=\bm{\pi_C} \circ \bm{p}_{i,j}$. For later use, we distinguish the resulting surface by writing it as $\bm{Z}=\bm{Z}_n=\bm{M}_{n, \ell(n)}$ and the projections by $\bm{p}=\bm{p}_{n,\ell(n)}:\bm{Z} \to \bm{M}$ and $\bm{f}=\bm{f}_{n,\ell(n)}:\bm{Z} \to \bm{C}$.

\begin{exmp}\label{exmp:blownup surface generic}(Generic $\xi$)
Consider the case $n=1$ with $D=p$. When $\vec{\xi}=\uxi$ is generic i.e. $\xi_1,\dots, \xi_\ell$ are mutually distinct, the restriction of $\bm{p}:\bm{Z}\to \bm{M}$ to $\uxi \in \nm$, is given by the blow-up of $M$ at the distinct points $\xi_1,\dots, \xi_\ell$. See Figure \ref{fig:gensurf}.
    \begin{figure}[ht]
	\centering
	\begin{tikzpicture}

        \draw[thick] (-2,-2) -- (-2,3);
        \fill (-2,-2.3) node {$M_p$};
        \draw[thick] (-3,-1) -- (4,-1);
	\draw[thick] (-2.5,2) -- (0.5,3);
        \draw[thick] (-2.5,0) -- (0.5,1);
        \fill (-0.5,1.8) node {$\vdots$};
        \fill (1,3) node {$E_1$};
        \fill (-2.3,2.5) node {$\xi_1$};
        \fill (1,1) node {$E_{\ell}$};
        \fill (-2.3,0.5) node {$\xi_\ell$};
        \fill (4.5,-1) node {$C_0$};
	\end{tikzpicture}
	\caption{\scshape Surface $Z_{\uxi}$ for a generic $\uxi$ \label{fig:gensurf}}
\end{figure}
\end{exmp}

\begin{exmp}\label{exmp:blownup surface strgly par}(Strongly parabolic $\xi=0$)
Consider the case $n=1$ with $D=p$. When $\uxi=0$, the restriction of $\bm{p}:\bm{Z}\to \bm{M}$ to $\uxi \in \nm$ is given by the successive blow-up of $M$ along the strict transform of $M_p$ and the previous exceptional divisor. See Figure \ref{fig:sparsurf}.

    \begin{figure}[ht]
	\centering
	\begin{tikzpicture}
        \draw[thick] (-2,-2) -- (-2,3);
        \fill (-2,-2.3) node {$M_p$};
	\draw[thick] (-2.5,0) -- (-0.5,1);
        \draw[thick] (-1,1) -- (1,0);
        \draw[thick] (2,0) -- (4,1);
        \draw[thick] (3.5,1) -- (5.5,0);
        \draw[thick] (4.5,-1) -- (6.5,3);
        \fill (1.5,0.5) node {$\cdots$};
        \fill (-1.5,0) node {$E_\ell$};
        \fill (-2.3,0.5) node {$0$};
        \fill (0,0) node {$E_{\ell-1}$};
        \fill (3,0) node {$E_{2}$};
        \fill (4.5,0) node {$E_{1}$};
        \fill (7,2) node {$C_0$};
	\end{tikzpicture}
	\caption{\scshape Surface $Z_{\uxi}$ for $\uxi=0$ \label{fig:sparsurf}}
\end{figure}
\end{exmp}

Next, we study the basic intersection theory of each fiber $Z_{\vec{\xi}}$. Recall that we have an exceptional divisor $\bm{E}_{i,j}$ in $\bm{M}_{i,j}$ for $i \in I, j \in J_i$. Define the pullback of such exceptional divisor to $\bm{Z}$ as $\bm{\Xi}_{i,j}=(\bm{p}^{i,j}_{n, \ell(n)})^*\bm{E}_{i,j}$. Denote the restrictions to $Z_{\vec{\xi}}$ by $E_{i,j,\vec{\xi}} = \bm{E}_{i,j}|_{Z_{\vec{\xi}}}$ and $\Xi_{i,j,\vec{\xi}}= \bm{\Xi}_{i,j}|_{Z_{\vec{\xi}}}$. We let $C_0$ and $C_\infty$ be the zero section and the infinity section of $\pi:M \to C$, respectively. Also, denote by $F_{i,\vxi}$ and $\td{C}_{\infty,\vxi}$ the strict transform of the fiber $M_{p_i}$ and the infinity section $C_\infty$ in $Z_{\vxi}$, respectively.
\begin{lem}
For each $\vec{\xi}\in \nm$, we have 
    \begin{equation*}
        \mathrm{Num}(Z_{\vxi}) = \mathrm{Num}(M)\bigoplus \bigoplus_{i\in I, j \in J_i} \Z[\Xi_{i,j,\vxi}]
    \end{equation*}
\end{lem}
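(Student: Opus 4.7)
The plan is to restrict to a fixed fiber $Z_{\vxi}$ and build up $\mathrm{Num}(Z_{\vxi})$ inductively along the tower of blow-ups, using the standard formula for the numerical group of a blow-up of a smooth surface at a closed point.

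First, I would observe that restricting each relative blow-up $\bm{p}^{i,j-1}_{i,j}: \bm{M}_{i,j}\to \bm{M}_{i,j-1}$ to the fiber over $\vxi\in\nm$ produces an honest blow-up of the smooth projective surface $M_{i,j-1,\vxi}$ at a single closed point. Concretely, the center is either a point of $M$ lying in the fiber $M_{p_i}$ (when $\xi_{i,j}$ differs from all earlier $\xi_{i,j'}$ with $j'<j$) or else an infinitely near point lying on a previously created exceptional divisor (when some earlier center coincides with $\xi_{i,j}$, which can happen within the $i$-th tower but never across different towers since the $p_i$ are distinct points of $C$). In either case, each step is the blow-up of a smooth projective surface at a single reduced point.

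Second, I would apply the well-known formula (Hartshorne V.3, or more generally Grothendieck's blow-up formula) stating that if $\sigma: X'\to X$ is the blow-up of a smooth projective surface at a closed point with exceptional divisor $E$, then
\[
\mathrm{Num}(X') \;=\; \sigma^{*}\mathrm{Num}(X)\;\oplus\;\Z[E],
\]
with the direct sum being orthogonal with respect to the intersection pairing in the sense that $\sigma^{*}D\cdot E=0$ and $E^2=-1$. Applying this to each stage of the tower and iterating, we obtain
\[
\mathrm{Num}(Z_{\vxi}) \;=\; p_{\vxi}^{*}\mathrm{Num}(M)\;\oplus\;\bigoplus_{i\in I,\,j\in J_i}\Z\bigl[(p^{i,j}_{n,\ell(n)})^{*}_{\vxi}\,\widetilde{E}_{i,j,\vxi}\bigr],
\]
where $\widetilde{E}_{i,j,\vxi}$ denotes the class of the exceptional divisor created at the $(i,j)$-th step in $M_{i,j,\vxi}$, and its pullback to $Z_{\vxi}$ is taken under the remaining tower of blow-ups. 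Since $p_{\vxi}$ is birational, the pullback $p_{\vxi}^{*}$ is injective on $\mathrm{Num}(M)$, which we suppress in the statement.

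Finally, I would perform a triangular change of basis to identify the exceptional part with $\bigoplus \Z[\Xi_{i,j,\vxi}]$, as required by the stated form. By definition $\Xi_{i,j,\vxi}=(p^{i,j}_{n,\ell(n)})^{*}E_{i,j,\vxi}$ is the total transform of the $(i,j)$-th exceptional divisor, and each $\Xi_{i,j,\vxi}$ is an integer combination of the pullbacks $(p^{i,j'}_{n,\ell(n)})^{*}\widetilde{E}_{i,j',\vxi}$ for $j'\leq j$, with coefficient $1$ on the $j'=j$ term. This upper-triangular change of basis with unit diagonal is invertible over $\Z$ and fixes $p_{\vxi}^{*}\mathrm{Num}(M)$, giving the statement. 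The only mild subtlety to record is that the formula applies uniformly in the generic and non-generic cases, since the blow-up formula is insensitive to whether the centers are infinitely near; no genuine obstacle appears.
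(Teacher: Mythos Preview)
Your argument is correct and is the standard one; the paper in fact states this lemma without proof, treating it as well known. Two small remarks. First, your final ``triangular change of basis'' step is unnecessary: by your own description, $\widetilde{E}_{i,j,\vxi}$ is the exceptional divisor of the $(i,j)$-th blow-up as a class in $M_{i,j,\vxi}$, so its pullback $(p^{i,j}_{n,\ell(n)})^{*}\widetilde{E}_{i,j,\vxi}$ is already equal to $\Xi_{i,j,\vxi}$ by the paper's definition of $\Xi_{i,j,\vxi}$. Iterating the blow-up formula therefore yields $\mathrm{Num}(Z_{\vxi}) = p_{\vxi}^{*}\mathrm{Num}(M)\oplus\bigoplus_{i,j}\Z[\Xi_{i,j,\vxi}]$ directly, with no further basis change required. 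Second, the injectivity of $p_{\vxi}^{*}$ on $\mathrm{Num}(M)$ follows from the projection formula $p_{\vxi}^{*}D\cdot p_{\vxi}^{*}D' = D\cdot D'$ together with the nondegeneracy of the intersection pairing on $\mathrm{Num}$, which you might make explicit rather than merely asserting.
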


\begin{lem}\label{intersectionpattern}
For each $\vec{\xi}\in \nm$, we have 
    \begin{equation*}
        \Xi_{i,j, \vxi}\cdot \Xi_{i',j',\vec{\xi}} = \begin{cases}
            -1  &\textrm{if  } (i,j)=(i',j')\\
            0  &\textrm{if  } \text{ otherwise}
        \end{cases}, \quad \Xi_{i,j, \vxi} \cdot F_{k,\vec{\xi}}= \begin{cases}
            1 &\textrm{if  } i=k \\
            0 &\textrm{if  } i \neq k
        \end{cases}, \quad \Xi_{i,j, \vxi}\cdot p_{\vxi}^*C_{0} = 0.
    \end{equation*}
    \end{lem}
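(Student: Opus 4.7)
The idea is to reduce each of the three intersection pairings on $Z_{\vxi}$ to one on an intermediate blown-up surface $M_{i,j,\vxi}$ (the fiber of $\bm{M}_{i,j}$ at $\vxi$) by the projection formula, and then invoke the two elementary identities for a blow-up $\sigma\colon \tilde{Y}\to Y$ of a smooth surface at a point with exceptional curve $E$: namely $E^2 = -1$ and $E\cdot \sigma^*D = 0$ for any divisor $D$ on $Y$. Throughout, write $q_{i,j}\colon Z_\vxi\to M_{i,j,\vxi}$ for the composition of all the remaining fiberwise blow-ups, so that $\Xi_{i,j,\vxi} = q_{i,j}^*E_{i,j,\vxi}$, with $E_{i,j,\vxi}\subset M_{i,j,\vxi}$ the exceptional curve of the $(i,j)$-th fiberwise blow-up.

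\textbf{Intersections among the $\Xi$'s and with $p_\vxi^*C_0$.} The projection formula immediately gives $\Xi_{i,j,\vxi}^2 = E_{i,j,\vxi}^2 = -1$ on $M_{i,j,\vxi}$. For $\Xi_{i,j,\vxi}\cdot\Xi_{i',j',\vxi}$ with $(i,j)<(i',j')$ in lex order, I would push both classes to $M_{i',j',\vxi}$: the second becomes $E_{i',j',\vxi}$, while the first becomes the pullback of $E_{i,j,\vxi}$ through the last blow-up $\sigma\colon M_{i',j',\vxi}\to M_{i',j'-1,\vxi}$, so the intersection vanishes by orthogonality. The same argument disposes of $\Xi_{i,j,\vxi}\cdot p_\vxi^*C_0$: after pushing down to $M_{i,j,\vxi}$, the class obtained from $C_0$ is itself a $\sigma_{(i,j)}$-pullback from $M_{i,j-1,\vxi}$, hence pairs with $E_{i,j,\vxi}$ to zero.

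\textbf{Intersection with $F_{k,\vxi}$.} Pushing down to $M_{i,j,\vxi}$ (and noting that $F_{k,\vxi}$ maps isomorphically onto its image, because $q_{i,j}$ only contracts the exceptional curves of later blow-ups, none of which is $F_{k,\vxi}$), the projection formula gives $\Xi_{i,j,\vxi}\cdot F_{k,\vxi} = E_{i,j,\vxi}\cdot F_{k,\vxi}^{(i,j)}$, where $F_{k,\vxi}^{(i,j)}$ denotes the fiberwise strict transform of $M_{p_k}$ at stage $(i,j)$. When $k\neq i$, the $(i,j)$-th blow-up center lies over $p_i$, hence off the curve $F_{k,\vxi}^{(i,j-1)}\subset M_{i,j-1,\vxi}$; the strict transform coincides with the $\sigma_{(i,j)}$-pullback, and the intersection vanishes. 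When $k=i$, the essential input is the claim that the center $q_{(i,j)}$ of step $(i,j)$ always lies on $F_{i,\vxi}^{(i,j-1)}$; smoothness of this curve then gives $\sigma_{(i,j)}^*F_{i,\vxi}^{(i,j-1)} = F_{i,\vxi}^{(i,j)} + E_{i,j,\vxi}$, from which
\[
E_{i,j,\vxi}\cdot F_{i,\vxi}^{(i,j)} \;=\; E_{i,j,\vxi}\cdot \sigma_{(i,j)}^* F_{i,\vxi}^{(i,j-1)} \;-\; E_{i,j,\vxi}^{2} \;=\; 0 - (-1) \;=\; 1.
\]

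\textbf{Main obstacle.} The only genuine subtlety lies in the claim $q_{(i,j)}\in F_{i,\vxi}^{(i,j-1)}$ in the non-generic regime where several of the $\xi_{i,j'}(\vxi)$ coincide, so that $q_{(i,j)}$ lands on previously created exceptional divisors instead of at its naive position in $M_{p_i}$. The plan is to prove this inductively using the family-theoretic fact that each section $\bm{\xi}_{i,j}$ is contained in the smooth divisor $\bm{M}_{p_i} = M_{p_i}\times \nm\subset \bm{M}$. Because every blow-up center $\bm{\xi}_{i',j'}$ itself lies in some smooth divisor $\bm{M}_{p_{i'}}$, the family strict transform of $\bm{\xi}_{i,j}$ remains, at every stage, inside the family strict transform of $\bm{M}_{p_i}$; one then checks that restriction to the fiber over $\vxi$ turns this into the fiberwise statement $q_{(i,j)}\in F_{i,\vxi}^{(i,j-1)}$. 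The required compatibility between "iterated family strict transform" and "restriction to a fiber" for a smooth divisor containing the blow-up centers is the main technical point, and everything else in the proof is formal bookkeeping with the projection formula.
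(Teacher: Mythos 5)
Your proposal is correct, and for the self-intersections, the mutual orthogonality of the $\Xi_{i,j,\vxi}$'s, and the pairing with $p_{\vxi}^*C_0$ it is the same projection-formula argument the paper gives. The only divergence is in computing $\Xi_{i,j,\vxi}\cdot F_{k,\vxi}$: the paper writes $F_{k,\vxi}\sim p_{\vxi}^*M_{p_k}-\sum_{j'}\Xi_{k,j',\vxi}$ and then quotes the first assertion, whereas you push $F_{k,\vxi}$ down to the stage-$(i,j)$ surface and use the one-step relation $\sigma_{(i,j)}^*F_{i,\vxi}^{(i,j-1)}=F_{i,\vxi}^{(i,j)}+E_{i,j,\vxi}$; these are the same computation organized differently, since the paper's divisor identity is exactly the accumulation of your single-blow-up relations. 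Both versions rest on the geometric input you isolate: every blow-up center over $p_i$ lies on the current (smooth) strict transform of $M_{p_i}$ with multiplicity one, even in the non-generic case where the centers become infinitely near points on earlier exceptional divisors. The paper simply asserts the identity $F_{i,\vxi}\sim p_{\vxi}^*M_{p_i}-\sum_{j}\Xi_{i,j,\vxi}$ without proof (it is implicit in Example \ref{exmp:blownup surface strgly par} and Figure \ref{fig:sparsurf}), while you flag it and sketch the right justification: each $\bm{\xi}_{i,j}$ sits inside $\bm{M}_{p_i}=M_{p_i}\times\nm$, the successive centers are Cartier divisors in the iterated strict transform of $\bm{M}_{p_i}$, so that strict transform stays isomorphic to $M_{p_i}\times\nm$ and its fiber over $\vxi$ is precisely the fiberwise strict transform, forcing the fiberwise center onto $F_{i,\vxi}^{(i,j-1)}$. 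So the only thing missing from your write-up is to carry out that short check rather than leave it as a plan; once it is written down your argument is complete and, if anything, more explicit than the paper's.
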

    \begin{proof}
    For simplicity, we drop out $\vec{\xi}$ in the notation. We prove the first assertion. If $i \neq i'$, then it is clear that $\Xi_{i,j}$ does not intersect with $\Xi_{i',j'}$
        If $i=i'$ and $j\neq j'$, say $j>j'$, we have 
            \begin{align*}
                \Xi_{i,j} \cdot \Xi_{i',j'} = (p^{i,j})^*E_{i,j}\cdot (p^{i',j'})^*  E_{i',j'} = E_{i,j}\cdot (p_{i,j}^{i',j'})^*E_{i',j'} =0
            \end{align*}
            If $(i,j)=(i',j')$, then clearly $(\Xi_{i,j})^2= ((p^{i,j})^*E_{i,j})^2= (E_{i,j})^2 =-1$. For the second computation, note that 
            \begin{align*}
                \Xi_{i,j}\cdot F_k = \Xi_{i,j} \cdot \left( p^* M_{p_k}- \sum_{j'=1}^{\ell(k)} (p^{k,j'})^* E_{k,j}  \right) = \Xi_{i,j}\cdot\left( p^* M_{p_k}- \sum_{j'=1}^{\ell(k)} \Xi_{k,j'}  \right)
            \end{align*}
        Due to the first assertion, this becomes $1$ when $k=i$. The final equality is clear $\Xi_{i,j}\cdot p^*C_0 = E_{i,j}\cdot p_j^*C_0=0$. This lemma implies that the intersection pattern of the $\Xi_{i,j,\vec{\xi}}$'s is independent of $\vec{\xi}$ and behaves similarly to the generic case. 
    \end{proof}
    For the purpose of establishing the relative spectral correspondence, we will be interested in the following family of (non-compact) holomorphic symplectic surfaces. Let $\bm{F}_i$ and $\bm{\widetilde{C}_\infty}$ denote the family of the strict transforms of $M_{p_i}$ and $C_\infty$, respectively. We define $\bm{S}$ to be the complement of the divisors $\bm{F}_i$'s and $\bm{\widetilde{C}_\infty}$ in $\bm{Z}$. The next proposition shows that $\bm{S}$ is a family of holomorphic symplectic surfaces. 
\begin{prop}
    For each $\vec{\xi}\in \nm$, the non-compact surface $Z_{\vec{\xi}}\setminus (F_{\vec{\xi}}\cup \td{C}_{\infty,\vec{\xi}})$ is holomorphic symplectic. 
\end{prop}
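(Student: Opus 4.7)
The plan is to exhibit a meromorphic symplectic $2$-form $\omega$ on $M$ with polar divisor contained in $C_\infty\cup \bigcup_{i\in I} M_{p_i}$ and then show, via a local analysis at each stage of the blow-up tower, that the pullback $p_{\vxi}^*\omega$ is holomorphic and non-degenerate on $S_{\vxi} = Z_{\vxi}\setminus (F_{\vxi}\cup \td{C}_{\infty,\vxi})$.

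For $\omega$, I start with the canonical Liouville symplectic form on $T^*(C\setminus D)\subset M^\circ$. In a local chart near $p_i$ with uniformizer $z$ at $p_i$ and fiber coordinate $w$ corresponding to the trivialization $\frac{dz}{z}$ of $K_C(D)$, the Liouville form becomes $\omega = \frac{dw\wedge dz}{z}$ (up to a unit factor depending on the trivialization), giving a simple pole along $M_{p_i}$ while remaining holomorphic and non-degenerate on $M^\circ \setminus \bigcup_i M_{p_i}$. Passing to the chart $w' = 1/w$ near $C_\infty$ shows that $\omega$ extends to a global meromorphic $2$-form on $M$ whose pole divisor is contained in $C_\infty\cup \bigcup_i M_{p_i}$.

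I then proceed by induction on the blow-up tower $\bm{Z}\to \cdots \to \bm{M}$, with the hypothesis that (i) the fiberwise blow-up center at each step is a smooth point on the strict transform of some $M_{p_i}$ disjoint from $\td{C}_\infty$, and (ii) the pullback of $\omega$ has poles only along the strict transforms of the $M_{p_i}$'s and of $C_\infty$. Part (i) follows from the observation that each section $\bm{\xi}_{i,j}:\nm\to \bm{M}$ is supported over $p_i$: its iterated strict transforms stay inside the preimage of $M_{p_i}$, which, away from $\td{C}_\infty$, consists of the strict transform of $M_{p_i}$ together with a chain of exceptional divisors meeting it in a single point, so the codimension-two section specializes fiberwise to a point on the strict transform of $M_{p_i}$. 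For the inductive step of (ii), at a center $q$ on the strict transform of $M_{p_i}$ choose local coordinates $(z,w)$ on the current surface with $\{z=0\}$ the strict transform of $M_{p_i}$ and $q = (0,0)$; by (ii) the form reads $\omega = \frac{u(z,w)\,dw\wedge dz}{z}$ for a local unit $u$. In the chart $w = zb$ the pullback becomes $u(z,zb)\,db\wedge dz$, holomorphic and non-vanishing on the new exceptional divisor, while in the chart $z = wa$ it becomes $\frac{u(wa,w)\,dw\wedge da}{a}$, whose pole is along the strict transform of $\{z=0\}$ in the new blow-up. Thus both parts of the hypothesis persist, and the former local model propagates on relabeling $(a,w)$ as the new $(z,w)$.

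After all $\sum_{i\in I}\ell(i)$ blow-ups and restriction to $\vxi$, the form $p_{\vxi}^*\omega$ on $Z_{\vxi}$ has pole divisor contained in $F_{\vxi}\cup \td{C}_{\infty,\vxi}$, so its restriction to $S_{\vxi}$ is a holomorphic $2$-form, and the explicit local models show that it is nowhere vanishing: away from the exceptional divisors it agrees with $\omega$ on $M^\circ$, and on the exceptional divisors it is represented by the non-vanishing model $u(z,zb)\,db\wedge dz$. The main obstacle is step (i) of the induction: in the non-generic regime where several $\xi_{i,j}$ coincide, one must carefully identify the fiberwise specialization of the strict transform of $\bm{\xi}_{i,j}$ with a point on the strict transform of $M_{p_i}$ (typically at its intersection with the outermost exceptional divisor) and not merely on an exceptional divisor, since only then does the local model $\omega = \frac{u\,dw\wedge dz}{z}$ apply and the inductive step close, preventing the form from acquiring zeros on the exceptional locus.
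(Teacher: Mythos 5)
Your strategy (exhibit the Liouville form $\omega$ on $M$ with polar divisor $2C_\infty+\sum_i M_{p_i}$ and chase it through the blow-up tower in charts) is a hands-on version of the paper's one-line argument, which computes $K_{Z_{\vxi}}=p_{\vxi}^*K_M+\sum_{i,j}\Xi_{i,j,\vxi}=-\sum_i F_{i,\vxi}-2\td{C}_{\infty,\vxi}$ using $F_{i,\vxi}\sim p_{\vxi}^*M_{p_i}-\sum_j \Xi_{i,j,\vxi}$; both arguments rest on exactly the same geometric input, namely your step (i). And that is where your write-up has a genuine gap: the justification you offer ("the iterated strict transforms stay inside the preimage of $M_{p_i}$, \dots so the section specializes fiberwise to a point on the strict transform of $M_{p_i}$") is a non sequitur, since the preimage of $M_{p_i}$ also contains the whole chain of exceptional divisors, and nothing you say rules out the fiberwise center landing on an exceptional curve away from the strict transform of $M_{p_i}$; indeed you concede at the end that this identification is "the main obstacle," i.e.\ it is asserted rather than proved. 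The point is not cosmetic: if some fiberwise center sat on an exceptional divisor off the strict transform of $M_{p_i}$, then by your own local model the form would be holomorphic symplectic at that center, its pullback would vanish to first order along the new exceptional curve, and that curve is \emph{not} removed from $S_{\vxi}$ — so the proposition itself would fail. Hence (i) must actually be established, not flagged.

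Fortunately (i) is true and closes in two lines. Each section $\bm{\xi}_{i,j}$ is contained in $\bm{M}_{p_i}:=M_{p_i}\times \nm$ as a Cartier divisor of this smooth $(1+\dim\nm)$-fold; therefore at every stage the strict transform of $\bm{M}_{p_i}$ maps isomorphically onto $\bm{M}_{p_i}$, contains the strict transform of the next $\bm{\xi}_{i,j}$, and restricts over $\vxi$ to the current strict transform of $M_{p_i}$ in the fiber surface. So the fiberwise center of the $(i,j)$-th blow-up is the unique point of the current strict transform of $M_{p_i}$ lying over $(p_i,\xi_{i,j})$, which in the repeated-eigenvalue case is its intersection with the newest exceptional divisor over that eigenvalue (consistent with Figure~\ref{fig:sparsurf}). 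Concretely, with local coordinates $(z,w,\vec{t})$ on $\bm{M}$ and $\bm{\xi}_{i,1}=\{z=0,\ w=a_1(\vec t)\}$, $\bm{\xi}_{i,2}=\{z=0,\ w=a_2(\vec t)\}$, in the chart $z=s(w-a_1)$ of the first blow-up the strict transforms are $\{s=0\}$ and $\{s=0,\ w=a_2\}$, so over a point with $a_1=a_2$ the next center is $\{s=0,\ w=a_1\}$, i.e.\ lies on the strict transform of $M_{p_i}$ at its intersection with $E_{i,1}$. With this inserted, your chart computation of $p_{\vxi}^*\omega$ is correct and yields the nowhere-vanishing holomorphic $2$-form on $S_{\vxi}$, recovering the paper's conclusion.
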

\begin{proof}
    It is easy to check that the canonical divisor of $K_{Z_{\vxi}}$ is given by \[K_{Z_{\vec{\xi}}}=-p_{\vec{\xi}}^*M_{p_1} - \cdots -p_{\vec{\xi}}^*M_{p_n} -   2p_{\vec{\xi}}^*C_\infty+\sum_{i\in I}\sum_{j \in J_i} \Xi_{i,j,\vxi} = -F_{1,\vec{\xi}}- \cdots -F_{n,\vec{\xi}}-2\td{C}_{\infty,\vec{\xi}}\] since $F_{i,\vec{\xi}}\sim p_{{\vec{\xi}}}^*M_{p_i} - \sum_{j\in J_i} \Xi_{i,j,\vxi}$ for $i \in I$. 
\end{proof}

\subsection{Linear systems and Hitchin bases}\label{subsec:linearsystem}
Since we will be considering pure dimension one sheaves on the holomorphic symplectic surfaces $S_{\vxi}$ constructed by removing the divisors $F_{i,\vxi}$ and $\td{C}_{\infty,\vxi}$ from $Z_{\vxi}$, the support of these sheaves must also be disjoint from these divisors. The next proposition characterizes the curve classes that are disjoint from these divisors. 
\begin{prop}\label{p:class}
         A class $\Sigma$ in $\mathrm{Num}(Z_{\vec{\xi}})$ that satisfies 
        \begin{equation*}
           \Sigma\cdot \td{C}_{\infty,\vec{\xi}} =0, \qquad \Sigma\cdot F_{i,\vec{\xi}}=0 \quad \text{for } i \in I 
        \end{equation*}
        is of the form $ap_{\vec{\xi}}^*C_{0} + \sum_{i\in I}\sum_{j \in J_i} c_{i,j}\Xi_{i,j,\vec{\xi}}$ where $a= -\sum_{j \in J_i} c_{i,j}$ for any $i \in I$. Moreover, if we require 
        \begin{equation*}
            \Sigma\cdot \Xi_{i,j,\vec{\xi}}= m_{i,j}
        \end{equation*}
        for all $i \in I, j \in J_i$, then $\Sigma$ is of the form $rp_{\vec{\xi}}^*C_0 -\sum_{i\in I}\sum_{j \in I_i} m_{i,j} \Xi_{i,j,\vec{\xi}}$, where $r=\sum_{j\in J_i}m_{i,j}$ for any $i \in I$.
    \end{prop}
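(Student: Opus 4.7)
The proposition is purely an intersection-theoretic computation, so the plan is to write $\Sigma$ in a basis of $\mathrm{Num}(Z_{\vec{\xi}})$ and impose the stated intersection conditions one by one, using Lemma \ref{intersectionpattern} together with the intersection theory of the ruled surface $M=\mathbb{P}(K_C(D)\oplus \mathcal{O}_C)$.

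First, I would invoke the preceding lemma, which identifies $\mathrm{Num}(Z_{\vec{\xi}}) = \mathrm{Num}(M)\oplus \bigoplus_{i,j}\mathbb{Z}[\Xi_{i,j,\vec{\xi}}]$, and combine it with $\mathrm{Num}(M)=\mathbb{Z}[C_0]\oplus \mathbb{Z}[f]$ where $f$ denotes a fiber class of $\pi\colon M\to C$. Thus I may write
\[
\Sigma \;=\; a\,p_{\vec{\xi}}^{*}C_0 + b\,p_{\vec{\xi}}^{*}f + \sum_{i\in I, j\in J_i} c_{i,j}\,\Xi_{i,j,\vec{\xi}}
\]
for integers $a,b,c_{i,j}$.

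Next, I would pin down $b$ by intersecting with $\widetilde{C}_{\infty,\vec{\xi}}$. Since each center $\xi_{i,j}\in M^\circ=\mathrm{Tot}(K_C(D))$ is disjoint from $C_\infty$, no blow-up affects $C_\infty$, so $\widetilde{C}_{\infty,\vec{\xi}} = p_{\vec{\xi}}^{*}C_\infty$. Then projection formula and the standard identities $C_0\cdot C_\infty=0$, $f\cdot C_\infty=1$ on $M$, together with $\Xi_{i,j,\vec{\xi}}\cdot p_{\vec{\xi}}^{*}C_\infty=0$ (the image of $\Xi_{i,j,\vec{\xi}}$ in $M$ is a single point off $C_\infty$), give $\Sigma\cdot \widetilde{C}_{\infty,\vec{\xi}}=b$, hence $b=0$.

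With $b=0$, I would then impose the fiber conditions. Using $F_{i,\vec{\xi}}\sim p_{\vec{\xi}}^{*}M_{p_i}-\sum_{j\in J_i}\Xi_{i,j,\vec{\xi}}$ and Lemma \ref{intersectionpattern}, I compute
\[
\Sigma\cdot F_{i,\vec{\xi}} \;=\; a\,(C_0\cdot M_{p_i}) + \sum_{j\in J_i} c_{i,j}(\Xi_{i,j,\vec{\xi}}\cdot F_{i,\vec{\xi}}) \;=\; a + \sum_{j\in J_i}c_{i,j},
\]
since $C_0$ is a section and the mixed terms $\Xi_{i',j',\vec{\xi}}\cdot F_{i,\vec{\xi}}$ vanish for $i'\ne i$. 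Setting this to $0$ for each $i$ forces $a=-\sum_{j\in J_i}c_{i,j}$, yielding the first assertion (with the consistency that this sum is independent of $i$).

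Finally, for the moreover part, I would use $p_{\vec{\xi}}^{*}C_0\cdot \Xi_{i,j,\vec{\xi}}=0$ and the self/mixed-intersection numbers of $\Xi_{\bullet,\bullet,\vec{\xi}}$ from Lemma \ref{intersectionpattern} to get $\Sigma\cdot\Xi_{i,j,\vec{\xi}}=-c_{i,j}$, so the constraint $\Sigma\cdot\Xi_{i,j,\vec{\xi}}=m_{i,j}$ immediately gives $c_{i,j}=-m_{i,j}$, and then $a=-\sum_{j\in J_i}c_{i,j}=\sum_{j\in J_i}m_{i,j}=r$, which recovers the stated expression. There is no real obstacle; the only conceptual point worth stating carefully is the identification $\widetilde{C}_{\infty,\vec{\xi}}=p_{\vec{\xi}}^{*}C_\infty$, since all of the blow-up centers lie on the affine side of $M$.
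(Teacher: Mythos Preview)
Your proposal is correct and follows essentially the same approach as the paper: write $\Sigma$ in the basis $\{p_{\vec{\xi}}^*C_0,\, p_{\vec{\xi}}^*f,\, \Xi_{i,j,\vec{\xi}}\}$, use $\widetilde{C}_{\infty,\vec{\xi}}=p_{\vec{\xi}}^*C_\infty$ to kill the fiber coefficient, then apply Lemma~\ref{intersectionpattern} and the relation $F_{i,\vec{\xi}}\sim p_{\vec{\xi}}^*M_{p_i}-\sum_{j}\Xi_{i,j,\vec{\xi}}$ to extract the remaining constraints. Your justification of the identification $\widetilde{C}_{\infty,\vec{\xi}}=p_{\vec{\xi}}^*C_\infty$ is slightly more explicit than the paper's, but the arguments are otherwise identical.
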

    \begin{proof}
    A class $\Sigma\in \mathrm{Num}(Z_{\vec{\xi}})$ is of the form $ap_{\vec{\xi}}^*C_0+ bp_{\vec{\xi}}^*M_p + \sum_{i\in I}\sum_{j \in J_i} c_{i,j}\Xi_{i,j,\vec{\xi}}$ where $p \neq p_i$ for $i \in I$. Then by Lemma \ref{intersectionpattern}
    \begin{equation*}
        0= \Sigma\cdot \td{C}_{\infty,\vxi} =  \Sigma\cdot p_{\vec{\xi}}^*C_{\infty,\vxi} = b\cdot M_p\cdot C_{\infty,\vxi} \implies b=0
    \end{equation*}
    Moreover, 
    \begin{equation*}
        0= \Sigma \cdot F_{i,\vxi} = a C_0\cdot M_{p_i} + \sum_{i\in I}\sum_{j \in J_i} (c_{i,j} \Xi_{i,j,\vxi}\cdot F_{i,\vxi}) = a+ \sum_{j \in I} c_{i,j} = 0. 
    \end{equation*}
    The last condition on $\Sigma\cdot \Xi_{i,j,\vxi}$ implies that $-c_{i,j}=m_{i,j}.$
    \end{proof}
Hence, we will consider the following effective relative Cartier divisor \[\bm{\sm}(\vec{m}):= r\bm{p}^*\bm{C}_0 -\sum_{i \in I}\sum_{j \in J_i}  m_{i,j} \bm{\Xi}_{i,j}\] on $\bm{Z}$ and the linear systems formed by its restrictions to $Z_{\vxi}$
\[\Sigmamxi= rp_{\vec{\xi}}^*C_0 - \sum_{i \in I}\sum_{j \in J_i}  m_{i,j} {\Xi}_{i,j,\vxi}\]

The complete linear system $|\Sigmamxi|$ admits a useful description as a subset of $|rC_0|$. To see this, recall the following elementary fact: Let $X$ be a smooth projective surface and $D\subset X$ be a divisor. Let $\pi: X'\to X$ be the blow-up of $X$ at a point $x\in X$ and $E\subset X'$ be the exceptional divisor. Recall that we can identify the complete linear system $|\pi^* D- mE|$ as the sublinear system of $|D|$ consisting of divisors $D'\in |D|$ which pass through $p$ with multiplicity $\geq m$ by mapping $D'$ to $\pi^*D' - mE$ which is effective. Applying this iteratively, we can identify the linear system $|\Sigmamxi|$ with a projective subspace in $ |rC_0|$ for any given $\vm$ and $\vxi$. 

\begin{exmp}\label{exmp:linearsystem1}
Consider the case $n=1$ and $D=p$ for simplicity. Let $\vxi =\uxi=  (\xi_1,\dots,\xi_\ell)\in \C^{\times \ell}$ and  $\vm =\um=  (m_1,\dots, m_\ell)\in \Z^{\times \ell}$. Then $Z_{\vxi}$ is constructed as a sequence of blow-ups
\[Z_{\vxi}= M_{\ell} \xrightarrow{p_\ell^{\ell-1}} \dots \to M_{1} \xrightarrow{p_1^0} M \quad \textrm{with the exceptional divisors }E_j\subset M_j. \]
We also have a chain of complete linear systems: Let $L_0 := |rC_0|$ on $M$, $L_1 :=|R_1|$ on $M_1$ where $R_1:=(p_1^0)^*(rC_0)-m_1E_1$ and for $j=2,\dots, \ell$, let $L_j = |R_j|$ on $M_j$ where $R_j:= (p_{j}^{j-1})^* R_{j-1}-m_jE_j$. Then we have 
\begin{equation*}\label{inclusions}
    L_\ell\xhookrightarrow{i_\ell}\dots \xhookrightarrow{i_2} L_1 \xhookrightarrow{i_1}L_0 
    \end{equation*}
    where the inclusion $i_j: L_j \to L_{j-1}$ identifies $L_j$ with the subset of $L_{j-1}$ consisting of effective divisors in $L_{j-1}$ that passes through the center of the blow-up $p_{j}^{j-1}:M_{j}\to M_{j-1}$ with multiplicity $\geq m_j$. In particular, $L_\ell = |\Sigmamxi|$ can be seen as a projective subset of $|rC_0|$ consisting of effective divisors with prescribed multiplicities at the points in $M_j$ determined by $\vxi$ and $\vm$. 
    \end{exmp}
    
\begin{exmp}\label{exmp:linearsystem2}
       Continuing with the previous example above, suppose now that $\xi_1=\dots = \xi_\ell$. In this case, the center $c_j$ of the blow-up $p_j^{j-1}:M_j\to M_{j-1}$ lies in the exceptional divisor $E_j$ (see Figure \ref{fig:sparsurf}).
       Let $\iota : |\Sigmamxi|\to |rC_0|$ denote the inclusion described above.  If $X\in i_1(L_1)$, then $X$ passes through $c_1$ with multiplicity at least $m_1$. This implies that the total transform   $(p_1^0)^*X  = X^1 + m_1E_1$ where $X^1\in L_1$ is a divisor in $M_1$. If, further, $X\in i_1\circ i_2(L_2)$, then $X^1\in i_2(L_2)$ and $X^1$ passes through $c_2$ with multiplicity at least $m_2$. Thus, the total transform $(p_2^0)^*X =  (p_2^1)^*(X^1+m_1E_1)$ passes through $c_2$ with multiplicity at least $m_1+m_2$ where $p_2^0= p_2^1\circ p_1^0$. Proceeding it inductively, we find that if $X\in \iota(L_\ell)$, then for each $j=2,\dots,\ell$, the total transform of $X$ in $M_{j-1}$ passes through $c_j$ with multiplicity at least $m_1+\dots +m_j$. In fact, this property can also be used as a characterization of $X\in \iota (|\Sigmamxi|).$
      
\end{exmp}
\begin{exmp}\label{exmp:linearsystem3}
            In general, the argument in Example \ref{exmp:linearsystem2} works for $\vxi= \uxi= (\xi_1,\dots,\xi_{\ell})$ with repeated entries since the conditions imposed by $|\Sigmamxi|$ is local. Let $\{P^{\xi^\circ_1}, \dots ,P^{\xi^\circ_e}\}$ be the collection of partitions labeled by the reduced vector of distinct eigenvalues $(\xi_1^\circ,\dots, \xi_e^\circ)$ determined by the parabolic $(\vm,\vxi)$. For example, if $P^{\xi_1^\circ} = (m_{j_1},\dots, m_{j_s})$ and let $c_{j_a}$ be the center of the blow-up $M_{j_a}\to M_{j_a-1}$ which lies in the exceptional divisor $E_{j_a}$, where $a=1,\dots, s$. Then $X\in \iota (|\Sigmamxi|)$ satisfies the following local condition:
            \begin{center}
                $X$ passes through $c_{j_1}$ with multiplicity at least $ m_{j_1}$ and, for $a=1,\dots ,s$, the total transform of $X$ in $M_{j_a}$ passes through $c_{j_a}$ with multiplicity at least $m_{j_1}+\dots + m_{j_a}$.
            \end{center}
             The analogous condition holds for other $\xi_{j}^\circ$, $j=2,\dots, e.$ Hence, a divisor $X$ in $ |\Sigmamxi|$ can be characterized completely by these local multiplicity conditions over the centers associated to each $\xi_j^\circ$. 
        \end{exmp}

\begin{definition}
    For each $(\vec{m}, \vec{\xi})$, define $\bmxi$  as the subset of $|\Sigmamxi|$ consisting of effective divisors in $Z_{\vxi}$ that are compactly supported away from $F_{i,\vxi}$ for all $i \in I$, as well as from  $\td{C}_{\infty,\vec{\xi}}$.
\end{definition}

\begin{lem}\label{affinebase}
$\bmxi$ is an (open) affine space. 
    
\end{lem}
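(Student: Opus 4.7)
The plan is to identify $\bmxi$ with an affine subspace of the Hitchin base $A=\bigoplus_{\mu=1}^{r}H^0(C,K_C(D)^{\otimes\mu})$ via the birational projection $p_{\vxi}:Z_{\vxi}\to M$.

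I would first recall the standard identification of the open complement in $|rC_0|$ of the hyperplane of divisors having $C_\infty$ as a component with the affine space $A$: a tuple $(a_1,\dots,a_r)\in A$ corresponds to the monic spectral curve $\{t^r+a_1 t^{r-1}+\cdots+a_r=0\}\subset\Tot(K_C(D))$. Pushforward along $p_{\vxi}$ then produces an injection $|\Sigmamxi|\hookrightarrow|rC_0|$, $\Sigma\mapsto(p_{\vxi})_*\Sigma$, whose image consists of divisors $D\in|rC_0|$ satisfying the iterated tangency conditions at the blowup centers described in Examples~\ref{exmp:linearsystem1}--\ref{exmp:linearsystem3}. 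Under this identification, $\Sigma$ is disjoint from $\td{C}_{\infty,\vxi}$ precisely when $(p_{\vxi})_*\Sigma$ does not contain $C_\infty$ as a component, that is, when $(p_{\vxi})_*\Sigma\in A$.

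Next, I claim that the condition of being disjoint from each $F_{i,\vxi}$ is then automatic. Indeed, for any monic polynomial $t^r+\sum_{\mu=1}^{r}a_\mu t^{r-\mu}$, its restriction to the fiber $M_{p_i}$ has leading term $t^r\neq 0$ and hence does not vanish identically on $M_{p_i}$; so the corresponding divisor $D$ never contains $M_{p_i}$ as a component. Since $F_{i,\vxi}$ appears as a component of $p_{\vxi}^*D$ if and only if $M_{p_i}$ appears as a component of $D$, we conclude that $\Sigma=p_{\vxi}^*D-\sum_{i,j}m_{i,j}\Xi_{i,j,\vxi}$ does not contain $F_{i,\vxi}$. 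Combining the two steps yields $\bmxi=|\Sigmamxi|\cap A$ as subsets of $|rC_0|$.

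Finally, the tangency conditions cutting out $|\Sigmamxi|\cap A$ inside $A$ translate into affine-linear equations on the coefficients $a_\mu$ and their higher jets at the $p_i$'s, so $\bmxi$ is an affine subspace of $A$, and in particular an affine space. The main subtlety will be making this translation precise in the non-generic case where the centers $(p_i,\xi_{i,j})$ coincide and iterated higher-order tangencies along the exceptional tower arise; however, the local characterization supplied by Example~\ref{exmp:linearsystem3} already provides the necessary input, so the remaining step amounts to straightforward linear-algebraic bookkeeping.
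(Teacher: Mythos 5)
Your overall route coincides with the paper's: identify $|rC_0|$ minus the locus of divisors meeting $C_\infty$ with the Hitchin base $A$, regard $|\Sigmamxi|$ as a subspace of $|rC_0|$, and argue that the conditions along the fibers over the $p_i$ are automatic, so that $\bmxi$ is the intersection of a linear subspace with an affine chart. The genuine gap is in the step where you declare disjointness from $F_{i,\vxi}$ to be ``automatic'': what you actually prove is only that $F_{i,\vxi}$ is not an irreducible \emph{component} of $\Sigma$ (since a monic equation cannot vanish identically on the fiber $M_{p_i}$). But membership in $\bmxi$ requires $\Sigma\cap F_{i,\vxi}=\emptyset$, and ``no common component'' alone does not give this: a priori the strict transform of $D=(p_{\vxi})_*\Sigma$ could pass through points of $F_{i,\vxi}$ lying over values of $y$ different from the $\xi_{i,j}$, and $\Sigma$ may also contain components supported on the exceptional locus (the class $\Sigmamxi$ allows this, and this genuinely occurs for non-generic $\vxi$), some of which meet $F_{i,\vxi}$ because $\Xi_{i,j,\vxi}\cdot F_{i,\vxi}=1$. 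The missing ingredient is the numerical identity $\Sigmamxi\cdot F_{i,\vxi}=0$ (Proposition \ref{p:class}, Lemma \ref{intersectionpattern}): once $F_{i,\vxi}$ is not a component of the effective divisor $\Sigma$, all local intersection multiplicities along $F_{i,\vxi}$ are nonnegative and sum to zero, hence the intersection is empty. This is exactly how the paper closes the argument (it rules out $F_{i,\vxi}\subset X$ by observing that $F_{i,\vxi}$ meets $\td{C}_{\infty,\vxi}$ while $X$ does not, and then uses $X\cdot F_{i,\vxi}=0$). Note that you already rely on the same mechanism for the infinity section---``does not contain $C_\infty$'' is equivalent to ``disjoint from $C_\infty$'' only because $rC_0\cdot C_\infty=0$---so the fix is short, but it is not optional.

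A secondary point: for the final affineness you propose translating the iterated tangency conditions into affine-linear equations on the coefficients $a_\mu$ and their jets at the $p_i$. That translation is essentially Proposition \ref{evaluation description}, which the paper proves separately (and only with the local analysis of Lemma \ref{lem:localconditions}), so deferring it as ``bookkeeping'' understates the work, especially in the non-generic case. For the lemma itself it is cleaner, and it is what the paper does, to use that $\iota:|\Sigmamxi|\hookrightarrow |rC_0|$ is a linear embedding of projective spaces, so its intersection with the affine chart $W^c\cong A$ is automatically an affine (possibly empty) subspace; no explicit equations are needed.
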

\begin{proof}
Recall that (see e.g. \cite[Remark 1.1]{pantevkouvidakis} and also the discussion after this proof) the subset $W\subset |rC_0|$ consisting of effective divisors that touch or contain $C_\infty$ is a projective subspace of codimension $1$ so that the complement $W^c$ of $W$ is an affine space in the linear system $|rC_0|$, which can be identified with the usual Hitchin base $A$ introduced in \eqref{usual hitchin base}. Then an effective divisor $Q\in W$ if and only if $Q\cap C_\infty\neq \emptyset$ if and only if $C_\infty \subseteq Q$. 

 Let $\iota: |\Sigmamxi|\to |rC_0|$ be the inclusion described above. Since the centers of the successive blow-up construction of $Z_{\vxi}$ are away from $C_\infty$ (and its strict transforms), the subset of effective divisors that are disjoint from $\td{C}_{\infty,\vxi}$ can be identified with $|\Sigmamxi|\cap \iota^{-1}(W^c)$ which is an affine space. We claim that $|\Sigmamxi|\cap \iota^{-1}(W^c) = \bmxi$. Clearly, we have $\bmxi\subset |\Sigmamxi|\cap \iota^{-1}(W^c)$. For the reverse inclusion, note that $X\in  |\Sigmamxi|\cap 
\iota^{-1}(W^c)$ implies $X\cdot F_{i,\vxi}=\Sigmamxi \cdot F_{i,\vxi}=0$ and $X$ does not  contain the strict transform $F_{i, \vxi}$ as an irreducible component, otherwise $X$ would intersect $\td{C}_{\infty, \vxi}$ non-trivially. 

\end{proof}

As the linear system $|\Sigmamxi|$ can be described in terms of $|rC_0|$, $\bmxi$ also admits a description in terms of the Hitchin base $A$. First, let us recall the identification between Hitchin base and the (open subset of) linear system $|rC_0|$ on the ruled surface $M$ (see e.g. \cite[Section 1.1]{pantevkouvidakis} for details). 
Let $y\in H^0(M,\pi^*(K_C(D))\otimes \O_{M}(1))$ be the zero section and $w\in H^0(M,\O_M(1))$ be the infinity section. So, we have $Div(y)=C_0$ and $Div(w)=C_\infty$. There are natural isomorphisms $H^0(M, rC_0)\cong H^0\left(M, \pi^*\left(K_C(D)\right)^{\otimes r}\otimes \O_M(r)\right) $ and
\begin{align*}
     H^0\left(M, \pi^*\left(K_C(D)\right)^{\otimes r}\otimes \O_M(r)\right)  &\cong   \bigoplus_{\mu=0}^r H^0(C,(K_C(D))^{\otimes \mu})  \\
    s_0y^r + s_1y^{r-1}w + \dots + s_rw^r&\longleftrightarrow(s_0,s_1,\dots ,s_r)
\end{align*}
where we identify $H^0(C,(K_C(D))^{\otimes \mu})$ with $\pi^*H^0(C,(C,K_C(D))^{\otimes \mu})\subset H^0( M, \pi^* (K_C(D))^{\otimes \mu})$ on the left-hand side. Note that $s_0\neq 0$ defines the divisors that do not touch or contain $C_\infty. $ Denote by $W\subset |rC_0|$ the subset of divisors that touch or contain $C_\infty$ as a component.
This describes an identification between the Hitchin base $A$ and divisors compactly supported in $M^\circ$:   
\begin{align}\label{divisors vs sections}
     |rC_0|\setminus W &\cong   \bigoplus_{\mu=1}^r H^0(C,(K_C(D))^{\otimes \mu}) =A \\
    Div(y^r + s_1y^{r-1} + \dots + s_r)&\longleftrightarrow(s_1,\dots ,s_r)
\end{align}
where $y$ is now treated as the tautological section of $\pi^*(K_C(D))$ on $M^\circ$. 

As $\bmxi\subset |rC_0|\setminus W$, we can provide another description in terms of $A$.  For each $p_i\in D$, we choose a local trivialization of $M$ around $p_i$ i.e. $\Spec(\C[[x_i]] [y])$ with horizontal coordinate $x_i$ and vertical coordinate $y$. Then the evaluation of the local derivative at $p_i$, $s_\mu^{(a)}(p_i):=\left. \frac{\del}{\del x_i^a}\right \vert_{0}s_\mu$, is well defined and we can define the evaluation maps
\begin{equation*}
    \mathrm{ev}_{u,a}(\xi_{i,j}): A\to \C, \quad s= (s_1,\dots, s_r) \mapsto \left. \frac{\partial}{\partial y^u}\frac{\partial}{\partial x_i^a} \right\vert_{(0, \xi_{i,j})}q_s(x_i,y)
\end{equation*}
where $q_s(x_i,y) = y^r+ s_1(x_i)y^{r-1}+\dots + s_r(x_i)$ and $u,a\in \Z_{\geq 0}$. 

Given a pair $(\vm, \vxi)$, let $\{P^{\xi^\circ_{i,1}},\dots,P^{\xi^\circ_{i,e(i)}}\}$ be the collection of partitions labeled by distinct eigenvalues $\uxi_i^\circ = (\xi_{i,1}^\circ,\dots \xi_{i,e(i)}^\circ)$ (see Section \ref{sec:notation}). To specify orders of partial derivatives, we introduce the following notation: Given a partition $P=(m_1\geq \dots \geq m_\ell)$, we define 
\begin{equation}\label{eq:leveldomain}
    G(P) := \{(u,a)\in \Z^2|0\leq u<|P|, 0\leq a< \gamma_P(|P|- u)\},
\end{equation} and call it  the \emph{level domain} associated to the partition $P$.

\begin{prop} \label{evaluation description}
 $ \bmxi $ is isomorphic to the subspace of \( A \) defined by the vanishing of evaluations over the level domains associated to the partitions \( P^{\xi^\circ_{i,j}} \), that is,
\[
\bmxi \cong 
\bigcap_{i \in I} \ \bigcap_{j = 1}^{e(i)} \ \bigcap_{(u,a) \in G\left(P^{\xi^\circ_{i,j}}\right)} 
\operatorname{ev}_{u,a}(\xi^\circ_{i,j})^{-1}(0)
\subset A.
\]
\end{prop}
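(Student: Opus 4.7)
The plan is to translate the local multiplicity characterization of $|\Sigmamxi|$ from Example \ref{exmp:linearsystem3} into explicit vanishing conditions on partial derivatives of $q_s(x_i, y)$, and to match these with the level domain $G(P^{\xi^\circ_{i,j}})$. Since the multiplicity conditions in Example \ref{exmp:linearsystem3} are local at each $(p_i, \xi^\circ_{i,j})$, the analysis decouples across distinct eigenvalues, so it suffices to fix one such point with associated subpartition $P = (m_1 \geq \cdots \geq m_s)$. I would choose local coordinates $(x, y)$ on $M$ around $(p_i, \xi^\circ)$, set $v = y - \xi^\circ$, and write $f(x, v) := q_s(x, v + \xi^\circ) = \sum_{u, a} c_{u, a}\, v^u x^a/(u!\, a!)$, noting that $c_{u, a} = \mathrm{ev}_{u, a}(\xi^\circ)(s)$ under the identification \eqref{divisors vs sections}.

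Next I would analyze the iterated blow-up chain at $(p_i, \xi^\circ)$ restricted to the fiber $Z_{\vxi}$. A direct local calculation in the family $\bm{M}$ shows that the strict transform of each successive tautological section $\bm{\xi}_{i, j_{k+1}}$ meets the previous exceptional divisor precisely at its point of intersection with the strict transform of $M_{p_i}$. Using the affine chart $x = w_1 v$ for the first blow-up, $w_1 = w_2 v$ for the second, and so on, one obtains coordinates $(w_k, v)$ on $M_k$ with $x = w_k v^k$, and all successive centers $c_k$ sit at the origin of their respective charts. Consequently, the total transform of $X = \{f = 0\}$ on $M_{k-1}$ reads
\[
f(w_{k-1} v^{k-1}, v) \;=\; \sum_{u, a} \frac{c_{u, a}}{u!\, a!}\, w_{k-1}^a\, v^{u + (k-1)a},
\]
and its multiplicity at $c_k = (0, 0)$ equals $\min\{u + ka : c_{u, a} \neq 0\}$. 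Requiring this multiplicity to be at least $N_k := m_1 + \cdots + m_k$ for each $k = 1, \ldots, s$ (as prescribed by Example \ref{exmp:linearsystem3}) is therefore equivalent to
\[
c_{u, a} = 0 \text{ for every } (u, a) \in \bigcup_{k=1}^s \{(u, a) : u + ka < N_k\}.
\]

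Then I would verify the combinatorial identity $\bigcup_{k=1}^s \{(u, a) : u + ka < N_k\} = G(P)$. For fixed $a \geq 0$, the union reduces to $u < \max_k(N_k - ka) = \sum_{j : m_j > a}(m_j - a)$, with the maximum attained at the largest $k$ for which $m_k > a$. On the other hand, writing the conjugate partition $\widehat{P} = (n_1, \ldots, n_{m_1})$ with $n_c = \#\{j : m_j \geq c\}$ and using the defining property $\gamma_P(k) > a \iff k > n_1 + \cdots + n_a$, one finds $(u, a) \in G(P) \iff u < |P| - (n_1 + \cdots + n_a)$; the elementary identity $|P| - \sum_{c=1}^a n_c = \sum_{j : m_j > a}(m_j - a)$ then closes the loop. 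Assembling these local descriptions across all $(i, j)$ yields the asserted intersection description of $\bmxi$ inside $A$.

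The main obstacle I anticipate is the blow-up chain analysis in the second step: justifying that at every stage the successive strict transform of $\bm{\xi}_{i, j_{k+1}}$ lies at the intersection of the previous exceptional divisor with the strict transform of $M_{p_i}$. This requires a careful local tracking of the tautological sections in $\bm{M}$ and of how they interact under iterated blow-ups, especially when restricted to the fiber $\vxi$ where several sections coincide. Once this geometric picture is established, the remaining work is direct Taylor-coefficient bookkeeping and a short combinatorial verification.
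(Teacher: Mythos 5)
Your proposal is correct and follows the paper's overall skeleton — reduce membership in $\bmxi$ to the local multiplicity conditions of Example \ref{exmp:linearsystem3} at the blow-up centers over each $(p_i,\xi^\circ_{i,j})$, then translate these into vanishing of partial derivatives of $q_s$ — but it handles the key local step by a genuinely different argument. The paper delegates that step to Lemma \ref{lem:localconditions} in Appendix \ref{sec:local behaviors}, whose proof passes through the intermediate condition (2) (vanishing orders $v(\mu)\geq \gamma_P(\mu)$ of the coefficients $s_\mu(x)$) and a term-by-term analysis of $s_\mu(x)y^{r-\mu}$ under the charts $x=u_jy^j$; you instead expand $f$ in both variables, observe that in the chart $x=w_{k-1}v^{k-1}$ each pair $(u,a)$ contributes a distinct monomial so the multiplicity of the total transform at $c_k$ is exactly $\min\{u+ka : c_{u,a}\neq 0\}$ (no cancellation), and then prove the identity $\bigcup_k\{u+ka<N_k\}=G(P)$ via the conjugate-partition computation $|P|-\sum_{c\leq a}n_c=\sum_{j:m_j>a}(m_j-a)$ — all of which checks out. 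Your route is more direct and gives both implications at once, at the price of reproving rather than citing the appendix lemma; the paper's factorization buys a reusable statement whose vanishing-order formulation (2) is what is invoked again in Corollary \ref{identification of hitchin bases} and Remark \ref{rem:local condition(minimal)}. The one point you flag as an obstacle — that in the fiber over a non-generic $\vxi$ the successive centers sit at the intersection of the latest exceptional divisor with the strict transform of $M_{p_i}$, so that the chain of charts $x=w_kv^k$ is the right local model — is indeed needed, is true, and follows from the local computation you sketch (in the chart $x=s(y-\xi_{i,1})$ the strict transform of $\bm{\xi}_{i,2}$ is $\{s=0,\,y=\xi_{i,2}\}$, a section over $\nm$ hitting $E_{i,1}\cap\widetilde{M}_{p_i}$ on the coincidence locus, and inductively likewise for later sections); the paper leaves this implicit in its construction and Examples \ref{exmp:blownup surface strgly par}--\ref{exmp:linearsystem2}, so your making it explicit is a small but welcome addition rather than a gap.
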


\begin{proof}
    Let $\vxi^\circ = (\uxi^\circ_1,\dots, \uxi^\circ_n)$ be the distinct part of $\vxi$ where $\uxi^\circ_i = (\xi^\circ_{i,1},\dots, \xi^\circ_{i,e(i)})$. Apply the argument in Example~\ref{exmp:linearsystem3} and its direct generalization to the multiple points case $D=p_1+\dots +p_n$, we can identify $\bmxi$ as the subset of $|rC_0|\setminus W$ which consists of effective divisors satisfying the local multiplicity conditions through the centers of blow-ups associated to each $\xi^\circ_{i,j}$. It remains to check that the local multiplicity conditions over each point $p_i$ is equivalent to the vanishing of the evaluation maps under the isomorphism \eqref{divisors vs sections}.
    
    If $\xi^\circ_{i,j}=0$, then the local multiplicity condition at $(p_i,0)$ is equivalent to the vanishing of $\operatorname{ev}_{u,a}(0)$ by applying Lemma \ref{lem:localconditions}. For $\xi^\circ_{i,j}\neq 0$, by doing a translation $y= \overline{y}+\xi_{i,j}^\circ$, $q'_s(x,\overline{y}) := q_s(x,\overline{y}+\xi_{i,j})$ vanishes at $(x,\overline{y})=(0,0)$ and \begin{equation}\label{partialderivative}
           \operatorname{ev}_{u,a}(\xi^\circ _{i,j})=  \left. \frac{\partial}{\partial y^u}\frac{\partial}{\partial x^a} \right\vert_{(0, \xi^\circ_{i,j})}q_s(x,y) = \left. \frac{\partial}{\partial \overline{y}^u}\frac{\partial}{\partial x^a} \right\vert_{(0, \overline{y}=0)}q_s'(x,\overline{y}).
        \end{equation}
        Again, by applying Lemma \ref{lem:localconditions}, the local multiplicity condition at $(p_i,\xi^\circ_{i,j})$ is equivalent to the vanishing of $\operatorname{ev}_{u,a}(\xi^\circ_{i,j})$.

\end{proof}
As claimed, the affine space $\bmxi$ will later serve as the Hitchin base for the moduli space of $\vxi$-parabolic Higgs bundles later. For now, we verify that $B(\vm)_0$ matches the Hitchin base of the strongly parabolic Higgs bundles $A(\vec{m})_0$.
\begin{cor}\label{identification of hitchin bases}
    $B(\vm)_0$ is isomorphic to $A(\vm)_0.$ 
\end{cor}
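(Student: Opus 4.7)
The plan is to apply Proposition~\ref{evaluation description} directly to the case $\vxi = 0$ and then unwind the evaluation conditions into vanishing orders of sections of $K_C(D)^{\otimes \mu}$ at the marked points.

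First, when $\vxi = 0$, the distinct part of each $\uxi_i$ is the singleton $\uxi_i^\circ = (0)$, so $e(i) = 1$ and the associated subpartition $P^{\xi^\circ_{i,1}}$ coincides with the full partition $P^i$. Proposition~\ref{evaluation description} therefore identifies
\[
B(\vm)_0 \cong \bigcap_{i \in I} \bigcap_{(u,a) \in G(P^i)} \operatorname{ev}_{u,a}(0)^{-1}(0) \subset A.
\]

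Next, I would compute the evaluation maps explicitly. Writing a point $s = (s_1, \dots, s_r) \in A$ as the polynomial $q_s(x_i, y) = y^r + s_1(x_i) y^{r-1} + \dots + s_r(x_i)$ in the local trivialization near $p_i$, the operator $\frac{\partial^u}{\partial y^u}$ applied to $y^{r-\mu} s_\mu(x_i)$ and then evaluated at $y = 0$ vanishes unless $\mu = r - u$, in which case it yields $(r-u)! \, s_{r-u}(x_i)$. Consequently,
\[
\operatorname{ev}_{u,a}(0)(s) = (r-u)! \cdot s_{r-u}^{(a)}(p_i),
\]
where $s_\mu^{(a)}(p_i)$ denotes the $a$-th derivative of $s_\mu$ at $p_i$ in the local coordinate $x_i$.

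With the substitution $\mu = r - u$, the defining conditions $(u, a) \in G(P^i)$ translate to $\mu \in \{1, \dots, r\}$ and $0 \leq a < \gamma_{P^i}(\mu)$. Hence the vanishing of all $\operatorname{ev}_{u,a}(0)(s)$ for $(u,a) \in G(P^i)$ is exactly the condition that, for each $\mu$, the section $s_\mu$ vanishes at $p_i$ to order at least $\gamma_{P^i}(\mu)$. Collecting these conditions over all $i \in I$ characterizes precisely the subspace
\[
\bigoplus_{\mu=1}^r H^0\!\left(C, K_C(D)^{\otimes \mu} \otimes \O_C\Bigl(-\sum_{i \in I} \gamma_{P^i}(\mu) p_i\Bigr)\right) = \bigoplus_{\mu=1}^r H^0(C, L(\vm)_\mu) = A(\vm)_0,
\]
yielding the desired isomorphism $B(\vm)_0 \cong A(\vm)_0$.

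There is no serious obstacle here; the only subtle point is the reindexing $\mu = r - u$ that matches the height index in $G(P^i)$ with the degree of the coefficient $s_\mu$, together with the observation that since the $p_i$ are distinct, the local vanishing conditions at different points are independent and assemble into the global section condition defining $L(\vm)_\mu$.
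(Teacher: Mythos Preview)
Your proof is correct and follows essentially the same route as the paper, which simply cites the equivalence $(2)\Leftrightarrow(3)$ of Lemma~\ref{lem:localconditions}; you have written out that equivalence explicitly. One small slip: the factorial in your evaluation should be $u!$ rather than $(r-u)!$, since $\partial_y^u y^{r-\mu}\big|_{y=0}$ is nonzero only when $r-\mu=u$, giving $u!$; this does not affect the argument as the constant is nonzero either way.
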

\begin{proof}
    This follows directly from the equivalence between (2) and (3) in Lemma \ref{lem:localconditions}.
\end{proof}

                As discussed in Example \ref{exmp:linearsystem1}, the linear system $|\Sigmamxi|$ can be described by imposing multiplicity conditions at points on the successive blown-up surfaces. In general, requiring a divisor to pass through a point with multiplicity $\geq k$ will impose $k(k+1)/2$ linear conditions. Since $\vxi\in \nm$, there is a linear relation between the eigenvalues $\xi_{i,j}$. So, the expected dimension of the linear system $|\Sigmamxi|$ is 
    \begin{equation*}
    \begin{aligned}
        \mathrm{expdim}(|\Sigmamxi|):&= \mathrm{dim}( |rC_0|)  - 1+\sum_{i \in I}\sum_{j \in J_i}\frac{m_{i,j}(m_{i,j}+1)}{2}.
        \end{aligned}
\end{equation*}
We can compute $\dim(|rC_0|)$ simply by computing $\dim(A)$ since $|rC_0|\setminus W \cong A$. A direct computation via the Riemann--Roch theorem yields
        \begin{equation}\label{eq:expdim}
            \begin{aligned}
        \mathrm{expdim}(|\Sigmamxi|)&= \left(r^2(g-1)+ \frac{nr(r+1)}{2}\right) - 1+\sum_{i \in I}\sum_{j \in J_i}\frac{m_{i,j}(m_{i,j}+1)}{2}\\
        &=1+r^2(g-1)+\frac{nr^2-\sum_{i \in I}\sum_{j \in J_i} m_{i,j}^2}{2}.
    \end{aligned}
    \end{equation}

\begin{prop}\label{constantdim}
    Suppose that $\mathrm{expdim}(|\Sigmamxi|)\geq 0$ and the family of linear systems $|\bm{\sm}(\vec{m})|$ (resp. $\bm{B}(\vm)$) over $\nm$ is non-empty over every $\vec{\xi}\in \nm$. If the OK condition holds, i.e. 
    \begin{equation}\label{vanishing condition}
    H^1\left(C,\lmmu\right)=0 \quad \textrm{for }\mu=2,\dots, r,
    \end{equation}
     then the family $|\bm{\sm}({\vm})|$ (resp. $\bm{B}({\vm})$)  has constant dimension as \eqref{eq:expdim}.
    
\end{prop}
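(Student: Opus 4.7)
By Lemma~\ref{affinebase}, $\bmxi$ is a hyperplane complement in $|\Sigmamxi|$, so $\dim\bmxi = h^0(Z_{\vxi},\m{O}(\Sigmamxi)) - 1$. The plan is to combine upper-semicontinuity of $h^0$ in the flat family $\bm{Z}\to\nm$ (via Grauert's theorem, applied to the relatively ample relative Cartier divisor $\bm{\sm}(\vm)$) with an explicit OK-driven computation at the strongly parabolic point $\vxi=0$, and use the scaling action $\vxi \mapsto (1-t)\vxi$ to deform any $\vxi \in \nm$ to the origin. The expected dimension $\mathrm{expdim}(|\Sigmamxi|)+1$ is the minimum possible value of $h^0$: when the evaluation conditions of Proposition~\ref{evaluation description} impose their maximal rank $N-1=\sum_{i,k} m_{i,k}(m_{i,k}+1)/2 - 1$ (the $-1$ reflecting the residue redundancy, since the residue theorem applied to $s_1\in H^0(K_C(D))$ forces the image of the evaluation map into a hyperplane of the target). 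This maximum is attained on the generic stratum of pairwise-distinct eigenvalues by a direct fat-point independence check, so $h^0(Z_{\vxi},\m{O}(\Sigmamxi))\geq \mathrm{expdim}(|\Sigmamxi|)+1$ for every $\vxi\in\nm$. Hence it suffices to prove the reverse inequality.

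For any $\vxi\in\nm$, the scaling $\vxi_t := (1-t)\vxi$ defines a morphism $\mathbb{A}^1 \to \nm$ (since $\nm$ is a linear subspace). For $t\neq 1$, the automorphism of $M = \mathbb{P}(K_C(D)\oplus\m{O}_C)$ induced by scaling $K_C(D)$ by the nonzero constant $1-t$ sends the tautological section $\bm{\xi}_{i,j}$ to $(1-t)\bm{\xi}_{i,j}$, and thereby lifts to an isomorphism $Z_{\vxi} \xrightarrow{\sim} Z_{\vxi_t}$ that carries $\Sigmamxi$ to $\Sigma(\vm)_{\vxi_t}$; in particular $h^0(Z_{\vxi_t},\m{O}(\Sigma(\vm)_{\vxi_t})) = h^0(Z_{\vxi},\m{O}(\Sigmamxi))$ for every $t\in\mathbb{A}^1\setminus\{1\}$. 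Applying upper-semicontinuity at $t=1$ to the flat family $\bm{Z}\times_{\nm}\mathbb{A}^1 \to \mathbb{A}^1$ (pulled back along $t\mapsto \vxi_t$) then yields
\[
h^0(Z_0,\m{O}(\Sigma(\vm)_0)) \;\geq\; h^0(Z_{\vxi},\m{O}(\Sigmamxi)).
\]

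It remains to evaluate $h^0(Z_0,\m{O}(\Sigma(\vm)_0))$ using the OK condition. By Corollary~\ref{identification of hitchin bases} and Lemma~\ref{affinebase}, $B(\vm)_0 \cong A(\vm)_0 = \bigoplus_{\mu=1}^r H^0(C,L(\vm)_\mu)$, so $h^0(Z_0,\m{O}(\Sigma(\vm)_0)) = 1 + \sum_\mu h^0(L(\vm)_\mu)$. Under the OK condition $h^1(L(\vm)_\mu) = 0$ for $\mu\geq 2$, Riemann-Roch gives $h^0(L(\vm)_\mu) = \mu(2g-2+n) - \sum_i \gamma_{P^i}(\mu) + 1 - g$, while $h^0(L(\vm)_1) = h^0(K_C) = g$. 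Summing and using the combinatorial identity $\sum_{\mu=1}^r \gamma_{P^i}(\mu) = \sum_k m_{i,k}(m_{i,k}+1)/2$ for each $i$ shows $1 + \sum_\mu h^0(L(\vm)_\mu) = 1 + r^2(g-1) + (nr^2 - \sum m_{i,j}^2)/2 = \mathrm{expdim}(|\Sigmamxi|)+1$. Combining with the scaling inequality and the generic lower bound forces $h^0(Z_{\vxi},\m{O}(\Sigmamxi)) = \mathrm{expdim}(|\Sigmamxi|)+1$ for every $\vxi\in\nm$, hence $\dim\bmxi = \mathrm{expdim}(|\Sigmamxi|)$ is constant. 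The main obstacle is this final Riemann-Roch computation at $\vxi=0$: the arithmetic identity requires careful combinatorial tracking of the level-function contributions $\gamma_{P^i}(\mu)$ against the OK vanishing, and the $+1$ discrepancy with the naive codimension count reflects the single residue redundancy that defines $\nm$.
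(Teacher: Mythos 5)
Your proposal is correct and is essentially the paper's own proof: the scaling ($\C^*$) action identifying $|\Sigma(\vec{m})_{\lambda\vec{\xi}}|$ for all $\lambda\neq 0$, upper semicontinuity of $h^0$ at the origin of the line through $\vec{\xi}$, the Riemann--Roch computation of $\dim A(\vec{m})_0$ under the $OK$ condition combined with the identity $\sum_{\mu=1}^r\gamma_{P^i}(\mu)=\sum_{j} m_{i,j}(m_{i,j}+1)/2$, and the expected-dimension lower bound, which (as in the paper) rests on the non-emptiness hypothesis together with the single residue relation. Two cosmetic remarks: the displayed intermediate value should read $2+r^2(g-1)+\tfrac{1}{2}\bigl(nr^2-\sum_{i,j}m_{i,j}^2\bigr)$ rather than $1+\cdots$ (your endpoints $1+\sum_{\mu}h^0(C,L(\vec{m})_\mu)=\mathrm{expdim}(|\Sigma(\vec{m})_{\vec{\xi}}|)+1$ are the correct equality), and $\bm{\Sigma}(\vec{m})$ is not relatively ample, but this is harmless since ordinary semicontinuity for the flat family of surfaces suffices and neither ampleness nor Grauert's theorem is needed.
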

\begin{proof}
    First, note that there is a natural $\C^*$-action on $\nm$ by scaling $(\xi_{i,j})\mapsto (\lambda\xi_{i,j})$ where $\lambda\in \C^*$. There is also a $\C^*$-action on $\bm{Z}$ induced by the $\C^*$-action on $M$ via scaling on fibers over $C$ (fixing $C_\infty)$. Then it is easy to see that the family of surfaces $\bm{Z}\to \nm$ is $\C^*$-equivariant with respect to these actions. In particular, a non-zero $\lambda\in \C^*$ defines an isomorphism between $Z_{\vec{\xi}}$ and $Z_{\lambda \vec{\xi}}$ and induces an isomorphism between $|\Sigmamxi|$ and $|\Sigmam_{\lambda\vxi}|$. For each non-zero $\vec{\xi}\in \nm$, if we restrict the family of linear systems to the line through the origin and $\vec{\xi}$, then by upper semicontinuity, the dimensions of the family of linear systems $|\Sigmam_{\lambda\vxi}|$ can only jump at $\lambda=0$ i.e. $\dim(|\Sigmam_0|) \geq \dim(|\Sigmam_{\lambda \vxi}|)$ for $\lambda\in \C^*$ . By Proposition \ref{identification of hitchin bases}, we can compute the dimension of $|\Sigmam_0|$ by computing the dimension of $A(\vm)_0$ via a Riemann--Roch computation:
    \begin{equation}\label{riemann-roch}
        \dim(|\Sigmam_0|) = \dim(A(\vm)_0) = 1+ r^2(g-1) + \frac{nr(r+1)}{2} - \sum_{i =1}^n\sum_{\mu=1}^r \gamma_{P^i}(\mu). 
    \end{equation}
    Note that this formula relies on the vanishing assumption \eqref{vanishing condition},  so the only non-trivial $h^1$-term in the Riemann--Roch computation is  \[h^1\left(C, \left(K_C(D)\right)\otimes \O\left(-\sum_{i=1}^n\gamma_{P^i}(1) p_i\right)\right)=h^1(C,K_C) =1.\]
    In other words, the term for $\mu=1$  contributes to the $"+1"$ in \eqref{riemann-roch}.

 By the following obvious identity for a partition $P^i=(m_{i,1},\dots, m_{i,\ell(i)})$,  
    \[\sum_{\mu=1}^{r}\gamma_{P^i}(\mu) = \sum_{j=1}^{\ell(i)} \frac{m_{i,j}(m_{i,j}+1)}{2} \quad \textrm{for  } i=1,\dots, n,\]
    we see that $\dim(|\Sigmam_0|) = \textrm{expdim}(|\Sigmamxi|)$ as computed in \eqref{eq:expdim}. Since the expected dimension is always lower than the actual dimension, we deduce that 
    \[\dim(|\Sigmam_0|)\geq \dim(|\Sigmamxi|) \geq \textrm{expdim}(|\Sigmamxi|)=  \dim(|\Sigmam_0|).\]
    Hence, 
    $\dim(|\Sigmam_0|)=  \dim(|\Sigmam_{\lambda\vxi}|)$ for $\lambda\in \C^*$ and the dimension of the family of linear systems is constant everywhere. 

\end{proof}

\begin{rem}\label{affine bundle}
It follows from Proposition \ref{constantdim} that $(\bm{\pi}_{\m N}\circ \bm{f})_*\O(\bm{\Sigma}(\vm))$ is a vector bundle on $\nm$ whose fiber is $H^0(Z_{\vxi},\O(\Sigmamxi))$. We denote its projectivization by $\overline{\bm{B}(\vm)}$. By Lemma \ref{affinebase}, the open subset of $\overline{\bm{B}(\vec{m})}$ consisting of divisors compactly supported away from $F_{\vec{\xi}}$ and $\td{C}_{\infty,\vec{\xi}}$ for each $\vec{\xi}$ forms an affine bundle $\bm{B}(\vm)\to \nm$ whose fiber over each $\vec{\xi}$ is the affine space $\bmxi$. We will call 
\[\bm{B}(\vec{m})\to \nm\]
 the family of Hitchin bases associated to $\vm$. 

Recall that by definition we have the inclusion  $\iota: \O(\bm{\sm}(\vm))\to \O(r\bm{f}^*\bm{C_0}) $ induced by the section $\O\to \O(\sum_{i\in I}\sum_{j \in J_i}  m_{i,j} \bm{\Xi}_{i,j}).$ Since $(\bm{\pi}_{\m N}\circ \bm{f})_*\O(r\bm{f}^*\bm{C_0})\cong H^0(C,rC_0)\times\nm $, the projectivization of the inclusion $\iota$ induces the morphism $\overline{\bm{B}(\vec{m})}\to |rC_0|\times \nm$. Restricting to the open subsets of divisors away from the infinity divisors, we get a morphism $\bm{B}(\vec{m})\to \bm{A}(\nm):=A\times \nm$.
\end{rem}

The advantage of having a constant dimension for the family of Hitchin bases is that we can deform smooth or integral curves from the fiber at $0 \in \nm$ to nearby $\vec{\xi} \in \nm.$ This property will be useful later when we study the non-emptiness of the moduli spaces in Section \ref{sec:nonempty moduli}. 

\begin{prop}\label{deformation of curves}
    Suppose that the family of Hitchin bases $\bm{B}(\Vec{m})\to \nm$ is non-empty for every $\Vec{\xi}\in \nm$ and has constant dimension (e.g. by Proposition \ref{constantdim}). Then, whenever $B(\Vec{m})_0$ contains a smooth (resp. integral) member,   $\bmxi$ also contains a smooth (resp. integral) member for every $\Vec{\xi}\in \nm$.
\end{prop}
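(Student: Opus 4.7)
The plan is to combine the openness of the smooth (respectively integral) locus in flat proper families with the $\C^*$-equivariance of $\bm{B}(\vec{m}) \to \nm$ established inside the proof of Proposition~\ref{constantdim}, in order to transport a smooth (resp.\ integral) member from the fibre over $0$ to every fibre.

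First I would consider the universal divisor $\bm{\mathcal{C}} \subset \bm{Z} \times_{\nm} \bm{B}(\vec{m})$ in the class $\bm{\Sigma}(\vec{m})$; this is a proper flat family of curves over $\bm{B}(\vec{m})$. Smoothness of fibres is open by standard flatness arguments, and the locus of geometrically integral fibres is open in a flat proper family by standard results in EGA~IV. Hence both subsets $U^{sm}, U^{int} \subset \bm{B}(\vec{m})$ consisting of points whose associated curve is smooth, respectively integral, are open. By the hypothesis and Remark~\ref{affine bundle}, the structure morphism $\pi : \bm{B}(\vec{m}) \to \nm$ is an affine bundle with fibres of constant dimension $d := \dim \bmxi$. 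Setting $V := \bm{B}(\vec{m}) \setminus U^{sm}$, the locus
\[
\{\,\vxi \in \nm : \bmxi \subset V\,\} \;=\; \{\,\vxi \in \nm : \dim(V \cap \bmxi) \geq d\,\}
\]
is closed in $\nm$ by upper semicontinuity of fibre dimension applied to $V \to \nm$. Therefore its complement
\[
W^{sm} \;:=\; \{\,\vxi \in \nm : U^{sm} \cap \bmxi \neq \emptyset\,\}
\]
is open in $\nm$, and by the same argument one obtains an open subset $W^{int} \subset \nm$. The hypothesis that $B(\vec{m})_0$ contains a smooth (resp.\ integral) member says exactly that $0 \in W^{sm}$ (resp.\ $0 \in W^{int}$).

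To conclude, fix any $\vxi \in \nm \setminus \{0\}$. By the argument already used in the proof of Proposition~\ref{constantdim}, scaling $\vxi \mapsto \lambda \vxi$ for $\lambda \in \C^{*}$ lifts to an isomorphism of surfaces $Z_{\vxi} \xrightarrow{\sim} Z_{\lambda \vxi}$ which identifies $|\Sigmamxi|$ with $|\Sigma(\vec{m})_{\lambda \vxi}|$, and in particular preserves smoothness and integrality of members of these linear systems. Hence $W^{sm}$ and $W^{int}$ are both stable under the $\C^{*}$-action on $\nm$. Since each is open and contains $0$, the line $\{\lambda \vxi : \lambda \in \C\}$ meets $W^{sm}$ (resp.\ $W^{int}$) at some $\lambda_{0} \neq 0$, and $\C^{*}$-invariance then forces $\vxi \in W^{sm}$ (resp.\ $\vxi \in W^{int}$), as required.

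The main technical point I anticipate is the openness of integrality in the flat proper family $\bm{\mathcal{C}} \to \bm{B}(\vec{m})$: openness of smoothness is elementary, whereas openness of geometric integrality requires invoking the EGA~IV results on the behaviour of geometric reducedness and irreducibility. The remainder is a formal consequence of upper semicontinuity of fibre dimension combined with the $\C^{*}$-equivariance already built into the construction of $\bm{B}(\vec{m})$.
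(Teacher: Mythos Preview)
Your approach is essentially the same as the paper's: openness of the smooth/integral locus in the universal family, followed by the $\C^*$-equivariance to propagate from the fibre over $0$ to all fibres. The paper phrases the middle step slightly differently (taking the image of $U$ under $\pi$ and noting it is open because $\pi$ is an affine bundle), but the skeleton is identical.

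One technical point to tighten: you justify openness of $W^{sm}$ by ``upper semicontinuity of fibre dimension applied to $V \to \nm$''. Upper semicontinuity of $\vxi \mapsto \dim V_{\vxi}$ on the \emph{target} requires $V \to \nm$ to be proper (or closed), which it is not, since $\bm{B}(\vec m)\to\nm$ is an affine bundle. You can repair this either by compactifying in $\overline{\bm{B}(\vec m)}$ and noting the boundary has fibre dimension $d-1<d$, or---more simply and as the paper implicitly does---by observing that $\pi$ is smooth, hence open, so $W^{sm}=\pi(U^{sm})$ is open directly. With that adjustment your argument is complete.
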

\begin{proof}
    As explained in Remark \ref{affine bundle}, the equi-dimensional family of Hitchin bases $\bm{B}(\Vec{m})\to \nm$ is an affine bundle, which is irreducible because $\nm$ is itself irreducible. Since $B(\vm)_0\subset \bm{B}(\vm)$ contains a smooth member and smoothness is an open condition, there exists a non-empty open subset $U\subset \bm{B}({\vec{m}})$ of smooth curves. As $\bm{B}(\vec{m})\to \nm$ is a surjective morphism between irreducible varieties, the image of $U$ in $\nm$ is also an open subset $V\subset \nm$ containing $0\in \nm$. Now, recall that, as explained in the proof of Proposition \ref{constantdim}, there is a $\C^*$-action on the family of surfaces $\bm{Z}\to \nm$ which identifies the curves in $\bmxi$ and $B(\vm)_{\lambda\vxi}$ for $\lambda \in \C^*$. In particular, the $\C^*$-action should preserves the smoothness of the curves. Hence, $V$ is $\C^*$-invariant and it follows that $V=\nm$. The same argument goes for the case of integral curves since the family of surfaces is smooth and so the family of curves (which are Cartier divisors) parametrized by $\bm{B}(\vec{m})$ are all Cohen-Macaulay. In this case, the locus of integral curves is open. 
\end{proof}

We will analyze the non-emptiness assumption in Proposition \ref{constantdim} and Proposition \ref{deformation of curves} in Section \ref{sec:nonemptiness}.

\section{Relative spectral correspondence}
\subsection{Relative moduli of pure dimension one sheaves}\label{sec:relativemoduli of pure}

We first recall the absolute case, particularly the  definition of $\beta$-twisted $A$-Gieseker semistablity condition first introduced in \cite{Matsuki-Wentworth} for torsion-free sheaves. For pure dimension one sheaves, it is due to Yoshioka \cite{yoshioka}. Let $X$ be a smooth projective surface and $\beta, A\in NS(X)_\Q$ with $A$ ample. Let $F$ be a pure dimension one sheaf on $X.$ We define the $\beta$-twisted Euler characteristic as
\[  \chi^\beta(F) = \chi(F) + \c_1(F)\cdot \beta \]
and the $\beta$-twisted slope as 
\[ \mu^\beta (F) = \frac{\chi^\beta(F)}{\c_1(F)\cdot A}. \]
\begin{definition}
    A pure dimension one sheaf $F$ is said to be $\beta$-twisted $A$-Gieseker stable (resp. semistable) if for any proper subsheaf $F'\subset F$, one has
    \[ \mu^\beta(F') < \mu^\beta(F), \quad (\textrm{resp.  }\leq) \]
\end{definition}

As we are considering the relative moduli problems, we will need to make a choice of a rational Neron-Severi class $\bm{\beta}$ and an ample line bundle $\bm{A}$ on $\bm {Z}$ both relative to $\bm{\pi}_{\m{N}}$. 
        \begin{itemize}
            \item (A choice of $\bm{\beta}$) For a set of rational numbers $\{\b_{i,j}|i\in I, j \in J_i\}$ with $0 \leq \b_{i,1} < \cdots < \b_{i,\ell(i)} $ for $i \in I$, we choose $\bm{\b}= \sum_{i\in I}\sum_{j \in J_i} \b_{i,j}\bm{\Xi}_{i,j}$ on $\mathrm{NS}_{\bm{\pi}}(Z)_\Q$. 
            \item (A choice of $\bm{A}$) One can prove that there exists a relatively ample divisor $\bm{A}$ on $\bm{Z}$ which can be written as 
            $\bm{A}=\kappa \bm{f}^*(\bm{D}) + \lambda \bm{C_\infty} -\sum_{i \in I, j \in J_i}\kappa_{i,j}\bm{\Xi}_{i,j}$ for sufficiently large enough $\kappa \gg\kappa_{i,j}$ and $\kappa\gg\lambda$. 
           
        \end{itemize}

With the fixed data: $r \geq 1$, $c \in \mathbb{Q}$, $d\in \mathbb{Z}$, $\bm{\beta}$, $\bm{A}$, and $\bm{\sm}({\vec{m}})$, we define the relative moduli stack $\bm{\overline{\f M}}$ over the base $\nm$ of $\bm{\beta}$-twisted $\bm{A}$-Gieseker semistable pure dimension one sheaves $\bm{\m F}$ on the family $\bm{Z}\to \nm$ with topological invariants $ (0,\bm{\sm}(\vm),c)$. In other words, for each $\vxi\in \nm$, if we let $\m{F}_{\vxi}$ denote the restriction of $\bm{\m{F}}$ to $Z_{\vxi}$, then $\m{F}_{\vxi}$ is $\beta_{\vxi}$-twisted  $A_{\vxi}$-Gieseker semistable and satisfies the following topological constraints:\[(\ch_0(\m{F}_{\vxi}),\ch_1(\m{F}_{\vxi}), \chi(\m{F}_{\vxi}))= (0, \Sigmamxi, c).\] 
Here $\beta_{\vxi}$ and $A_{\vxi}$ are the restriction of $\bm{\b}$ and $\bm{A}$ to $Z_{\vxi}$.

Recall that $\bm{S}\to \nm$ is the family of open holomorphic symplectic surfaces in $\bm{Z}\to \nm$. We define the open substack $\bm{\f M}(\vm)\subset \overline{\bm{\f M}(\vm)}$ of sheaves compactly supported on $\bm{S}\to \nm$. Equivalently, for each $\vxi\in \nm$, the restriction $\m{F}_{\vxi}$ is a pure dimension one sheaf on $S_{\vxi}\subset Z_{\vxi}$ with compact support. 

By \cite[Theorem 2.8]{yoshioka}, the relative moduli stack $\overline{\bm{\f{M}}(\vm)}\to \nm$ admits a relative coarse moduli space $\overline{\bm{\m{M}}(\vm)}\to \nm$, which is projective over $\nm$. There is a natural morphism defined by taking the Fitting support morphism:  
\begin{equation*}
    \begin{tikzcd}
    \overline{\bm{\m{M}}(\vm)}\arrow[rr,"\overline{\bm{\phi}}"]\arrow[rd]&& \overline{\bm{B}(\vm)}\arrow[ld]\\
        &\nm &
    \end{tikzcd}
\end{equation*}
As $\overline{\bm{\m{M}}(\vm)}$ is proper over $\nm$ and $\overline{\bm{B}(\vm)}$ is clearly separated over $\nm$, it follows that the Fitting support morphism $\overline{\bm{\phi}}$ is also proper. By definition, the preimage $\bm{\m M}(\vm):= \bm{\overline{\phi}}^{-1}(\bm{B}({\vec{m}}))$ becomes the relative coarse moduli space of $\bm{\f M}(\vm)$. Restricting the target of $\overline{\bm{\phi}}$ to the open subset $\bm{B}(\vec{m})\subset \overline{\bm{B}(\vec{m})}$ (see Remark \ref{affine bundle}), we have the following commutative diagram
\begin{equation*}
     \begin{tikzcd}
        \bm{\m{M}}(\vm) \arrow[rr,"\bm{\phi}"]\arrow[rd]&& \bm{B}({\vec{m}})\arrow[ld]\\
        &\nm &
    \end{tikzcd}
\end{equation*}
 
\subsection{Single blow-up}\label{s:sheaf local}
In this subsection, we first study the case of a blow-up at a single point in a more general setup to establish a relation between parabolic sheaves on a surface and its blow-up. We begin with a general definition of parabolic sheaf (cf. \cite[Definition 3.5]{AkerSzabo}). 

\begin{definition}[Parabolic sheaf]\label{defn:parabolic_sheaf}
Let $M$ be a smooth projective surface and let $L \subset M$ be an effective divisor. A parabolic sheaf $\mathcal{G}_\bullet$ on $M$ with the parabolic divisor $L$ consists of:
\begin{enumerate}
    \item A pure dimension one sheaf $G$ whose support does not share any irreducible components with $L$.
    \item A filtration of length $\ell$ by subsheaves:
    $$ G_\bullet : G_\ell \subset G_{\ell-1} \subset \dots \subset G_1 \subset G_0 = G $$
    such that $G_\ell = G \otimes \mathcal{O}_M(-L)$.
\end{enumerate}
There is an equivalent description of this filtration: a tower of quotient sheaves $R_j := G/G_j$ for $j=1, \dots, \ell$, which form a sequence of surjections:
$$ R_\bullet : G \twoheadrightarrow R_\ell \twoheadrightarrow R_{\ell-1} \twoheadrightarrow \dots \twoheadrightarrow R_1. $$
The condition on the filtration implies that $R_\ell \cong G|_L$.
\end{definition}
\begin{rem}
   In the literature, a parabolic weight is usually included in the definition of a parabolic sheaf. However, since we will not deal directly with the stability of parabolic sheaves, we omit this additional data.
\end{rem}
\begin{rem}\label{rem:support}
    A direct consequence of this definition is that all the quotient sheaves $R_j$ are $\O_L$-modules, that is, 
    \[R_j\xrightarrow{\sim} R_j|_{L}\quad \textrm{for}\quad j=1,\dots, \ell.\]
    Indeed, since $R_\ell\cong G|_L$ is an $\O_L$-module, the surjections $R_\ell\twoheadrightarrow R_j$ imply that all $R_j$ are $\O_L$-modules as well. We will refer to this property frequently. 
\end{rem}

Since we are mainly interested in the case where the parabolic sheaves come from $\vxi$-parabolic Higgs bundles, we also define the corresponding notion for parabolic sheaves.
\begin{definition}\label{def:xi,L par}
    Let $\uxi = (\xi_1,\dots, \xi_\ell)$ where $\xi_j$'s are not necessarily distinct points in $L$. A parabolic sheaf $\mathcal{G}_\bullet$ is said to satisfy the $(\uxi,L)$-parabolic condition if for $j=1,\dots, \ell$, the kernel $K_j:=\operatorname{ker}(R_j\to R_{j-1})\cong G_{j-1}/G_{j}$ (with $R_{-1}=0$) is a skyscraper sheaf supported at a single reduced point $\xi_j\in L$. 
\end{definition}

Let $M$ be a smooth projective surface and let $p:M_1 \to M$ be the blow-up map at a point $\xi \in M$, with exceptional divisor $E$. Let $L$ be a smooth curve in $M$ that contains $\xi$ and $L_1$ its strict transform in $M_1$. Our goal is to transform a parabolic sheaf $\mathcal{G}_\bullet$ on $M$ to a new one on $M_1$. Let us define a sheaf $F_1$ on $M_1$ as follows:
\[F_1:= \operatorname{ker}(p^*G\to p^*R_1) = \operatorname{Im}(p^*G_1\to p^*G).\] 
Consider the following diagram obtained by pulling back the tower of surjections of $G$ along the blow-up map $p$ and composing with the restriction to $L_1$: 
    \begin{equation*}
        \begin{tikzcd}
            p^*G\arrow[d,two heads]\arrow[rrd,two heads]\arrow[rrrd,two heads] &&&\\
            p^*R_\ell\arrow[d,two heads]\arrow[r,two heads] &\dots \arrow[r, two heads] &p^*R_2\arrow[d,two heads]\arrow[r,two heads] &p^*R_1\arrow[d,two heads]\\
            p^*R_\ell|_{L_1}\arrow[r,two heads] &\dots \arrow[r, two heads] &p^*R_2|_{L_1}\arrow[r,two heads] &p^*R_1|_{L_1}
        \end{tikzcd}
    \end{equation*} 
    Since $F_1$ maps to $p^*G$, this diagram induces a tower of surjections on $F_1$ 
    \begin{equation}
        \begin{tikzcd}
            F_1\arrow[d,two heads]\arrow[rrd,two heads]\arrow[rrrd,two heads] &&&\\
            I_\ell\arrow[r,two heads] &\dots \arrow[r, two heads] &I_2\arrow[r,two heads] &I_1= 0
        \end{tikzcd}
    \end{equation} 
    where $I_j = \operatorname{Im}(F_1 \to p^*R_j|_{L_1})$. One can then define a filtration $F_\bullet$ by taking their kernels $F_j=\operatorname{ker}(F_1\twoheadrightarrow I_{j})$. We will denote by $\Psi_p(\mathcal{G}_\bullet)$ this sheaf $F_1$ with its filtration $F_\bullet$ (or its tower of surjections $I_\bullet$). Since $I_1=0$, the length of the filtration $F_\bullet= F_\ell\subset ...\subset F_2\subset F_1$ on $F_1$ is $\ell-1$. 

\begin{prop}\label{prop:parabolic_properties}
Let $\mathcal{G}_\bullet$ be a parabolic sheaf of length $\ell$ on $M$ with the parabolic divisor $L$, and fix a point $\xi \in L$.
\begin{enumerate}
    \item $\Psi_p(\mathcal{G}_\bullet)$ is a parabolic sheaf of length $\ell-1$ on $M_1$ with parabolic divisor $L_1$. 
    
    \item Let $\uxi= (\xi_1,\dots,\xi_\ell)$ be a tuple of points in $L$ with $\xi_1=\xi$. If $\mathcal{G}_\bullet$ satisfies the $(\uxi,L)$-parabolic condition, then $\mathcal{F}_\bullet:=\Psi_p(\mathcal{G}_\bullet)$ satisfies the $(\uxi',L_1)$-parabolic condition. Here we denote by $\uxi'=(\xi'_2, \dots, \xi'_\ell)$ the tuple of points in $L_1$ where $\xi'_j=E\cap L_1$ if $\xi_j=\xi$ and $\xi'_j= p^{-1}(\xi_j)$ otherwise. 
\end{enumerate}
\end{prop}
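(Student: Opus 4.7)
The strategy is to reduce all verifications to local calculations near the blow-up center $\xi$. Away from $\xi$, the map $p$ is an isomorphism, so all constructions transport naturally and the verifications are routine. Near $\xi$, I will work in local coordinates $(x,y)$ on $M$ with $L = \{y = 0\}$ and $\xi = (0,0)$, and on the blow-up chart of $M_1$ with coordinates $(x',u)$ satisfying $x = x'$, $y = x'u$, so that $E = \{x' = 0\}$ and $L_1 = \{u = 0\}$; in these coordinates, $L_1$ meets $E$ transversally at the single reduced point $(0,0)$.

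For part (1), the length $\ell-1$ of the filtration $F_\bullet$ is immediate from the definition, since $I_1 = 0$. Purity of $F_1$ and the support condition follow from the defining sequence $0 \to F_1 \to p^*G \to p^*R_1 \to 0$: any one-dimensional torsion in $p^*G$ can occur only along $E$, and $p^*R_1$ (supported on $E \cup L_1$) precisely absorbs this torsion, leaving $F_1$ pure of dimension one with support on the strict transform of $\mathrm{supp}(G)$, which does not contain $L_1$ since $\mathrm{supp}(G)$ does not contain $L$. For the equality $F_\ell = F_1 \otimes \O_{M_1}(-L_1)$, I would establish a natural isomorphism $I_\ell \cong F_1|_{L_1}$ via the commutative diagram arising from the chain of surjections $p^*G \twoheadrightarrow p^*R_\ell = p^*(G|_L)$ followed by restriction to $L_1$. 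Once this identification is obtained, the short exact sequence $0 \to F_1(-L_1) \to F_1 \to F_1|_{L_1} \to 0$ (which holds because $F_1$ has no Tor along $L_1$, thanks to the support condition) gives $F_\ell = \ker(F_1 \to F_1|_{L_1}) = F_1 \otimes \O_{M_1}(-L_1)$ as desired.

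For part (2), the successive kernels $K'_j := \ker(I_j \to I_{j-1}) = F_{j-1}/F_j$ (with $K'_2 = I_2$) are naturally quotients of $p^*K_j \cong p^*\C_{\xi_j}$ induced from $F_1 \hookrightarrow p^*G \twoheadrightarrow p^*R_j$. If $\xi_j \neq \xi$, then $p$ is a local isomorphism near $\xi_j$, so $p^*K_j = \C_{p^{-1}(\xi_j)}$ is already a reduced skyscraper at $\xi'_j = p^{-1}(\xi_j) \in L_1$, and a direct trace through the construction shows $K'_j \cong \C_{\xi'_j}$. If $\xi_j = \xi$, then $p^*K_j = p^*\C_\xi \cong \O_E$; restricting to $L_1$ yields $\O_E \otimes \O_{L_1} = \O_{E \cap L_1} = \C_{\xi'_j}$ (reduced by the smoothness of $L$ at $\xi$), and an explicit computation in the $(x',u)$ coordinates then identifies $K'_j$ with this skyscraper.

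The main technical obstacle is the identification $I_\ell \cong F_1|_{L_1}$ in part (1): it requires verifying the injectivity of the natural map $F_1|_{L_1} \to p^*G|_{L_1}$, equivalently controlling the connecting Tor term $\mathrm{Tor}_1^{\O_{M_1}}(p^*R_1, \O_{L_1})$ in the long exact sequence obtained by tensoring $0 \to F_1 \to p^*G \to p^*R_1 \to 0$ with $\O_{L_1}$. This Tor is supported only at the single point $E \cap L_1$ and its contribution must be shown to vanish under the composition with $F_1$; this, together with the $\xi_j = \xi$ case in part (2), will be handled by module-theoretic computations in the blow-up chart, using that the relevant sheaves admit simple presentations in the coordinates $(x',u)$.
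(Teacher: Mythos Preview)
Your strategy is workable but more laborious than the paper's, which hinges on one observation you do not exploit: the restriction $p|_{L_1}\colon L_1 \to L$ is an isomorphism. Since every $R_j$, $K_j$, and $G_{j-1}/G_j$ is an $\O_L$-module (all are subquotients of $R_\ell = G|_L$), the functor $S \mapsto (p^*S)|_{L_1}$ on $\O_L$-modules is simply transport along this isomorphism and is therefore exact. This collapses part~(2) to a single diagram chase: the sequence $0 \to p^*K_j|_{L_1} \to p^*R_j|_{L_1} \to p^*R_{j-1}|_{L_1} \to 0$ remains exact, and one reads off $K'_j \cong p^*K_j|_{L_1}$, a reduced skyscraper at $\xi'_j$, with no case split on whether $\xi_j = \xi$. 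Your purity argument is also roundabout: the paper shows separately (via a locally free resolution of $G$) that $p^*G$ is already pure, so $F_1 \subset p^*G$ is pure immediately---there is no torsion for $p^*R_1$ to ``absorb''.

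You correctly isolate $I_\ell \cong F_1|_{L_1}$ as the crux of part~(1) and trace it to the connecting map from $\mathrm{Tor}_1^{\O_{M_1}}(p^*R_1, \O_{L_1})$. But beware: for a general parabolic sheaf this Tor need not vanish, and the identification can fail. Take $G = \O_C$ with $C = \{y = x^2\}$ tangent to $L = \{y=0\}$ at $\xi=(0,0)$, and the degenerate length-$2$ filtration $G_1 = G_2 = G(-L)$; then $R_1 = G|_L \cong \C[x]/(x^2)$ has length $2$, one finds $F_1 \cong \O_{C'}$ for the strict transform $C'$, and a direct computation gives $F_1|_{L_1} \cong \C$ while $I_2 = 0$. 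So your promised ``module-theoretic computation'' cannot succeed for part~(1) in the stated generality. What makes it work in the cases the paper actually uses is the $(\uxi,L)$-parabolic hypothesis with $\xi_1 = \xi$: then $R_1 = \C_\xi$, so $p^*R_1 \cong \O_E$, and $\mathrm{Tor}_1(\O_E, \O_{L_1}) = 0$ since $E$ and $L_1$ meet transversally. Under that hypothesis your Tor argument does go through; without it, part~(1) needs an additional assumption.
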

\begin{proof}
    First, we prove (1). The purity of $F_1$ follows from the purity of $p^*G$ (Lemma \ref{l:local pullback}). Since $\operatorname{Supp}(F_1)\subset \operatorname{Supp}(p^*G)$ and the latter is the pullback of $\operatorname{Supp}(G)$, it is clear that both of them do not share any irreducible components with $L$. Note that the restriction of $p$ to $L_1$ is an isomorphism $p|_{L_1}: L_1\xrightarrow{\sim} L$. So, $p^*(R_{\ell})|_{L_1} \cong p^*(G|_L)|_{L_1} \cong (p^*G)|_{L_1}$. Then the map $F_1\twoheadrightarrow I_\ell$ sits in 
        \begin{equation*}
            \begin{tikzcd}
            p^*G_1\arrow[r,two heads]\arrow[d, two heads]&F_1\arrow[r,hook]\arrow[d,two heads]&p^*G\arrow[d,two heads]\\
            p^*G_1|_{L_1}\arrow[r,two heads]&I_\ell\arrow[r,hook]&p^*G|_{L_1}
            \end{tikzcd}
        \end{equation*}
        It follows that $I_\ell\cong F_1|_{L_1}$. 

         Next we prove (2). We need to show that the kernel $K'_j:=\operatorname{ker}(I_j\to I_{j-1})$ is supported at the point $\xi'_j$ for $j=2,\dots, \ell$. Since $\mathcal{G}_\bullet$ is a $(\uxi,L)$-parabolic sheaf, the sheaves $K_j, R_j , R_{j-1}$ are all $\O_L$-modules, so the sequence $0\to p^*(K_j)|_{L_1}\to p^*(R_j)|_{L_1}\to p^*(R_{j-1})|_{L_1}\to 0 $ is exact by the reason that $L_1\to L$ is an isomorphism. Then we have $K'_j\hookrightarrow I_j\twoheadrightarrow I_{j-1}$ sits in 
        \begin{equation*}
            \begin{tikzcd}
            p^*(G_{j-1}/G_j)|_{L_1}\arrow[r,dashed]\arrow[d]&K'_j\arrow[d,hook]\arrow[r,dashed]& p^*(K_{j})|_{L_1}= p^*(G_{j-1}/G_{j})|_{L_1}\arrow[d,hook]\\
            p^*(G_1/G_j)|_{L_1}\arrow[r,two heads]\arrow[d, two heads]&I_j\arrow[r,hook]\arrow[d,two heads]&p^*(R_j)|_{L_1} = p^*(G/G_j)|_{L_1}\arrow[d,two heads]\\
            p^*(G_1/G_{j-1})|_{L_1}\arrow[r,two heads]&I_{j-1}\arrow[r,hook]&p^*(R_{j-1})|_{L_1} = p^*(G/G_{j-1})|_{L_1}
            \end{tikzcd}
        \end{equation*}
        It follows that $K'_j = p^*(G_{j-1}/G_J)|_{L_1}$ is a skyscraper sheaf supported at $\xi'_j$ since $G_{j-1}/G_j\cong K_j$ is supported at $\xi_j\in L.$ 
        
\end{proof}        

In order to study how the numerical invariants of the underlying sheaf transform under $\Psi_p$, we recall the following fact which can be found in {\cite[Lemma 3.5 and Lemma 3.6]{CGKT2020}}. For the completeness, we include their proof here. 
\begin{lem}\label{l:local pullback}
    Let $G$ be a pure dimension one sheaf on $M$. Then $p^*G$ is pure with $\chi(p^*G)=\chi(G)$. Moreover, $p_*p^*G = G$. 
\end{lem}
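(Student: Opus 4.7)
The plan is to prove the three assertions---purity of $p^*G$, the equality $p_*p^*G = G$, and $\chi(p^*G) = \chi(G)$---by combining a local length-one free resolution of $G$ with the standard vanishing $Rp_*\O_{M_1} = \O_M$ for the blow-up of a smooth point on a smooth surface. This reduces everything to controlling the potential Tor-contributions that could arise because $p$ is not flat at the exceptional fiber.

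First I would establish purity of $p^*G$ via a local resolution. Since $G$ is pure of dimension one on the smooth surface $M$, at every $x \in \operatorname{Supp}(G)$ the stalk $G_x$ is a depth-one module over the regular local ring $\O_{M,x}$ of dimension two, so Auslander--Buchsbaum gives $\operatorname{pd}_{\O_{M,x}} G_x = 1$. Because $G_x$ has rank zero at the generic point, a minimal free resolution must be square:
\[
0 \to \O_{M,x}^a \xrightarrow{\phi} \O_{M,x}^a \to G_x \to 0,
\]
with $\det\phi$ a nonzero element cutting out the Fitting support. Applying $p^*$ at any $y \in p^{-1}(x)$, the image $p^*(\det\phi)$ remains a nonzero element of the domain $\O_{M_1,y}$, so by Cramer's rule $p^*\phi$ stays injective. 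The resulting length-one free resolution of $(p^*G)_y$ forces $\operatorname{depth}(p^*G)_y = 1$ by Auslander--Buchsbaum, which together with the fact that $\operatorname{Supp}(p^*G) = p^{-1}(\operatorname{Supp}(G))$ is still one-dimensional establishes that $p^*G$ is pure of dimension one.

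Next, the exactness of the pulled-back complex shows that $\operatorname{Tor}_i^{\O_M}(\O_{M_1},G) = 0$ for all $i \geq 1$, so the underived pullback agrees with the derived pullback: $p^*G \cong Lp^*G$. By the derived projection formula,
\[
Rp_*(p^*G) \;\cong\; G \otimes^{L}_{\O_M} Rp_*\O_{M_1}.
\]
The standard calculation for the blow-up of a smooth point on a smooth surface---via the theorem on formal functions applied to the exceptional $\P^1$ with normal bundle $\O(-1)$---gives $Rp_*\O_{M_1} = \O_M$. Therefore $Rp_*p^*G \cong G$; taking $H^0$ yields $p_*p^*G = G$ together with the vanishing of the higher direct images, and the Leray spectral sequence then immediately delivers $\chi(p^*G) = \chi(Rp_*p^*G) = \chi(G)$.

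The main subtle point will be the derived identification $p^*G = Lp^*G$: since $p$ fails to be flat precisely along the exceptional fiber, \emph{a priori} $G$ could contribute nontrivial $\operatorname{Tor}^1$ there, and without this vanishing both the projection-formula argument and the Euler-characteristic equality would break. What rescues the argument is the strong local structure of pure dimension-one sheaves on a smooth surface: being Cohen--Macaulay of codimension one, they have projective dimension exactly one locally, and the resulting uniform square resolution simultaneously controls the Tor terms and makes the purity of $p^*G$ transparent. Everything else---the projection formula, $Rp_*\O_{M_1}=\O_M$, Leray---is standard once this step is secured.
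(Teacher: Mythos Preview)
Your proposal is correct and follows essentially the same approach as the paper: a length-one locally free resolution of $G$ (the paper takes a global one, you work stalkwise), exactness of its pullback via injectivity of $p^*\phi$, Auslander--Buchsbaum for purity, then the projection formula with $Rp_*\O_{M_1}=\O_M$ and Leray for $p_*p^*G=G$ and the Euler-characteristic equality. Your derived-category phrasing and the determinant argument for injectivity are slightly more explicit than the paper's version, but the underlying strategy is identical.
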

\begin{proof}
    Since $G$ is pure of dimension one, we take a locally free resolution 
    \[0 \to E_2\to E_1\to G\to 0.\]
    Applying $p^*$ is left exact because the kernel of $p^*E_2\to p^*E_1$ must be supported on $E$, which cannot be a subsheaf of the locally free sheaf $p^*E_2$. Then the purity of $p^*G$ follows from the Auslander-Buchsbaum formula. 
    
    Taking the pushforward $p_*$, we get 
        \[ 0\to p_*p^*E_2\to p_*p^* E_1 \to p_*p^*G \to R^1p_*p^*E_2 \to \dots.  \]
    By the projection formula, since $E_1$ and $E_2$ are locally free and $R^jp_*\O_{M_1}=0$ for $j>1$, we deduce that the first two terms are just $E_2$ and $E_1$, and  $R^1p_*p^*E_i=0$ for $i=1,2$. Hence, it follows that $p_*p^*G=G$ and $R^1p_*p^*G=0$. By Leray theorem, $\chi(p^*G)=\chi(p_*p^*G)-\chi(R^1p_*p^*G)=\chi(G)-0=\chi(G)$.
\end{proof}
\begin{prop}\label{l:local pullback 2}
Let $\uxi= (\xi_1,\dots,\xi_\ell)$ be a tuple of points in $L$ with $\xi_1=\xi$.
Let $\mathcal{G}_\bullet$ be a $(\uxi,L)$-parabolic sheaf on $M$ and let $F_1$ be the underlying sheaf of $\Psi_p(\mathcal{G}_\bullet)$. Then $\c_1(F_1)= p^*\c_1(G)-mE$, where $m=\operatorname{rank}(R_1)$,  and $\chi(F_1\otimes \O(E))=\chi(G)$. 
\end{prop}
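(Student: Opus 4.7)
The plan is to extract both identities from the defining short exact sequence
\begin{equation*}
0 \to F_1 \to p^*G \to p^*R_1 \to 0,
\end{equation*}
which is exact because $G \twoheadrightarrow R_1$ and $p^*$ is right exact. For the first Chern class, I would handle the two endpoints separately. Choosing a locally free resolution $0 \to E_2 \to E_1 \to G \to 0$, the argument in the preceding lemma shows that its pullback remains exact, so $c_1(p^*G) = p^*c_1(E_1) - p^*c_1(E_2) = p^*c_1(G)$. For $p^*R_1$, the $(\uxi,L)$-parabolic hypothesis forces $R_1 = K_1$ to be a skyscraper sheaf at the reduced point $\xi$, which gives $R_1 \cong \mathcal{O}_\xi^{\oplus m}$ with $m = \operatorname{rank}(R_1)$; hence $p^*R_1 \cong \mathcal{O}_E^{\oplus m}$ and $c_1(p^*R_1) = mE$. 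Additivity of $c_1$ along the sequence then yields $c_1(F_1) = p^*c_1(G) - mE$.

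For the Euler characteristic, I would tensor the defining sequence with the line bundle $\mathcal{O}(E)$ (preserving exactness):
\begin{equation*}
0 \to F_1 \otimes \mathcal{O}(E) \to p^*G \otimes \mathcal{O}(E) \to p^*R_1 \otimes \mathcal{O}(E) \to 0.
\end{equation*}
The key input is $Rp_*\mathcal{O}_{M_1}(E) = \mathcal{O}_M$: applying $Rp_*$ to the standard sequence $0 \to \mathcal{O}_{M_1} \to \mathcal{O}(E) \to \mathcal{O}_E(-1) \to 0$ and using $Rp_*\mathcal{O}_{M_1} = \mathcal{O}_M$ together with the vanishing of $Rp_*\mathcal{O}_E(-1)$ (which reduces to $H^\bullet(\mathbb{P}^1, \mathcal{O}(-1)) = 0$) delivers this. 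Combined with $L^i p^*G = 0$ for $i > 0$, extracted from the proof of the preceding lemma, the derived projection formula yields $Rp_*(p^*G \otimes \mathcal{O}(E)) = G$, whence $\chi(p^*G \otimes \mathcal{O}(E)) = \chi(G)$. On the other hand, $p^*R_1 \otimes \mathcal{O}(E) \cong \mathcal{O}_E(-1)^{\oplus m}$ (since $\mathcal{O}(E)|_E = \mathcal{O}_E(-1)$), which has Euler characteristic zero. Additivity of $\chi$ along the tensored sequence then concludes $\chi(F_1 \otimes \mathcal{O}(E)) = \chi(G)$.

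The main delicate point is extracting the identification $R_1 \cong \mathcal{O}_\xi^{\oplus m}$ cleanly from the parabolic condition: one must read \emph{skyscraper sheaf supported at a single reduced point} as asserting that the scheme-theoretic support of $R_1$ is the reduced point $\xi$, so that $\mathfrak{m}_\xi$ annihilates $R_1$ and it splits as $\mathcal{O}_\xi^{\oplus m}$. Without this, $p^*R_1$ can carry a higher Tor contribution that breaks the clean formulas (for instance $p^*(\mathcal{O}_M/\mathfrak{m}_\xi^2) \cong \mathcal{O}_{2E}$ has $c_1 = 2E$ even though its length is $3$). Granted the intended interpretation, both assertions reduce to routine additivity along the defining sequence and the blow-up projection formula.
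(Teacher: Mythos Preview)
Your proof is correct and follows essentially the same route as the paper: both work with the defining sequence $0\to F_1\to p^*G\to p^*R_1\to 0$, identify $p^*R_1\cong \O_E^{\oplus m}$ from the $(\uxi,L)$-parabolic hypothesis, and compute the two invariants by additivity after tensoring with $\O(E)$. The one noteworthy difference is in how $\chi(p^*G\otimes\O(E))=\chi(G)$ is established: you do it directly via $Rp_*\O_{M_1}(E)=\O_M$ and the derived projection formula, whereas the paper forward-references the proof of the subsequent Theorem (which shows $p_*F_0\cong G$ and $R^1p_*F_0=0$) to get the same conclusion. Your argument is thus slightly more self-contained at this point in the exposition. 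Your caveat about reading ``skyscraper at a single reduced point'' as annihilated by $\mathfrak{m}_\xi$ is exactly the intended interpretation and is how the paper uses it.
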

\begin{proof}
     Recall that $F_1=\ker(p^*G \to p^*R_1)$. Since $R_1$ is a skyscraper sheaf supported at $\xi_1=\xi$, $p^*R_1 = \O_E^{\oplus m}$. Hence, we have $\c_1(p^*R_1) = mE$ and  $\c_1(F_1) = p^*c_1(G) - mE$.
     
     Then we claim that $\chi(p^*R_1 \otimes \O(E)) = 0$. In fact, 
    \[
    \begin{aligned}
        & p_*(p^*R_1\otimes \O(E)) = H^0(E, p^*R_1 \otimes \O_E(E)) \cong H^0(\P^1, \O(-1)^{\oplus m})=0, \\
        & R^1p_*(p^*R_1\otimes \O(E)) = H^1(\P^1, \O(-1)^{\oplus m}) = 0.
    \end{aligned}
    \]
    Finally, it follows from Theorem \ref{thm:recovery} and its proof that 
    \[ \chi(F_1\otimes \O(E)) = \chi(F_0)=  \chi(p_*F_0) - \chi(R^1p_*F_0) = \chi(G). \]    
    
\end{proof}

\begin{theorem}\label{thm:recovery}
Let $\uxi= (\xi_1,\dots,\xi_\ell)$ be a tuple of points in $L$ with $\xi_1=\xi$.
Let $\mathcal{G}_\bullet$ be a $(\uxi,L)$-parabolic sheaf on $M$ with filtration $G_\ell \subset \dots \subset G_1 \subset G_0=G$. Let $F_\ell \subset \dots \subset F_2 \subset F_1$ be the filtration of the parabolic sheaf $\mathcal{F}_\bullet= \Psi_p(\mathcal{G}_\bullet)$. Consider the following chain of morphisms:
$$\mathcal{F}_{\bullet \to 0}:  F_\ell \hookrightarrow \dots \hookrightarrow F_2 \hookrightarrow F_1 \xrightarrow{\delta} F_0:= F_1 \otimes \mathcal{O}_{M_1}(E) $$
where $\delta$ is obtained from the natural map $\O_{M_1}\to \O_{M_1}(E)$.

Applying the pushforward functor $p_*$ to this chain recovers the original filtration term-by-term. That is, for each $i=0, \dots, \ell$, we have
$$ p_*(F_i) \cong G_i. $$
Furthermore, the pushforwards of the maps in the chain $\mathcal{F}_{\bullet \to 0}$ recover the inclusion maps of the original filtration.

\end{theorem}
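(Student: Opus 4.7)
The plan is to prove $p_*F_i \cong G_i$ for $i=0,1,\dots,\ell$ by induction on $i$, starting from the two endpoints $F_1$ and $F_0 = F_1(E)$ and descending through the filtration via the short exact sequences $0 \to F_j \to F_{j-1} \to K'_j \to 0$ supplied by Proposition \ref{prop:parabolic_properties}.

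For the base cases, I start from the defining sequence $0 \to F_1 \to p^*G \to p^*R_1 \to 0$. Since $R_1$ is a skyscraper at $\xi$ of rank $m$, one has $p^*R_1 \cong \mathcal{O}_E^{\oplus m}$, giving $p_*p^*R_1 \cong R_1$ and $R^1p_*p^*R_1 = 0$; combined with Lemma \ref{l:local pullback}, pushing forward produces the exact sequence $0 \to p_*F_1 \to G \to R_1 \to R^1p_*F_1 \to 0$, and by naturality the map $G \to R_1$ recovers the original quotient, so $p_*F_1 \cong G_1$ with $R^1p_*F_1 = 0$. For $F_0$, I tensor the sequence with $\mathcal{O}(E)$: the term $p^*R_1 \otimes \mathcal{O}(E) \cong \mathcal{O}_{\mathbb{P}^1}(-1)^{\oplus m}$ has vanishing $Rp_*$, while the projection formula yields $Rp_*(p^*G \otimes \mathcal{O}(E)) \cong G$, so $p_*F_0 \cong G = G_0$. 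The identification of $p_*\delta : p_*F_1 \to p_*F_0$ with the original inclusion $G_1 \hookrightarrow G$ then follows by pushing forward the obvious commutative square formed by $F_1 \hookrightarrow p^*G$ and $F_0 \hookrightarrow p^*G(E)$.

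For the inductive step, I assume $p_*F_{j-1} \cong G_{j-1}$ as subsheaves of $G$ and $R^1p_*F_{j-1} = 0$. Pushing forward $0 \to F_j \to F_{j-1} \to K'_j \to 0$, together with $p_*K'_j \cong K_j$ and $R^1p_*K'_j = 0$ (since $p|_{L_1}:L_1 \xrightarrow{\sim} L$ and $K'_j$ is a skyscraper), produces
\[ 0 \to p_*F_j \to G_{j-1} \xrightarrow{\phi} K_j \to R^1p_*F_j \to 0. \]
The core task is to identify $\phi$ with the quotient $G_{j-1} \twoheadrightarrow G_{j-1}/G_j = K_j$. I would do this by considering the commutative square with vertical inclusions $F_{j-1} \hookrightarrow p^*G$ and $K'_j \hookrightarrow p^*R_j|_{L_1}$, and with top row the quotient $F_{j-1} \twoheadrightarrow K'_j$, which coincides with the composition $F_{j-1} \hookrightarrow p^*G \to p^*R_j|_{L_1}$ because its image lies in $\ker(p^*R_j|_{L_1} \to p^*R_{j-1}|_{L_1}) \cong K'_j$ by the very definition of $F_{j-1}$. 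Pushing forward this square and using $p_*(p^*R_j|_{L_1}) \cong R_j$ turns the bottom row into the original quotient $G \twoheadrightarrow R_j$, so by commutativity $\phi$ is exactly the restriction of this quotient to $G_{j-1}$, which factors as $G_{j-1} \twoheadrightarrow G_{j-1}/G_j = K_j$. The surjectivity of $\phi$ then forces $R^1p_*F_j = 0$ and $p_*F_j \cong G_j$, closing the induction.

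The main technical hurdle will be verifying the identification $p_*(p^*R_j|_{L_1}) \cong R_j$ with the required naturality. This relies on the fact that $R_j$ is an $\mathcal{O}_L$-module by Remark \ref{rem:support}, combined with the isomorphism $p|_{L_1}: L_1 \xrightarrow{\sim} L$; even though the underived pullback $p^*R_j$ may carry nontrivial $\operatorname{Tor}$-contributions supported on $E$ whenever some $\xi_k$ coincides with $\xi$, these contributions disappear after restriction to $L_1$, and the push forward along $p|_{L_1}$ recovers $R_j$ exactly.
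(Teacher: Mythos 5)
Your proof is correct, and your base cases coincide with the paper's computations: the paper likewise establishes $p_*F_0\cong G_0$ from the twisted sequence $0\to F_0\to p^*G_0\otimes\O(E)\to p^*R_1\otimes\O(E)\to 0$, and recovers $G_1\hookrightarrow G_0$ by pushing forward the morphism between the two defining short exact sequences. Where you genuinely diverge is in the deeper filtration steps. The paper avoids any induction and any tracking of $R^1p_*$: it pushes forward the diagram sandwiching the quotients $I_j=F_1/F_j$ between $p^*(G_1/G_j)|_{L_1}$ and $p^*(G_0/G_j)|_{L_1}$, uses $p_*(p^*S|_{L_1})\cong S$ for $\O_L$-modules $S$ to get $p_*I_j\cong G_1/G_j$, and then left exactness of $p_*$ applied to $F_j=\ker(F_1\to I_j)$ yields $p_*F_j\cong G_j$ in one stroke. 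You instead induct over the graded pieces via $0\to F_j\to F_{j-1}\to K'_j\to 0$, identify the pushed-forward connecting map with $G_{j-1}\twoheadrightarrow K_j$, and propagate $R^1p_*F_j=0$; this is slightly longer bookkeeping but gives, as a by-product, the vanishing of all $R^1p_*F_j$ (a fact the paper only needs later, in Proposition \ref{prop:parabolic_converse} and Theorem \ref{thm:construction_is_inverse}). Two steps you should make explicit: the "projection formula" step $Rp_*(p^*G\otimes\O(E))\cong G$ needs $Lp^*G\simeq p^*G$, which is exactly the two-term locally free resolution argument in the proof of Lemma \ref{l:local pullback} (and of \eqref{e:pushforward G}); and the identification $\ker\bigl(p^*R_j|_{L_1}\to p^*R_{j-1}|_{L_1}\bigr)\cong K'_j$ is not purely definitional but is the content of the proof of Proposition \ref{prop:parabolic_properties}(2), resting on $K_j,R_j,R_{j-1}$ being $\O_L$-modules and on $p|_{L_1}:L_1\xrightarrow{\sim}L$ — the same fact that resolves the naturality of $p_*(p^*R_j|_{L_1})\cong R_j$, which you handle exactly as the paper does.
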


\begin{rem}
 When the support of $F_1$ doesn't share a component with $E$, the map $\delta$ is injective so that the chain of morphisms becomes a filtration of $F_0$ and makes $F_0$ a parabolic sheaf. 
\end{rem}
\begin{proof}
    First, we show that the pushforward of $ F_0$ recovers $G_0$. By definition, we have 
    \[ 0\to F_0\to p^*G_0 \otimes \O(E) \to p^*R_1 \otimes \O(E)\to 0.\]
    Pushing it forward along $p$ yields    
\begin{equation}\label{e:Up les}
    \begin{aligned}
    0 & \to p_*F_0 \to p_*(p^*G_0 \otimes \O(E)) \to p_*(p^*R_1 \otimes \O(E))\\
    & \to R^1p_*F_0 \to  R^1p_*(p^*G_0 \otimes \O(E)) \to R^1p_*(p^*R_1 \otimes \O(E)) \to 0.
\end{aligned}
\end{equation}
We claim that 
\begin{equation}\label{e:pushforward G}
    p_*(p^*G_0\otimes \O(E))\cong G_0, \quad R^1p_*(p^*G_0\otimes \O(E))=0. 
\end{equation}

As done in the proof of Lemma \ref{l:local pullback}, we take a locally free resolution $0 \to E_2\to E_1\to G_0\to 0$ and pull it back to $M_1$. Tensoring further with the line bundle $\O(E)$ and taking the pushforward along $p_*$, we get 
        \[ 0\to p_*(p^*E_2 \otimes \O(E))\to p_*(p^* E_1\otimes \O(E)) \to p_*(p^*G_0\otimes \O(E))\to 0 \]
    since $R^1p_*(p^*E_2\otimes \O(E)) =E_2 \otimes R^1p_*\O_{M_1}(E)=0$. Hence, the relation \eqref{e:pushforward G} follows. 
        
        Next, we show that $p_*(p^*R_1 \otimes \O(E))=0$. Since $p_*(p^*R_1 \otimes \O(E))$ is a coherent sheaf with $0$-dimensional support, it is enough to show that its length is $0$. By Leray theorem, we have 
        \[\chi(p^*R_1 \otimes \O(E))=\chi(p_*(p^*R_1 \otimes \O(E)))-\chi(R^1p_*(p^*R_1 \otimes \O(E))).\]
        We have already shown that $\chi(p^*R_1 \otimes \O(E))=0$ and $R^1p_*(p^*R_1 \otimes \O(E))=0$ which follows from \eqref{e:Up les} and \eqref{e:pushforward G}. Thus, $p_*(p^*R_1 \otimes \O(E))=0$. From the long exact sequence \eqref{e:Up les}, one can see that the relation \eqref{e:pushforward G} implies that $p_*F_0 \cong G_0$. 

        In order to recover the inclusion $G_1\hookrightarrow G_0$ from $\delta: F_1\to F_0$, we consider the morphism of two short exact sequences 
        \[
        \begin{tikzcd}
            0 \arrow[r] & F_1 \arrow[r] \arrow[d,"\delta"] & p^*G_0 \arrow[r] \arrow[d] & p^*R_1 \arrow[r] \arrow[d] & 0 \\
            0 \arrow[r] & F_0 \arrow[r] & p^*G_0 \otimes \O(E) \arrow[r] & p^*R_1 \otimes \O(E) \arrow[r] & 0.
        \end{tikzcd}
        \]
        Since we have shown $p_*F_0  \cong p_*(p^*G_0 \otimes \O(E)) \cong  G_0$, $p_*p^*G_0\cong G_0$ (Lemma \ref{l:local pullback}), and $p_*p^*R_1 \cong R_1$ due to the $(\uxi,L)$-parabolic condition, pushing forward the above diagram yields
        \[\begin{tikzcd}
            0 \arrow[r] & p_*F_1 \arrow[r] \arrow[d,"p_*\delta"] & G_0 \arrow[r] \arrow[d,"\sim"] & R_1 \arrow[r] \arrow[d] & 0 \\
            0 \arrow[r] & p_*F_0 \arrow[r,"\sim"] & G_0\arrow[r]& 0 \arrow[r] & 0.
        \end{tikzcd}
        \]
        Hence, $p_*F_1\cong G_1$ and the inclusion $G_1\hookrightarrow G_0$ is recovered. 
        
        For the other inclusion of $G_j$ where $j=2,\dots, \ell$, it suffices to show that the pushforward of the map $F_1\twoheadrightarrow I_j$ recovers $G_1\twoheadrightarrow G_1/G_j$ since that will imply that the pushforward of $F_j\to F_1$ is $G_j\hookrightarrow G_1$. By definition, the map $F_1\twoheadrightarrow I_j$ sits in 
        \begin{equation}\label{eq:filtration recovery}
            \begin{tikzcd}
            p^*G_1\arrow[r,two heads]\arrow[d, two heads]&F_1\arrow[r,hook]\arrow[d,two heads]&p^*G_0\arrow[d,two heads]\\
            p^*(G_1/G_j)|_{L_1}\arrow[r,two heads]&I_j\arrow[r,hook]&p^*(G_0/G_j)|_{L_1}.
            \end{tikzcd}
        \end{equation}
        Note that if an $\O_M$-module $S$ on $M$ is actually an $\O_L$-module, then we have $p_*(p^*S|_{L_1})\cong S$ since the restriction of $p$ to $L_1$ is an isomorphism $L_1\xrightarrow{\sim}L$. In particular, if we apply this to $S= G_1/G_j$ and $G_0/G_j$ (which are $\O_L$-modules by Remark \ref{rem:support}) and push the diagram \eqref{eq:filtration recovery} forward along $p$, we get 
        \begin{equation*}
            \begin{tikzcd}
            p_*p^*G_1\arrow[r]\arrow[d]&G_1\arrow[r,hook]\arrow[d]&G_0\arrow[d,two heads]\\
            G_1/G_j\arrow[r]&p_*I_j\arrow[r,hook]&G_0/G_j
            \end{tikzcd}
        \end{equation*}
        where the identification in the first row follows from the first part of the proof. It follows immediately that $p_*I_j\cong G_1/G_j$ and the claim follows. 
\end{proof}

Theorem \ref{thm:recovery} says that the original parabolic sheaf $\mathcal{G}_\bullet$ can be recovered from $\Psi_p(\mathcal{G}_\bullet)$ via the pushforward functor $p_*$ when applied to the corresponding chain of maps $\mathcal{F}_{\bullet\to 0}$ on $M_1$. For the rest of this section, we establish the other direction starting from a parabolic sheaf $\mathcal{F}_\bullet$ on $M_1$ under the additional assumption that the support of $F_1$ is integral, but not equal to the exceptional divisor.

\begin{prop}
\label{prop:parabolic_converse}
Let $\mathcal{F}_\bullet$ be a parabolic sheaf on $M_1$ of length $\ell-1$ with parabolic divisor $L_1$ and filtration $F_\ell\subset ...\subset F_2 \subset F_1$ on the underlying sheaf $F_1$. Assume the support of $F_1$ is integral, but not equal to the exceptional divisor $E$.  Consider the following chain of morphisms 
$$\mathcal{F}_{\bullet \to 0}:  F_\ell \hookrightarrow \dots \hookrightarrow F_2 \hookrightarrow F_1 \xrightarrow{\delta} F_0:= F_1 \otimes \mathcal{O}_{M_1}(E). $$
\begin{enumerate}
    \item Applying the pushforward functor $p_*$ to this chain defines a parabolic sheaf $\mathcal{G}_\bullet$ of length $\ell$ on $M$ with parabolic divisor $L$. 
    \item Let $\uxi'=(\xi'_2, \dots, \xi'_\ell)$ be a tuple of points in $L_1$ and let $\uxi = (\xi_1, \dots, \xi_\ell)$ be the corresponding tuple on $L$, where $\xi_1=\xi$ and $p(\xi'_j)=\xi_j$ for $j \ge 2$. If $\mathcal{F}_\bullet$ satisfies the $(\uxi',L_1)$-parabolic condition, then $\mathcal{G}_\bullet$ satisfies the $(\uxi,L)$-parabolic condition.
\end{enumerate}
\end{prop}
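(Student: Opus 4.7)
The plan is to apply the pushforward functor $p_*$ termwise to the chain $\mathcal{F}_{\bullet\to 0}$ and verify that the resulting chain on $M$ satisfies Definition \ref{defn:parabolic_sheaf} and, in the setting of part (2), Definition \ref{def:xi,L par}. The first step is to observe that $\delta$ is injective because the support of $F_1$ is integral and not equal to $E$: a local section of $\mathcal{O}(E)$ cutting out $E$ is a nonzerodivisor on the pure sheaf $F_1$. Tensoring the short exact sequence $0\to\mathcal{O}_{M_1}\to\mathcal{O}_{M_1}(E)\to\mathcal{O}_E(E)\to 0$ with $F_1$ then yields
\begin{equation*}
0\to F_1\xrightarrow{\delta} F_0\to F_1|_E\otimes\mathcal{O}_E(E)\to 0,
\end{equation*}
so $\mathcal{F}_{\bullet\to 0}$ is a genuine length-$\ell$ filtration of $F_0$.

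The crux of the argument is establishing that $R^1p_*F_j=0$ for every $j=0,1,\dots,\ell$, so that $p_*$ preserves each short exact sequence arising from the filtration. For $j\geq 1$, each $F_j$ is a subsheaf of $F_1$ whose support lies in the integral curve $C':=\mathrm{Supp}(F_1)\neq E$, and $p|_{C'}$ is a proper morphism with $0$-dimensional fibers onto its image, so its higher direct images vanish. For $j=0$, the vanishing follows from the displayed short exact sequence combined with the fact that $F_1|_E\otimes\mathcal{O}_E(E)$ has finite-length support on $E$. Setting $G_j:=p_*F_j$, I then obtain a chain $G_\ell\subset\dots\subset G_1\subset G_0=:G$ of subsheaves on $M$, and I would check that $G$ is pure of dimension one with support $p(C')$, which is an integral curve in $M$ distinct from $L$ (since $C'\neq L_1$ and $p$ identifies $L_1$ with $L$). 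The required identity $G_\ell\cong G\otimes\mathcal{O}_M(-L)$ then follows from the equalities $p^*L=L_1+E$ (as $L$ is smooth through $\xi$) and $F_\ell\cong F_1\otimes\mathcal{O}(-L_1)\cong F_0\otimes\mathcal{O}(-p^*L)$, together with the projection formula. This proves part (1).

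For part (2), the vanishing $R^1p_*F_j=0$ gives $K_j:=G_{j-1}/G_j\cong p_*(F_{j-1}/F_j)$. For $j\geq 2$, the quotient $F_{j-1}/F_j$ is by hypothesis a skyscraper at $\xi'_j\in L_1$, so its pushforward is a skyscraper at $\xi_j=p(\xi'_j)\in L$. For $j=1$, the quotient $F_0/F_1\cong F_1|_E\otimes\mathcal{O}_E(E)$ is supported on $E$, which $p$ contracts to $\xi_1=\xi\in L$, giving a skyscraper at $\xi_1$.

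The main obstacle I anticipate is the vanishing $R^1p_*F_0=0$ together with the precise identification of the new graded piece $F_0/F_1$; both rely decisively on the integrality of $\mathrm{Supp}(F_1)$, which excludes $E$ as a component of any $F_j$ and makes the injection $\delta$ and the subsequent projection-formula computation well-behaved. The remaining verifications (purity of $G$, support avoiding $L$, skyscraper structure of each $K_j$) are then essentially formal consequences once the pushforward has been shown to be exact on the chain.
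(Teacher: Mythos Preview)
Your proposal is correct and follows essentially the same route as the paper: push forward the chain termwise, use the projection formula with $p^*L=L_1+E$ to get $G_\ell\cong G_0\otimes\mathcal{O}_M(-L)$, and invoke $R^1p_*F_j=0$ to identify each $G_{j-1}/G_j$ with $p_*(F_{j-1}/F_j)$. You are in fact more explicit than the paper, which simply asserts the $R^1$-vanishing and the purity of $G_0$ without argument. One small simplification: since $F_0=F_1\otimes\mathcal{O}_{M_1}(E)$ has the same integral support $C'\neq E$ as $F_1$, the vanishing $R^1p_*F_0=0$ follows by your finite-map argument for $j\geq 1$ directly, so the separate short-exact-sequence argument for $j=0$ is unnecessary.
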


\begin{proof}
    By the assumption on the support of $F_1$, $\delta: F_1\to F_0$ is injective. Pushing forward $\mathcal{F}_{\bullet\to 0}$ term-by-term, we have the chain of inclusions (filtration on $G_0$)
    \[  G_\ell \hookrightarrow \dots \hookrightarrow G_2 \hookrightarrow G_1 \hookrightarrow G_0 , \quad \textrm{where} \quad G_j = p_*F_j. \]
    Moreover, by projection formula, we have 
    \[G_\ell = p_*(F_\ell) = p_*(F_1\otimes \O(-L_1))=p_*(F_0\otimes \O(-E-L_1)) = (p_*F_0)\otimes  \O (-L) = G_0\otimes \O(-L)\]
    which means that $G_0$ with the filtration indeed defines a parabolic sheaf $\mathcal{G}_\bullet$.

    For (2), note that $R^1p_* F_j=0$ implies that $G_{j-1}/G_j\cong p_*(F_{j-1}/F_j)$ which is a skyscraper sheaf supported on $\xi_j$ for $j=2,\dots, \ell$ by the $(\uxi',L_1)$-parabolic condition. Finally, $G_0/G_1= p_*(F_0/F_1)\cong p_*(F_0|_E)$ is clearly a skyscraper sheaf supported on $\xi_1=\xi$. 

\end{proof}

\begin{theorem}
\label{thm:construction_is_inverse}
Let the hypotheses and notations be as in Proposition \ref{prop:parabolic_converse}. Then $\mathcal{F}_\bullet$ can be recovered from $\mathcal{G}_\bullet$ via $\Psi_p$: 
$$ \Psi_p(\mathcal{G}_\bullet) = \mathcal{F}_\bullet. $$
\end{theorem}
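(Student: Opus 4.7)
The goal is to produce a canonical isomorphism of parabolic sheaves $\Psi_p(\mathcal{G}_\bullet) \xrightarrow{\sim} \mathcal{F}_\bullet$. By construction, the underlying sheaf of $\Psi_p(\mathcal{G}_\bullet)$ is $F'_1 := \ker(p^*G_0 \to p^*R_1)$, where $G_j = p_*F_j$ and $R_1 = G_0/G_1$. The vanishing $R^1 p_* F_j = 0$ used in the proof of Proposition~\ref{prop:parabolic_converse} yields $R_1 \cong p_*(F_0/F_1) = p_*(F_0|_E)$. Combining the adjunction counits $\beta: p^*p_*F_0 \to F_0$ and $\gamma: p^*p_*(F_0|_E) \to F_0|_E$ with the defining short exact sequence $0 \to F_1 \to F_0 \to F_0|_E \to 0$ (which is exact since $\delta$ is injective under the integrality hypothesis) produces a commutative diagram with exact rows
\begin{equation*}
\begin{tikzcd}
0 \ar[r] & F'_1 \ar[r] \ar[d, "\tilde\beta"] & p^*G_0 \ar[r] \ar[d, "\beta"] & p^*R_1 \ar[r] \ar[d, "\gamma"] & 0 \\
0 \ar[r] & F_1 \ar[r] & F_0 \ar[r] & F_0|_E \ar[r] & 0.
\end{tikzcd}
\end{equation*}
The induced map $\tilde\beta: F'_1 \to F_1$ is the desired isomorphism on underlying sheaves.

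To verify $\tilde\beta$ is an isomorphism, the snake lemma reduces the problem to showing that $\beta$ is surjective and that the induced map $\ker(\beta) \to \ker(\gamma)$ is itself an isomorphism. Both kernels are supported on the exceptional divisor $E$, so the verification is local near $\xi$. Working in coordinates $(x,y)$ on $M$ with $\xi=(0,0)$ and the blow-up chart $(x,t)$ with $y=xt$, $E=\{x=0\}$, the integrality of $\mathrm{supp}(F_1)=\Sigma_1$ together with $\Sigma_1\neq E$ lets me realize $F_0 = F_1\otimes\O(E)$ locally as a rank-one module over $\O_{\Sigma_1}$. In the model case $F_0\cong\O_{\Sigma_1}$, a direct computation gives $p^*G_0\cong \O_{\Sigma_1+E}$ and $p^*R_1\cong \O_E$, where $\beta$ is the restriction-to-$\Sigma_1$ map (surjective) and $\gamma$ is evaluation at $\Sigma_1\cap E$; both kernels correspond to the ideal of $\Sigma_1$ inside the scheme-theoretic union $\Sigma_1+E$, pulled to $E$, and the induced comparison map is the identity. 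The general case reduces to this via local freeness of $F_0$ on $\Sigma_1$. This yields $F'_1\cong F_1$.

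Finally, I lift the isomorphism of underlying sheaves to an isomorphism of parabolic sheaves. Both $\Psi_p(\mathcal{G}_\bullet)$ (by Proposition~\ref{prop:parabolic_properties}(2)) and $\mathcal{F}_\bullet$ (by hypothesis) are $(\uxi',L_1)$-parabolic, hence their length-$(\ell-1)$ filtrations have $0$-dimensional successive quotients. Unwinding the construction, $F'_j = \ker(F'_1 \to p^*R_j|_{L_1})$. Composing with the counit $p^*R_j|_{L_1}\to F_0/F_j$ and using the naturality square identifies $F'_1 \to p^*R_j|_{L_1} \to F_0/F_j$ with $F_1 \to F_0/F_j$ under $\tilde\beta$; a local computation parallel to the one in the previous step shows that the counit is injective on the image of $F'_1$, so $F'_j = \tilde\beta^{-1}(F_j)$. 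Thus the filtrations match term by term and the proof concludes.

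\textbf{Main obstacle.} The crucial step is the local verification that $\ker(\beta)\xrightarrow{\sim}\ker(\gamma)$. The integrality of $\Sigma_1$ is exactly what controls the ``exceptional tail'' introduced by $p^*p_*$ at the blown-up point and makes it match the corresponding tail on $R_1$; if $F_1$ had a component on $E$, $\ker(\beta)$ would pick up additional pieces which fail to be detected by $\ker(\gamma)$, breaking the snake-lemma argument. This is precisely the reason the integral-support hypothesis is imposed in Proposition~\ref{prop:parabolic_converse} and Theorem~\ref{thm:construction_is_inverse}.
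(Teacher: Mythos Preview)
Your overall strategy—building the same comparison diagram with adjunction counits and extracting a map $\tilde\beta:F'_1\to F_1$—matches the paper. The divergence, and the gap, is in how you prove $\tilde\beta$ is an isomorphism.

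The paper does not attempt a direct local verification. Instead it checks that $H_1:=F'_1$ and $F_1$ have the same first Chern class and Euler characteristic (using Lemma~\ref{l:local pullback} and $R^1p_*F_0=0$), observes that both are pure sheaves with integral support (hence Gieseker-stable in the usual sense), and concludes that any nonzero map between stable sheaves with equal numerical invariants is an isomorphism. This bypasses all analysis near $E$.

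Your snake-lemma reduction to ``$\beta$ surjective and $\ker(\beta)\xrightarrow{\sim}\ker(\gamma)$'' is correct in principle, but your model computation only treats the case where $\Sigma_1$ meets $E$ transversely in a single reduced point. The integrality hypothesis allows $m:=\Sigma_1\cdot E$ to be any positive integer: then $R_1=p_*(F_0|_E)$ is a length-$m$ skyscraper at $\xi$, $p^*R_1$ has rank $m$ along $E$, and $p^*G_0$ is supported on a divisor strictly larger than $\Sigma_1+E$ (and $G_0=p_*\O_{\Sigma_1}$ is typically not $\O_{\Sigma_0}$ when $\Sigma_0$ is singular at $\xi$). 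Your claimed identifications $p^*G_0\cong\O_{\Sigma_1+E}$ and $p^*R_1\cong\O_E$ therefore fail in general, and the sentence ``the general case reduces to this via local freeness of $F_0$ on $\Sigma_1$'' only addresses the rank of $F_0$, not the intersection multiplicity with $E$. The snake-lemma route can presumably be completed, but it needs a substantially more careful local analysis than you have sketched; the paper's stability argument sidesteps all of this.

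For the filtration step the paper again avoids local work: since each $F_1/F_j$ is an $\O_{L_1}$-module and $p|_{L_1}:L_1\to L$ is an isomorphism, one gets $p^*(G_1/G_j)|_{L_1}\cong F_1/F_j$ directly, and injectivity of $p^*(G_1/G_j)|_{L_1}\to p^*(G_0/G_j)|_{L_1}$ follows from the same reason. This forces $I_j\cong F_1/F_j$ and hence $H_j=F_j$, with no further computation needed.
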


\begin{proof}
    Denote by $H_1$ the underlying sheaf of $\Psi_p(\mathcal{G}_\bullet)$ and $H_\ell\subset \dots \subset H_1$ the filtration. We will first show that $H_1 \cong F_1$. We begin by checking that $H_1$ and $F_1$ share the same numerical invariant. Note that $R^1p_*F_1=0$ implies that $0\to p_*F_1\to p_*F_0\to p_*F_0|_E\to 0$ remains exact, so $G_0/G_1\cong  p_*F_0|_E$. For the first Chern class, we have 
    \[\c_1(H_1)= p^*\c_1(p_*F_0) - \chi(F_0|_E) [E] = \c_1(F_0)= \c_1(F_1).\]
    On the other, since $p_*F_0$ is also a pure dimension one sheaf, we can apply Lemma \ref{l:local pullback} to $p_*F_0$ to show that 
    \[\chi(p^*p_*F_0) =  \chi(p_*F_0) = \chi(F_0)\]
    where the final equality follows because $R^1p_*F_0=0$. Moreover, since $p^*p_*F_0|_E\cong \O_E^{\oplus \chi(F_0|_E)}$, we have $\chi(p^*p_*(F_0|_E)) = \chi(F_0|_E)$. Hence, we deduce that $\chi(H_1) = \chi(F_0) - \chi(F_0|_E)= \chi(F_1)$. 
    
    Consider the following diagram
    \begin{equation}\label{eq:recovery_isom}
        \begin{tikzcd}
            0\arrow[r]&H_1\arrow[r]\arrow[d,dashed]&p^*p_*(F_0)\arrow[d]\arrow[r]&p^*p_*(F_0|_E)\arrow[r]\arrow[d]&0 \\
            0\arrow[r]&F_1\arrow[r]&F_0\arrow[r]&F_0|_E\arrow[r]&0 
        \end{tikzcd}
    \end{equation}
    where the second and third vertical maps are the natural maps.
    Since all the morphisms of the left-hand square in the diagram \eqref{eq:recovery_isom} are isomorphic away from $E$, the induced map $H_1\to F_0$ is non-zero. As the supports of $H_1$ and $F_1$ are integral (hence Gieseker stable in the usual sense), the non-zero morphism $H_1\to F_1$ between stable sheaves with the same numerical invariant must be an isomorphism. 

    Next, we shall show that $H_j\subset H_1$ coincides with $F_j\subset F_1$, which is equivalent to show that their quotients are equivalent. By definition, if we write $H_1/H_j$ as $I_j$, the $H_1\twoheadrightarrow I_j$ sits in 
    \begin{equation}\label{eq:filtration recovery}
            \begin{tikzcd}
            p^*G_1\arrow[r,two heads]\arrow[d, two heads]&H_1\arrow[r,hook]\arrow[d,two heads]&p^*G_0\arrow[d,two heads]\\
            p^*(G_1/G_j)|_{L_1}\arrow[r,two heads]&I_j\arrow[r,hook]&p^*(G_0/G_j)|_{L_1}.
            \end{tikzcd}
        \end{equation}
    By $R^1p_*F_j=0$, we have
    \[p^*(G_1/G_j)|_{L_1}\cong p^*\left(p_*(F_1/F_j) \right)|_{L_1}\cong F_1/F_j\]
    where the second isomorphism follows since $F_1/F_j$ is an $\O_{L_1}$-module and $p|_{L_1}:L_1\xrightarrow{\sim}L$ is an isomorphism. Since $\mathcal{G}_\bullet$ is again a parabolic sheaf by Proposition \ref{prop:parabolic_converse}, $G_1/G_j\hookrightarrow G_0/G_j$ is a morphism of $\O_{L}$-modules. Hence, $p^*(G_1/G_j)|_{L_1}\to p^*(G_0/G_j)|_{L_1}$ is still injective, since $p|_{L_1}$ is an isomorphism. In particular, it follows that $I_j\cong F_1/F_j$. 

\end{proof}

\subsection{Relative spectral correspondence}

We establish the relative spectral correspondence over $\nm$.
\begin{theorem}[Relative spectral correspondence] \label{spectral-corr}
        Fix $r\geq 1, d\in \Z$. Let $\bm{\m{M}}(\vm)$ be as in Section \ref{sec:relativemoduli of pure} and $\bm{\m{H}}(\vm)$ be the relative coarse moduli space of semistable $\vec{\xi}$-parabolic Higgs bundles on $C$ of rank $r$, degree $d$ with parabolic weights $\vec{\a}=(\underline{\a}_1,\cdots,\underline{\a}_n)$. There is a closed embedding 
    \begin{equation}
        \begin{tikzcd}
            \bm{Q}: \bm{\m{H}}(\vm)\arrow[rr, hook] 
            \arrow[rd, swap] && \bm{\m{M}}(\vm)  \arrow[ld] \\
            & \nm &
        \end{tikzcd}
    \end{equation}
    where $\a_{i,j}=\b_{i,j}+N, d=c+r(g-1)$ for sufficiently large $\kappa$ and $N$.
    \end{theorem}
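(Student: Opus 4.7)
The plan is to construct $\bm{Q}$ fiberwise over $\nm$ by composing a parabolic BNR correspondence on $\bm{M}$ with the iterated single blow-up transformation $\Psi_p$ developed in Section~\ref{s:sheaf local}. Fix $\vec{\xi}\in\nm$ and a $\vec{\xi}$-parabolic Higgs bundle $(E, E^\bullet_D, \Phi, \vec{\alpha})$. First I would apply the classical BNR construction: $\Phi$ endows $\pi^*E$ with a module structure over $\operatorname{Sym}^\bullet(K_C(D)^\vee)$, whose cokernel on $M$ produces a pure dimension one sheaf $G$ supported on the spectral curve inside $M^\circ$ with $\chi(G)=d-r(g-1)$. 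The quasi-parabolic filtration $E_{p_i}^\bullet$, being preserved by $\Phi_{p_i}$, descends to a filtration of $G|_{M_{p_i}}$, making $G$ into a parabolic sheaf on $M$ with parabolic divisor $M_{p_i}$. The $\vec{\xi}$-parabolic condition (\ref{xi-parabolic}) is precisely the condition that this filtration satisfies the $(\uxi_i, M_{p_i})$-parabolic condition in the sense of Definition~\ref{def:xi,L par}, with successive quotients supported at the points $(p_i,\xi_{i,j})$.

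Next I would apply $\Psi_p$ iteratively along the sequence of blow-ups $\bm{p}^{i,j-1}_{i,j}\colon \bm{M}_{i,j}\to \bm{M}_{i,j-1}$ taken in lexicographic order of $(i,j)$. Proposition~\ref{prop:parabolic_properties} guarantees that at each step the output is again a parabolic sheaf whose filtration length drops by one, and that the parabolic condition is updated to the new tuple $\uxi'$ on the strict transform (with entries on $\bm{E}_{i,j}\cap \bm{F}_i$ whenever eigenvalues coincide with $\xi_{i,j}$). After all $\sum_{i} \ell(i)$ blow-ups, the filtration is exhausted and what remains is an ordinary pure dimension one sheaf $F_1$ on $Z_{\vec{\xi}}$ whose support is disjoint from $F_{i,\vxi}$ and $\td{C}_{\infty,\vxi}$, hence lies in $S_{\vec{\xi}}$. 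Iterating Proposition~\ref{l:local pullback 2}, together with Lemma~\ref{intersectionpattern}, one reads off $c_1(F_1)=\Sigmamxi$ and $\chi(F_1)=c$ with $d=c+r(g-1)$. Since every operation above is algebraic and commutes with base change along $\nm$, the same construction carried out on the universal Higgs bundle gives a morphism $\bm{Q}\colon \bm{\m{H}}(\vm)\to \bm{\m{M}}(\vm)$ over $\nm$.

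The main technical step will be matching parabolic stability with $\bm{\beta}$-twisted $\bm{A}$-Gieseker stability under the shift $\alpha_{i,j}=\beta_{i,j}+N$. For $\kappa \gg \lambda, \kappa_{i,j}$, the $\bm{A}$-slope of a one-dimensional subsheaf on $\bm{S}$ is dominated by its intersection with $\bm{f}^*\bm{D}$, which records the rank of the pushforward to $\bm{C}$. For $N$ large, the dominant contribution $N$ to each $\alpha_{i,j}$ cancels between numerator and denominator when one passes from parabolic Hilbert polynomial to $\bm{\beta}$-twisted Hilbert polynomial, so the comparison reduces to the relation $\alpha_{i,j}-N=\beta_{i,j}$ on the remaining terms. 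Under the spectral correspondence, $\Phi$-invariant subbundles $F\subset E$ correspond to pure dimension one subsheaves of $G$ supported on $\Phi$-invariant subspectral curves, and these are transformed into subsheaves of $F_1$ by the same iterated $\Psi_p$. The match of slopes then follows by a direct bookkeeping of the numerical invariants, in the spirit of the analogous computation in \cite{Diaconescu_2018} for generic $\vxi$.

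Finally, to show $\bm{Q}$ is a closed embedding, I would invoke Theorem~\ref{thm:recovery} as a left inverse: applying $p_*$ iteratively along the chain of morphisms $\mathcal{F}_{\bullet\to 0}$ at each blow-up recovers the full parabolic filtration on $M$, from which $(E,\Phi)$ is reconstructed by the inverse BNR. This recovery is functorial and injective on $T$-valued points, giving injectivity on geometric points and on tangent spaces, hence $\bm{Q}$ is a universally injective unramified morphism. Properness of $\bm{Q}$ over $\nm$ follows because both $\bm{\m{H}}(\vm)\to\nm$ and $\bm{\m{M}}(\vm)\to\nm$ admit projective (or proper) coarse moduli in the relevant sense and the construction commutes with limits via the valuative criterion. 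Combining injectivity, unramifiedness, and properness yields the closed embedding. The main subtlety — and the reason one cannot expect a global isomorphism — is that for non-generic $\vec{\xi}$ the support of the target sheaf can contain components on the exceptional divisors $\Xi_{i,j,\vxi}$, so the hypothesis of integral support in Proposition~\ref{prop:parabolic_converse} and Theorem~\ref{thm:construction_is_inverse} may fail; isomorphism on the generic locus of $\nm$ is then immediate since no such components arise.
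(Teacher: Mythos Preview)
Your overall architecture matches the paper's: parabolic BNR to get a $(\vxi,L)$-parabolic sheaf on $M$, iterate $\Psi_p$ along the blow-ups, then use Theorem~\ref{thm:recovery} as a left inverse plus properness to conclude closed embedding. Two steps need substantive repair.

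\textbf{Stability.} Your argument only tests subsheaves of $F_1$ that arise from $\Phi$-invariant subbundles of $E$. For non-generic $\vxi$ this is not enough: $F_1$ can have pure dimension one subsheaves supported entirely, or partially, on the exceptional divisors $\Xi_{i,j,\vxi}$, and these do not come from any subbundle of $E$ (their pushforward to $C$ is torsion). The paper's Proposition~\ref{prop:sp stab} handles this by a three-case analysis: (i) subsheaves whose support surjects to $C$, where your argument works; (ii) subsheaves supported on exceptional divisors, where $\chi(F')\leq 0$ and $\c_1(F')\cdot\beta_{\vxi}<0$ forces $\mu^\beta(F')<0<\mu^\beta(F)$ for $N\gg 0$; (iii) the mixed case, combined by a seesaw inequality. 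This case split is precisely why one needs both $\kappa$ and $N$ large, and your sketch does not supply it. Also, the final sheaf is $F_{n,\ell(n)}\otimes\O(\sum_{i,j}\Xi_{i,j})$, not $F_{n,\ell(n)}$ itself; this twist is what makes $\chi=c$ by iterated application of Proposition~\ref{l:local pullback 2}.

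\textbf{Properness.} Neither $\bm{\m{H}}(\vm)\to\nm$ nor $\bm{\m{M}}(\vm)\to\nm$ is proper (the Hitchin base is unbounded), so your valuative-criterion argument as stated fails. The paper instead routes through the Hitchin base: by Corollary~\ref{cor:sp commutativity} the diagram
\[
\bm{\m{H}}(\vm)\xrightarrow{\bm{Q}}\bm{\m{M}}(\vm)\longrightarrow \bm{A}(\vm)
\]
commutes, the composite $\bm{\m{H}}(\vm)\to\bm{A}(\vm)$ is proper (Yokogawa's properness of the parabolic Hitchin map composed with a closed embedding), and $\bm{\m{M}}(\vm)\to\bm{A}(\vm)$ is separated (Fitting support is proper, $\iota$ is a closed embedding). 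A proper map factoring through a separated map is proper, giving properness of $\bm{Q}$; together with the left inverse from Theorem~\ref{thm:recovery} (hence monomorphism), this yields the closed embedding.
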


    \begin{rem}
    Our construction of $\bm{Q}$ extends the construction of \cite{Diaconescu_2018} in the case of generic $\vxi\in \nm $. In particular, $\bm{Q}$ is an isomorphism over generic $\vxi\in \nm$. 
    \end{rem}

\subsubsection*{Construction of the morphism $\bm{Q}: \bm{\m{H}} \to \bm{\m{M}}$ over $\nm$} For each of notations, we will fix $\vxi\in \nm$ and demonstrate the construction at the level of objects, which extends for families of objects and relatively over $\nm$ in a natural way. If it is clear from the context, we may drop the subscript $\vxi$ for simplicity. Let $(E, E^\bullet_D, \Phi, \va)\in \hvxi$ be a $\vxi$-parabolic Higgs bundle. If 

\noindent\textbf{Step 1}:
First, we apply the classical spectral correspondence \cite{bnr} to a get pure dimension one sheaf on $M$. Let $y$ be a tautological section of $\pi^*M^\circ$ over $M^\circ$. Let $G$ to be the cokernel of 
\[ y\operatorname{Id}_{\pi^* E}- \pi^*\Phi: \pi^*E \otimes _{M^\circ} \pi^*(M^\circ)^{-1} \to \pi^* E\]
which is a pure dimension one sheaf on $M$ whose support is away from $C_\infty$ and does not share a common component with $\pi^{-1}(D)$. Moreover, we have $\c_1(G) = rC_0$ and $\chi(G)= \chi(E) = d+r(1-g)$. 
Recall that the pushforward of $G$ to $C$ together with the tautological section $y$ recovers $(E,\Phi)$. Also, the quasi-parabolic structure on $(E,\Phi^\bullet)$ makes $G$ a parabolic sheaf. More explicitly, for each $i\in I$ and $j\in J_i$, consider $E\twoheadrightarrow E_{p_i}\twoheadrightarrow E_{p_i}/E^j_{p_i} $ and the induced diagram
\begin{equation*}
    \begin{tikzcd}
        \pi^*E \otimes _{M^\circ} \pi^*(M^\circ)^{-1} \arrow[d,two heads]\arrow[rrr," y\operatorname{Id}_{\pi^* E}- \pi^*\Phi"]&&& \pi^* E\arrow[d,two heads]\arrow[r]&G\arrow[d,two heads] \\
        \pi^*(E_{p_i}/E_{p_i}^j) \otimes _{M^\circ} \pi^*(M^\circ)^{-1}\arrow[rrr," (y-\pi^* \xi_{i,j})\operatorname{Id}_{\pi^* (E_{p_i}/E^j_{p_i})}"] &&& \pi^* E_{p_i}/E_{p_i}^j\arrow[r]&R_{i,j} 
    \end{tikzcd}
\end{equation*}
where $R_{i,j}$ is defined as the cokernel of $(y-\pi^* \xi_{i,j})\operatorname{Id}_{\pi^* (E_{p_i}/E^j_{p_i})}$. 
It is easy to check that the surjections $G\twoheadrightarrow R_{n,\ell(n)}\twoheadrightarrow\dots \twoheadrightarrow R_{1,1}$ defines a parabolic sheaf $\mathcal{G}_\bullet$ with parabolic divisor $L= \pi^*(D)$. Moreover, the $\vxi$-parabolic condition of $(E,E^\bullet_D,\Phi,\va)$ implies that $\mathcal{G}_\bullet$ satisfies the $(\vxi,L)$-parabolic condition (see Definition \ref{def:xi,L par})

\noindent\textbf{Step 2}:
Recall that we have constructed $Z_{\vxi}$ as iterated blow-ups on $M$, with intermediate spaces $M_{i,j}$, for $i\in I=\{1, \dots, n\} , j \in J_i=\{1, \dots, \ell(i)\}$, and $Z_{\vxi}= M_{n,\ell(n)}$ is the resulting blown-up surface. We iteratively apply the construction of $\Psi_{p}$ in Section \ref{s:sheaf local} to the parabolic sheaves, which reduces the length of the filtrations at each step. Starting from $i=1$, we have the first blow-up $p_{1,1}: M_{1,1}\to M$ over $\xi_{1,1}$ and the $(\vxi,L)$-parabolic sheaf $\mathcal{G}_\bullet$ with the parabolic divisor $L$ on $M$. We can construct the following parabolic sheaf 
\[\mathcal{F}^{1,1}_\bullet:= \Psi_{p_{1,1}}(\mathcal{G}_\bullet) \]
which satisfies the $(\vxi^{(1,1)}, L_{1,1})$-parabolic condition due to Proposition \ref{prop:parabolic_properties}. Here  $L_{1,1}$ is the strict transform of $L$ and $\vxi^{(1,1)}= (\vxi^{(1,1)}_{i,j})_{i\in I, j\in J_i, (i,j) > (1,1)}$ with respect to the lexicogprahic order of paris $(i,j)$ and $\xi^{(1,1)}_{i,j}=E_{1,1}\cap L_{1,1}$ if $\xi_{i,j}=\xi$ and $\xi^{(1,1)}_{i,j}= p^{-1}(\xi_{i,j})$ otherwise. Moreover, by Lemma \ref{l:local pullback 2}, the underlying sheaf $F^{1,1}$ of $\mathcal{F}^{(1,1)}_\bullet$ has 
\[\c_1(F^{1,1}) = p_{1,1}^*(rC_0) - m_{1,1}E_{1,1} \quad \textrm{and}\quad  \chi(F^{1,1}\otimes \O(E_{1,1})) =\chi(G)  \]

Now, we can iteratively apply the construction $\Psi_{p^{i,j-1}_{i,j}}$ for each blow-up $p^{i,j-1}_{i,j}$ to get
 \[\begin{cases}
       \mathcal{F}^{i,j}_\bullet = \Psi_{p_{i,j}^{i,j-1}}(\mathcal{F}^{i,j-1}_\bullet), &\textrm{if } j\neq 1\\
       \mathcal{F}^{i,j}_\bullet = \Psi_{p_{i,j}^{i-1,\ell(i-1)}}(\mathcal{F}^{i-1,\ell(i-1)}_\bullet), &\textrm{if  } j=1
    \end{cases}\]
which again satisfies the $(\vxi^{(i,j)},L_{i,j})$-parabolic condition by Proposition \ref{prop:parabolic_properties}. In the last step, we will obtain a parabolic sheaf $\mathcal{F}_{n,\ell(n)}$ whose filtration has length $0$, that is, a pure dimension one sheaf $F_{n,\ell(n)}$. Finally, we define 
\[ Q_{\vxi}(E, E^\bullet_D, \Phi, \va) = F_{n,\ell(n)}\otimes \O(\Xi_{1,1}+ \dots + \Xi_{n,\ell(n)})\]
where $Q_{\vxi}$ is the restriction of the functor $\bm{Q}$ over $\vxi \in \nm$.

\begin{prop}\label{prop:sp numerical}
Let $(E, E^\bullet_D, \Phi, \va)\in \hvxi$ be a parabolic Higgs bundle of rank $r$ and degree $d$, and let $F = Q_{\vxi}(E, E^\bullet_D, \Phi, \va)$ be the corresponding sheaf on $Z_{\vxi}$. Then the first Chern class of $Q_{\vxi}(E, E^\bullet_D, \Phi, \va) $ is given by
\[\sm(\vm)_{\vxi}= rp^*C_0 -\sum_{i\in I}\sum_{j\in J_i}m_{i,j}\Xi_{i,j},\quad \]
and its Euler characteristic given by $\chi(G) = \chi(E) = d+r(1-g)$.
\end{prop}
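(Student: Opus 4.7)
The plan is to track the pair $(\c_1,\chi)$ stage by stage through the construction of $Q_{\vxi}$. By Step 1 the BNR output already supplies $\c_1(G)=rC_0$ and $\chi(G)=\chi(E)=d+r(1-g)$, so the task reduces to understanding how each $\Psi_{p_{i,j}^{i,j-1}}$ and the final twist by $\O(\Xi_{1,1}+\dots+\Xi_{n,\ell(n)})$ transform these invariants.

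For the first Chern class, I would apply Lemma \ref{l:local pullback 2} at each step $(i,j)$ to obtain
\[\c_1(F^{i,j})=(p_{i,j}^{i,j-1})^*\c_1(F^{i,j-1})-m_{i,j}E_{i,j},\]
and then pull back inductively to $Z_{\vxi}$ using $\Xi_{i,j}=(p^{i,j}_{n,\ell(n)})^*E_{i,j}$. An induction over the lexicographic order of $(i,j)$ then gives $\c_1(F^{n,\ell(n)})=rp^*C_0-\sum_{i,j}m_{i,j}\Xi_{i,j}=\Sigma(\vm)_{\vxi}$. Since tensoring a pure dimension one sheaf by a line bundle does not alter its first Chern class, the same formula holds after the final twist.

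For the Euler characteristic, the second identity of Lemma \ref{l:local pullback 2} combined with the standard relation $\chi(F\otimes L)-\chi(F)=\c_1(F)\cdot\c_1(L)$ for pure dimension one $F$ reduces the question to an intersection computation. Using the projection formula $(p_{i,j}^{i,j-1})^*(-)\cdot E_{i,j}=0$ together with $E_{i,j}^2=-1$, one obtains $\chi(F^{i,j})-\chi(F^{i,j-1})=-m_{i,j}$, hence $\chi(F^{n,\ell(n)})=\chi(G)-\sum_{i,j}m_{i,j}=\chi(G)-nr$, since $\sum_{j}m_{i,j}=r$ for each of the $n$ punctures. The final twist further shifts $\chi$ by $\c_1(F^{n,\ell(n)})\cdot\sum_{i,j}\Xi_{i,j}$, which by Lemma \ref{intersectionpattern} (orthogonality of $p^*C_0$ to the $\Xi$'s and $\Xi_{i,j}\cdot\Xi_{i',j'}=-\delta_{(i,j),(i',j')}$) equals exactly $nr$, cancelling the deficit and yielding $\chi(Q_{\vxi}(E,E^\bullet_D,\Phi,\va))=\chi(G)$.

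The main delicate point is not any single numerical manipulation but the bookkeeping: one must confirm that at step $(i,j)$ the bottom quotient of the intermediate parabolic sheaf $\mathcal{F}^{i,j-1}_\bullet$ has length exactly $m_{i,j}$, so that the rank input to Lemma \ref{l:local pullback 2} really is $m_{i,j}$. This follows because $\Psi_p$ strips off the bottom quotient of the tower while preserving the $(\vxi^{(i,j)},L_{i,j})$-parabolic condition (Proposition \ref{prop:parabolic_properties}), so the residual quotients stay in lockstep with the prescribed lexicographic blow-up order.
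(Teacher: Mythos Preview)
Your argument is correct and is exactly the intended expansion of the paper's one-line proof (``It follows immediately by iteratively applying Lemma~\ref{l:local pullback 2}''). You have supplied the intermediate intersection calculations that make the iteration explicit, particularly the conversion $\chi(F^{i,j}\otimes\O(E_{i,j}))=\chi(F^{i,j-1})$ into the increment $\chi(F^{i,j})-\chi(F^{i,j-1})=-m_{i,j}$ via $\c_1(F^{i,j})\cdot E_{i,j}=m_{i,j}$, and the final cancellation by the twist; this is precisely what the paper leaves implicit.
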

\begin{proof}
    It follows immediately by iteratively applying Lemma \ref{l:local pullback 2}.
\end{proof}
\begin{prop}\label{prop:sp stab}
    Let $\beta_{i,j}=\alpha_{i,j}+ N$ for some $N >0$, and define $\beta_{\vxi}$ and $A_{\vxi}$ as in Section \ref{sec:relativemoduli of pure}. There exist sufficiently large $\kappa$ and $N$ such that if the $\vxi$-parabolic Higgs bundle $(E, E^\bullet_D, \Phi, \va)$ is $\a$-semistable, then the induced pure dimension one sheaf $F = Q_{\vxi}(E, E^\bullet_D, \Phi, \va)$ is $\b_{\vxi}$-twisted $A_{\vxi}$-semistable. 
\end{prop}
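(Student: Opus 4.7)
The plan is to argue by contradiction: suppose $F := Q_{\vxi}(E, E^\bullet_D, \Phi, \vec{\alpha})$ admits a destabilizing subsheaf $F' \subsetneq F$ with $\mu^{\beta_{\vxi}}(F') > \mu^{\beta_{\vxi}}(F)$. From $F'$ I will extract a $\Phi$-invariant parabolic sub-bundle $(E'_*, \Phi|_{E'}) \subset (E_*, \Phi)$ violating $\vec{\alpha}$-semistability, yielding the contradiction.

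The first step is to reverse the construction of Section \ref{s:sheaf local}. Set $F_0 := F \otimes \mathcal{O}(-\sum_{i,j}\Xi_{i,j}) = F_{n,\ell(n)}$ and let $F'_0 \subset F_0$ be the corresponding twist of $F'$. Applying the pushforward recovery of Theorem \ref{thm:recovery} one blow-up at a time along the tower $\bm{p}$, the iterated pushforward of $F'_0$ produces a subsheaf $G' \hookrightarrow G$ on $M$ compatible with the parabolic filtration $G_\bullet$. Multiplication by the tautological section $y$ preserves $G'$, so $E' := \pi_* G' \hookrightarrow E$ is $\Phi$-invariant with induced filtration $E'^\bullet_D := E^\bullet_D \cap E'|_D$. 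Replacing $E'$ by its saturation in $E$, which does not decrease parabolic slope, yields a $\Phi$-invariant parabolic sub-bundle of some rank $0 < r' < r$.

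Next comes the numerical comparison. Using Lemma \ref{intersectionpattern} and the chosen form of $\bm{\beta}$ and $\bm{A}$,
\[
c_1(F) \cdot A_{\vxi} = rn\kappa - \sum_{i,j}m_{i,j}\kappa_{i,j}, \qquad c_1(F) \cdot \beta_{\vxi} = \sum_{i,j}m_{i,j}\beta_{i,j},
\]
so substituting $\beta_{i,j} = \alpha_{i,j} - N$ and invoking Proposition \ref{prop:sp numerical} gives $\chi^{\beta_{\vxi}}(F) = \mathrm{par}\chi(E_*) - rnN$. Writing $c_1(F') = r'p^*C_0 - \sum_{i,j}m'_{i,j}\Xi_{i,j} + \rho$, where $m'_{i,j} = \dim(E'^{j-1}_{p_i}/E'^j_{p_i})$ and $\rho$ encodes any extra ``vertical'' components of $F'$ supported on exceptional divisors, one gets analogous formulas
\[
\chi^{\beta_{\vxi}}(F') = \mathrm{par}\chi(E'_*) - r'nN + \delta, \qquad c_1(F') \cdot A_{\vxi} = r'n\kappa - \sum_{i,j}m'_{i,j}\kappa_{i,j} + \eta,
\]
where $\delta, \eta$ are linearly controlled by $\rho$ and uniformly bounded over the bounded family of destabilizing candidates (by a Quot-scheme argument). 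Expanding $\mu^{\beta_{\vxi}}(F) - \mu^{\beta_{\vxi}}(F')$ to leading order in $1/\kappa$, the $N$-terms cancel and the leading contribution reduces to $\frac{1}{n\kappa}\bigl(\mathrm{par}\chi(E_*)/r - \mathrm{par}\chi(E'_*)/r'\bigr) \geq 0$ by $\vec{\alpha}$-semistability. Taking $\kappa$ large relative to the $\kappa_{i,j}$ absorbs the subleading corrections from $\delta$ and $\eta$, while taking $N$ large rules out the degenerate case where $F'$ is supported purely on exceptional divisors (so $r' = 0$), whose twisted slope remains uniformly bounded while $\mu^{\beta_{\vxi}}(F)$ grows linearly in $N$.

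The principal obstacle is the first step: verifying that the iterated pushforward of an \emph{arbitrary} coherent subsheaf $F'$—not only those already in the image of $\Psi_p$ of a pre-existing parabolic subsheaf—produces a genuinely $\Phi$-invariant parabolic subsheaf of $G$, and cleanly separating its horizontal sub-Higgs-bundle content from the vertical correction $\rho$. It is precisely this separation that forces the explicit largeness conditions on $\kappa$ and $N$ appearing in the statement.
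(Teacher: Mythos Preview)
Your overall strategy---push forward a destabilizing subsheaf $F'$ to obtain a $\Phi$-invariant parabolic subsheaf $E'\subset E$, compare slopes, and use largeness of $\kappa,N$---is the same as the paper's. But several steps are either more complicated than necessary or not actually carried out.

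First, the ``principal obstacle'' you flag is not one. You do not need Theorem~\ref{thm:recovery} or any step-by-step unwinding of the blow-ups: simply push $F'$ forward along the composite $f=\pi\circ p:Z_{\vxi}\to C$. Since $f_*$ is left exact you get $E':=f_*F'\hookrightarrow f_*F=E$, and $\Phi$-invariance is automatic because the Higgs field is multiplication by the tautological section $y$, which is $\O$-linear and hence preserves any $\O$-submodule. This is exactly what the paper does.

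Second, and more seriously, your treatment of the ``vertical correction'' $\rho$ is where the real work lies, and you do not do it. Saying that $\delta,\eta$ are ``uniformly bounded by a Quot-scheme argument'' and then absorbed by large $\kappa$ is not a proof: you must show that the horizontal part $F_h$ of $F'$ (which gives the sub-Higgs-bundle) and the vertical part $F_v$ (supported on exceptional divisors) each satisfy the slope inequality, and then combine them. The paper does this by a clean case split: (i) if $\mathrm{Supp}(F')$ contains no exceptional component then $R^1f_*F'=0$ and the parabolic comparison goes through directly; (ii) if $F'$ is supported entirely on exceptional divisors then $f_*F'=0$ (as $E$ is locally free) and a direct computation shows $\mu^\beta(F')<0<\mu^\beta(F)$ for large $N$; (iii) in the mixed case one decomposes $F'$ via short exact sequences $0\to F_h\to F'\to F_v\to 0$ and $0\to F_v(-\Gamma_h)\to F'\to F'|_{\Gamma_h}\to 0$, applies the two previous cases, and uses the see-saw property of slopes. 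The finiteness input making $\kappa,N$ uniform is that the set of possible $c_1(F')$ for subsheaves of a sheaf with fixed $c_1(F)$ is finite (Yoshioka), not a Quot-scheme boundedness.

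Finally, two minor slips: you wrote $\beta_{i,j}=\alpha_{i,j}-N$ where the statement has $+N$ (with $-N$ your claim that $\mu^\beta(F)$ grows in $N$ is false and purely-vertical subsheaves would in fact destabilize); and the purely-vertical twisted slope is not ``uniformly bounded'' but tends to $-\infty$ as $N\to\infty$, which is what you want.
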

\begin{proof}
    Let $F'\subset F$ be a subsheaf. First, suppose that the support of $F'$ does not contain any exceptional divisors such that $R^1f_*F'=0$. Then $E' := f_*F'$ is a rank $r'$ subbundle of $E$ with $r'\leq r$. Then the induced parabolic structure on $E'_D= E'\cap E^\bullet_D$ has length $m_{i,j}'\leq m_{i,j}$. Since $\va$-stability is unchanged after replacing $\va=(\alpha_{i,j})$ by $\alpha_{i,j}+N$ for $N \geq 0$, the assumption that $(E,E^\bullet_D,\Phi,\va)$ is $\va$-semistable implies that 
    \begin{equation}\label{eq:stability}
     \frac{\chi^{\beta}(F')}{r'}=\frac{\chi(E')+ \sum{\b_{i,j}m'_{i,j}}}{r'}\leq \frac{\chi(E)+ \sum {\b_{i,j}m_{i,j}}}{r} = \frac{\chi^{\beta}(F)}{r} .
    \end{equation}
    In order to relate this to the $\beta_{\vxi}$-twisted stability for $F$, recall the general fact that for a fixed numerical class $\Sigmamxi$ , the set of possible first Chern classes $\c_1(F')$ of subsheaves of $F$ with $\c_1(F)=\Sigmamxi$ is finite (see e.g. \cite[Section 1.4]{Yoshioka2000abelian}). Note that $\c_1(F')\cdot A_{\vxi} = \kappa r'-b'$ and $\c_1(F)\cdot A_{\vxi} = \kappa r-b$ for some integer $b',b\geq 0$. By choosing $\kappa$ sufficiently large, we can assume that the value
    \[\nu_{\kappa} =\left( \frac{\c_1(F')\cdot A_{\vxi}}{ \c_1(F)\cdot A_{\vxi} }\right) \chi^{\beta}(F)-\c_1(F')\cdot \beta_{\vxi} = \left(\frac{\kappa r' - b'}{\kappa r- b} \right) \chi^{\beta}(F) -\c_1(F')\cdot \beta_{\vxi} \]is sufficiently close to 
    \[ \nu= \left(\frac{r' }{ r} \right) \chi^{\beta}(F) -\c_1(F')\cdot \beta_{\vxi},\] 
    for all $\c_1(F')$ and $r'$, such that an integer $K$ satisfies $K\leq \nu$ if and only if $K\leq \nu_\kappa$. Applying this to $K=\chi(F')$, we see that the inequality \eqref{eq:stability} implies $\mu^{\beta}(F')\leq \mu^{\beta}(F)$. 

    Suppose $F'$ is supported entirely on the union of some exceptional divisors. Then $f_*F'$ is a $0$-dimensional subsheaf of $f_*F=E$. Since $E$ is locally free, it cannot have torsion subsheaf, so $f_*F'=0$. Note that $\chi(F') = - \chi(R^1f_*(F'))\leq 0$ because $R^1f_*(F')$ has $0$-dimensional support. Thus, we get 
    \[ \mu^{\beta}(F')= \frac{\chi(F') +\c_1(F')\cdot \beta_{\vxi} }{\c_1(F')\cdot A_{\vxi}} \leq \frac{\c_1(F')\cdot \beta_{\vxi} }{\c_1(F')\cdot A_{\vxi}} = -\frac{\sum c_{i,j}\beta_{i,j} }{\sum c_{i,j}\kappa_{i,j}}   \]
    where we let $\c_1(F') = \sum_{i\in I, j\in J_i}c_{i,j}\Xi_{i,j}$ for $c_{i,j}\geq 0$. It is clear that the expression on the right hand side must be negative. On the other hand, the slope for $F$ looks like
    \[\mu^{\beta}(F)= \frac{\chi^\beta(F) }{\c_1(F)\cdot A_{\vxi}}  = \frac{\chi(F) +\sum \beta_{i,j}m_{i,j} }{\kappa r -\sum_{i\in I, j\in J_i} \kappa_{i,j}m_{i,j} }  \] 
    where the expression on the right is positive for sufficiently large $N$. Hence, we again have $\mu^{\beta}(F')\leq \mu^\beta(F)$ in this case.

    Now, if the support of $F'$ is neither of the cases above, we write $\operatorname{Supp}(F') =  \Gamma_v+ \Gamma_h$ where $\Gamma_v$ consists entirely of exceptional divisors and $\Gamma_h$ is a finite cover over $C$ of order $r' \leq r$. Then we have the two short exact sequences
    \[  0\to F_h\to F' \to F_v\to 0, \quad 0\to F_v\to F'(\Gamma_h) \to F'(\Gamma_h)|_{\Gamma_h}\to 0   \]
    where $\c_1(F_h)= \Gamma_h$ and $\c_1(F_v)= \Gamma_v$. 
    Note that $F_{h}\subset F'$ is a subsheaf of $F$ of the first type above, we must have 
    \begin{equation}\label{eq:stability0}
        \mu^\beta(F_h)\leq \mu^{\beta}(F)
    \end{equation}
    Similarly, since $F_v(-\Gamma_h)\subset F'$ is a subsheaf of $F$ of the second type, we have 
    \begin{equation}\label{eq:stability1}
         \mu^{\beta}(F_v(-\Gamma_h)) =\frac{\chi^{\beta}(F_v)} { \c_1(F_v)\cdot A_{\vxi} }- \frac{\Gamma_v\cdot \Gamma_h}{\Gamma_v\cdot A_{\vxi}} \leq -\frac{\sum d_{i,j}\beta_{i,j} }{\sum d_{i,j}\kappa_{i,j}}  
    \end{equation}
    where we write $\Gamma_v = \sum_{i\in I, j\in J_i}d_{i,j}\Xi_{i,j}$ for $d_{i,j}\geq 0$. By the same argument for the finiteness of the choice of $\c_1(F')$ as above, there are only finitely many choices of decomposition $\c_1(F')= \Gamma_v + \Gamma_h$. Let $\omega$ be the minimum of all (finitely many) $-\frac{\Gamma_v\cdot \Gamma_h}{\Gamma_v\cdot A_{\vxi}}$. Choose $N$ sufficiently large such that the last expression in \eqref{eq:stability1} is smaller than $\omega$. Then the inequality \eqref{eq:stability1} implies the following inequalities 
    \begin{equation}\label{eq:stability2}
        \mu^{\beta}(F_v)  \leq \omega + \frac{\Gamma_v\cdot \Gamma_h}{ \c_1(F_v)\cdot A_{\vxi} }\leq 0 \leq \mu^\beta(F) .
    \end{equation}
    Combining the inequalities \eqref{eq:stability0} and \eqref{eq:stability2}, we get
    \begin{align*}
        \chi^{\beta}(F')= \chi^\beta(F_h)+ \chi^{\beta}(F_v) \leq \mu^\beta(F)(\c_1(F_h)\cdot A_{\vxi}+\c_1(F_v)\cdot A_{\vxi})  = \mu^\beta(F)(\c_1(F')\cdot A_{\vxi}) .
    \end{align*}
    Since $\c_1(F')\cdot A_{\vxi}$ is positive, it yields
    \[\mu^{\beta}(F') \leq \mu^\beta(F).\]
    
\end{proof}

\begin{rem}
   As in the proof of Proposition \ref{prop:sp stab}, for fixed $\vxi \in \nm$ and $\Sigma(\vm)_{\vxi}$, there are only finitely many $\c_1(F')$. Hence, it is possible to choose sufficiently large $\kappa$ and $N$. Moreover, up to nonzero scaling, the same $\kappa$ and $N$ can be taken for $\lambda \vxi$ with any $0 \neq \lambda \in \mathbb{C}$. In other words, the choice of $\kappa$ and $N$ depends only on the configuration of exceptional divisors up to nonzero scaling. Since there are only finitely many such configurations, one may choose $\kappa$ and $N$ uniformly.
\end{rem}

\begin{cor}[Fitting support morphism vs Hitchin morphism]\label{cor:sp commutativity}
The following diagram commutes: 
\begin{equation*}
    \begin{tikzcd}
        \hpar(\vm)\times \nm\arrow[d,"h^{\parb}(\vec{m})"] & \bm{\m{H}}(\vm) \arrow[l] \arrow[r, hook, "\bm{Q}"] & \bm{\m{M}}(\vm) \arrow[d]\\
        \bm{A}(\vm) && \bm{B}(\vm) \arrow[ll, hook', "\iota"].
    \end{tikzcd}
\end{equation*}
where $\iota$ is defined in Remark \ref{affine bundle}.
\end{cor}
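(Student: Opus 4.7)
The plan is to fix $(E, E^\bullet_D, \Phi, \vec{\a}) \in \huxi$ for some $\vxi \in \nm$ and chase it through both routes of the diagram. The left-hand route yields $(h^{\parb}(E,\Phi), \vxi) \in A \times \nm = \bm{A}(\vm)$, where $h^{\parb}(E,\Phi) = (s_1, \dots, s_r)$ records the coefficients of the characteristic polynomial $\det(y\,\mathrm{Id} - \pi^*\Phi) = y^r + s_1 y^{r-1} + \dots + s_r$ under the identification \eqref{divisors vs sections}. The right-hand route produces $(\iota(\Sigma_F), \vxi)$, where $\Sigma_F$ denotes the Fitting support of $F := Q_{\vxi}(E, E^\bullet_D, \Phi, \vec{\a})$. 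Since the $\nm$-coordinate is preserved trivially by both composites, it will suffice to prove $\iota(\Sigma_F) = h^{\parb}(E,\Phi)$ in $A$.

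The core idea is to realize $\iota$ concretely as iterated pushforward along $p_{\vxi}: Z_{\vxi} \to M$. Indeed, Remark \ref{affine bundle} describes $\iota$ as multiplication of sections by the canonical section of $\O\bigl(\sum_{i,j} m_{i,j}\Xi_{i,j}\bigr)$, followed by the projection formula isomorphism $H^0(Z_{\vxi}, \O(rp_{\vxi}^*C_0)) \cong H^0(M, \O(rC_0))$; at the level of effective divisors this coincides with $p_{\vxi *}$. On the sheaf side, the construction of $\bm{Q}$ in Section \ref{s:sheaf local} is engineered so that the pushforward recovery result (Theorem \ref{thm:recovery}) applies at every blow-up. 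Iterating it along the tower of blow-ups $\bm{p}^{i,j-1}_{i,j}$, with the $(\vxi^{(i,j)}, L_{i,j})$-parabolic compatibility supplied by Proposition \ref{prop:parabolic_properties}(2), I expect to obtain
\[
p_{\vxi *} F \;\cong\; G,
\]
where $G = \mathrm{coker}(y\,\mathrm{Id}_{\pi^*E} - \pi^*\Phi)$ is the classical BNR sheaf on $M$ from Step 1 of the construction; the final tensor twist by $\O(\sum \Xi_{i,j})$ is absorbed into the last application of the identification $F_0 = F_1 \otimes \O(E)$ within Theorem \ref{thm:recovery}. Because $\c_1(F) = \Sigma(\vm)_{\vxi}$ contains no exceptional component (Proposition \ref{prop:sp numerical}), no irreducible component of $\Sigma_F$ is contracted by $p_{\vxi}$, so pushforward of the Fitting support agrees with the Fitting support of the pushforward, yielding $p_{\vxi *}\Sigma_F = \mathrm{Fitt}(G)$.

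The argument then concludes by invoking the classical BNR correspondence, which identifies $\mathrm{Fitt}(G)$ with the zero divisor of $\det(y\,\mathrm{Id} - \pi^*\Phi)$; under the parametrization \eqref{divisors vs sections}, this is precisely $h^{\parb}(E,\Phi) \in A$. Assembling, $\iota(\Sigma_F) = p_{\vxi *}\Sigma_F = h^{\parb}(E,\Phi)$, and the diagram commutes. The main obstacle in executing this plan is the iterated pushforward identity $p_{\vxi *}F \cong G$: the single-step case is exactly Theorem \ref{thm:recovery}, but the induction requires that the output of each $\Psi_{p^{i,j-1}_{i,j}}$ satisfy the hypotheses needed for the next application, which is secured by Proposition \ref{prop:parabolic_properties}. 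A secondary subtlety is the compatibility of Fitting support with pushforward, which reduces to the absence of exceptional components in $\Sigma_F$ and follows directly from Proposition \ref{prop:sp numerical}; all remaining ingredients are standard and contained in the earlier sections of the paper.
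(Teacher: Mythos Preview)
Your overall strategy---unwind both composites, identify $\iota$ with push-forward after multiplication by the canonical section of $\O(\sum m_{i,j}\Xi_{i,j})$, and reduce to the BNR identification $\mathrm{Fitt}(G)=\mathrm{div}\det(y\,\mathrm{Id}-\pi^*\Phi)$---is exactly what the paper's one-line proof (``follows from the definition of $\bm{Q}$'') is abbreviating, and the iterated use of Theorem~\ref{thm:recovery} with the projection formula does give $p_{\vxi *}F\cong G$.

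There is, however, a real flaw in one step. You assert that ``$\c_1(F)=\Sigma(\vm)_{\vxi}$ contains no exceptional component, so no irreducible component of $\Sigma_F$ is contracted by $p_{\vxi}$''. This is false: the numerical class $\Sigma(\vm)_{\vxi}$ can be represented by effective divisors with exceptional components, and for non-generic $\vxi$ the Fitting support $\Sigma_F$ may well contain pieces of the $\Xi_{i,j}$ (the paper flags exactly this phenomenon in the introduction and before Theorem~\ref{thm:open locus}). So the hypothesis of your ``Fitting support commutes with push-forward'' argument fails, and that compatibility is not otherwise standard. The clean fix bypasses the issue: at each blow-up, the exact sequence $0\to F_1\to p^*G\to p^*R_1\to 0$ together with $\mathrm{Fitt}(p^*R_1)=m_1E$ and $\mathrm{Fitt}(p^*G)=p^*\mathrm{Fitt}(G)$ gives $\mathrm{Fitt}(F_1)=p^*\mathrm{Fitt}(G)-m_1E$ as divisors (multiplicativity of Fitting ideals in short exact sequences). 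Iterating yields $\Sigma_F+\sum m_{i,j}\Xi_{i,j}=p_{\vxi}^*\mathrm{Fitt}(G)$; since $\iota$ is precisely ``add $\sum m_{i,j}\Xi_{i,j}$ and descend along $p_{\vxi}^*$'', this gives $\iota(\Sigma_F)=\mathrm{Fitt}(G)$ directly, with no need to control the components of $\Sigma_F$.
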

\begin{proof}
  The commutativity follows from the definition of $\bm{Q}$.   
\end{proof}

\begin{prop}\label{prop:sp proper}
The morphism $\bm{Q}:\bm{\m{H}}(\vm) \to \bm{\m{M}}(\vm)$ is proper. 
\end{prop}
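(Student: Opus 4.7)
The plan is to deduce properness of $\bm{Q}$ from the commutativity of the diagram in Corollary \ref{cor:sp commutativity}, combined with the standard cancellation principle: if $g\circ f$ is proper and $g$ is separated, then $f$ is proper. I would take $f=\bm{Q}$ and $g=\bm{\phi}$, so the task reduces to verifying that $\bm{\phi}$ is separated and that $\bm{\phi}\circ\bm{Q}$ is proper.

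First, for the separatedness of $\bm{\phi}$, I would in fact establish the stronger statement that $\bm{\phi}$ is proper. Section \ref{sec:relativemoduli of pure} already shows that $\overline{\bm{\phi}}\colon\overline{\bm{\m{M}}(\vm)}\to\overline{\bm{B}(\vm)}$ is proper, and $\bm{\phi}$ is the base change of $\overline{\bm{\phi}}$ along the open immersion $\bm{B}(\vm)\hookrightarrow\overline{\bm{B}(\vm)}$; since properness is preserved under base change, $\bm{\phi}$ is proper, and in particular separated.

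Next, for properness of $\bm{\phi}\circ\bm{Q}$, Corollary \ref{cor:sp commutativity} identifies this composite with $\iota\circ h$, where $h\colon\bm{\m{H}}(\vm)\to\bm{B}(\vm)$ is the relative Hitchin map and $\iota\colon\bm{B}(\vm)\hookrightarrow\bm{A}(\vm)$ is a closed embedding. It then suffices to show $h$ is proper. For this I would argue that $\bm{\m{H}}(\vm)$ embeds as a closed subscheme of $\hpar(\vm)\times\nm$, cut out by the condition that the graded residue data of the Higgs field coincide with the tautological section $\vxi$ over $\nm$. The product map $h^{\rm{par}}\times\mathrm{id}_{\nm}\colon\hpar(\vm)\times\nm\to A\times\nm=\bm{A}(\vm)$ is proper, since the classical parabolic Hitchin map $h^{\rm{par}}$ is proper \cite{Yokogawa-compactification}. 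Restricting to the closed subscheme $\bm{\m{H}}(\vm)$ and noting that its image factors through the closed embedding $\iota$, one obtains properness of $h$, and cancellation against $\bm{\phi}$ then yields the claim.

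The main obstacle I expect is to justify that $\bm{\m{H}}(\vm)\hookrightarrow\hpar(\vm)\times\nm$ is genuinely a closed immersion at the level of the relative coarse moduli spaces; this amounts to showing that the $\vxi$-parabolic condition \eqref{xi-parabolic} is a closed condition in families, which should follow from the explicit GIT construction of $\bm{\m{H}}(\vm)$ in \cite{LeeLee2024}. Once this is secured, the rest is a formal consequence of the cancellation principle and of Yokogawa's classical properness result, so no further hard analysis is required.
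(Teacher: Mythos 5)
Your proof is correct and takes essentially the same route as the paper: both arguments rest on the closed embedding $\bm{\m{H}}(\vm)\hookrightarrow \hpar(\vm)\times\nm$, Yokogawa's properness of $h^{\mathrm{par}}$, properness of the Fitting support morphism, the closed embedding $\iota$, and the cancellation principle applied to the commutative diagram of Corollary \ref{cor:sp commutativity}. The only cosmetic difference is that you cancel in two steps (first obtaining properness of $\bm{\phi}\circ\bm{Q}$ over $\bm{A}(\vm)$, then cancelling against the separated map $\bm{\phi}$ over $\bm{B}(\vm)$), whereas the paper cancels once over $\bm{A}(\vm)$ using separatedness of $\bm{\m{M}}(\vm)\to\bm{A}(\vm)$.
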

\begin{proof}
    First, note that the morphism $\bm{\m{H}}(\vm)\to \hpar(\vm)\times \nm$ is a closed embedding because the $\vxi$-parabolic condition is a closed condition. Moreover, by a result of Yokogawa \cite[Corollary 5.12]{Yokogawa-compactification}, the morphism $h^{\parb}(\vec{m})$ is proper. Thus, the composition $\bm{\m{H}}(\vm)\to \bm{A}(\vm)$ is proper. Similarly, since the Fitting support morphism $\bm{\mathcal{M}}(\vm)\to \bm{B}(\vm)$ is proper and $\iota$ is a closed embedding, their composition $\bm{\m{M}}(\vm)\to \bm{A}(\vm)$ is also proper, and hence separated. By Corollary \ref{cor:sp commutativity}, we have a commutative diagram where the proper map $\bm{\m{H}}(\vm)\to \bm{A}(\vm)$ factors through $\bm{Q}$ and the separated map $\bm{\m{M}}(\vm)\to \bm{A}(\vm)$, hence $\bm{Q}$ must be proper. 
\end{proof}

\begin{proof}[Proof of Theorem \ref{spectral-corr}]
Proposition \ref{prop:sp numerical} and Proposition \ref{prop:sp stab} imply that the relative morphism $\bm{Q}:\bm{\m{H}}(\vm) \to \bm{\m{M}}(\vm)$ is well-defined. Moreover, Theorem \ref{thm:recovery} provides a left inverse morphism of $\bm{Q}$, defined on the image of $\bm{Q}$. More precisely, it is defined by composing the pushforward functors associated with each step of the blow-up and the projection $\pi: M\to C$ to the base curve. Since $\bm{Q}$ is also proper by Proposition \ref{prop:sp proper}, it is a closed embedding.

\end{proof}

  In order to obtain an isomorphism between the moduli spaces $\bm{\mathcal{H}}(\vm)$ and $\bm{\mathcal{M}}(\vm)$, one has to define an inverse morphism to $\bm{Q}$. As remarked in the introduction, the presence of sheaves with support on the exceptional divisors causes some exactness arguments to break down. For example, over a non-generic $\vxi\in \nm$, the higher direct image sheaf $R^1p_*F$ along a single blow-up $p: M_1 \to M$ (in the intermediate step) may no longer vanish and the Euler characteristics of the pushforwards may not be preserved. So, we cannot define an inverse morphism $\mvxi\to \hvxi$ simply by pushing the sheaves forward, as is possible in the generic case. It is unclear whether this obstruction is fundamental to achieve an isomorphism or it can be resolved through a careful analysis of stability conditions. Nevertheless, one can avoid these issues by working over the integral locus. 
\begin{theorem}\label{thm:open locus}
    Let $\bm{B}_{int}(\vm) \subset \bm{B}(\vm)$ be the open subset parametrizing integral curves. Then $\bm{Q}:\bm{\m{H}}(\vm) \to \bm{\m{M}}(\vm)$ is an isomorphism over this open locus. 
\end{theorem}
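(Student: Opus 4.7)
The plan is to construct a set-theoretic inverse to $\bm{Q}$ over $\bm{B}_{int}(\vm)$ by iteratively blowing down, thereby reversing the construction in Section~\ref{s:sheaf local}. Since $\bm{Q}$ is already a closed embedding by Theorem~\ref{spectral-corr}, exhibiting such an inverse immediately upgrades it to an isomorphism of schemes over this open locus.

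Fix $\vxi\in \nm$ and let $F \in \mvxi$ have integral Fitting support $\Sigma \in \bmxi$. I first set $F_0 := F\otimes \O(-\Xi_{1,1,\vxi}-\cdots-\Xi_{n,\ell(n),\vxi})$, regarded as a parabolic sheaf of length zero on $Z_{\vxi}$. I then apply the pushforward through the blow-ups $\bm{p}^{i,j-1}_{i,j}$ in the \emph{reverse} order from the one used to define $\bm{Q}$. At each step, Proposition~\ref{prop:parabolic_converse} produces a parabolic sheaf of length one greater on the next intermediate surface, provided the underlying sheaf has integral support not contained in the exceptional divisor just contracted. Since $\c_1(\Sigma) = rp_{\vxi}^*C_0-\sum m_{i,j}\Xi_{i,j,\vxi}$ with $r\geq 1$, the support $\Sigma$ cannot be contained in any exceptional divisor, and the image of an integral curve under the blow-up morphism remains integral because the morphism is birational away from the exceptional locus. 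Iterating through every blow-up yields a parabolic sheaf $\mathcal{G}_\bullet$ on $M$ with parabolic divisor $L=\pi^*D$, whose underlying sheaf has integral support disjoint from $L$ and which satisfies the $(\vxi,L)$-parabolic condition.

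Next, I apply the inverse of the classical BNR correspondence: set $E := \pi_*G$, define the Higgs field $\Phi: E\to E\otimes K_C(D)$ via the tautological section $y$, and take the filtrations on each $E_{p_i}$ induced by the pushforward of $G_\bullet$. The $(\vxi,L)$-parabolic condition translates exactly to the requirement of Definition~\ref{def:xi-parabolic}, so $(E, E^\bullet_D, \Phi, \va)$ is a $\vxi$-parabolic Higgs bundle; the numerical invariants match by Proposition~\ref{prop:sp numerical}. Stability is forced by the integrality of $\Sigma$: any $\Phi$-invariant subbundle $E'\subset E$ would give, by the classical BNR direction, a subsheaf of $F$ supported on a proper subcurve of $\Sigma$, contradicting integrality. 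Theorems~\ref{thm:recovery} and~\ref{thm:construction_is_inverse} then show that this assignment and $\bm{Q}$ are mutually inverse on points, and the construction is functorial in $S$-points so it descends to an inverse morphism of relative moduli spaces over $\bm{B}_{int}(\vm)$.

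The main obstacle is verifying that the hypothesis of Proposition~\ref{prop:parabolic_converse} persists at \emph{every} intermediate step of the iterative blow-down, namely that the support of the intermediate sheaf is integral and distinct from the exceptional divisor just introduced. This is essentially a bookkeeping argument through the numerical classes, but one must ensure that the pushforward does not acquire embedded points or components lurking on the exceptional divisors --- precisely the pathology that caused exactness failures in the non-generic case discussed just before the theorem. The integrality hypothesis on $\Sigma$ is exactly what rules out these pathologies and distinguishes this open locus from the general setting.
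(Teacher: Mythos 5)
Your proposal is correct and follows essentially the same route as the paper: construct the inverse $\bm{P}$ over the integral locus by twisting down by the exceptional divisors, use integrality of the support to keep the hypotheses of Proposition~\ref{prop:parabolic_converse} valid through the blow-downs, push forward to $M$ and then to $C$ to recover a $\vxi$-parabolic Higgs bundle (stable by integrality), and conclude via Theorems~\ref{thm:recovery} and~\ref{thm:construction_is_inverse} together with the closed-embedding property of $\bm{Q}$ from Theorem~\ref{spectral-corr}. The only difference is organizational: the paper builds the whole chain $F_{n,\ell(n)}\hookrightarrow\dots\hookrightarrow F_0$ on $Z_{\vxi}$ at once and pushes it forward, invoking the single-blow-up results only to verify that $\bm{Q}$ is a left inverse of $\bm{P}$, whereas you iterate Proposition~\ref{prop:parabolic_converse} one blow-down at a time, which amounts to the same argument.
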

\begin{proof}
    Note that by the commutative diagram in Corollary \ref{cor:sp commutativity}, it makes sense to say that the morphism $\bm{Q}$ is over $\bm{B}(\vm)$ which further projects to $\nm$. We denote by $\bm{\m{H}}_{int}(\vm) $ and $\bm{\m{M}}_{int}(\vm)$ the preimages of $\bm{B}_{int}(\vm)$ in $\bm{\m H}(\vm)$ and $\bm{\m M}(\vm)$ respectively. 

    One can then define a morphism $\bm{P}:\bm{\m{M}}_{int}(\vm)\to  \bm{\m{H}}_{int}(\vm)$ as follows. Let $F_0\in \bm{\m M}_{int}(\vm)$ be a pure dimension one sheaf on $Z_{\vxi}$ and let $F_{n,\ell(n)} = F_0\otimes \O(-\Xi_{1,1}-\dots - \Xi_{n,\ell(n)})$. Starting from $F_{n,\ell(n)}$, define $F_{i,j}$ iteratively by setting
    \[\begin{cases}
       F_{i,j-1} = F_{i,j}\otimes \O(\Xi_{i,j}), &\textrm{if } j\neq 1\\
        F_{i-1,\ell(i-1)} = F_{i,j}\otimes \O(\Xi_{i,j}), &\textrm{if  } j=1
    \end{cases}\]
    By the integrality assumption, we have a natural chain of inclusions:
    \[ F_{n,\ell(n)}\hookrightarrow F_{n,\ell(n)-1} \hookrightarrow... \hookrightarrow F_0.\] 
    Pushing it forward to $M$ yields a parabolic sheaf $\mathcal{G}_\bullet$ with parabolic divisor $L=\pi^{-1}(D)$. By Proposition \ref{prop:parabolic_converse}, $\mathcal{G}_{\bullet}$ satisfies the $(\vxi, L)$-parabolic condition. By further pushing $\mathcal{G}_\bullet$ together with the tautological section forward to $C$ yields a $\vxi$-parabolic Higgs bundle $(E,E^\bullet_D, \Phi, \va)$. It is straightforward to check that the resulting parabolic Higgs bundle has the right numerical invariant. By the integrality assumption, the resulting parabolic Higgs bundle is automatically $\va$-stable. 

    If we look at each step of the blow-up, say $q= p_{n,\ell(n)}^{i,j}: Z_{\vxi}\to M_{i,j}$ (with $j\neq 1$), then we have a parabolic sheaf
    \[  q_* F_{n,\ell(n)} \hookrightarrow \dots \hookrightarrow q_*  F_{i,j}\hookrightarrow q_*F_{i,j-1}\cong q_*( F_{i,j})\otimes \O(E_{i,j}). \]
    Then Theorem \ref{thm:construction_is_inverse} from the analysis for a single blow-up says that the next pushforward is reversible by $\Psi_{p^{i,j-1}_{i,j}}$. Hence, it follows that $\bm{Q}$ is a left inverse morphism to $\bm{P}$. Combining with Theorem \ref{spectral-corr}, it yields that $\bm{Q}$ is an isomorphism over $\bm{B}_{int}(\vm)$. 
\end{proof}

\section{Non-emptiness results}\label{sec:nonemptiness}
\subsection{Controllability of line bundles}
Recall that by Proposition \ref{evaluation description} the non-emptiness problem of $\bmxi$ can be formulated as the existence of sections of line bundles whose local derivatives satisfy certain system of linear equations. We begin by some simple criteria which guarantee the existence of sections with prescribed local derivatives. 
\begin{definition}
    Given a set of distinct points $p_1, \cdots, p_n$, we say that a line bundle $L$ is controllable up to order $(t(p_1),\dots, t(p_n))$ at $(p_1, \cdots, p_n)$ if the restriction $H^0(C,L) \to H^0(C,L|_{t(p_1)p_1+\cdots+t(p_n)p_n})$ is surjective. 
\end{definition}
In other words, if $L$ is controllable up to order $(t(p_1),\dots, t(p_n))$ at $(p_1, \cdots, p_n)$, there exists a section $s \in H^0(C,L)$ and local charts around $p_i$'s such that the local derivatives of $s$ at $p_i$, $s^{(a)}(p_i)$, can be any values in $\mathbb{C}$ for $a<t(p_i)$. In this paper, we will only apply this notion to the line bundles $L=(K_C(D))^{\otimes \mu}$ where $D=p_1+\dots +p_n$ and $\mu=1,\dots, r$. In this case, we always have $H^1(C, L)=0$. 

\begin{lem}\label{controllability1}
Suppose that $H^1(C, L)=0$ and let $L' = L\left(-\sum_{i=1}^n t(p_i)p_i\right).$ Then $L$ is  controllable up to order $(t(p_1),\dots, t(p_n))$ at $(p_1, \cdots, p_n)$ if and only if 
    \[ H^1 \left( C,  L'\right)=0. \]
\end{lem}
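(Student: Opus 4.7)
The plan is to prove this by analyzing the long exact cohomology sequence associated to the short exact sequence of sheaves
\begin{equation*}
0 \to L' \to L \to L|_{T} \to 0,
\end{equation*}
where $T := \sum_{i=1}^n t(p_i) p_i$ is the effective divisor (scheme-theoretically), and the last map is the natural restriction. The sheaf $L|_T$ is a skyscraper (zero-dimensional) sheaf on $C$, so $H^1(C, L|_T) = 0$ automatically.

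Taking the long exact sequence in cohomology and inserting the hypothesis $H^1(C, L) = 0$, I obtain
\begin{equation*}
H^0(C, L) \xrightarrow{\rho} H^0(C, L|_T) \to H^1(C, L') \to H^1(C, L) = 0.
\end{equation*}
From this four-term exact sequence, exactness at $H^1(C, L')$ shows that $H^1(C, L') = \mathrm{coker}(\rho)$. Therefore $\rho$ is surjective if and only if $H^1(C, L') = 0$. Since by definition $L$ is controllable up to order $(t(p_1),\dots,t(p_n))$ at $(p_1,\dots,p_n)$ precisely when $\rho$ is surjective, the equivalence follows.

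This is essentially a one-step diagram-chase, so I do not anticipate any serious obstacle; the only small point worth spelling out is the identification of the quotient $L/L'$ with $L|_T$, which is immediate from the fact that $\mathcal{O}_C/\mathcal{O}_C(-T) \cong \mathcal{O}_T$ and tensoring with the line bundle $L$ preserves this identification. The hypothesis $H^1(C, L) = 0$ is used in exactly one place, to kill the rightmost term in the long exact sequence.
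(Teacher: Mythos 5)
Your proof is correct and follows exactly the same route as the paper: the long exact cohomology sequence of $0 \to L' \to L \to L|_T \to 0$, using $H^1(C,L)=0$ to identify $H^1(C,L')$ with the cokernel of the restriction map. The only difference is that you spell out the identification $L/L' \cong L|_T$, which the paper leaves implicit.
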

\begin{proof}
 The result follows directly from the following long exact sequence
\begin{equation}\label{controllability:les}
0 \to H^0(C,L') \to H^0(C,L) \to H^0(C,L|_{t(p_1)p_1+\dots + t(p_n)p_n}) \to H^1(C, L') \to H^1(C,L)\to \cdots
\end{equation}
\end{proof}
\begin{lem}[Controllability inequality]\label{controllability2}
    Suppose that $H^1(C, L)=0$. If the following inequality holds
    \begin{equation}\label{controllability inequality}
        \sum_{i=1}^nt(p_i) < \mathrm{deg}(L) - (2g-2),
    \end{equation}
    then $L$ is controllable up to order $(t(p_1),\dots, t(p_n))$ at $(p_1,\dots, p_n)$. 
\end{lem}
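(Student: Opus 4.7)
The plan is to reduce to Lemma \ref{controllability1} and then dispose of the vanishing of $H^1(C,L')$ via Serre duality and a degree count.

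First I would apply Lemma \ref{controllability1}: since $H^1(C,L)=0$ by hypothesis, controllability of $L$ up to order $(t(p_1),\dots,t(p_n))$ at $(p_1,\dots,p_n)$ is equivalent to the vanishing
\[
H^1\!\left(C, L'\right)=0, \qquad L' := L\!\left(-\sum_{i=1}^n t(p_i)p_i\right).
\]
So the task reduces to verifying this single cohomological vanishing from the degree inequality \eqref{controllability inequality}.

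Next I would invoke Serre duality to rewrite
\[
H^1(C, L') \;\cong\; H^0\!\left(C, K_C \otimes (L')^{-1}\right)^{\vee},
\]
and compute the degree of the dual line bundle:
\[
\deg\!\left(K_C \otimes (L')^{-1}\right) \;=\; (2g-2) - \deg(L) + \sum_{i=1}^n t(p_i).
\]
The hypothesis $\sum_i t(p_i) < \deg(L) - (2g-2)$ rearranges precisely to say that this degree is strictly negative. A line bundle of negative degree on a smooth connected projective curve has no nonzero global sections, so $H^0(C, K_C\otimes(L')^{-1})=0$, whence $H^1(C,L')=0$ and the lemma follows.

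There is no real obstacle here: the argument is a one-line application of Serre duality combined with the trivial fact that negative-degree line bundles have no sections. The only thing worth being careful about is to phrase the reduction through Lemma \ref{controllability1} so that the role of the hypothesis $H^1(C,L)=0$ is transparent (it is what lets the surjectivity on global sections be equivalent to the vanishing of $H^1$ of the twisted-down bundle, via the long exact sequence \eqref{controllability:les}).
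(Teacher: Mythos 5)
Your argument is correct and is essentially identical to the paper's proof: both reduce via Lemma \ref{controllability1} to the vanishing of $H^1(C,L')$, apply Serre duality to identify this with $H^0(C,(L')^{\vee}\otimes K_C)^{\vee}$, and observe that the inequality \eqref{controllability inequality} is exactly the statement that this dual line bundle has negative degree, hence no nonzero sections. No gaps; nothing further to add.
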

\begin{proof}
    By Serre duality, we have 
    $H^1(C, L') \cong H^0(C,(L')^\vee\otimes K_C)^\vee$. In particular, when $\mathrm{deg}((L')^\vee\otimes K_C)<0$ which is equivalent to the inequality \eqref{controllability inequality}, we have $H^1(C,L')=0$ and hence the controllability of $L$ by Lemma \ref{controllability1}.
\end{proof}
\begin{rem}
    Suppose that $H^1(C, L)=0$ holds. When $L'\cong K_C$, or equivalently, $L\cong K_C(\sum_{i=1}^n t(p_i)p_i)$ we have $H^1(C, L') = H^0(C, \O)^\vee= \C$. In this case, $L$ is not controllable up to order $(t(p_1),\dots, t(p_n))$ at $(p_1,\dots,p_n)$. By the long exact sequence \eqref{controllability:les}, the image of $H^0(C, L)\to H^0(C, L|_{t(p_1)p_1+\dots+ t(p_n)p_n})$ is described by the kernel of $f: H^0(C, L|_{t(p_1)p_1+\dots+ t(p_n)p_n})\to H^1(C, L')$. For example, when $t(p_1)=\dots= t(p_n)=1$ and $L\cong K_C(\sum_{i=1}^n p_i)$, an element $(b_1,\dots,b_n)\in H^0(C, L|_{p_1+\dots +p_n})$ comes from a section $s\in H^0(C,L)$
    if and only if $f(b_1,\dots,b_n)= b_1+\dots+ b_n=0$. 
 \end{rem}

 \begin{rem}[The $OK$ condition]
 In the setting of parabolic Higgs bundles, we are given $\vm=(\um_1,\dots,\um_n)$. Let $\gamma_{P^i}(\mu)$ denote the level function associated to the partition $P^i=\um_i$. Applying  Lemma \ref{controllability1} to the line bundle $L= (K_C(D))^{\otimes \mu}$ with $t(p_i) = \gamma_{P^i}(\mu)$, we find that the condition that $(K_C(D))^{\otimes \mu}$ is controllable up to order $(\gamma_{P^1}(\mu), \dots, \gamma_{P^n}(\mu))$ at $(p_1,\dots,p_n)$ for $\mu=2, \dots, r$ is equivalent to the $OK$ condition for the collection of line bundles $L_{\vm}(\mu)$, $\mu=2,\dots,r$. 
 \end{rem}

\subsection{Non-emptiness of parabolic Hitchin bases}
    In this section, we will study the non-emptiness problem of the subset $\bmxi\subset |rC_0|$ for any given pair $(\vm,\vxi)$. In general, the problem of showing the existence of divisors of a linear system with prescribed multiplicity conditions at some points on a surface is non-trivial. In our case, we will use the alternative description of  $ \bmxi$ in terms of the subspace of $ A $ defined by the vanishing of evaluations over the level domains associated to the partitions \( P^{\xi^\circ_{i,j}} \) (Proposition \ref{evaluation description})

     Recall from Section \ref{subsec:linearsystem} that for each $p_i\in D$, after choosing a local trivialization of $M$ around $p_i$, we have the evaluations of local derivatives of a section $s_\mu$ at $p_i$, $s_\mu^{(a)}(p_i):=\left. \frac{\del}{\del x_i^a}\right \vert_{0}s_\mu$.
By Proposition \ref{evaluation description}, an element in $\bmxi$ is characterized by $s=(s_1,\dots,s_r)\in A$ satisfying the vanishing of the evaluation maps, which can be written out explicitly as follows: For $i=1,\dots, n$, 
    \begin{equation}\label{eq:gen2}
          \begin{cases}
              \frac{\partial}{\partial y^u}\frac{\partial}{\partial x_i^a} \big |_{(0, \xi^\circ_{i,1})}q_s(x_i,y)=0 & (u,a) \in G(P^{\xi^\circ_{i,1}}) \\
              \qquad \vdots & \\
              \frac{\partial}{\partial y^u}\frac{\partial}{\partial x_i^a} \big |_{(0, \xi^\circ_{i,e(i)})}q_s(x_i,y)=0 & (u,a) \in G(P^{\xi^\circ_{i,e(i)}})
          \end{cases}
    \end{equation}
    where $q_s(x_i,y) = y^r+ s_1(x_i)y^{r-1}+\dots + s_r(x_i)$ and $a,u\in \Z_{\geq 0}$. These are all linear equations that the evaluations $s^{(a)}_{\mu}(p_i)$ have to satisfy.
    \begin{exmp}\label{exmp:linear constraints 1}
        Let $n=1$, $\vm = \um = (3,2,1)$ and $\vxi = \uxi = (\xi_1,\xi_2,\xi_3)$ with $\xi_1=\xi_3$. So, the distinct part $\uxi^\circ=  (\xi^\circ_1,
        \xi^\circ_2) = (\xi_1,\xi_2)$ and 
        \[P^{\xi^\circ_1} = (3,1),\quad P^{\xi^\circ_2} =  (2) \]
        The corresponding level domains are shown below
          \begin{center}
$G(P^{\xi^\circ_1} )=$\begin{tabular}{c|cccc}
$a=2$ &$\bullet$&&&\\
$a=1$ &$\bullet$&$\bullet$&&\\
$a=0$ &$\bullet$&$\bullet$&$\bullet$&$\bullet$\\
\hline
 & $u=0$ & $u=1$ & $u=2$ & $u=3$  \\
\end{tabular} \quad $G(P^{\xi^\circ_2} )=$ \begin{tabular}{c|ccc}
$a=1$ &$\bullet$&\\
$a=0$ &$\bullet$&$\bullet$\\
\hline
 & $u=0$ & $u=1$  \\
\end{tabular}
\end{center}
        
        Then the equations indexed by $G(P^{\xi^\circ_{1}})$ are explicitly given by 
        \setlength{\arraycolsep}{2pt}
        \[\begin{array}{ccccccccccccc}
            (\xi_1^\circ)^5s_1(p) &+&(\xi_1^\circ)^4s_2(p) &+&(\xi_1^\circ)^3 s_3(p)&+&
            (\xi_1^\circ)^2s_4(p) &+&(\xi_1^\circ)s_5(p) &+&s_6(p) &=&-(\xi_1^\circ)^6\\
            5(\xi_1^\circ)^4s_1(p) &+&4(\xi_1^\circ)^3s_2(p) &+&3(\xi_1^\circ)^2 s_3(p)&+&
            2(\xi_1^\circ)s_4(p) &+&s_5(p) &&&=&-6(\xi_1^\circ)^5\\
            20(\xi_1^\circ)^3s_1(p) &+&12(\xi_1^\circ)^2s_2(p) &+&6(\xi_1^\circ) s_3(p)&+&
            2s_4(p) & &&&&=&-30(\xi_1^\circ)^4\\
            60(\xi_1^\circ)^2s_1(p) &+&24(\xi_1^\circ)s_2(p) &+& 6s_3(p)&
             & &&&&&=&-120(\xi_1^\circ)^3\\
             (\xi_1^\circ)^5s_1^{(1)}(p) &+&(\xi_1^\circ)^4s_2^{(1)}(p) &+&(\xi_1^\circ)^3 s_3^{(1)}(p)&+&
            (\xi_1^\circ)^2s_4^{(1)}(p) &+&(\xi_1^\circ)s_5^{(1)}(p) &+&s_6^{(1)}(p) &=&0\\
            5(\xi_1^\circ)^4s_1^{(1)}(p) &+&4(\xi_1^\circ)^3s_2^{(1)}(p) &+&3(\xi_1^\circ)^2 s_3^{(1)}(p)&+&
            2(\xi_1^\circ)s_4^{(1)}(p) &+&s_5^{(1)}(p) &&&=&0\\
            (\xi_1^\circ)^5s_1^{(2)}(p) &+&(\xi_1^\circ)^4s_2^{(2)}(p) &+&(\xi_1^\circ)^3 s_3^{(2)}(p)&+&
            (\xi_1^\circ)^2s_4^{(2)}(p) &+&(\xi_1^\circ)s_5^{(2)}(p) &+&s_6^{(2)}(p) &=&0
        \end{array}\]
        and the equations indexed by $G(P^{\xi^\circ_{2}})$ are explicitly given by 
        \setlength{\arraycolsep}{2pt}
        \[\begin{array}{cccccccccccccc}
            (\xi_2^\circ)^5s_1(p) &+&(\xi_2^\circ)^4s_2(p) &+&(\xi_2^\circ)^3 s_3(p)&+&
            (\xi_2^\circ)^2s_4(p) &+&(\xi_2^\circ)s_5(p) &+&s_6(p) &=&-(\xi_2^\circ)^6\\
            5(\xi_2^\circ)^4s_1(p) &+&4(\xi_2^\circ)^3s_2(p) &+&3(\xi_2^\circ)^2 s_3(p)&+&
            2(\xi_2^\circ)s_4(p) &+&s_5(p) &&&=&-6(\xi_2^\circ)^5\\
             (\xi_2^\circ)^5s_1^{(1)}(p) &+&(\xi_2^\circ)^4s_2^{(1)}(p) &+&(\xi_2^\circ)^3 s_3^{(1)}(p)&+&
            (\xi_2^\circ)^2s_4^{(1)}(p) &+&(\xi_2^\circ)s_5^{(1)}(p) &+&s_6^{(1)}(p) &=&0
        \end{array}\]
    \end{exmp}

    Define the vector space 
    \[V:= \bigoplus_{\mu=1}^r H^0\left(C,\left(K_C(D)\right)^{\otimes \mu}|_{\sum_{i=1}^n m_{i,1}p_i }\right).\]
    Given the choice of local trivializations around $p_i\in D$, there is a natural choice of basis $\{e_{\mu, a, i}\}$ of $V$ such that the restriction of $s_\mu$ in $H^0(C,(K_C(D))^{\otimes \mu}|_{ m_{i,1}p_i})$ can be written as $ \sum_{a=0}^{m_{i,1}-1} s_{\mu}^{(a)}(p_i) e_{\mu,a,i}. $ The index set of the basis is given by
    \[\Pi:= \{(\mu, a, i)\in \Z^{\geq 0}| 1\leq\mu\leq r , 0\leq a< m_{i,1}, 1\leq i\leq n\}.\]
    Hence, we can regard $V$ as the set of relevant evaluations $s^{(a)}_{\mu}(p_i)$. 
    We denote by \( \Pi(a_0, i_0) \) the subset of \( \Pi \) where \( a = a_0 \) and \( i = i_0 \), i.e.,  
\[
\Pi(a=a_0, i=i_0) = \{(\mu, a, i) \in \Pi \mid a = a_0, i = i_0 \}.
\]  
Similarly, we define \( \Pi(\mu=\mu_0) \) as the subset of \( \Pi \) where \( \mu = \mu_0 \), i.e.,  
\[
\Pi(\mu= \mu_0) = \{(\mu, a, i) \in \Pi \mid \mu = \mu_0 \},
\]  
and so on for other fixed entries.

    The set of all linear equations \eqref{eq:gen2} can be divided into different systems of equations labeled by $a$ and $i$, each of which only involves $s_1^{(a)}(p_i) ,\dots, s_r^{(a)}(p_i)$. For each eigenvalue $\xi^\circ_{i,j}$, the number of equations involving $x$-derivative of order $a$ is \[c(a,\xi^\circ_{i,j}) := \#\{u| \gamma_{P^{\xi^\circ_{i,j}}}(u)\geq a\}.\]Hence, the number of equations involving $s_1^{(a)}(p_i) ,\dots, s_r^{(a)}(p_i)$ is 
    $c(a,i):= \sum_{j=1}^{e(i)} c(a,\xi^\circ_{i,j})$
    and we denote by $A_{a,i}X_{a,i} = B_{a,i}$ the system of equations of size $c(a,i)\times r$ and regard $X_{a,i} = (x_{\mu,a,i})^T_{1\leq \mu\leq r}$ as variables where $x_{\mu,a,i}:V\to \C$ is the dual basis of $e_{\mu,a,i}$. Note that for $a>0$, the system of equations is homogeneous i.e. $B_{a,i}=0$. More explicitly, define the following $c\times r$ matrix where the first row is $(\xi^{r-1},\xi^{r-2},\dots,\xi,1)$ and the $k$-th row is the $k$-th derivative of the first row (divided by $k!$): 
    \begin{equation*}
        R(\xi,c) = \begin{pmatrix}
        \xi^{r-1}&\dots &\dots & &\xi&1\\
        (r-1)\xi^{r-2}&\dots &\dots& & 1&0\\
        &&\vdots&\iddots&&\\
            \frac{(r-1)!}{(c-1)!}\xi^{r-c}&  \cdots &(r-c-1)!&0&\dots &0
        \end{pmatrix}
    \end{equation*}
    and similarly the $c\times 1$ matrix $K(\xi,c )= (\xi^r,r\xi^{r-1},\dots, \frac{r!}{c!}\xi^{r-c+1})^T$.
    Then the coefficient matrix is given by 
    \begin{equation*}
        A_{a,i} = \begin{pmatrix}
            R(\xi_{i,1}^\circ, c(a, \xi_{i,1}^\circ))\\
            \vdots\\
            R(\xi_{i,e(i)}^\circ, c(a, \xi_{i,e(i)}^\circ)) 
        \end{pmatrix}
    \end{equation*}
    Meanwhile, we have $B_{a,i}=0$ for $a>0$ and 
    \[B_{0,i} = \begin{pmatrix}
        K(\xi_{i,1}^\circ, c(a, \xi_{i,1}^\circ))\\
            \vdots\\
            K(\xi_{i,e(i)}^\circ, c(a, \xi_{i,e(i)}^\circ))
    \end{pmatrix}\]
    
    \begin{rem}\label{rem:vandermonde}
    Note that we divide the equations in \eqref{eq:gen2} by some factor of $k!$ to align with the form of a generalized matrix. The $c(a,i)\times c(a,i)$ submatrix $A'_{a,i}$ formed by the last $c(a,i)$ columns is a generalized Vandermonde matrix \cite{Vandermonde}. A fundamental property about the generalized Vandermonde matrix $A'_{a,i}$ is that $A'_{a,i}$ is invertible if and only if the entries $\xi_{i,j}^\circ$ are pairwise distinct i.e.  $\xi_{i,j}^\circ\neq \xi_{i,j'}^\circ$ for all $j\neq j'$ \cite[Page 19]{Vandermonde}, which is true under our assumption of $\vxi^\circ$. 
    \end{rem}
     
    Thus, the systems of linear equations can be expressed succinctly in terms of a linear map
    \[ T: V \to \C^s, \quad \textrm{where} \quad s= \sum_{a,i} c(a,i) \]
    such that the solution space is given by $T^{-1}(\beta)$ where $\beta$ is the vector formed by all $B_{a,i}$ for $a\geq 0$.
    
    \begin{exmp}\label{exmp:linear constraints 2}
        Continuing with Example \ref{exmp:linear constraints 1}, the sets of equations are divided into systems of equations $A_aX_a= B_a$ where $a=0,1,2$ (here $i=1$, so we drop the subscript $i$) and the coefficient matrices are given by 
        \begin{align*}
        A_0&= 
        \begin{pmatrix}
            (\xi_1^\circ)^5 &(\xi_1^\circ)^4 &(\xi_1^\circ)^3 &
            (\xi_1^\circ)^2 &(\xi_1^\circ) & 1\\
            5(\xi_1^\circ)^4&4(\xi_1^\circ)^3 &3(\xi_1^\circ)^2 &
            2(\xi_1^\circ)&1 &0\\
            10(\xi_1^\circ)^3 &6(\xi_1^\circ)^2&3(\xi_1^\circ)&
            1 &0 &0\\
            10(\xi_1^\circ)^2 &4(\xi_1^\circ) &1&0
             & 0&0\\
(\xi_2^\circ)^5 &(\xi_2^\circ)^4&(\xi_2^\circ)^3 &
            (\xi_2^\circ)^2 &(\xi_2^\circ) &1 \\
            5(\xi_2^\circ)^4 &4(\xi_2^\circ)^3 &3(\xi_2^\circ)^2 &
            2(\xi_2^\circ) &1 &0
        \end{pmatrix}, &B_0 &= \begin{pmatrix}
        -(\xi_1^\circ)^6\\-6(\xi_1^\circ)^5\\-15(\xi_1^\circ)^4\\-20(\xi_1^\circ)^3\\-(\xi_2^\circ)^6\\-6(\xi_2^\circ)^5    
        \end{pmatrix}\\
        A_1 &= \begin{pmatrix}
            (\xi_1^\circ)^5 &(\xi_1^\circ)^4 &(\xi_1^\circ)^3 &
            (\xi_1^\circ)^2 &(\xi_1^\circ) & 1\\
            5(\xi_1^\circ)^4&4(\xi_1^\circ)^3 &3(\xi_1^\circ)^2 &
            2(\xi_1^\circ)&1 &0\\
            (\xi_2^\circ)^5 &(\xi_2^\circ)^4&(\xi_2^\circ)^3 &
            (\xi_2^\circ)^2 &(\xi_2^\circ) &1
        \end{pmatrix},  &B_1&=\begin{pmatrix}
            0\\0\\0
        \end{pmatrix}\\
        A_2&=\begin{pmatrix}
            (\xi_1^\circ)^5 &(\xi_1^\circ)^4 &(\xi_1^\circ)^3 &
            (\xi_1^\circ)^2 &(\xi_1^\circ) & 1
        \end{pmatrix},  &B_2&=\begin{pmatrix}
            0
        \end{pmatrix}
        \end{align*}
        
    \end{exmp}

    On the other hand, we consider the subspace of $V$ which comes from global sections i.e.
    \[ S:= \mathrm{Im}\left(\bigoplus_{\mu=1}^r H^0\left(C,(K_C(D))^{\otimes \mu}\right) \to V\right) \subset V.\]
    As our goal is to find sections satisfying the constraints \eqref{eq:gen2}, the question can be formulated as follows: 
    \begin{ques}
        Does the affine subspace $T^{-1}(\beta)$ intersect the linear subspace $S$ non-trivially in $V$? 
    \end{ques} 
    
    Define the standard decomposition $\Pi=\Pi_{\textrm{pivot}}\sqcup \Pi_{\textrm{free}}$ where 
    \begin{equation}\label{standard}
        \Pi_{\textrm{pivot}}:= \{(\mu, a, i)\subset \Pi| r-c(a,i)+1\leq \mu\leq r\} , \quad \Pi_{\textrm{free}} := \Pi_{\textrm{pivot}}^c\subset \Pi.
    \end{equation}
    \begin{lem}\label{graphlike}
    Let $V= V_{\textrm{free}}\oplus V_{\textrm{pivot}}$ be the direct sum decomposition induced by the standard decomposition \eqref{standard}. Then there is an affine map $H:V_{\textrm{free}}\to V_{\textrm{pivot}}$ whose graph is $T^{-1}(\beta)$.
    \end{lem}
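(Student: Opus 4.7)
The plan is to show that the system of linear equations $T(v)=\beta$ decomposes into independent subsystems indexed by pairs $(a,i)$, and then to use the generalized Vandermonde structure of the coefficient matrices $A_{a,i}$ to solve each subsystem uniquely for the pivot coordinates in terms of the free ones.

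First, I would observe that by construction the $(a,i)$-th subsystem $A_{a,i}X_{a,i}=B_{a,i}$ involves \emph{only} the variables $\{x_{\mu,a,i}\mid 1\leq \mu\leq r\}$, that is, the coordinates indexed by $\Pi(a=a,i=i)$. Consequently, under the decomposition
\[
V=\bigoplus_{(a,i)}V_{a,i},\qquad V_{a,i}=\mathrm{span}\{e_{\mu,a,i}\mid 1\leq \mu\leq r\},
\]
the map $T$ is block-diagonal and $\beta$ splits accordingly. It therefore suffices to construct an affine map $H_{a,i}\colon V^{\mathrm{free}}_{a,i}\to V^{\mathrm{pivot}}_{a,i}$ whose graph equals the solution set of $A_{a,i}X_{a,i}=B_{a,i}$, since the product map $H=\prod_{(a,i)} H_{a,i}$ will then have graph $T^{-1}(\beta)$.

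Next, I would write $A_{a,i}=(A''_{a,i}\mid A'_{a,i})$, where $A''_{a,i}$ is the $c(a,i)\times(r-c(a,i))$ block consisting of the first $r-c(a,i)$ columns and $A'_{a,i}$ is the $c(a,i)\times c(a,i)$ block consisting of the last $c(a,i)$ columns. By Remark~\ref{rem:vandermonde}, the square block $A'_{a,i}$ is a generalized Vandermonde matrix in the distinct nodes $\xi_{i,1}^\circ,\dots,\xi_{i,e(i)}^\circ$, and so it is invertible. Splitting $X_{a,i}=(X^{\mathrm{free}}_{a,i},X^{\mathrm{pivot}}_{a,i})^T$ according to the standard decomposition \eqref{standard}, the equation $A_{a,i}X_{a,i}=B_{a,i}$ is equivalent to
\[
X^{\mathrm{pivot}}_{a,i}=(A'_{a,i})^{-1}\!\bigl(B_{a,i}-A''_{a,i}X^{\mathrm{free}}_{a,i}\bigr),
\]
which defines an affine map $H_{a,i}\colon V^{\mathrm{free}}_{a,i}\to V^{\mathrm{pivot}}_{a,i}$ whose graph is precisely the affine solution space of the subsystem.

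Assembling these maps produces the desired $H\colon V_{\mathrm{free}}\to V_{\mathrm{pivot}}$. There is no real obstacle here; the only content is invoking the nontrivial fact that the relevant generalized Vandermonde block is invertible under the distinctness of $\uxi^\circ_i$, which is exactly what Remark~\ref{rem:vandermonde} records.
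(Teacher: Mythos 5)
Your proof is correct and takes essentially the same route as the paper: split the equations into the independent $(a,i)$-blocks, observe that the square submatrix formed by the last $c(a,i)$ columns is an invertible generalized Vandermonde matrix (Remark \ref{rem:vandermonde}), and solve for the pivot variables affinely in terms of the free ones. The only cosmetic difference is that the paper treats $a=0$ separately (there $c(0,i)=r$, the block is square and the solution is unique), whereas your formula handles this uniformly as a block with no free variables.
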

        \begin{proof}
            Since $T^{-1}(\beta)$ is the solution set of the systems of equations $A_{a,i}X_{a,i}=B_{a,i}$. When $a=0$, the matrix $A_{0,i}$ is always a square generalized Vandermonde matrix, which is invertible, so there is always a unique solution for $A_{0,i}X_{0,i}=B_{0,i}$. 
            
            For $a>0$ and $1\leq i\leq n$, in order to solve the homogeneous system of equations $A_{a,i}X_{a,i}=0$, it suffices to find $c(a,i)$ columns which form an invertible submatrix of $A_{a,i}$ such that $A_{a,i}X_{a,i}=0$ is equivalent to an expression of the pivot variables (corresponding to the choice of columns) in terms of the free variables. In our case, we take the submatrix $A'_{a,i}$ formed by the last $c(a,i)$ columns  i.e. $(\mu, a,i)\in \Pi_{\textrm{pivot}}$  which is invertible by Remark \ref{rem:vandermonde}.  Hence, $\Pi_{\textrm{pivot}}$ and $\Pi_{\textrm{free}}$ are the index sets of the pivot variables and free variables respectively.

            Therefore, we get an affine map $R: V_{\textrm{free}}\to V_{\textrm{pivot}}$ from the expression of pivot variables in terms of free variables  and the solution set $T^{-1}(\beta)$ is then the graph of $H$.

            \end{proof}
        
        \begin{prop}\label{prop:intesection}
        Let $\Pi=\Pi_{\textrm{pivot}}\sqcup \Pi_{\textrm{free}}$ be the standard decomposition. Define 
        \[t(\mu_0, i_0):= \#\{a| (\mu_0,a, i_0)\in \Pi_{\textrm{pivot}}\}\] for each pair $1\leq \mu_0\leq r, 1\leq i_0\leq n$. Suppose that $\vec{\xi} \in \nm$ and $(K_C(D))^{\otimes \mu}$ is controllable up to order $(t(\mu,1),\dots,t(\mu,n))$ at $(p_1,\dots,p_n)$ for $\mu=2,\dots, r$. Then $S\cap T^{-1}(\beta)\neq \emptyset.$ 
    \end{prop}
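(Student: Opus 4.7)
The plan is to construct, by induction on $\mu$, sections $s_\mu \in H^0(C, (K_C(D))^{\otimes \mu})$ for $\mu = 1, \dots, r$ whose collective image in $V$ satisfies every block equation $A_{a,i} X_{a,i} = B_{a,i}$, giving the desired element of $S \cap T^{-1}(\beta)$.

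First I would exploit the block structure of the system: the blocks for distinct $(a,i)$ involve disjoint sets of variables, and within each block, the pivot equation for $\mu_0$ (assuming $\mu_0 > r - c(a,i)$) expresses $x_{\mu_0, a, i}$ as an affine function of $x_{\mu, a, i}$ with $\mu \leq r - c(a,i) < \mu_0$ only. Next, since $c(a,i)$ is non-increasing in $a$, the set of $a$'s for which $(\mu_0, a, i) \in \Pi_{\textrm{pivot}}$ forms a contiguous interval $\{0, 1, \dots, t(\mu_0, i) - 1\}$. Hence, at the $\mu_0$-th step of the induction, the constraint on $s_{\mu_0}$ amounts precisely to prescribing its $t(\mu_0, i)$-jet at each $p_i$, where the prescribed values are affine functions of the already-chosen $s_\mu$ for $\mu < \mu_0$.

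For $\mu_0 \geq 2$, the controllability hypothesis on $(K_C(D))^{\otimes \mu_0}$ (Lemma \ref{controllability1}) gives surjectivity of the restriction $H^0(C, (K_C(D))^{\otimes \mu_0}) \to \bigoplus_i H^0(C, (K_C(D))^{\otimes \mu_0}|_{t(\mu_0, i) p_i})$, so a section $s_{\mu_0}$ realizing the prescribed jets exists and the induction proceeds. The base case $\mu_0 = 1$ is more delicate since no controllability hypothesis is imposed there: one observes that $c(0, i) = r$ while $c(a, i) < r$ for $a \geq 1$, so $t(1, i) = 1$ and the constraint on $s_1$ reduces to prescribing the single value $s_1(p_i)$. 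The $a = 0$ block equations force $q_s(0, y)$ to factor at each $p_i$ as $\prod_{j=1}^{e(i)} (y - \xi^\circ_{i,j})^{|P^{\xi^\circ_{i,j}}|}$, and reading off the coefficient of $y^{r-1}$ gives $s_1(p_i) = -\sum_j m_{i,j} \xi_{i,j}$. The residue condition $\vec{\xi} \in \nm$ then says exactly $\sum_i s_1(p_i) = 0$, which by the residue theorem (equivalently, by the long exact sequence attached to $K_C(D) \to K_C(D)|_D$) is precisely the obstruction to realizing these prescribed residues by a global section of $K_C(D)$.

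The main obstacle is the base case $\mu_0 = 1$: the $OK$ condition is not assumed for $\mu = 1$ because $H^1(C, K_C) = \C$, and the argument must leverage the precise match between the single linear constraint on the image of $H^0(C, K_C(D)) \to \bigoplus_i \C$ and the residue condition $\vec{\xi} \in \nm$, via the identification of the $a = 0$ pivot value with the negative sum of roots of $q_s(0, y)$ counted with multiplicity.
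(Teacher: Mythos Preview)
Your proposal is correct and follows essentially the same inductive strategy as the paper's proof: determine the pivot values column by column (in $\mu$), using that within each $(a,i)$-block the pivot entry for $\mu_0$ depends only on free entries with $\mu<\mu_0$, and then lift via controllability. Your treatment is in fact more explicit than the paper's in two places: you spell out that $c(a,i)$ is non-increasing in $a$ so the pivot indices for fixed $(\mu_0,i_0)$ form the initial interval $\{0,\dots,t(\mu_0,i_0)-1\}$ (whence the constraint is genuinely a jet constraint matching the controllability hypothesis), and you handle the base case $\mu_0=1$ carefully by computing $s_1(p_i)=-\sum_j m_{i,j}\xi_{i,j}$ from the factorization of $q_s(0,y)$ and matching the residue obstruction to the condition $\vec{\xi}\in\nm$, whereas the paper's assertion that ``$S_\mu\twoheadrightarrow V_{\textrm{pivot},\mu}$ is surjective'' is literally false for $\mu=1$ and should be read as saying the required value lies in the image.
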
 
    \begin{proof}
         In terms of the vector spaces defined above, the assumptions that $\vec{\xi} \in \nm$ and the controllability of the line bundles $K_C(D)^{\otimes \mu}$ means that the composition $S\to V\to V_{\textrm{pivot}}$ is surjective. Moreover, if we write $S=\bigoplus_{\mu=1}^r S_\mu $ and $V_{\textrm{pivot},\mu}=\bigoplus_{\mu=1}^r V_{\textrm{pivot},\mu}$ the direct sum decomposition induced by $V=\bigoplus_{\mu=1}^r V_\mu$, then $S_\mu\to V_{\textrm{pivot},\mu}$ is surjective.

        We will find an element $v =\sum\limits_{(\mu,a,i)\in \Pi}c_{\mu,a,i}e_{\mu,a,i}\in S\cap T^{-1}(\beta)$ by induction. For simplicity, we call $c_{\mu,a,i}$ a pivot (resp. free) coefficient with $\mu=\mu_0$ if $(\mu, a,i)\in \Pi_{\textrm{pivot}}(\mu=\mu_0)$ (resp. $\Pi_{\textrm{free}}(\mu=\mu_0))$. Similarly, we call $c_{\mu,a,i}$ a pivot (resp. free) coefficient with $a=a_0,i=i_0$ if $(\mu, a,i)\in \Pi_{\textrm{pivot}}(a=a_0,i=i_0)$ (resp. $\Pi_{\textrm{free}}(a=a_0,i=i_0))$

        Since $A_{a,i}X_{a,i}=B_{a,i}$ always has a unique solution for $a=0$, the coefficients $c_{\mu,a,i}$ are uniquely determined for $(\mu,a,i)\in \Pi(a=0)$. These determine all the pivot coefficients with $\mu=1$ which yield a vector in $V_{\textrm{pivot},\mu=1}$. Then we can lift this vector via the surjection $S_1\twoheadrightarrow V_{\textrm{pivot},1}$ to a vector in $S_1$ which determines all the free coefficients with $\mu=1$. Now, suppose all the coefficients are determined up to $\mu=k\leq r-1$ and we want to determine the coefficients for $\mu=k+1$. For each fixed $a_0$ and $i_0$, if $c_{k+1,a_0,i_0}$ is a pivot coefficient, the set of coefficients $c_{k-\delta,a_0,i_0}$ with $\delta= 1, \dots, k$ include all the free coeffcients with $a=a_0,i=i_0$ and are all determined by induction hypothesis. So, by applying the map $H:V_{\textrm{free}}\to V_{\textrm{pivot}}$ in Lemma \ref{graphlike}, the pivot coefficient $c_{k+1,a_0,i_0}$ is uniquely determined. Once all the pivot coefficients with $\mu=k+1$ are determined, we can use the surjective map $S_{k+1}\twoheadrightarrow V_{\textrm{pivot}, \mu=k+1}$ to lift to a vector in $S_{k+1}$ which determines the free coefficients with $\mu=k+1$. Hence, all the coefficients $c_{\mu,a,i}$ are determined and the vector $v =\sum\limits_{(\mu,a,i)\in \Pi}c_{\mu,a,i}e_{\mu,a,i}$ lies in both $S$ and $T^{-1}(\beta)=\textrm{graph}(H)$ by construction.

    \end{proof}
    \begin{exmp}\label{exmp:linear constraints 3}
        Continuing with Example \ref{exmp:linear constraints 2}, we can concretely visualize the proof. In the table below, each dot represents a variable labeled by a pair \((a, \mu)\), forming the index set \(\Pi\). The decomposition \(\Pi = \Pi_{\mathrm{free}} \sqcup \Pi_{\mathrm{pivot}}\) is represented by dots of different colors: red dots for free variables \(\Pi_{\mathrm{free}}\) (the free variables), and blue dots for the pivot variables \(\Pi_{\mathrm{pivot}}\).

        \begin{center}
\begin{tabular}{c|cccccc}
$a=2$ & $\textcolor{red}{\bullet}$ &$\textcolor{red}{\bullet}$  &$\textcolor{red}{\bullet}$ &$\textcolor{red}{\bullet}$&$\textcolor{red}{\bullet}$&$\textcolor{blue}{\bullet}$ \\
$a=1$ & $\textcolor{red}{\bullet}$ & $\textcolor{red}{\bullet}$& $\textcolor{red}{\bullet}$&$\textcolor{blue}{\bullet}$&$\textcolor{blue}{\bullet}$&$\textcolor{blue}{\bullet}$\\
$a=0$ &$\textcolor{blue}{\bullet}$&$\textcolor{blue}{\bullet}$&$\textcolor{blue}{\bullet}$&$\textcolor{blue}{\bullet}$&$\textcolor{blue}{\bullet}$&$\textcolor{blue}{\bullet}$ \\
\hline
 & $\mu=1$ & $\mu=2$ & $\mu=3$ & $\mu=4$ & $\mu=5$ &$\mu=6$ \\
\end{tabular}
\end{center}
The goal is to find a vector $v\in S\cap T^{-1}(\beta)$. 
\begin{itemize}
    \item ($v\in T^{-1}(\beta)$) To solve the systems of equations $A_aX_a=B_a$ for a fixed $a$, we first select values for the free variables (the red dots), which then determine the values of the pivot variables (the blue dots) in the same row. 

    \item  ($v\in S$) We also need to ensure that the choice of values for all the variables can be lifted to sections $s_\mu$ of the line bundles $(K_C(D))^{\otimes \mu}$. For a fixed $\mu$, under the controllability assumption for $\mu>1$ and $\vxi\in \nm $ for $\mu=1$, a section $s_\mu$ can be found for any pivot variable values, which in turn determines the free variables. Pictorially, this means that the blue dots in a column determine the red dots in the same column. Here the values of the pivot variables for $a=0$ are uniquely determined. 
\end{itemize}
The idea of the proof then proceeds by moving from left to right across the columns. For each fixed $\mu$, the blue dots determine the remaining  red dots in the column. Once the red dots in a row are determined, they determine the remaining blue dots in that row. This process continues across all columns, and by the time we reach the final column, all the variables (both red and blue dots) are determined.

    \end{exmp}
        \begin{prop}\label{thm:criterion} Continuing with the notation in Proposition \ref{prop:intesection}, we have 
        \[t(\mu, i)= \gamma_{P^i}(\mu) \quad \textrm{for  } 1\leq \mu\leq  r,1\leq i\leq n\]
    where $P^i= P^{\xi^\circ_{i,1}}\cup \cdots \cup P^{\xi^\circ_{i,e(i)}}$. 
    \end{prop}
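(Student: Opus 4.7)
The plan is to express $c(a,i)$ combinatorially in terms of the conjugate partition $\hat{P}^i$ and then read off $t(\mu,i)$ directly from the defining characterization of $\gamma_{P^i}$. This reduces the identity to an elementary manipulation of Young diagrams, with no analytic input needed.

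First, I would rewrite $c(a,\xi^\circ_{i,j})$ as a count of boxes in a Young diagram. Unpacking \eqref{eq:leveldomain}, for each fixed $a$ the $u$-values appearing in $G(P^{\xi^\circ_{i,j}})$ are the integers $0\le u<|P^{\xi^\circ_{i,j}}|$ satisfying $\gamma_{P^{\xi^\circ_{i,j}}}(|P^{\xi^\circ_{i,j}}|-u)\ge a+1$. Substituting $u' = |P^{\xi^\circ_{i,j}}|-u$, this is precisely the number of boxes of $P^{\xi^\circ_{i,j}}$ whose column index exceeds $a$, so
\[ c(a,\xi^\circ_{i,j}) = |P^{\xi^\circ_{i,j}}| - \sum_{k=1}^{a}\hat{P}^{\xi^\circ_{i,j}}_k. \]
Next, I would invoke the elementary compatibility of conjugation with the union operation on partitions: for every $k$,
\[ \hat{P}^i_k = \sum_{j=1}^{e(i)} \hat{P}^{\xi^\circ_{i,j}}_k, \]
since both sides count the parts $\ge k$ in $P^i=\bigcup_j P^{\xi^\circ_{i,j}}$. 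Summing the previous display over $j$ and using $\sum_j |P^{\xi^\circ_{i,j}}| = r$ then gives the clean formula
\[ c(a,i) = r - \sum_{k=1}^{a}\hat{P}^i_k. \]

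Finally, I would unwind the definition of $\Pi_{\mathrm{pivot}}$ from \eqref{standard}: the condition $(\mu,a,i)\in\Pi_{\mathrm{pivot}}$ reads $\mu > r - c(a,i) = \sum_{k=1}^{a}\hat{P}^i_k$, which says exactly that the $\mu$-th box of the Young diagram of $P^i$ does not lie in the first $a$ columns, equivalently $\gamma_{P^i}(\mu) \ge a+1$. Since $\gamma_{P^i}(\mu)\le m_{i,1}$ (the total number of columns of $P^i$), every such $a$ already lies in the admissible range $\{0,\dots,m_{i,1}-1\}$, and therefore
\[ t(\mu,i) = \#\{a\ge 0 \mid a < \gamma_{P^i}(\mu)\} = \gamma_{P^i}(\mu), \]
as required. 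The only ingredient beyond direct unwinding of definitions is the union-conjugation compatibility of Young diagrams, which is immediate, so I do not anticipate any genuine obstacle.
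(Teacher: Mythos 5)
Your proposal is correct, and it is essentially a direct, closed-form version of the paper's argument rather than a literal reproduction of it. The paper proves the identity indirectly: it sets up a (non-canonical) bijection between $\Pi_{\textrm{pivot}}$ and $\bigsqcup_{i,k} G(P^{\xi^\circ_{i,k}})$ preserving the $(a,i)$-entries, observes that $t(\mu,i_0)$ is non-decreasing in $\mu$, and concludes by noting that arranging the values $\gamma_{P^{\xi^\circ_{i_0,k}}}(u_0)$ in increasing order reproduces the level function of the union $P^{i_0}$. You instead make the count explicit: $c(a,\xi^\circ_{i,j})=|P^{\xi^\circ_{i,j}}|-\sum_{k\le a}\widehat{P}^{\xi^\circ_{i,j}}_k$, hence $c(a,i)=r-\sum_{k\le a}\widehat{P}^i_k$ by the compatibility of conjugation with union, so that \eqref{standard} literally reads $(\mu,a,i)\in\Pi_{\textrm{pivot}}\iff a<\gamma_{P^i}(\mu)$, and the count over the admissible range $0\le a<m_{i,1}$ gives $t(\mu,i)=\gamma_{P^i}(\mu)$ since $\gamma_{P^i}(\mu)\le m_{i,1}$. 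This buys you a pointwise identification of the pivot set (not just equal cardinalities per $(a,i)$-slice), so you need neither the bijection nor the monotonicity-and-sorting step; the only extra input is the elementary identity $\widehat{P}^i_k=\sum_j\widehat{P}^{\xi^\circ_{i,j}}_k$. One remark: you read $c(a,\xi^\circ_{i,j})$ as the number of $u$ with $(u,a)\in G(P^{\xi^\circ_{i,j}})$, i.e.\ $\gamma_{P^{\xi^\circ_{i,j}}}(u')\ge a+1$ after the substitution $u'=|P^{\xi^\circ_{i,j}}|-u$; this is indeed the intended meaning (it is what \eqref{eq:leveldomain}, the sizes of the matrices $A_{a,i}$, and Example \ref{exmp:linear constraints 2} dictate), while the displayed formula ``$\#\{u\mid\gamma_{P^{\xi^\circ_{i,j}}}(u)\ge a\}$'' in the text is off by one; with the literal reading the proposition would fail, so your choice is the right one.
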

    \begin{proof}
    In the standard decomposition $\Pi=\Pi_{\textrm{pivot}}\sqcup \Pi_{\textrm{free}}$, 
           the elements in $\Pi_{\textrm{pivot}}$ correspond to the set of pivot variables which is in (non-canonical) bijection with the set of linear equations where the $a$ and $i$ entries in $(\mu, a,i)\in \Pi_{\textrm{pivot}}$ denotes the variables involved in $A_{a,i}X_{a,i}= B_{a,i}$. The set of linear equations are in bijection with the set $G(P^{\xi^\circ_{i,k}})$ where an equation labeled by $(u, a)\in G(P^{\xi^\circ_{i,k}})$ belongs to the system of equations $A_{a,i}X_{a,i}= B_{a,i}$. So, we have a (non-canonical) bijection which preserves the $a$ and $i$ entries on both sides while matching the $\mu$ and $u$ entries 
           \[\Pi_{\textrm{pivot}}\leftrightarrow \bigsqcup\limits_{1\leq i \leq n, 1\leq k\leq e(i)} G(P^{\xi^\circ_{i,k}}).\]  
            Since $t(\mu_0, i_0)$ counts the number of different $a$'s in $\Pi_{\textrm{pivot}}$ for each fixed $\mu_0$ and $i_0$, we can count this number on the other side as well. On the other side, the number of different $a$'s for each fixed $u_0$ and $i_0$ is given by the level function $\gamma_{P^{\xi^\circ_{i_0,k}}}(u_0)$.
            
            By the definition of $\Pi_{\textrm{pivot}}$, $t(\mu, i_0)$ is increasing in $1\leq \mu\leq r$ for fixed $i_0$. On the other side, arranging $\gamma_{P^{\xi^\circ_{i_0,k}}}(u_0)$ in an increasing order for a fixed $i_0$ is simply the level function associated to $P^i=\bigcup_{k=1}^{e(i_0)} P^{\xi^\circ_{i_0,k}}$. Hence, we have $t(\mu, i)= \gamma_{P^i}(\mu)$. 
    \end{proof}

    \begin{cor}\label{thm:nonemptiness of Hitchin bases}Suppose that the $OK$ condition holds. Then for any $\vec{\xi} \in \nm$, the affine space $\bmxi$ is non-empty. In particular, 
    \begin{enumerate}
        \item when $n\geq 3$ and $g=0$, if the inequalities
        \begin{equation}\label{eq:intro-defect}
             \sum_{i=1}^n \gamma_{P^i}(\mu) < (n-2)\mu +2
        \end{equation}
        hold for $\mu=2, \dots, r$, then $\bmxi\neq \emptyset.$ 
        \item when $n \geq 1$ and $g \geq 1$, if one of the partitions $P^i$ is not the singleton partition $m_1=r$, then $\bmxi \neq \emptyset$.
    \end{enumerate}
    \end{cor}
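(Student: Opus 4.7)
The plan is to combine Propositions~\ref{evaluation description}, \ref{prop:intesection}, \ref{thm:criterion}, and Lemma~\ref{controllability1} to obtain the main assertion, and then verify the $OK$ condition case by case in (1) and (2) via Serre duality. By Proposition~\ref{evaluation description}, $\bmxi$ is realized as the subspace of $A$ cut out by the vanishing of the evaluation maps $\mathrm{ev}_{u,a}(\xi^\circ_{i,j})$, which in the notation preceding Proposition~\ref{prop:intesection} is exactly the preimage of the affine solution set $T^{-1}(\beta) \subset V$ under the restriction map $A \to V$. Thus $\bmxi \neq \emptyset$ if and only if $S \cap T^{-1}(\beta) \neq \emptyset$, where $S$ is the image of $A$ in $V$. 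Since $\vec{\xi} \in \nm$ is given, Proposition~\ref{prop:intesection} reduces this to the controllability of $(K_C(D))^{\otimes \mu}$ up to order $(t(\mu,1),\dots,t(\mu,n))$ at $(p_1,\dots,p_n)$ for $\mu = 2,\dots,r$. Proposition~\ref{thm:criterion} identifies $t(\mu,i) = \gamma_{P^i}(\mu)$, and Lemma~\ref{controllability1} identifies this controllability with the vanishing $H^1(C, L(\vec{m})_\mu) = 0$, i.e., the $OK$ condition. This proves the main claim.

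For (1) and (2), the remaining task is to verify the $OK$ condition via a degree computation for $L(\vec{m})_\mu = (K_C(D))^{\otimes \mu} \otimes \O_C(-\sum_i \gamma_{P^i}(\mu) p_i)$. When $g = 0$, $L(\vec{m})_\mu \cong \O_{\mathbb{P}^1}(\mu(n-2) - \sum_i \gamma_{P^i}(\mu))$, and its $H^1$ vanishes iff the degree is at least $-1$; this rearranges exactly to $\sum_i \gamma_{P^i}(\mu) < (n-2)\mu + 2$, giving (1). When $g \geq 2$, since $\gamma_{P^i}(\mu) \leq \mu$ always, $\deg L(\vec{m})_\mu \geq \mu(2g-2) > 2g-2$ for $\mu \geq 2$, so Serre duality yields $H^1 = 0$ unconditionally.

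The only delicate case is $g = 1$, where $K_C \cong \O_C$ and $L(\vec{m})_\mu \cong \O_C(\sum_i (\mu - \gamma_{P^i}(\mu)) p_i)$ has vanishing $H^1$ iff its degree is strictly positive. The key combinatorial claim I need is that $\gamma_P(\mu) < \mu$ for every $2 \leq \mu \leq r$ whenever $P$ is not the singleton partition $(r)$; equivalently, the $\mu$-th box of the Young diagram of $P$ (in the top-to-bottom, left-to-right reading) lies in some column of index $< \mu$. I would prove this by a short analysis of the conjugate partition: if $P$ has at least two rows then $\hat{m}_1 \geq 2$, and combined with $\hat{m}_j \geq 1$ as long as the $j$-th column is non-empty, the first $\mu - 1$ columns contain at least $\mu$ boxes in total. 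Granting this, the hypothesis that some $P^i \neq (r)$ forces $\deg L(\vec{m})_\mu \geq 1$ for all $\mu \geq 2$, giving (2). The main obstacle is this Young-diagram verification; the rest is direct assembly of earlier results.
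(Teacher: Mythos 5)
Your proposal is correct and follows essentially the same route as the paper: reduce non-emptiness to controllability via Propositions~\ref{evaluation description}, \ref{prop:intesection}, and \ref{thm:criterion}, identify controllability with the $OK$ condition through Lemma~\ref{controllability1}, and then check the $OK$ condition case by case by a degree/Serre-duality computation (the paper packages this last step as the controllability inequality of Lemma~\ref{controllability2}, which is the same computation). Your explicit verification that $\gamma_P(\mu)<\mu$ for $2\leq\mu\leq r$ when $P$ is not the singleton partition fills in a fact the paper simply asserts, and it is correct (just note the trivial boundary case where $\mu-1$ exceeds the number of columns, in which $\gamma_P(\mu)\leq m_1<\mu$ directly).
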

    \begin{proof}
    Combining Proposition \ref{prop:intesection} and Proposition \ref{thm:criterion}, we see that $\bmxi$ is non-empty if $(K_C(D))^{\otimes \mu}$ is controllable up to order $(\gamma_{P^1}(\mu),\dots, \gamma_{P^n}(\mu))$ at $(p_1,\dots,p_n)$ for $\mu=2,\dots, r$. The controllability can be guaranteed by the controllability inequalities in Lemma \ref{controllability2}: \begin{equation}\label{controllability3}
         \sum_{i=1}^n \gamma_{P^i}(\mu) < \mathrm{deg}((K_C(D))^{\otimes \mu}) -(2g-2) = (2g-2)(\mu-1) +n\mu  \quad \textrm{for  }\mu=2, \dots, r.
     \end{equation}
        \begin{enumerate}
            \item When $g=0$, the right-hand side of \eqref{controllability3} becomes $(n-2)\mu+ 2$. 
        \item When $g\geq 1$, note that we always have the weak inequalities $\gamma_{P^i}(\mu)\leq \mu$ for $2\leq \mu\leq r$. Hence, we always have 
        \[\sum_{i=1}^n \gamma_{P^i}(\mu) \leq \mu n \quad \textrm{for }2\leq \mu\leq r.\]
        For $g\geq2$, we have $\mu n<(2g-2)(\mu-1) + \mu n$, so the controllability inequalities always hold for all choices of partitions $P^i$. For $g=1$, if one of the partitions $P^{i_0}$ is not the singleton partition for some $i_0$, then $\gamma_{P^{i_0}}(\mu)<\mu$ is strict for $2\leq \mu\leq r$, so the controllability inequalities also hold.
        \end{enumerate}
    \end{proof}

\subsection{Non-emptiness of moduli spaces}\label{sec:nonempty moduli}
We can now combine the non-emptiness of $\bmxi$ and the relative spectral correspondence to construct stable $\vxi$-parabolic Higgs bundles. Before proceeding to the main theorem, we will need to slightly enhance the $OK$ condition to guarantee that there exists an integral curve in $\bmxi$.

\begin{lem}\label{lem:integral curves} 
Suppose $H^0(C,L(\vm)_r)\neq 0$. There exists $s=(s_1,\dots,s_r)\in A(\vm)_0$ such that the corresponding spectral curve $C_s\subset Tot(K_C(D))$ is integral. 
\end{lem}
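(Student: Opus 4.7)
The strategy is to construct $s=(0,\dots,0,s_r)\in A(\vm)_0$ with $s_r\in H^0(C,L(\vm)_r)$ nonzero (provided by the hypothesis), and then to prove that the spectral polynomial $f(y)=y^r+s_r\in K(C)[y]$ is irreducible over $K(C)$. Once irreducibility is in hand, the function field of $C_s$ equals the field $K(C)[y]/(f)$, so $C_s$ has a unique generic point and is irreducible. Since $C_s$ is cut out by a single equation inside the smooth surface $\mathrm{Tot}(K_C(D))$, it is Cartier, hence Cohen--Macaulay with no embedded points; in characteristic zero the irreducibility of $f$ automatically forces separability, so the generic fibre of $C_s\to C$ is reduced, and Cohen--Macaulayness promotes this to reducedness on all of $C_s$. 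Thus $C_s$ is integral.

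The irreducibility of $f$ is to be obtained by Eisenstein's criterion at a simple zero of $s_r$. Picking a point $q\in C$ where $s_r$ vanishes to order exactly one, fix a uniformizer $t$ of $\mathcal{O}_{C,q}$ and a local trivialization of $L(\vm)_r$ near $q$, so that $s_r=tu$ with $u\in\mathcal{O}_{C,q}^{\times}$. Viewed in $\mathcal{O}_{C,q}[y]$, $f$ has leading coefficient $1\notin \mathfrak{m}_q$, all intermediate coefficients equal to zero (hence trivially in $\mathfrak{m}_q$), and constant term $s_r\in \mathfrak{m}_q\smallsetminus \mathfrak{m}_q^2$. Eisenstein's criterion for the DVR $\mathcal{O}_{C,q}$ then yields irreducibility of $f$ in $K(C)[y]$.

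The main obstacle is producing an $s_r$ with at least one simple zero. I would handle this by the standard base-locus decomposition of the complete linear series: write $L(\vm)_r=\mathcal{O}_C(B)\otimes M$ with $B$ the base divisor and $M=L(\vm)_r(-B)$ base-point-free. A general section $s_r$ has divisor $B+D'$ with $D'\in|M|$ generic, and by Bertini on the smooth curve $C$, $D'$ is reduced as soon as $\dim|M|\geq 1$; any point of $\mathrm{supp}(D')\smallsetminus \mathrm{supp}(B)$ is then a simple zero of $s_r$. The only truly degenerate situation is $h^0(L(\vm)_r)=1$ whose unique section has every zero of multiplicity divisible by $r$, which forces $L(\vm)_r$ to be an $r$-th tensor power of a line bundle; in that residual case I would relax Eisenstein to a Newton-polygon irreducibility test, which gives the same conclusion from any zero of $s_r$ of multiplicity coprime to $r$, or, if even that fails, enlarge the construction by taking some $s_\mu$ with $\mu<r$ nonzero to force an irreducible spectral polynomial.
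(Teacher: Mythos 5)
Your main line is essentially the paper's argument: both proofs take $s=(0,\dots,0,s_r)$ with $s_r\in H^0(C,L(\vm)_r)$ nonzero and use a Bertini-type genericity statement to produce a member whose divisor has multiplicity-one points, then conclude integrality of the cyclic cover $\{y^r+s_r=0\}$. The paper finishes by quoting the Beauville--Narasimhan--Ramanan criterion (integrality fails only if $\mathrm{div}(s_r)$ is of the form $mZ'$ with $m\mid r$), whereas you verify irreducibility by Eisenstein at a simple zero and then deduce reducedness from the Cartier/Cohen--Macaulay plus generically-reduced argument; that is a legitimate and somewhat more self-contained variant, at the cost of needing an honest simple zero rather than just a point of multiplicity one in the moving part. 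One point you should make explicit: the constant term of the spectral polynomial is $s_r$ viewed as a section of $(K_C(D))^{\otimes r}$, whose divisor is $Z+\sum_i\gamma_{P^i}(r)p_i$, so your simple zero must also avoid the marked points $p_i$; this is automatic for a general member of the base-point-free part when $\dim|M|\geq 1$, but it needs to be said.

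Where the write-up goes astray is the ``degenerate case'' discussion, and this is the part I would call a gap. First, the failure of your main argument does not reduce to ``$h^0=1$ and every zero has multiplicity divisible by $r$'': for example with $r=6$ a unique section whose zeros have multiplicities $2$ and $3$ admits no zero of multiplicity coprime to $6$, yet $L(\vm)_r$ is not forced to be a $6$-th power. Second, in the very case you describe (all multiplicities divisible by $r$) the Newton-polygon fallback is vacuous, since then no zero can have multiplicity coprime to $r$. Third, the final fallback of ``turning on some $s_\mu$ with $\mu<r$'' is not an argument and is not even available from the hypothesis, which only guarantees $H^0(C,L(\vm)_r)\neq 0$; the spaces $H^0(C,L(\vm)_\mu)$ for $\mu<r$ may vanish, and no reason is given that adding lower-order terms yields irreducibility. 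In fairness, the paper's proof also only treats the situation where a general member of $|L(\vm)_r|$ can be taken reduced and away from the $p_i$ (via Bertini with base points), so your main case matches its level of rigor; but you should either drop the fallback claims or replace them by the correct criterion for cyclic covers over $\C$: $y^r+s_r$ is irreducible unless $-s_r$ is a $p$-th power of a section of $(K_C(D))^{\otimes r/p}$ for some prime $p\mid r$ (with the standard extra case when $4\mid r$), which is exactly the divisor-shape condition the paper cites.
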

\begin{proof}
 Let $s_1=\dots =s_{r-1}=0$ and $a_s$ be a non-zero section in $H^0(C,L_r)$ . So, the spectral curve is of the form of a cyclic cover $C_s=\{y^r+ s_r=0\}\subset Tot(K_C(D))$. Note that \cite[Remark 3.1]{bnr}
 here we treat $s_r$ as a section in $(K_C(D))^{\otimes r}$ so that $div(s_r) = Z+ \sum_{i=1}^n\gamma_{P^i}(r)p_i$ for some effective divisor $Z\in |L(\vm)_r|.$ Recall that a cyclic cover is integral if $div(s_r)$ is not of the form $mZ'$ for some $Z'\in |(K_C(D))^{\otimes k}|$ where $m\vert r$. Since $|L(\vm)_r|$ is a linear system with base point, by \cite[Remark 10.9.2]{hartshorne}, we can assume that  $Z$ is reduced and away from $\sum_{i=1}^n\gamma_{P^i}(r)p_i. $ Then, it is clear that $div(s_r)$ is not of the form $mZ'$. Hence, $C_s$ is integral.
\end{proof}

\begin{rem}
    If $H^0(C, L_{\vm}(r))=0$,  we must have $s_r=0$, so the spectral curve defined by  $y^r+s_1y^{r-1}+\dots +s_{r-1}y=0$ must be reducible as it always contains the zero section as an irreducible component. In particular, no spectral curve is integral in this case. For example, this happens when $C=\P^1$ and $\deg(L(\vm)_r) = r(n-2) - \sum_{i=1}^n \gamma_{P^i}(r) <0$. When $g>0$, it is easy to check that $\deg(L(\vm)_r)>0$ for all choice of $\vm$, so $\dim H^0(C, L(\vm)_r)>0.$ 
\end{rem}

\begin{lem}\label{lem:integral at zero}
    Suppose $H^1(C, \lmmu\otimes \O(-p_i))= 0$ for $\mu=2,\dots, r$ and $i=1,\dots, n$. Then there exists an integral curve $\Sigma\in B(\vm)_0$ contained in $Z_0.$ 
\end{lem}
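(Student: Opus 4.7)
The plan is to construct an integral element of $B(\vm)_0$ explicitly, refining the cyclic-cover argument of Lemma~\ref{lem:integral curves}. Via the identification $B(\vm)_0 \cong A(\vm)_0$ from Corollary~\ref{identification of hitchin bases}, it suffices to produce $s = (s_1, \dots, s_r) \in A(\vm)_0$ whose associated spectral curve $C_s \subset M$ has integral strict transform $\Sigma \subset Z_0$.

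First I would set $s_1 = \cdots = s_{r-1} = 0$ and search for a suitable $s_r \in H^0(C, L(\vm)_r)$. The short exact sequence
\[
0 \to L(\vm)_r \otimes \O(-p_i) \to L(\vm)_r \to L(\vm)_r|_{p_i} \to 0
\]
combined with the hypothesis $H^1(C, L(\vm)_r \otimes \O(-p_i)) = 0$ shows that the evaluation map $H^0(C, L(\vm)_r) \to L(\vm)_r|_{p_i} \cong \C$ is surjective for every $i$. Hence the locus of sections with $s_r(p_i) \neq 0$ for all $i$ is a non-empty Zariski open subset of $H^0(C, L(\vm)_r)$.

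Next, as in the proof of Lemma~\ref{lem:integral curves} and using \cite[Remark 10.9.2]{hartshorne}, I would further restrict to the open subset of sections whose zero divisor $Z \in |L(\vm)_r|$ is reduced; combined with the previous step, $Z$ can be chosen disjoint from $\{p_1,\dots,p_n\}$. Viewed as a section of $(K_C(D))^{\otimes r}$, such $s_r$ has divisor $Z + \sum_{i=1}^n \gamma_{P^i}(r)\, p_i$ with $Z$ reduced and away from the $p_i$. In particular, $\mathrm{div}(s_r)$ contains at least one point of multiplicity one lying outside the marked points, so it cannot be of the form $m Z'$ for any integer $m \mid r$ with $m > 1$, and the cyclic cover $C_s = \{y^r + s_r = 0\} \subset M$ is integral.

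Finally, since each successive blow-up in the construction of $Z_0$ is an isomorphism away from its exceptional locus, the strict transform $\Sigma \subset Z_0$ of the integral curve $C_s$ remains integral. By Proposition~\ref{p:class}, $\Sigma$ has class $\Sigma(\vm)_0$, so $\Sigma \cdot F_{i,0} = 0 = \Sigma \cdot \td{C}_{\infty,0}$. As $\Sigma$ is irreducible and belongs to a different numerical class, it cannot contain $F_{i,0}$ or $\td{C}_{\infty,0}$ as a component, and these vanishing intersection numbers then force $\Sigma$ to be set-theoretically disjoint from them, placing $\Sigma$ in $B(\vm)_0$. The main technical point is the reducedness step: $|L(\vm)_r|$ may a priori have base points at the $p_i$, but the strengthened vanishing hypothesis removes them, after which a standard Bertini-type argument on the curve produces a reduced member.
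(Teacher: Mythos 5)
Your first three paragraphs are fine (the surjectivity of evaluation from $H^1(C,L(\vec{m})_r\otimes\O(-p_i))=0$, the reduced zero divisor, and the integrality of the cyclic cover $C_s=\{y^r+s_r=0\}$ all check out), but the final paragraph has a genuine gap. The strict transform of $C_s$ in $Z_0$ is \emph{not} in the class $\Sigma(\vec{m})_0=rp^*C_0-\sum_{i,j}m_{i,j}\Xi_{i,j}$ in general --- the appeal to Proposition~\ref{p:class} is backwards, since that proposition characterizes classes orthogonal to the boundary rather than computing the class of a strict transform. Membership of the strict transform in $|\Sigma(\vec{m})_0|$ requires the successive total transforms of $C_s$ to pass through the blow-up centers with multiplicity \emph{exactly} $m_{i,1},\, m_{i,1}+m_{i,2},\dots$, and by Lemma~\ref{lem:localconditions} and Remark~\ref{rem:local condition(minimal)} this forces $s_\mu$ to vanish at $p_i$ to order exactly $\gamma_{P^i}(\mu)$ for every $\mu$ in the minimal index set $J_{\min}$. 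Unless every partition $\underline{m}_i$ has all parts equal, $J_{\min}$ contains indices $\mu<r$, and your choice $s_1=\dots=s_{r-1}=0$ makes those vanishing orders infinite. Concretely, take $n=1$, $\underline{m}=(2,1,1)$, $r=4$, so $\gamma(4)=2$ and locally $s_4=u(x)x^2$ with $u(0)\neq 0$: the curve $y^4+u x^2=0$ has multiplicity $2=m_1$ at the first center, but its strict transform $y^2+uu_1^2$ has multiplicity $2>m_2=1$ at the second center. One computes $p^*C_s-2\Xi_1-\Xi_2-\Xi_3=\widetilde{C_s}+E_2$, so the member of $B(\vec{m})_0$ determined by $s$ acquires the exceptional component $E_2$ and is not integral, while $\widetilde{C_s}$ itself meets the exceptional divisors with pattern $(2,2,0)\neq(2,1,1)$ and hence lies in the wrong class. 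Either way, no integral member of $B(\vec{m})_0$ is produced.

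This is exactly why the hypothesis $H^1(C,L(\vec{m})_\mu\otimes\O(-p_i))=0$ is assumed for \emph{all} $\mu=2,\dots,r$ and not just $\mu=r$. The paper's proof uses the cyclic cover only to show (via Lemma~\ref{lem:integral curves}) that the locus $U\subset A(\vec{m})_0$ of integral spectral curves is a non-empty open subset, and then takes a \emph{generic} element of $U$: the vanishing $H^1(C,L(\vec{m})_\mu\otimes\O(-p_i))=0$ gives $\dim H^0(C,L(\vec{m})_\mu\otimes\O(-p_i))<\dim H^0(C,L(\vec{m})_\mu)$, so a generic $s_\mu$ vanishes at $p_i$ to order exactly $\gamma_{P^i}(\mu)$, the exact-multiplicity condition holds, and the divisor of class $\Sigma(\vec{m})_0$ coincides with the (integral) strict transform. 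Your argument needs this perturbation away from the cyclic locus; restricted to $s_1=\dots=s_{r-1}=0$ it fails whenever some $P^i$ has unequal parts.
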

\begin{proof}
    First, observe that $H^1(C,  L(\vm)_r\otimes \O(-p_i))= 0$ implies the assumption $H^0(C, L(\vm)_r)\neq 0$ in Lemma \ref{lem:integral curves}. Indeed, consider the long exact sequence    \begin{equation}\label{eq:les}
    H^0(C, \lmmu\otimes \O(-p_i)) \to H^0(C, \lmmu) \to H^0(p_i, \lmmu|_{p_i})\to H^1(C, \lmmu\otimes \O(-p_i))\to \dots 
    \end{equation}
    Setting $\mu=r$, the claim follows from the vanishing of the last term and the fact that $H^0(p_i, \lmmu|_{p_i})\neq 0$. Then Lemma \ref{lem:integral curves} says that there is an open subset $U\subset A(\vm)_0$ representing integral curves in $\Tot(K_C(D)).$ Let $s = (s_1,\dots,s_r)\in U$ with $C_s\subset \Tot(K_C(D))$. Let $p:Z_{\vxi}\to \Tot(K_C(D))$ be the composition of blow-ups. Under the identification $B(\vm)_0\cong A(\vm)_0$ in Corollary \ref{identification of hitchin bases}, 
    \[C_s\longleftrightarrow \sm:= p^* C_s- \sum_{i=1}^n\sum_{j=1}^{\ell(i)}m_{i,j} \Xi_{i,j}.\]

    If the curve $\sm$ is exactly the strict transform of $C_s$ in $Z_{\vxi}$, then $\sm$ is also integral. So, it suffices to check that $C_s$ and its total transform in the blow-up have multiplicty exactly $m_{i,j}$ through the blow-up center. By Remark \ref{rem:local condition(minimal)}, we need to check that vanishing order of $s_\mu$ at $p_i$ equals exactly $\gamma_{P_i}(\mu)$ for $\mu=1,\dots,r$ and all $p_i\in D$ (in fact, it suffices to check the minimal indices $\mu$). We claim that this is true for generic $(s_1,\dots,s_r)\in U$. Indeed, it follows again from the long exact sequence \eqref{eq:les} and the vanishing of $H^1$ in the assumption that \[\dim(H^0(C, \lmmu\otimes \O(-p_i))) <\dim( H^0(C, \lmmu)).\]
    \end{proof}

\begin{prop}\label{nonemptiness of Higgs} For every  $\vec{\xi} \in \nm$, the moduli space of stable $\vec{\xi}$-parabolic Higgs bundle $\mathcal{H}(\vec{m})_{\vec{\xi}}$ is non-empty in the following cases:
    \begin{enumerate}
        \item When $n\geq 3$ and $g=0$, if the inequalities
        \begin{equation}\label{eq:defect-integral}
             \sum_{i=1}^n \gamma_{P^i}(\mu) < (n-2)\mu +1
        \end{equation}
        hold for $\mu=2, \dots, r$
        \item When $n \geq 2$ and $g = 1$, if at least two of the partitions $P^i$ are not the singleton partition $m_{i,1}=r$.
        \item When $n\geq 1$ and $g\geq 2.$
    \end{enumerate}
    \end{prop}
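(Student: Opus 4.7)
The plan is to combine the non-emptiness of the parabolic Hitchin base from Theorem~\ref{thm:nonemptiness of Hitchin bases} with the spectral isomorphism over the integral locus from Theorem~\ref{thm:open locus}. Roughly, I will upgrade the $OK$ condition to the slightly stronger vanishing needed by Lemma~\ref{lem:integral at zero}, propagate an integral spectral curve from $\vxi=0$ to all of $\nm$ via Proposition~\ref{deformation of curves}, and then pull a generic line bundle on such an integral curve back through the spectral correspondence.

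The first step is the purely numerical claim that under each of (1)--(3) we have $H^1\bigl(C, L(\vm)_\mu \otimes \mathcal{O}(-p_i)\bigr) = 0$ for every $\mu=2,\dots,r$ and every $i$. By Serre duality this amounts to the degree inequality
\[
\mu(2g-2) + n\mu - \sum_{i=1}^n \gamma_{P^i}(\mu) - 1 \;>\; 2g-2.
\]
For $g=0$ this is exactly the hypothesis \eqref{eq:defect-integral} in (1). For $g=1$ it becomes $\sum_i \gamma_{P^i}(\mu) < n\mu - 1$; here the key observation is that whenever $P^i$ is non-singleton (so $\ell(P^i)\ge 2$) the first column of its Young diagram has at least two boxes, and a straightforward counting yields $\gamma_{P^i}(\mu)\le \mu-1$ for every $\mu\ge 2$. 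Thus two non-singleton $P^i$ contribute a deficit of at least $2$, giving (2). For $g\ge 2$ the slack $(\mu-1)(2g-2)\ge 2$ makes the inequality automatic, giving (3).

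The second step uses this strengthened vanishing. In particular it implies the ordinary $OK$ condition, so Theorem~\ref{thm:nonemptiness of Hitchin bases} and Proposition~\ref{constantdim} together ensure that $\bm{B}(\vm)\to \nm$ is an equidimensional affine bundle and, in particular, non-empty over every fiber. Lemma~\ref{lem:integral at zero} then provides an integral curve $\Sigma_0\in B(\vm)_0$. Feeding this into Proposition~\ref{deformation of curves}, the $\C^*$-equivariance argument shows that $\bmxi$ contains an integral member $\Sigma \subset S_{\vxi}$ for every $\vxi\in \nm$.

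The final step is to realize $\Sigma$ by a stable pure dimension one sheaf and transport it to the Higgs side. Choose any line bundle $L$ on $\Sigma$ with $\chi(L)=c := d - r(g-1)$; regarded as a sheaf on $S_{\vxi}$, it has $\c_1 = \Sigma(\vm)_{\vxi}$ and is automatically $\b_{\vxi}$-twisted $A_{\vxi}$-Gieseker stable, since any proper subsheaf is of the form $L(-D)$ for an effective divisor $D$ on the integral curve $\Sigma$, which strictly decreases the Euler characteristic while preserving the first Chern class. Thus $L$ defines a point of $\bm{\mathcal{M}}(\vm)$ lying over $\bm{B}_{\mathrm{int}}(\vm)$, and Theorem~\ref{thm:open locus} produces a stable $\vxi$-parabolic Higgs bundle, proving $\hvxi \neq \emptyset$.

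The main obstacle is the combinatorial input in case (2): extracting the bound $\gamma_{P^i}(\mu)\le \mu-1$ for non-singleton partitions and confirming that precisely two such partitions suffice; the other cases are essentially bookkeeping with Serre duality, while steps two and three reduce to invoking results already in place.
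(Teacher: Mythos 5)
Your proposal is correct and follows essentially the same route as the paper: strengthen the $OK$ condition to $H^1(C, L(\vm)_\mu\otimes\O(-p_i))=0$, invoke Lemma~\ref{lem:integral at zero} and Proposition~\ref{deformation of curves} to produce an integral member of $\bmxi$ for every $\vxi$, and transport a line bundle on it through the spectral correspondence over the integral locus (Theorem~\ref{thm:open locus}); your explicit Serre-duality and level-function verification of the vanishing in cases (1)--(3) is exactly the computation the paper leaves implicit, and it checks out. The only (harmless) imprecision is the assertion that every proper subsheaf of $L$ on the integral curve has the form $L(-D)$ --- on a singular curve subsheaves of a line bundle need not be invertible --- but the stability argument only requires that any nonzero proper subsheaf has the same first Chern class and strictly smaller Euler characteristic, which holds by integrality of the support.
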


\begin{proof}[Proof of Proposition \ref{nonemptiness of Higgs}]
 Note that when $g=0,1$, the conditions are stronger than the one in Theorem \ref{thm:nonemptiness of Hitchin bases}. So, $\bmxi\neq \emptyset$ in all cases.

    Moreover, similar to the argument in the proof of Theorem \ref{thm:nonemptiness of Hitchin bases}, the strengthened inequalities in this theorem 
    are required to guarantee that $H^1(C, \lmmu\otimes \O(-p_i))=0$. So, we can apply Lemma \ref{lem:integral at zero} which  guarantees that there exists an integral curve $\sm_0\in B(\vm)_0$ contained in $Z_0$. Then we can apply Proposition \ref{deformation of curves} to obtain an integral curve $\sm_{\vxi}$ in $\bmxi$ for all $\vxi\in \nm$. By choosing a line bundle $L$ on $\sm_{\vxi}$, it must be $\beta_{\vxi}$-twisted $A_{\vxi}$-Gieseker for any choice of parameters $\beta_{\vxi}$ and $A_{\vxi}$. So, the pure dimension one sheaf on $Z_{\vxi}$ formed by $L$ on $\sm_{\vxi}$ is an element in $\hvxi$. We can now apply the spectral correspondence (Theorem \ref{spectral-corr}) to obtain the desired stable $\vec{\xi}$-parabolic Higgs bundles. 
\end{proof}

\begin{rem}\label{rem:optimalcond nonempty}
\begin{enumerate}
    \item The condition in Proposition~\ref{nonemptiness of Higgs} is not optimal. In particular, when $g = 0$, it suffices to check \eqref{eq:defect-integral} only for $\mu \in J_{\min}$ (see Remark~\ref{rem:local condition(minimal)} for the definition).
    \item If one asks about the non-emptiness of $\m{H}(\vm)_{\vxi}$ for a particular $\vxi \in \nm$, rather than for all $\vxi \in \nm$, the conditions can be relaxed depending on $\vxi$. We use this observation to study the Deligne--Simpson problem in Section~\ref{sec:DSP}.
\end{enumerate}
\end{rem}

\begin{rem}
   When $\vec{m}=(\underline{1}, \dots, \underline{1})$, the non-emptiness of $\hvxi \cong \hpar_{\vec{\xi}}$ follows from that of meromorphic Higgs bundles studied by Markman \cite{markmanspectral}. Under the assumption that the linear system $(K_C(D))^{\otimes r}$ is very ample (which translates to some genus and number of marked points assumption), the generic spectral curve is smooth by Bertini's theorem. So, at least for some generic (more generic than our definition) $\vec{\xi}$, there exist $\vec{\xi}$-parabolic Higgs bundles. Also, since the Hitchin map $h:\hpar \to A$ is proper \cite{Yokogawa-compactification} and its image must contain the proper dense subset of smooth spectral curves, we see that the Hitchin map is surjective.

    However, when $\vec{m}\neq(\underline{1}, \dots, \underline{1})$, no spectral curve in $\Tot(K_C(D))$ is smooth or integral. Therefore, we may not be able to apply the similar argument in this case. 
\end{rem}

\subsection{Multiplicative Deligne--Simpson problem}\label{sec:DSP}
As outlined in the introduction, we study the multiplicative Deligne–Simpson problem (DSP, for short) via the spectral correspondence. For historical background and motivation of DSP, we refer the reader to Section \ref{sec:intro DSP}.

\begin{definition}
    Given conjugacy classes $C_1,\dots,C_n$ in $GL_r(\C)$, we say that the multiplicative DSP is solvable for the tuple of conjugacy classes $\{C_i\}$ if there exist irreducible solutions to the equation $T_1\cdots T_n =  Id_r$ with $T_i\in C_i$ where an irreducible solution means that the matrices $T_j$ have no common invariant subspace. 
\end{definition}

Let us recall some definitions in the tame non-abelian Hodge correspondnce (NAHC) of Simpson \cite{simpson-noncompact}. 

\begin{definition}
    A filtered local system on a punctured curve $C \setminus \{p_1,\dots,p_n\}$ is a local system $\mathbb{L}$ together with a decreasing, left-continuous filtration $ \bigcup_{\beta \in \mathbb{R}}\mathbb{L}_{\widetilde{p}_i}^\beta$ on the stalk $\mathbb{L}_{\widetilde{p}_i}$, preserved by the local monodromy $T_i$ at $p_i$, where $\widetilde{p}_i$ is a point nearby $p_i$. 
\end{definition}
One can define the filtered degree of a filtered local system and a stability condition. For our purposes, it suffices to note that when the filtrations are trivial i.e. when $\mathbb{L}_{\widetilde{p}_i}^0=\mathbb{L}_{\widetilde{p}_i}$ and $\mathbb{L}_{\widetilde{p}_i}^\epsilon=0$ for $\epsilon>0$, then these stable filtered local systems of filtered degree zero are irreducible local systems. 

There is also an analogous definition of a filtered Higgs bundle, which is equivalent to the definition of a parabolic Higgs bundle $(E,E^\bullet_D, \Phi, \va)$ on $C$ we use in the paper.  At each $p_i\in D$, one simply combines the quasi-parabolic structure $E^\bullet_{p_i}$ (indexed by $j=1,\dots, \ell(i)$) and the parabolic weights $\underline{\a}_i$ in Definition \ref{def:parabolicHiggs} into a decreasing, left-continuous filtration (indexed by the parabolic weights) of the fiber $E_{p_i}$: 
\[ E_{p_i}=E^0_{p_i} \supset E^{\a_{i,1}}_{p_1}\supset \dots \supset E^{\a_{i,\ell(i)-1}}_{p_i} \supset E^{\a_{i,\ell(i)}}_{p_i}=0 , \quad \textrm{where  }E_{p_i}^{\a_{i,j}}=E^j_{p_i}.\]
Then $\Phi_i$ also preserves the filtration. Hence, we can equivalently work with parabolic Higgs bundles rather than filtered Higgs bundles.

At each $p_i$, both the filtered local systems and parabolic Higgs bundles contains the data of a filtered vector space and an endomorphism preserving the filtration.  
\begin{definition}
    Let $V$ be a finite-dimensional vector space over $\C$ equipped with a decreasing, left-continuous filtration $\bigcup_{\beta \in \mathbb{R}}V^\beta$, and let $T \colon V \to V$ be an endomorphism preserving the filtration i.e. $T(V^\beta) \subseteq V^\beta$ for all $\beta$. The residue of $(V, \bigcup_{\beta \in \mathbb{R}}V^\beta , T)$ is the graded vector space  
\[
\mathrm{res}(V) = \bigoplus_\beta \res(V)_\beta
\]  
where $\res(V)_\beta=V^\beta / V^{\beta+\epsilon}$ for small $\epsilon > 0$, together with the natural induced endomorphism $\mathrm{res}(T)$ acting on $\mathrm{res}(V)$.
\end{definition}
Now, we can associate to the residue of $(V,\bigcup_{\beta \in \mathbb{R}}V^\beta,T)$ a collection of partitions as follows. A conjugacy class $C_0\subset GL_r(\C)$ is determined by its Jordan normal form (JNF), we can identify $C_0$ with a collection of partitions $\{P^{\lambda}\}$ labeled by its eigenvalues $\lambda$, where each partition $P^\lambda= (n_1,\dots,n_\ell)$ records the sizes of Jordan blocks for eigenvalues $\lambda$. As the $\res(T)$ restricts to an endomorphism on $\res(V)_\beta$ for each $\beta$, we take $\{P^{\beta, \lambda}\}$ corresponding to the conjugacy class of this endomorphism on $\res(V)_\beta$. Then, we define the \textit{residue diagram} of  $(V,\bigcup_{\beta \in \mathbb{R}}V^\beta,T)$ to be the collection of partitions $\{P^{\b,\lambda}\}$ labeled by the jumps $\beta$ of the filtration and the eigenvalues $\lambda.$ 
\begin{exmp}\label{exmp:trivialfiltrations}
    When the filtrations of a filtered local system on $C\setminus \{p_1,\dots,p_n\}$ are trivial with $\beta=0$, then the residue diagrams $\{P^{0,\lambda}\}$ describe the conjugacy class of the local monodromy around $p_i$ of the underlying local system. 

    Similarly, if the parabolic Higgs bundle $(E,F^\bullet_D, \Phi, \vec{\a})$ has trivial filtrations i.e. $F^0_{p_i}=E_{p_i}, F^1_{p_i}=0$ and $\a_{i,1}=0$ for $i=1,\dots, n$, then the residue diagrams $\{P^{0,\xi}\}$ describe the conjugacy class of $\Phi_i$.
\end{exmp}
\begin{exmp}
     A $\vxi$-parabolic Higgs bundle  is the same as a parabolic Higgs bundle whose residue diagram at each $p_i$ is given by $P^{\a_{i,j},\xi_{i,j}}= (1,\dots,1)$ (repeated $m_{i,j}$ times). 
\end{exmp}

Now, we can state Simpson's tame NAHC \cite[Theorem, Page 718]{simpson-noncompact}: There is a one-to-one correspondence between stable filtered local systems on $C\setminus\{p_1,\dots,p_n\}$ of filtered degree zero and stable parabolic Higgs bundles on $C$ of parabolic degree zero. Moreover, at each $p_i\in D$, if we denote by $\{P^{\b,\lambda}\}$ and $\{P^{\a,\xi}\}$ the residue diagram for the filtered local system and parabolic Higgs bundle in correspondence, respectively, then the residue diagrams are the same, with the labels permuted according to the following table \cite[page 719]{simpson-noncompact}:
\begin{center}
\begin{table}[h]
    \centering
    \caption{Simpson's table}\label{table:simpson}
    \begin{tabular}{|c|c|c|}
        \hline
         & parabolic Higgs bundle & filtered local system \\ 
        \hline
        weight/jump& $\a$ & $\b=-2b$ \\  \hline
        eigenvalue & $\xi=b+\sqrt{-1}c$ & $\lambda= \exp(-2\pi \sqrt{-1} \a+ 4\pi c)$ \\ 
        \hline
    \end{tabular}
    \label{simpson's table}
\end{table}
\end{center}

Hence, under the tame NAHC, the goal of producing irreducible local systems whose local monodromies have prescribed conjugacy classes is equivalent to producing parabolic Higgs bundles with prescribed residue diagrams. Recall that given parabolic data $(\vm, \vxi)$, we have a collection of partitions $\{P^{\xi_{i,j}^\circ}\}$ labeled by the unrepeated eigenvalues that decomposes the partitions $\{P^i\}$; see Section~\ref{sec:notation} for notation.

\begin{prop}\label{conjugacy class}
Fix the parabolic data $(\vm, \vxi)$. Suppose $(E, \Phi)$ is a Higgs bundle obtained from a line bundle $L$ on an integral curve $\Sigma\subset Z_{\vxi}$ corresponding to a member in $\bmxi$ via pushing forward $L$ from $\sm$. Then 
        \begin{enumerate}
            \item \label{conjugacy1-1} $(E, \Phi)$ has no Higgs subbundle.
            \item \label{conjugacy1-2}If we equip $(E, \Phi)$ with the trivial filtrations $(E,F^\bullet_D, \Phi, \vec{\a})$, then the residue diagram $\{P^{0, \xi^\circ_{i,j}}\} $ of $(E, F^\bullet_D, \Phi, \vec{\a})$ is given by the conjugate partition of $P^{\xi_{i,j}^\circ}$
            \[P^{0, \xi^\circ_{i,j}} = \widehat{P}^{\xi_{i,j}^\circ}.\]
        \end{enumerate}

    \end{prop}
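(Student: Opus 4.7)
The plan is to decouple the two claims. Part~(\ref{conjugacy1-1}) is a direct consequence of the integrality of $\Sigma$ via the spectral correspondence, while part~(\ref{conjugacy1-2}) reduces to a purely local computation at each marked point $p_i$, for which I would appeal to Lemma~\ref{prop:localconjugate} in the appendix.

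For~(\ref{conjugacy1-1}), I would argue as follows. Any $\Phi$-invariant subbundle $F \subsetneq E$ has its own spectral curve $\Sigma_F \subset \Tot(K_C(D))$, obtained as the zero locus of the characteristic polynomial of $\Phi|_F$. Since the spectral sheaf of $F$ embeds into that of $E$, whose Fitting support pulls back to $\Sigma$, the proper transform of $\Sigma_F$ in $Z_{\vxi}$ is contained in $\Sigma$ as an effective Cartier divisor of strictly smaller degree over $C$ (since $\mathrm{rank}(F)<r$). This contradicts the integrality of $\Sigma$, so $F=0$ or $F=E$.

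For~(\ref{conjugacy1-2}), the residue diagram of $\Phi$ at $p_i$ depends only on the formal germ of the spectral data near $p_i$, so I pass to $\mathrm{Spec}\,\C\llb x_i\rrb$. Over this neighborhood the integral curve $\Sigma$ decomposes formally into local branches grouped by the distinct eigenvalues $\xi^\circ_{i,j}$: the branch associated to $\xi^\circ_{i,j}$ lives in the formal neighborhood of the iterated blow-ups performed at $(p_i, \xi^\circ_{i,j})$, and by the characterization of the linear system in Section~\ref{subsec:linearsystem} (see Proposition~\ref{p:class} and Example~\ref{exmp:linearsystem3}) it meets the $a$-th exceptional divisor in this chain with multiplicity equal to the $a$-th part of the partition $P^{\xi^\circ_{i,j}} = (m_{i,j_1}, \ldots, m_{i,j_s})$.

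This reduces the problem to the following local statement: given the blow-up tower over $\mathrm{Spec}\,\C\llb x\rrb[y]$ centered iteratively at $(0, \xi^\circ)$ and the prescribed intersection pattern $(m_1, \ldots, m_s) = P^{\xi^\circ}$ of an integral branch with the chain of exceptional divisors, determine the Jordan decomposition of the multiplication-by-$y$ endomorphism on the restriction of the pushforward of $\mathcal{O}_\Sigma$ to the closed point $x=0$. This is precisely the content of Lemma~\ref{prop:localconjugate}, whose conclusion is exactly the conjugate partition $\widehat{P}^{\xi^\circ}$. The heuristic is that the parts of $P^{\xi^\circ}$ record the \emph{horizontal} widths of the branch along successive exceptional divisors, while the Jordan block sizes record the \emph{vertical} nilpotency depths of $\Phi - \xi^\circ\cdot\mathrm{Id}$; these two statistics are classically dual. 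The main obstacle is precisely this local normal form computation, since it requires tracking how the iterated blow-ups interact with the pushforward to $C$ of an integral branch with prescribed multiplicity profile; once this local lemma is in place, both claims follow.
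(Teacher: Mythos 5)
Your proposal is correct and follows essentially the same route as the paper: part (1) from the integrality of $\Sigma$ (your spectral-curve-of-a-subbundle argument is just a spelled-out version of the paper's one-line appeal to integrality), and part (2) by localizing at each $(p_i,\xi^\circ_{i,j})$, translating to the nilpotent case, and invoking Lemma~\ref{prop:localconjugate} together with the multiplicity characterization of $\bmxi$. The only cosmetic caveat is that the prescribed intersection numbers $m_{i,j}$ are with the pulled-back classes $\Xi_{i,j}$ (so the strict transforms $E_j$ are met $m_j-m_{j+1}$ times, which is exactly how the conjugate partition emerges in the local lemma), but this does not affect the argument.
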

    \begin{proof}
        Let $f_{\vxi}:Z_{\vxi}\to C$ be the composition of successive blow-ups and
         $q: \Sigma\hookrightarrow Z_{\vxi}\to M \to C$ be the projection map. Then $E = q_*L$ and $\Phi$ is obtained from the tautological section. Part (1) follows from the integrality of $\Sigma$. 
        
        For part (2), since the filtrations are trivial,  each partition $P^{0,\xi^\circ_{i,j}}$ describes the Jordan normal form of $\Phi_i$ restricted to the generalized eigenspace of the eigenvalue $\xi^\circ_{i,j}$. Since $\Sigma$ intersects the fiber $f_{\vxi}^{-1}(p_i)$ at the exceptional divisors of the blow-ups centered at the points $(p_i,\xi^\circ_{i,j})$, the restriction of $\Phi_i$ to its generalized eigenspaces of $\xi^\circ_{i,j}$ can be determined locally in terms of the restriction of $L$ to the fiber over $\xi^\circ_{i,j}$. By doing a translation $\Phi - \xi^\circ_{i,j}$, we can further reduce the local analysis to the nilpotent case i.e $\xi^\circ_{i,j}=0$. By applying Proposition \ref{prop:localconjugate}, the condition of $\bmxi$ implies that the Jordan normal form corresponding to eigenvalue $\xi^\circ_{i,j}$  is the conjugate of $P^{\xi_{i,j}^\circ}$, so $P^{0, \xi^\circ_{i,j}} = \widehat{P}^{\xi_{i,j}^\circ}$. 

     \end{proof}

Before stating the main theorem, we define the relevant genericity assumptions which are useful to guarantee the existence of integral curves and irreduciblity of local systems.  
\begin{definition}\label{def:multiplicative generic}
    We say that a collection of eigenvalues $\{\lambda_{i,j}|i=1, \cdots, n \text{  and }j=1, \cdots, r\}$ is:
    \begin{itemize}
        \item 
     Multiplicatively generic if for any non-empty subsets $\Lambda_1,\dots, \Lambda_n\subset \{1,\dots, r\}$ with $|\Lambda_1|=\dots = |\Lambda_n| = m<n$, we have  
    \[
    \prod_{i=1}^n\prod_{k \in \Lambda_i} \lambda_{i,k} \neq 1.
    \]
    \item Additively generic if for the same choice of subsets, we have
    \[
    \sum_{i=1}^n\sum_{k \in \Lambda_i} \lambda_{i,k} \neq 1.
    \]
    \end{itemize}
    \end{definition}
    For each $\vxi= (\uxi_1,\dots,\uxi_n)\in \nm$, the vector $\uxi_i= ( \xi_{i,1},\dots, \xi_{i,\ell(i)})$ together with its associated multiplicities $\um_i= (m_{i,1},\dots, m_{i,\ell(i)})$ determines a vector of length $r$ by repeating each entry $\xi_{i,j}$ exactly $m_{i,j}$ times. Then there is an open subset $\nm_{add}\subset \nm$ consisting of additively generic eigenvalues.
\begin{lem}\label{lem:integral curves(generic)}
Suppose that the $OK$ condition holds i.e. 
\begin{equation}\label{eq:controllability for generic}
\sum_{i=1}^n \gamma_{P^i}(\mu) < (n-2)\mu + 2    
\end{equation}
holds for $\mu=2,\dots, r$. Then for a general choice of $\vxi\in \nm_{add}$, there exists an integral curve $\sm\in \bmxi$ contained in $Z_{\vxi}$. 
\end{lem}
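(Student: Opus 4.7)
The plan is to combine three facts. First, by Theorem~\ref{thm:nonemptiness of Hitchin bases}, the $OK$ condition implies $\bmxi \neq \emptyset$ for every $\vxi \in \nm$, and by Proposition~\ref{constantdim} together with Remark~\ref{affine bundle}, $\bm{B}(\vm) \to \nm$ is an equi-dimensional affine bundle. Second, as in the proof of Proposition~\ref{deformation of curves}, integrality is an open condition in the family of Cartier divisors parametrized by $\bm{B}(\vm)$ on the smooth family $\bm{Z} \to \nm$. Consequently, it suffices to exhibit a single $\vxi_0 \in \nm_{add}$ together with an integral member of $B(\vm)_{\vxi_0}$; openness and irreducibility of $\nm_{add}$ will then force the integral locus to contain a dense open subset of $\nm_{add}$.

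To construct such a $\vxi_0$, I would choose all eigenvalues $\xi_{i,j}$ to be pairwise distinct, which is possible on a Zariski dense subset of $\nm_{add}$. For such $\vxi_0$, $Z_{\vxi_0}$ is a single-step blow-up of $M$ at the distinct configuration of points $\{(p_i, \xi_{i,j})\}$, as in Example~\ref{exmp:blownup surface generic}. Via Proposition~\ref{evaluation description}, a general element of $B(\vm)_{\vxi_0}$ corresponds to a spectral curve $C_s = \{q_s(x,y) = 0\} \subset M^\circ$ with $q_s = y^r + s_1 y^{r-1} + \dots + s_r$, whose strict transform in $Z_{\vxi_0}$ — which for generic $s$ agrees with the divisor in $B(\vm)_{\vxi_0}$, since then the intersection multiplicities at the blow-up centers are exactly $m_{i,j}$ — will be the desired integral divisor provided $C_s$ is integral.

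The integrality of $C_s$ I would prove by contradiction. If $q_s$ factored as $q_1 \cdot q_2$ with $\deg_y q_1 = r_1 \in \{1, \dots, r-1\}$, then the coefficient $t_1 := -[y^{r_1-1}]\, q_1$ is a section of $K_C(D)$ whose residue at $p_i$ equals $\sum_j m^{(1)}_{i,j}\xi_{i,j}$ for some nonnegative integers $m^{(1)}_{i,j} \leq m_{i,j}$ with $\sum_j m^{(1)}_{i,j} = r_1$. Expanding $(\xi_{i,j})$ with multiplicities $m_{i,j}$ into the length-$r$ vector $(\lambda_{i,k})$ of eigenvalues at $p_i$, the multiplicities $m^{(1)}_{i,j}$ single out a subset $\Lambda_i \subset \{1, \dots, r\}$ of constant size $r_1 < r$ across all $i$. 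The residue theorem applied to $t_1$ then yields $\sum_{i=1}^n \sum_{k \in \Lambda_i} \lambda_{i,k} = 0$, contradicting additive genericity of $\vxi_0$.

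The main obstacle I foresee is the bookkeeping in the contradiction step: verifying that a factorization $q_s = q_1 \cdot q_2$ over the function field of $C$ really produces factors $q_1, q_2$ that are sections of the appropriate symmetric powers of $K_C(D) \oplus \O_C$ (so that $t_1$ is a genuine section of $K_C(D)$ and the residue theorem applies), and confirming that the subsets $\Lambda_i$ extracted from $q_1$ have the uniform size $r_1$ demanded by the genericity hypothesis. (For the residue-theorem step, I am reading additive genericity as $\sum_{i,\, k \in \Lambda_i}\lambda_{i,k} \neq 0$, which is the natural form forced by $\sum_i \res_{p_i}(t_1) = 0$.)
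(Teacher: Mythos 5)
Your overall route is essentially the paper's: under the $OK$ condition $\bmxi\neq\emptyset$, a member corresponds to a spectral curve $C_s\subset \Tot(K_C(D))$, and additive genericity forces $C_s$ to be integral — your residue-theorem bookkeeping worry is unfounded, since a proper component of $C_s$ is cut out by a monic polynomial $y^{r_1}+t_1y^{r_1-1}+\dots$ with $t_\mu\in H^0(C,(K_C(D))^{\otimes\mu})$ exactly by the identification \eqref{divisors vs sections}, it meets each fiber $M_{p_i}$ only at the points $(p_i,\xi_{i,j})$ with multiplicities $m^{(1)}_{i,j}\leq m_{i,j}$ summing to $r_1<r$, and $\sum_i\res_{p_i}(t_1)=0$ gives the contradiction (your reading of additive genericity as $\sum\neq 0$ with subsets of uniform size $<r$ is the intended one; the ``$\neq 1$'' and ``$m<n$'' in Definition \ref{def:multiplicative generic} are typos). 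Your preliminary reduction — openness of the integral locus in $\bm{B}(\vm)$ as in Proposition \ref{deformation of curves}, openness of the affine-bundle projection, hence it suffices to exhibit one good $\vxi_0$ — is also fine.

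The genuine gap is the parenthetical claim that for generic $s\in B(\vm)_{\vxi_0}$ the intersection multiplicities at the blow-up centers are \emph{exactly} $m_{i,j}$, so that the member of $B(\vm)_{\vxi_0}$ is the strict transform of $C_s$ and inherits its integrality. The conditions defining $\bmxi$ only impose multiplicity $\geq m_{i,j}$; exactness is the failure of the extra linear conditions indexed by $G(P^{\xi^\circ_{i,j}})_{\mathrm{min}}$ in Remark \ref{rem:local condition(minimal)}, and this needs an argument. First, the $OK$ condition allows $\dim\bmxi=0$ (e.g.\ the rigid case $g=0$, $r=2$, $n=3$, full flags, where the expected dimension is $0$), and then there is no ``generic $s$'' at all: exactness must be arranged by varying $\vxi$, which your argument cannot do once $\vxi_0$ is fixed. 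Second, even when $\dim\bmxi>0$ you must rule out that the extra condition holds identically on the affine space $\bmxi$; this is not automatic from $OK$ alone — compare Lemma \ref{lem:integral at zero}, where exactness of the vanishing orders at $\vxi=0$ requires the strictly stronger vanishing $H^1(C,\lmmu\otimes\O(-p_i))=0$, which is precisely why Proposition \ref{nonemptiness of Higgs} strengthens the inequality. The paper's proof handles exactly this point by a two-case analysis: for $\dim\bmxi>0$ a general member of $\bmxi$ works, while for $\dim\bmxi=0$ the extra equation cuts out a proper closed subset of $\nm_{add}$ and one uses the genericity of $\vxi$ — i.e.\ the freedom you gave up. To close your proof you would need to show that for a general $\vxi_0$ in your distinct-eigenvalue locus the extra conditions fail for a general (or, in the rigid case, the unique) member of $B(\vm)_{\vxi_0}$, which is the missing content.
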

\begin{proof}
Let $s\in \bmxi\subset A$ and denote $C_s\subset \Tot(K_C(D))$ the corresponding spectral curve. Then as in the proof of Lemma \ref{lem:integral at zero}, we have the identification 
    \[C_s\longleftrightarrow \sm:= p^* C_s- \sum_{i=1}^n\sum_{j=1}^{\ell(i)}m_{i,j} \Xi_{i,j,\vxi}\subset Z_{\vxi}.\]
Since $\vxi$ is additively generic, this spectral curve $C_s$ is integral.  As in the proof of Lemma \ref{lem:integral at zero}, to conclude that $\sm$ is integral, it suffices to check that $C_s$ and its total transform in the blow-ups have multiplicities exactly $m_{i,j}$ through the blow-up centers, so that $\sm$ is the strict transform of $C_s$. By Remark \ref{rem:local condition(minimal)}, we need to guarantee that the extra equations parametrized by $G(P^{\xi_{i,j}^\circ})_{\mathrm{min}}$ are not satisfied for each eigenvalue $\xi_{i,j}^\circ$. If $\dim(\bmxi)>0$, then one can choose a general element in $\bmxi$ to achieve this. When $\dim(\bmxi)=0$, the condition that the extra equation is satisfied defines a closed subset of $\nm_{add}$, so we can choose a general $\vxi$ in $\nm_{add}$ so that this condition fails, as desired. 

\end{proof}

Let $C_i\subset GL_r(\C)$ be a conjugacy class. Identify $C_i$ with a collection of partitions $\{P^{\lambda_{i,1}}, \dots, P^{\lambda_{i,e(i)}}\}$, labeled by the eigenvalues $\lambda_{i,j}$ of $C_i$. Then we define the following partition of $r$
\begin{equation}\label{e:conj partition}
    P^i := \widehat{P}^{\lambda_{i,1}} \cup \dots \cup \widehat{P}^{\lambda_{i,e(i)}}
\end{equation}
where $\widehat{P}^{\lambda_{i,j}}$ is the conjugate partition of $P^{\lambda_{i,j}}$.

\begin{theorem}\label{thm:DSP2}
Let $n\geq 3$. Let $C_1, \dots, C_n \subset GL_r(\mathbb{C})$ be a collection of conjugacy classes whose collection of eigenvalues is multiplicatively generic. 
 
    Suppose that the following conditions hold: 
    \begin{enumerate}
    \item  $\prod_{i=1}^n\det(C_i)=1$. 
        \item $\sum_{i=1}^n \gamma_{P^i}(\mu) <(n-2)\mu+ 2$ for $\mu=2,\dots, r$.
    \end{enumerate}
    Then the DSP is solvable for the tuple of conjugacy classes $(C_1,\dots,C_n)$. 
\end{theorem}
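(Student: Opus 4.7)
The plan is to translate the DSP into an existence problem for stable parabolic Higgs bundles on $\P^1$ via the tame non-abelian Hodge correspondence (tame NAHC), and then construct such objects using the spectral correspondence developed in this paper. Given the conjugacy classes $C_i$ with eigenvalues $\lambda_{i,k}$ and Jordan partitions $P^{\lambda_{i,k}}$, I would use Simpson's Table~\ref{table:simpson} (with trivial filtrations on the filtered local system side, i.e. $\beta=0$) to select parabolic weights $\alpha_{i,k}$ and Higgs eigenvalues $\xi^\circ_{i,k}$ related by $\lambda_{i,k} = \exp\bigl(-2\pi\sqrt{-1}\,\alpha_{i,k} + 4\pi\,\mathrm{Im}(\xi^\circ_{i,k})\bigr)$. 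Define $\um_i=P^i$ as in the theorem and expand $(\xi_{i,j},\alpha_{i,j})$ so that the decomposition into distinct eigenvalues gives $P^{\xi^\circ_{i,k}}=\widehat{P}^{\lambda_{i,k}}$. Condition~(1), $\prod_i \det(C_i)=1$, translates through Simpson's table to the residue constraint $\sum_{i,j}\res_{p_i}(\xi_{i,j})=0$, placing $\vxi \in \nm$; the multiplicative genericity of $\{\lambda_{i,j}\}$ becomes the additive genericity of $\{\xi_{i,j}\}$, so that $\vxi\in\nm_{add}$.

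Next, condition~(2) is precisely the $OK$ condition on $\P^1$ with $n\geq 3$ for the partitions $P^i$, hence Theorem~\ref{thm:nonemptiness of Hitchin bases}(1) yields $\bmxi\neq\emptyset$, and Lemma~\ref{lem:integral curves(generic)} produces an integral curve $\Sigma\in\bmxi$ sitting in $S_{\vxi}$. Pick a line bundle $L$ on $\Sigma$ of suitable degree $d$ and push forward to $C$ (reversing Step~1 of the construction of $\bm{Q}$) to get a Higgs bundle $(E,\Phi)$. By Proposition~\ref{conjugacy class}(\ref{conjugacy1-1}), the integrality of $\Sigma$ ensures $(E,\Phi)$ has no $\Phi$-invariant subbundle, and by Proposition~\ref{conjugacy class}(\ref{conjugacy1-2}), with the trivial filtration the residue of $\Phi$ at $p_i$ has Jordan block structure $\widehat{P}^{\xi^\circ_{i,k}}=P^{\lambda_{i,k}}$ at the eigenvalue $\xi^\circ_{i,k}$, which matches the Jordan structure of $C_i$ at $\lambda_{i,k}$.

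To apply NAHC in such a way that the output has trivial filtrations at every $p_i$ (so as to recover an honest irreducible local system), I would then replace the trivial filtration on $(E,\Phi)$ by a filtration $F^\bullet_D$ at each $p_i$ refining the generalized eigenspace decomposition of $\Phi_i$, with parabolic weights $\alpha_{i,j}$ prescribed in the first step. Adjusting $d$ makes the parabolic degree zero; absence of $\Phi$-invariant subbundles upgrades to stability of the resulting parabolic Higgs bundle, because any destabilizing parabolic Higgs subbundle would give a $\Phi$-invariant subbundle of $E$. The tame NAHC then produces a stable filtered local system of filtered degree zero, and since the filtered weights on the local system side are trivial by our construction, this is an irreducible local system on $\P^1\setminus\{p_1,\dots,p_n\}$ whose local monodromies have conjugacy classes $(C_1,\dots,C_n)$, solving the DSP.

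The main obstacle is the third step: bridging the parabolic structure naturally produced by the spectral correspondence (whose filtration is of type $\um_i=P^i$ coming from the spectral data) and the parabolic structure needed for NAHC to deliver a local system with trivial filtrations and prescribed monodromy classes. The choice of $\xi^\circ_{i,k}$ (essentially a purely imaginary shift) must be coordinated with $\alpha_{i,k}$ through Simpson's table so that real parts are absorbed into the parabolic weights and the monodromy eigenvalue comes out to be exactly $\lambda_{i,k}$. The local model for this coordination, and for verifying that the refined filtration recovers the prescribed Jordan type, is the local conjugacy computation of Lemma~\ref{prop:localconjugate}.
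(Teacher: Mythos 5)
Your overall architecture (tame NAHC $\to$ existence of a stable parabolic Higgs bundle $\to$ integral curve in $\bmxi$ via the $OK$ condition $\to$ Proposition~\ref{conjugacy class}) matches the paper's, but your decision to work with \emph{trivial} filtrations on the local-system side ($\beta=0$) creates a genuine gap. With $\beta_{i,j}=0$, Simpson's table forces the Higgs residue eigenvalue attached to $\lambda_{i,k}$ to be $\xi^\circ_{i,k}=\sqrt{-1}\log|\lambda_{i,k}|/4\pi$, i.e.\ it depends only on $|\lambda_{i,k}|$. Whenever two distinct eigenvalues of some $C_i$ have equal modulus — in particular in the most classical case where all eigenvalues are roots of unity, so that every $\xi^\circ_{i,k}=0$ — the $\xi$'s collide. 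This breaks the construction in two places. First, the surface $Z_{\vxi}$ and the partitions $P^{\xi^\circ}$ only see the \emph{distinct} part of $\uxi_i$, so a collision merges $\widehat{P}^{\lambda_{i,k}}\cup\widehat{P}^{\lambda_{i,k'}}$ into a single partition, and Proposition~\ref{conjugacy class} then produces a residue whose Jordan type at the merged eigenvalue is the conjugate of that union, not the concatenation $P^{\lambda_{i,k}}\sqcup P^{\lambda_{i,k'}}$; recovering the prescribed types by a refining filtration would require breaking Jordan blocks of $\Phi_i$ in a controlled way, which is an additional (Littlewood--Richardson-type) argument you do not supply. Second, and more fatally, multiplicative genericity of the $\lambda_{i,j}$ does \emph{not} give additive genericity of the purely imaginary vector $\left(\sqrt{-1}\log|\lambda_{i,j}|/4\pi\right)$ — for unitary eigenvalues this vector is identically $0$ — so Lemma~\ref{lem:integral curves(generic)} does not apply, and hypothesis (2) alone does not guarantee an integral curve at $\vxi=0$ (Lemma~\ref{lem:integral at zero} needs the strictly stronger bound $<(n-2)\mu+1$). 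Without integrality you lose both stability and the conjugacy-class computation.

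The paper avoids all of this by \emph{not} taking trivial filtrations: it introduces auxiliary filtration jumps $\beta_{i,j}$ on the local-system side so that the shifted eigenvalues $\tau'_{i,j}=-\beta_{i,j}/2+\sqrt{-1}\log|\lambda_{i,j}|/4\pi$ are pairwise distinct for distinct $\lambda_{i,j}$ and sum to zero, and can moreover be perturbed to make $\vxi$ a general point of $\nm_{add}$. Distinctness makes the filtration in Step~2 of the paper's proof just the ordering of generalized eigenspaces (no Jordan blocks need to be split), and genericity of the perturbation restores the hypotheses of Lemma~\ref{lem:integral curves(generic)}. Irreducibility of the underlying local system is then deduced not from triviality of the filtration but from the multiplicative genericity assumption, which forces any local system with the prescribed local monodromies to be irreducible. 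If you want to salvage your version, you must either incorporate these nontrivial $\beta_{i,j}$'s or restrict to the (non-generic) situation where all moduli $|\lambda_{i,k}|$ within each $C_i$ are already distinct and the resulting $\vxi$ happens to be additively generic.
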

\begin{proof}
\textbf{Step 1}: Reduction to a stable parabolic Higgs bundle with prescribed residue diagrams.

        By the preceding discussions, our goal is to find an irreducible local system whose residue diagram is $\{P^{0,\lambda_{i,j}}\}$ where $P^{0,\lambda_{i,j}} = P^{\lambda_{i,j}}$. According to the tame NAHC and Simpson's table \ref{table:simpson}, this amounts to finding a stable parabolic Higgs bundles of parabolic degree zero whose residue diagram is $\{P^{\a_{i,j},\tau_{i,j}}\}$ where 
\begin{equation}\label{prescribed residue diagram}
P^{\a_{i,j},\tau_{i,j}} = P^{0,\lambda_{i,j}}  \quad \textrm{and }\quad \a_{i,j} = -\mathrm{arg}(\lambda_{i,j}), \tau_{i,j} = \sqrt{-1}\log |\lambda_{i,j}|/4\pi.  
\end{equation}
For each fixed $i$, in order to avoid coincidence of $\tau_{i,j}$ for distinct $\lambda_{i,j}$, we introduce $\beta_{i,j}$ to each $\lambda_{i,j}$ such that the modified values $\tau'_{i,j} = - \beta_{i,j}/2+\sqrt{-1}\log|\lambda_{i,j}|/4\pi$ are distinct for distinct $\lambda_{i,j}$ and $\sum \tau'_{i,j} =0$. Let $\vec{\beta} = (\underline{\b}_1,\dots,\underline{\b}_n)$ be such a choice of $\beta_{i,j}$ where $\underline{\b}_i=(\beta_{i,j}).$ Under the multiplicative genericity assumption on $\lambda_{i,j}$, every local system with the prescribed conjugacy classes is irreducible. In particular, if we can produce a $\vec{\beta}$-stable filtered local system with the residue diagram 
\[P^{\beta_{i,j}, \lambda_{i,j}}= P^{\lambda_{i,j}},\] 
then the underlying local system $\bb{L}$ is also irreducible. Moreover, the conjugacy class of $\bb{L}$ is also determined by  $\{P^{\beta_{i,j}, \lambda_{i,j}}\}$ since we can assume that, for each fixed $i$, the weights $\beta_{i,j}$ are distinct for distinct $\lambda_{i,j}$ which ensures that the generalized eigenspace corresponding to the eigenvalue $\lambda_{i,j}$ is isomorphic to the associated graded piece labeled by $\beta_{i,j}$ under the natural linear map. To construct such a $\vec{\b}$-stable filtered local system via the tame NAHC, it is then equivalent to construct a stable parabolic Higgs bundles of parabolic degree zero whose residue diagram is $\{P^{\a_{i,j},\tau'_{i,j}}\}$ where
        \begin{equation*}
P^{\a_{i,j},\tau'_{i,j}} = P^{\beta_{i,j},\lambda_{i,j}} \quad \textrm{and  }\quad \a_{i,j} = -\mathrm{arg}(\lambda_{i,j}), \tau_{i,j}' = -\beta_{i,j}/2+ \sqrt{-1}\log |\lambda_{i,j}|/4\pi.
        \end{equation*}
Let us denote $R^{i,j}:=P^{\a_{i,j},\tau'_{i,j}}= P^{\lambda_{i,j}}$ for simplicity.
        
\vspace{0.5em}
        \noindent\textbf{Step 2}: Existence of a parabolic Higgs bundle realizing the desired residue diagrams.
        
        For each $i=1,\dots, n$, we can always choose a pair $(\um_i,\uxi_i)$ such that $\xi^\circ_{i,j}=\tau'_{i,j} $ and the associated collection of partitions $\{P^{\xi^\circ_{i,j}}\}_{j=1}^{e(i)}$ of the pair satisfies $P^{\xi_{i,j}^\circ} = \widehat{R}^{i,j}$. By construction, the union of these partitions is
        \[ P^{\xi^\circ_{i,1}} \cup \dots \cup P^{\xi^\circ_{i,e(i)}}  = \widehat{R}^{i,1}\cup \dots \cup \widehat{R}^{i,e(i)} = P^i, \]
         then the two conditions in Theorem \ref{thm:DSP2} guarantees that the hypotheses in Corollary \ref{thm:nonemptiness of Hitchin bases} are satisfied so that $\bmxi$ is non-empty. Moreover, Lemma \ref{lem:integral curves(generic)} ensures the existence of an integral curve in $\bmxi$ because the multiplicative genericity of $\lambda_{i,j}$ converts to the additive genericity of $\vxi$ and we can also perturb $\beta_{i,j}$ to make $\vxi$ a general element in $\nm_{add}$ if necessary. 
         
        Let $L$ be a line bundle on $\Sigma$ and $(E,\Phi)$ be the corresponding Higgs bundle under the spectral correspondence. By Proposition \ref{conjugacy class}, the residue diagrams of the Higgs bundle (treated as a parabolic Higgs bundle with trivial filtrations) are given by 
\[P^{0, \xi_{i,j}^\circ} = \widehat{P}^{\xi^\circ_{i,j}} = R^{i,j}.\]
In order to obtain the desired residue diagrams $P^{\a_{i,j},\tau'_{i,j}}$, we need to construct a new filtration for the Higgs bundle $(E,\Phi)$. Without loss of generality, assume that $\Phi_i$ is in its Jordan normal form with respect to a suitable basis. As the Jordan blocks corresponding to each eigenvalue $\xi_{i,j}^\circ= \tau'_{i,j}$ are described by $R^{i,j}$, we can explicitly construct a filtration $ E^\bullet_{p_i}$ of $E_{p_i}$ indexed by the parabolic weights $\a_{i,j}$ by choosing subsets of the basis vectors such that the residue diagram with respect to $E^\bullet_{p_i}$ is exactly $\{P^{\a_{i,j}, \tau'_{i,j}}\}$. This produces a parabolic Higgs bundle $(E, E^\bullet_D, \Phi, \va)$ with the desired residue diagram \eqref{prescribed residue diagram}. 

\noindent\textbf{Step 3}: Stability and parabolic degree zero conditions. 

It remains to check that $(E, E^\bullet_D,\Phi,\va)$ is stable and has parabolic degree $0$. Stability follows from the integrality of $\Sigma$ and Proposition \ref{conjugacy class} which guarantees that $(E, \Phi)$ has no Higgs subbundles. Therefore, $(E, E^\bullet_D, \Phi, \va)$ is automatically stable as a parabolic Higgs bundle. Note that the first condition is clearly necessary for the existence of solution to DSP. According to Simpson's table, the second condition converts to the conditions on the Higgs bundles side that $\sum \tau_{i,j}=0$ and the sum of $\a_{i,j}$ (counted with multiplicities) is an integer. Recall that the parabolic degree is defined as 
\[\deg(E) + \a_{i,j}\sum \dim( E_{p_i}^{j-1}/E_{p_i}^j).\]
As $\chi(L) = \chi(E)$, the degree of $E$ can be any value by choosing $L$ to have an appropriate degree. Since $\a_{i,j}\sum \dim( E_{p_i}^{j-1}/E_{p_i}^j)$ is an integer by the first condition, we can always select $\deg(L)$ such that the parabolic degree is zero.

\end{proof}
\begin{rem}
In Appendix \ref{comparison result}, we will show that this criterion is numerically equivalent to Simpson's criterion when one of the conjugacy classes has distinct eigenvalues.     
\end{rem}

\begin{rem}[Conjecture of Balasubramanian--Distler--Donagi]\label{rem:bdd conjecture}
        In the setup of Theorem \ref{thm:DSP2}, let $\vm = (\um_1,\dots, \um_n)$ where $\um_i= P^i$. The inequalities in condition (2) are equivalent to the $OK$ condition for the collection of line bundles $L_{\vm}(\mu)$ on $C=\P^1$ for $\mu=2,\dots,r$. Therefore, Theorem \ref{thm:DSP2} confirms the conjecture proposed in \cite{baladistlerdonagi}:   the $OK$ condition implies that multiplicative DSP is solvable for the tuple $(C_1,\dots,C_n)$ under the assumption that $(C_1,\dots,C_n)$ is multiplicatively generic.

\end{rem}

The same idea works for the case of higher genus curves. 
\begin{theorem}\label{thm:DSP higher genus}
Let $n\geq 1$ and $g>0$. 
Let $C_1, \dots, C_n \subset GL_r(\mathbb{C})$ be conjugacy classes whose collection of eigenvalues is multiplicatively generic. Suppose that $\prod_{i=1}^n\det(C_i)=1$ holds. For $g=1$, assume that at least one of the partitions $P^i$ is not the singleton partitions. Then there exist matrices $A_k,B_k, T_i\in GL_r(\C)$ with $T_i\in C_i$ for $k=1,\dots, g, i=1,\dots, n$ such that 
    \[ \prod_{k=1}^g (A_k,B_k)\prod_{i=1}^n T_i = Id_r\]
    where $(A_k,B_k) = A_kB_kA_k^{-1}B_k^{-1}$ and the matrices $A_k,B_k,T_i$ have no common invariant subspace. 
    
\end{theorem}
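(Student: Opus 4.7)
The plan is to follow the strategy of Theorem \ref{thm:DSP2} almost verbatim, observing that the positive-genus hypothesis removes the numerical inequality (2) of that theorem nearly automatically. First, I would apply the tame NAHC (together with Simpson's translation Table \ref{table:simpson}) to reduce the problem to constructing a stable parabolic Higgs bundle of parabolic degree zero on $C$ whose residue diagrams at the punctures realize the prescribed conjugacy classes $C_i$, after a mild perturbation $\beta_{i,j}$ of the weights so that distinct eigenvalues are separated by distinct parabolic weights. Multiplicative genericity of the $\lambda_{i,j}$ ensures that any such parabolic Higgs bundle corresponds to an irreducible local system on $C \setminus \{p_1,\ldots,p_n\}$, and hence to the desired tuple $(A_k, B_k, T_i)$ by taking its monodromy.

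Second, I would set $P^i := \widehat{P}^{\lambda_{i,1}} \cup \cdots \cup \widehat{P}^{\lambda_{i,e(i)}}$ and choose $(\um_i, \uxi_i)$ so that $\xi^\circ_{i,j} = \tau'_{i,j}$ and $P^{\xi^\circ_{i,j}} = \widehat{P}^{\lambda_{i,j}}$, giving $P^{\xi^\circ_{i,1}} \cup \cdots \cup P^{\xi^\circ_{i,e(i)}} = P^i$. Non-emptiness of $\bmxi$ then follows from Corollary \ref{thm:nonemptiness of Hitchin bases}(2): no conditions are needed for $g \geq 2$, while for $g = 1$ the hypothesis that at least one $P^i$ is not the singleton partition is exactly the $OK$ condition required. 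The condition $\prod_i \det(C_i) = 1$ is what guarantees $\vxi \in \nm$ in the first place.

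Third, I would produce an integral curve $\Sigma \in \bmxi$ by adapting Lemma \ref{lem:integral curves(generic)} to positive genus. Multiplicative genericity of the $\lambda_{i,j}$ translates via Simpson's table to additive genericity of $\vxi$, so a generic spectral curve in $\mathrm{Tot}(K_C(D))$ is integral by the cyclic-cover argument of Lemma \ref{lem:integral curves}; positivity of $\deg L(\vm)_r$ for $g \geq 1$ provides the needed base-point-free input. A perturbation within $\bmxi$, combined with the $\mathbb{C}^*$-equivariance of the family of surfaces and Proposition \ref{deformation of curves}, ensures the total transform acquires multiplicity exactly $m_{i,j}$ at each blow-up center, so its strict transform recovers $\Sigma$.

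Finally, pushing a line bundle $L$ on $\Sigma$ of suitable degree forward to $C$ yields $(E, \Phi)$ with no Higgs subbundle and residue diagrams $\widehat{P}^{\xi^\circ_{i,j}} = P^{\lambda_{i,j}}$ at each $p_i$ by Proposition \ref{conjugacy class}. Equipping $E_{p_i}$ with a filtration indexed by the weights $\a_{i,j}$, chosen by picking the appropriate subspaces of a Jordan basis of $\Phi_i$ in each generalized eigenspace, produces the target residue diagrams $P^{\a_{i,j}, \tau'_{i,j}}$; stability is automatic from the integrality of $\Sigma$, and $\deg(L)$ is tuned to make the parabolic degree zero. The main obstacle I expect is the edge case $g=1$ with exactly one non-singleton $P^i$, where the dimension of $\bmxi$ can drop and extra care is needed to deform an integral member from $B(\vm)_0$ to $\bmxi$; this is presumably the ``exceptional case in $g=1$'' alluded to after Theorem \ref{thm:DSP2}, which may require an argument distinct from the affine-bundle deformation of Proposition \ref{deformation of curves}.
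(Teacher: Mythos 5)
Your proposal is correct and follows essentially the same route as the paper, whose proof of this theorem consists precisely of reducing to the existence of an irreducible local system on the positive-genus punctured curve with prescribed local monodromies and then repeating the argument of Theorem \ref{thm:DSP2}, with the genus hypothesis (plus the non-singleton assumption when $g=1$) supplying the $OK$ condition in place of the $g=0$ inequality. Your closing worry about the $g=1$ edge case is unnecessary: in the multiplicatively generic setting the integral curve comes from the positive-genus analogue of Lemma \ref{lem:integral curves(generic)}, where one perturbs the auxiliary weights $\beta_{i,j}$ so that $\vxi$ becomes a general element of $\nm_{add}$, rather than from the deformation out of $B(\vm)_0$ via Proposition \ref{deformation of curves}.
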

\begin{proof}
    The existence of the matrices $A_k,B_k, T_i$ is equivalent to the existence of irreducible local system on a genus $g>0$ Riemann surface $C\setminus \{p_1,\dots,p_n\}$ whose residue diagram is determined by the prescribed conjugacy classes $C_i$. We can proceed as in the proof of Theorem \ref{thm:DSP2}. 
\end{proof}

\begin{rem}
    The same strategy can be adapted to obtain a sufficient condition for the DSP (and its higher genus analogue) without the multiplicatively genericity assumption. The key modification lies in ensuring the existence of integral curves, which can be guaranteed by Lemma~\ref{lem:integral at zero} and Proposition~\ref{deformation of curves}. However, this requires the inequalities in Theorem~\ref{thm:DSP2} to be slightly strengthened. For $g\geq 2$, it can be shown that Theorem~\ref{thm:DSP higher genus}  remains valid even when the multiplicatively genericity condition is dropped. 
\end{rem}

\begin{rem}[Comparison with Kostov's criterion for DSP]\label{rem: kostov}
For multiplicatively generic eigenvalues, Kostov provided a sufficient and necessary condition for DSP \cite{kostovsurvey}. Formulated in terms of the notion of "defect" defined by Simpson in \cite[Section 2.7]{Simpson-middleconvolution}, Kostov's criterion takes the form of an iterative algorithm where the defect controls both the termination condition and the modification of conjugacy classes at each step. Roughly, Kostov's algorithm goes by iteratively verifying the inequalities (analogous to Simpson's criterion) for a given set of conjugacy classes; if the defect is negative, it modifies the conjugacy classes and repeats the verification until termination. 

Analogous to Simpson's definition of defect, we define 
\[ \delta(\mu) =  (n-2) r - \sum_{i=1}^n \gamma_{P^i}(\mu) \]
where $P^i$'s are defined in \eqref{e:conj partition}. This differs from  Simpson's original definition by incorporating conjugate partitions, as required by Proposition \ref{conjugacy class} which involves conjugate partitions. Then the inequalities in the second condition of Theorem \ref{thm:DSP2} says that $\delta(\mu)>-2$ for $\mu=2, \dots, r$. This implies that the cases covered by Theorem \ref{thm:DSP2} amounts to the situation where the negative defects must be $-1$ in Kostov's algorithm. The cases with higher defects in Kostov's algorithm will be taken up in future work. 

\end{rem}

\appendix
\section{Local behaviors}\label{sec:local behaviors}
In this section, we study the local conditions imposed by the linear system $|\sm_{\vm}(\vxi)|$ by explicitly writing down the local charts of the successive blow-ups. 

Let $M_0=\bb{A}^2$ with coordinates $(x,y)$. Alternatively, one may take $M_0= \Spec(\C[[x]][y])$ in the context of ruled surface, the computations and results below remain unchanged in either setting. Let $l_x := \{x=0\}$ in $M_0$ and $M_1$ be the blow-up of $M_0$ at the origin $c_1:=(0,0)$. Let $M_\ell\to \dots\to  M_1\to M_0$ be a sequence of blow-ups where $p^{j-1}_{j}: M_{j}\to M_{j-1}$ is the blow-up of $M_j$ at the intersection $c_j$ of the strict transform of $l_x$ and the exceptional divisor from the previous blow-up. Let $\um = P = (m_1, \dots, m_\ell)$ a partition of $r$ and let $(n_1,\dots,n_s)$ be the conjugate partition of $P$. 
    \begin{lem}\label{lem:localconditions}
        Let $X$ be a plane curve in $M_0$, defined by the equation $F(x,y)= y^r+ s_1(x)y^{r-1}+\dots+ s_r(x)=0$ where $s_\mu(x)\in \C[x]$. Let $M_\ell\to \dots\to M_1\to M_0$ be as above. Then the following conditions are equivalent
    \begin{enumerate}
            \item $X$ passes through $c_1$ with multiplicity at least $m_1$ and, for $j= 2,\dots, \ell$, the total transform of $X$ in $M_{j-1}$ passes through $c_j$ with multiplicity at least $m_1+ \dots+ m_j$;
                \item Let $v(\mu)$ be the vanishing order of $s_\mu(x)$ at $x=0$ for $1 \leq \mu\leq \ell$ and $\gamma(\mu)$ the level function associated to the partition $P$. Then $v(\mu)\geq \gamma(\mu)$ for all $1\leq \mu\leq \ell.$
                \item The partial derivative  $\left. \frac{\partial^u}{\partial y^u}\frac{\partial^a}{\partial x^a} \right\vert_{(0,0)}F(x,y)$ vanishes for $(u,a)\in G(P):= \{(u,a)| 0\leq a<\gamma({r-u})\}$. 
            \end{enumerate}
            
        \end{lem}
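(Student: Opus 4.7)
I would prove (2) $\Leftrightarrow$ (3) first, as it is essentially a repackaging. Writing $F(x,y) = \sum_{\mu=0}^{r} s_\mu(x) y^{r-\mu}$ (with $s_0 = 1$), a direct computation gives
\[
\left.\tfrac{\partial^{u+a}}{\partial y^u\, \partial x^a} F\right|_{(0,0)} \;=\; u!\,a!\, [x^a]\, s_{r-u}(x).
\]
Thus the vanishing required by (3) for $(u,a) \in G(P)$ translates, after the substitution $\mu = r - u$, to $[x^a] s_\mu(x) = 0$ whenever $0 \leq a < \gamma(\mu)$ and $1 \leq \mu \leq r$, which is precisely $v(\mu) \geq \gamma(\mu)$.

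For (1) $\Leftrightarrow$ (2), I would construct explicit affine charts for the successive blow-ups. By induction on $k$, one finds a chart on $M_k$ with coordinates $(u_k, v_k)$ in which $x = u_k v_k^{k}$ and $y = v_k$, the strict transform of $l_x$ is $\{u_k = 0\}$, the exceptional divisor of the $k$-th blow-up is $\{v_k = 0\}$, and the next center $c_{k+1}$ is the origin. Writing $s_\mu(x) = x^{v(\mu)}\tilde s_\mu(x)$ with $\tilde s_\mu(0) \neq 0$, the total transform $F_k = p_k^* F$ becomes
\[
F_k(u_k, v_k) \;=\; \sum_{\mu=0}^r u_k^{v(\mu)}\, v_k^{k v(\mu) + r - \mu}\, \tilde s_\mu(u_k v_k^{k}),
\]
so its multiplicity at $c_{k+1}$ equals $\min_\mu\bigl((k+1)v(\mu) + r - \mu\bigr)$. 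Condition (1) is therefore equivalent to the system of inequalities
\[
k\, v(\mu) + r - \mu \;\geq\; m_1 + \cdots + m_k \qquad \text{for every } \mu \in \{0,\dots,r\} \text{ and } k \in \{1,\dots,\ell\}.
\]

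The remaining task is purely combinatorial: identify this system with $v(\mu) \geq \gamma(\mu)$ for all $\mu$. The key identity is $n_1 + \cdots + n_j = \sum_i \min(m_i, j)$, together with the elementary bound $\sum_i \min(m_i, j) \leq k j + \sum_{i > k} m_i$, with equality at $k = n_j$. For (2) $\Rightarrow$ (1): since $\mu \leq n_1 + \cdots + n_{\gamma(\mu)} \leq k \gamma(\mu) + \sum_{i > k} m_i$, the assumption $v(\mu) \geq \gamma(\mu)$ gives the desired inequality after rearrangement. For (1) $\Rightarrow$ (2): for each $\mu$, set $j = \gamma(\mu)$ and apply the inequality with $k = n_j \in \{1,\dots,\ell\}$; combined with the strict bound $\mu > n_1 + \cdots + n_{j-1} = n_j(j-1) + \sum_{i > n_j} m_i$ (which follows from $\gamma(\mu) = j$), this yields $n_j\, v(\mu) > n_j(j-1)$, hence $v(\mu) \geq j = \gamma(\mu)$ by integrality.

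The main obstacle I anticipate is the chart bookkeeping: verifying inductively that $x = u_k v_k^{k}$, $y = v_k$ really does carry $c_{k+1}$ to the origin and correctly identifies the exceptional divisor at each stage. Once that is pinned down, the algebra and the partition identity are routine, though choosing the test index $k = n_{\gamma(\mu)}$ in the reverse direction is the single non-obvious step.
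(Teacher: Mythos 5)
Your proposal is correct and takes essentially the same route as the paper: the same blow-up charts (composing to $x=u_kv_k^{k}$, $y=v_k$) reduce condition (1) to the linear inequalities $k\,v(\mu)+r-\mu\ \ge\ m_1+\cdots+m_k$ for all $\mu$ and $k$, the equivalence (2)$\Leftrightarrow$(3) is the same direct differentiation the paper dismisses as obvious, and your combinatorial finish via $n_1+\cdots+n_j=\sum_i\min(m_i,j)$ with the test index $k=n_j$ is just a cleaner packaging of the paper's Young-diagram box counting. The only detail worth one extra line is why the multiplicity of the total transform at each center \emph{equals} (rather than merely bounds) the stated minimum: distinct $\mu$ contribute distinct initial monomials $u_k^{v(\mu)}v_k^{kv(\mu)+r-\mu}$, so no cancellation can raise the multiplicity --- a fact the paper's term-by-term argument also uses implicitly.
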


         \begin{proof}
            $(1)\implies(2)$.  
            Let us fix $1\leq \mu\leq r$ and focus on the term $T_\mu= s_{\mu}(x)y^{r-\mu}$. Suppose $X$ must pass through $c_1=(0,0)$ with multiplicity at least $m_1$. If $r-\mu\leq m_1$, then $s_\mu(x)$ must vanish at $x=0$ with multiplicity at least $d_1= m_1-(r-\mu)$ in which case we can write $s_\mu =s_\mu^1(x)x^{d_1}$. For $r-\mu>m_1$, we can still write $s_\mu =s_\mu^{1}(x)x^{d_1}$ with $d_1=0$. Now, if we blow up at the origin to get $M_1$, we have $x=u_1y$ (only this chart matters) and the term in the total transform $X_1\subset M_1$ becomes 
            \[T^1_\mu= s_\mu^{1}(u_1y) u_1^{d_1}y^{r-\mu+d_1}.\] 
            Since $X$ has multiplicity at least $m_1$ at $(0,0)$, the exceptional divisor must have at least multiplicity $m_1$, and so 
            \[r-\mu+d_1\geq m_1.\]
            Now, suppose $X_1$ passes through $c_2:=(u_1,y)=(0,0)$ with multiplicity at least $m_1+ m_2$. Following the same reasoning as before, the term $T_\mu^1$ in the total transform $X_2\subset M_2$ transforms into 
            \[T_\mu^2:= s^{2}_\mu(u_2y^2) u_2^{d_1+d_2} y^{r-\mu+2d_1+2d_2-m_1-m_2}\]
            for some $d_2\geq 0$ where we let $u_1=u_2y$. Moreover, the inequality 
            \[r-\mu+2d_1+2d_2\geq  m_1+m_2\] holds. Proceed inductively, for $j=1,\dots, l$, the term in the total transform after the $j$-th blow-ups becomes
    \[ T_\mu^j  := s_\mu^{j} (u_jy^j)u_j^{\Delta_\mu^j}y^{\nabla_j}, \quad \textrm{where}\quad  \Delta_\mu^j= \sum_{a=1}^j d_a ,\quad \nabla_j = r-\mu +j\sum_{a=1}^jd_a \quad\]
    for some $d_1,\dots, d_j\geq 0$. We also have that the inequality 
    \[\nabla_j\geq \sum_{a=1}^jm_a \]
    holds. Our goal is to give a lower bound of $\Delta_\mu^l$ in the last ($l$-th) blow-up since $s_\mu(x)= s_\mu^{l}(x)x^{\Delta_\mu^l}.$ Note that each inequality $\nabla_j\geq \sum_{a=1}^jm_a$ implies 
    \begin{equation}\label{inequalities}
        (\ast_j) \qquad \Delta_\mu^l\geq \Delta_\mu^j\geq \frac{1}{j}\left( \sum_{a=1}^j m_a -(r-\mu)\right) \quad \textrm{for }j=1,\dots, l.
    \end{equation}
    Denote by $B_\mu$ the lower bound $\frac{1}{j}\left( \sum_{a=1}^j m_a -(r-\mu)\right)$. 
    Let $n_{1}\geq \dots \geq n_{m_1}$ be the conjugate partition. Since we would like to relate the lower bound with the level function, we also view $1\leq \mu\leq r$ as the integers in the filling (1) of the Young diagram introduced in Section \ref{Young diagram}. Let $m_{j+1}+1\leq t \leq m_j$ be the index of the columns where $j=1,\dots, l$ and $m_{l+1}=0$. Then, for $\mu= \sum_{b=1}^{t}n_b$, by counting the number of boxes, the inequality $(\ast_j)$ becomes
    \begin{equation*}
        \Delta_\mu^l\geq \frac{1}{j}\left( \sum_{a=1}^j m_a -\sum^{m_1}_{t+1}n_b \right) = \frac{1}{j}(jt)  =t. 
    \end{equation*}
    By the definition of level function, $t=\gamma(\mu)$ for $\mu= \sum^t_{b=1}n_b$. This yields the desired lower bound for the boxes at the bottom of each column in the Young diagram. Since $B_\mu$ is strictly increasing in $\mu$ and the value of $B_\mu$ between the two boxes at the bottom of two adjacent columns differ exactly by $1$, all the boxes in the same column will have the value of $B_\mu$ which is precisely $\gamma(\mu)$. Hence, each $s_\mu(x)$ will have vanishing order at least $\gamma(\mu)$ at $x=0$. 

    \noindent $(2)\implies (1)$. Suppose the vanishing order of $s_\mu(x)$ is at least $\gamma(\mu)$, so $s_\mu(x)= s'_\mu(x)x^{\gamma(\mu)}$ and $F(x,y) = y^r+ s'_1 x^{\gamma(1)} y^{r-1} + \dots + s'_rx^{\gamma(r)}$. To check that $X$ passes through $(0,0)$ with multiplicity at least $m_1$, it suffices to show that the minimum of the sum of the exponents of each term is $m_1$ i.e. $\textrm{min}_{1\leq \mu\leq r}\{\gamma(\mu) + r-\mu\}= m_1$. Note that when $\mu=r$, then $\gamma(r)+r-r= m_1$ by definition. It can be checked directly that $m_1$ is indeed the minimum.  Proceed as in the first part, we see that the general expression for the pullback after the $j$-th blow-ups is 
    \begin{equation*}
        y^{r} +\sum_{\mu=1}^r s_\mu u_j^{\gamma(\mu)}y^{(j-1)\gamma(\mu)+ r-\mu }
    \end{equation*}
    Similarly, in order to say that the pullback $X_j$ of $X$ passes through $c_j$ with multiplicity at least $m_1+\dots+ m_j$, it suffices to show that
    \begin{equation}\label{minimum}
    \min_{1\leq \mu\leq r}\{j\gamma(\mu) +r-\mu \}= \sum^{j}_{a=1}m_a.
    \end{equation}
    Let $C_\mu:= j\gamma(\mu) +r-\mu$.
    We claim that the minimum is attained at the bottom of the $m_j$-th column of the Young diagram where $\mu_{\mathrm{min}}=\sum_{b=1}^{m_j} n_{b}$ and $\gamma(\mu_{\mathrm{min}}) = m_j$. As 
    \[j\gamma(\mu_{\mathrm{min}}) - \sum^{j}_{a=1} m_a =- \sum_{b= m_j+1}^{m_1} n_b = -(r-\mu_{\mathrm{min}}), \]
    it follows that $C_{\mu_{\mathrm{min}}} = \sum_{a=1}^jm_a$. 
    To check that $\mu_{\mathrm{min}}:=\sum_{b=1}^{m_j} n_{b}$ attains the minimum value, we write $\mu= \mu_{\mathrm{min}}+\delta$ where $1 \leq \mu_{\mathrm{min}}+\delta\leq r$. Note that if we write $g= \gamma({\mu_{\mathrm{min}}+\delta}) -\gamma(\mu_{\mathrm{min}})$, then
    \begin{align*}
    C_\mu-\sum_{a=1}^jm_a = j\gamma(\mu_{\mathrm{min}}+\delta)+r-(\mu_{\mathrm{min}}+\delta) -\sum^j_{a=1}m_a     
    = jg -\delta. 
    \end{align*}
    Note that $g$ and $\delta$ have the same sign but $g\leq\delta$. As $j=n_{m_j}$, it is easy to see that $jg-\delta\geq 0$. Hence, the equality \label{minimum} holds. 

        \noindent  $(2)\iff (3)$ Obvious. 
        \end{proof}

\begin{rem} \label{rem:local condition(minimal)}
Let $J_{\mathrm{min}}= \{\mu|\mu= \sum_{b=1}^{m_j} n_b$, for $j = 1, \dots, \ell \}$ be the set of minimal indices in the proof of Lemma \ref{lem:localconditions}. The implication (2) $\Rightarrow$ (1) further shows that if equalities are attained for the minimal indices in condition (2)—that is, if $v(\mu) = \gamma(\mu)$ for $\mu\in J_{\mathrm{min}}$—then the multiplicities are exact in condition (1): the curve $X$ passes through $c_1$ with multiplicity exactly $m_1$, and for each $j = 2, \dots, \ell$, the total transform of $X$ in $M_{j-1}$ passes through $c_j$ with multiplicity exactly $m_1 + \dots + m_j$.

 Let $G(P)_{\mathrm{min}}= \{(u,a)|u\in J_{\mathrm{min}}, a= \gamma(r-u) \}$. Then the additional assumption on the minimal indices in condition (2) translates into condition (3) by requiring that the partial derivative  $\left. \frac{\partial^u}{\partial y^u}\frac{\partial^a}{\partial x^a} \right\vert_{(0,0)}F(x,y) \neq 0$ for $(u,a)\in G(P)_{\mathrm{min}}$. Therefore, the non-vanishing of these partial derivatives at $G(P)_{\mathrm{min}}$ forces equality for minimal indices in condition (2), and thus exact multiplicity in condition (1).
\end{rem}

    Continuing with the notation and setup from Lemma 6.1. Consider the projection $\pi: M_0\to \bb{A}^1= \textrm{Spec}(\C[x])$ and denote by $p:M_\ell\to M_0$ and $f:M_\ell\to M_0\to \bb{A}^1$ the composition. Let $E_j\subset M_\ell$ be the strict transform of the exceptional divisor of the blow-up $M_j\to M_{j-1}$ under $M_\ell\to M_j$. Let $\Xi_j = E_j+\dots + E_\ell$. 

For any $\O_{\Sigma_0}$-module $F$, multiplication by $y$ induces an $\O_{M_0}$-module homomorphism $y:p_*F\to p_*F$. Pushing forward along $\pi$, we obtain a Higgs bundle $(f_*F,\Phi)$ where the Higgs field is given by 
\[\Phi:= \pi_*y:f_*F\to f_*F.\]
 This serves as the local model for the blow-ups of spectral curves, allowing us to analyze the local behavior of the induced Higgs field. 

    \begin{prop}\label{prop:localconjugate}
    Suppose that $X$ satisfies one of the conditions in Lemma \ref{lem:localconditions} such that the divisor $\Sigma= p^*X-m_1\Xi_1-\dots -m_\ell \Xi_\ell$ is effective. Assume that $\Sigma$ is integral. Let $ L$ be a line bundle on $\Sigma$. Then the Jordan normal form with eigenvalue $0$ of $\Phi$ restricted to $0\in \bb{A}^1$ is uniquely determined by the partition $(m_1, \dots, m_\ell)$. More precisely, if we denote by $(n_1, \dots ,n_s)$ the partition corresponding to the Jordan normal form where each $n_j$ represents a Jordan block of size $n_j$, then $(n_1,\dots , n_s)$ is given by the conjugate partition of $(m_1, \dots , m_\ell)$. 
    \end{prop}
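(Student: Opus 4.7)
The plan is to compute the Jordan type of $\Phi|_0 = $ multiplication-by-$y$ on $V := (f_*L)|_0$ by decomposing $V$ into local summands indexed by the intersection points of $\Sigma$ with the exceptional chain, and analyzing each summand via a length calculation inside $\mathcal{O}_{\Sigma,q}$.

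\medskip
\noindent\textbf{Geometric setup.} First I would establish that
\[
f^{-1}(0) \;=\; \widetilde{l_x} \;+\; \sum_{j=1}^{\ell} j\,E_j
\]
as a divisor on $M_\ell$, by induction on the blow-up steps (or directly in the chart $(s^{(j)},t^{(j)})$ with $x = s^{(j)}(t^{(j)})^j$ and $E_j = \{t^{(j)} = 0\}$). Combined with the numerical identity $\Sigma \cdot E_j = m_j - m_{j+1}$ (setting $m_{\ell+1}:=0$, derived from $\Sigma = p^*X - \sum_k m_k \Xi_k$ and the chain intersection pattern) and the equality case of Lemma~\ref{lem:localconditions}(2) (see Remark~\ref{rem:local condition(minimal)}), this shows that $\Sigma$ meets each $E_j$ at finitely many points $q$ in the interior of $E_j$, with local intersection multiplicities $e_q$ satisfying $\sum_{q \in \Sigma \cap E_j} e_q = m_j - m_{j+1}$, and that $\Sigma$ avoids both $\widetilde{l_x}$ and the corner points $E_{j-1} \cap E_j$. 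Since $L$ is a line bundle, one obtains
\[
V \;=\; \bigoplus_{j=1}^{\ell}\ \bigoplus_{q \in \Sigma \cap E_j} V_q, \qquad V_q \;\cong\; \mathcal{O}_{\Sigma,q}/(x|_\Sigma).
\]

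\medskip
\noindent\textbf{Local Jordan form.} Fix $j$ and $q \in \Sigma \cap E_j$, and set $R := \mathcal{O}_{\Sigma,q}$. In a chart at $q$ with $y = t$, $x = s t^j$, and $q = (s_0, 0)$ for $s_0 \neq 0$, the element $s$ is a unit in $R$, so
\[
V_q \;\cong\; R/(t^j), \qquad \Phi|_0 \;=\; \text{multiplication by } t.
\]
Integrality of $\Sigma$ forces $R$ to be a $1$-dimensional local domain, so $t$ is a non-zero-divisor in $R$. The $t$-adic filtration $R/(t^j) \supset tR/(t^j) \supset \cdots \supset t^{j-1}R/(t^j) \supset 0$ therefore has successive quotients $t^k R/t^{k+1}R \cong R/(t)$, each of $k$-dimension $e_q$ by the definition of local intersection multiplicity. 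Injectivity of $t^k$ on $R$ then yields
\[
\dim_k \ker\bigl((\Phi|_0)^k\bigr)\big|_{V_q} \;=\; \min(k, j)\cdot e_q,
\]
so the Jordan decomposition of $\Phi|_0$ on $V_q$ consists of exactly $e_q$ blocks of size $j$. Summing over $q$ and $j$, the operator $\Phi|_0$ on $V$ has exactly $m_j - m_{j+1}$ Jordan blocks of size $j$ for each $j$. A direct Young-diagram check identifies this multiset of block sizes with the conjugate partition $\widehat P$: the number of blocks of size $\geq j$ is $\sum_{k \geq j}(m_k - m_{k+1}) = m_j$, which is exactly the $j$-th row length of $\widehat P$.

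\medskip
\noindent\textbf{Main obstacle.} The principal technical subtlety is carrying out the local analysis without assuming that $\Sigma$ is smooth at $q$, since $\Sigma$ could in principle be singular on the exceptional chain. The filtration argument sidesteps this by working only with lengths and the non-zero-divisor property of $t$, both of which are controlled by integrality rather than smoothness. A second point requiring care is the corner-avoidance of $\Sigma$: this is forced by the equality condition at the minimal indices in Lemma~\ref{lem:localconditions} (cf.\ Remark~\ref{rem:local condition(minimal)}), which is in turn implied by the combined requirement that $\Sigma$ be an effective Cartier divisor realizing the prescribed numerical class while containing none of the $E_j$ as components.
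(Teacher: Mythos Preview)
Your proposal is correct and follows the same geometric strategy as the paper: localize at points of $\Sigma\cap E_j$, use the relation $x=(\text{unit})\cdot y^j$ there, and read off the Jordan blocks from the action of $y$ on each local piece. The difference is purely in the local analysis. The paper fixes the chart $U_j$, writes the local equation of $\Sigma$ as $v_j^{e_j}+u_{j-1}f+c$ with $c\neq 0$, and exhibits an explicit basis $\{y^a v_j^b : 0\le a<j,\ 0\le b<e_j\}$ of a $y$-invariant subspace of $N_0$, checking linear independence by an inductive argument in $L_j/(u_{j-1}^k)$ using torsion-freeness. Your filtration argument via $\dim\ker(t^k)=\min(k,j)\cdot e_q$ is cleaner and more robust: it needs only that $t$ is a non-zero-divisor in $R=\mathcal{O}_{\Sigma,q}$ and that $\dim R/(t)=e_q$, both immediate from integrality, and it avoids any explicit manipulation of the defining equation.

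You are also more careful than the paper on corner avoidance. The paper simply writes ``for simplicity, we assume that $\Sigma_\ell$ does not pass through the intersection $E_j\cap E_{j+1}$,'' whereas you correctly argue that integrality of $\Sigma$ forces exact multiplicities (otherwise some $E_j$ would appear as a component), hence equality at the minimal indices of Remark~\ref{rem:local condition(minimal)}, which in turn forces the local equation of $\Sigma$ to be nonzero at each corner. (Concretely, at the corner $E_{j-1}\cap E_j$ the value of the equation of $\Sigma$ is controlled by $s'_{\mu_0}(0)$ for the minimal index $\mu_0=\sum_{b=1}^{m_j}n_b$, which is nonzero precisely when $v(\mu_0)=\gamma(\mu_0)$.) This closes a small gap left open in the paper's argument.
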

    \begin{rem}\label{rem:JNF}
        Recall that in linear algebra, given a vector space $V$ of dimension $r$ and a nilpotent operator $B:V\to V$, the Jordan normal form is determined by the sequence of subspaces:
        \[  0\subset W_1\subset \dots \subset W_\ell=V ,\quad \textrm{where }W_j := \ker(B^j)\]
        for some $\ell$. Let $b_j= \dim(W_j/W_{j-1})$ for $j=1,\dots, \ell$ and $u_j$ the number of Jordan blocks of size $j$, then $u_j$ can be computed as follows: 
        \begin{align*}
            u_1+u_2+\dots + u_\ell &= b_1\\
            u_2+\dots + u_\ell &= b_2\\
            \vdots \\
            u_\ell&= b_\ell
        \end{align*}
        Note that $u_1+2u_2+\dots+ \ell u_\ell=r = b_1+ b_2+\dots+ b_\ell$ and the following partition of $r$ is exactly the conjugate partition of $r$        \begin{equation}\label{conjugate partition}
          \underbrace{\ell,\dots,\ell}_{u_\ell-\textrm{copies}},\dots, \underbrace{1,\dots, 1}_{u_1-\textrm{copies}} 
        \end{equation}    
        Therefore, the Jordan normal form of $B$ in the form of partition is given by \eqref{conjugate partition} which is the conjugate partition of $(b_1, \dots,b_\ell)$.

        Note that this resembles the intersection numbers of the curve classes $\sm_{\vm}(\vxi)$ studied in Proposition \ref{p:class}: 
        \begin{align*}
            E_1\cdot \Sigma +E_2\cdot \Sigma +\dots + E_\ell\cdot \Sigma &= \Xi_1\cdot \Sigma = m_1\\
            E_2\cdot \Sigma+\dots + E_\ell\cdot \Sigma &= \Xi_2\cdot \Sigma =m_2\\
            \vdots \\
            E_\ell\cdot \Sigma &= \Xi_\ell\cdot \Sigma =m_\ell
        \end{align*}
        In fact, we will show that each $E_j\cdot \Sigma$ is realized as the number of Jordan blocks of size $j$. 

        \end{rem}
        \begin{proof}[Proof of Proposition \ref{prop:localconjugate}]        
        To analyze the local behavior of the Higgs field, we work chart by chart on the successive blow-ups $M_l$. The variety $M_l$ can be covered by $2\ell$ open subsets $U_j= \Spec R_j,V_j=\Spec R'_j$, $j=1, \dots, \ell$ where  
        \[R_j= \C[u_{j-1},v_{j},y]/(y-u_{j-1}v_j), \quad R'_j= \C[u_j,y,u_{j-1}]/(u_{j-1}-u_jy) \]
        where $u_0= x$. Let $F_j(u_{j-1},v_j)$ be the local equation of $\Sigma_\ell$ in $U_j.$ Since $L$ is locally free, we can represent the restriction of $L$ to $\Sigma_\ell\cap U_j$ as an $R_j$-module as 
        \[L_j= \frac{\C[u_{j-1},v_j]}{(F_j(u_{j-1},v_j))}.\]
        Let $N_0=L_j/(x)$ be the scheme-theoretic restriction of $L_j$ to the fiber over $x=0$. Then $f_*L|_{x=0}$ is locally represented by $N_0$ as a module over $\C[x]/(x)\cong \C$.  
        As $\sm$ is assumed to be integral, it does not contain any $E_j$ as an irreducible component, so $N_0$ is a finite dimensional vector space. The Higgs field $\Phi$ restricted to $f_*L|_{x=0}$ is induced by the action of multiplication by $y$ on $N_0.$ 
        
        Note that $U_j$ is obtained from blowing up at the origin in $V_{j-1}$ where we have the relation
        \[x = u_0= u_1y = \dots = u_{j-1}y^{j-1}. \]
        So, we also have the relation on $U_j$
        \[y^j = y^{j-1}\cdot (u_{j-1}v_j) = xv_j.\]
        This means that the action of multiplication by $y$ is nilpotent of order $j$ on $N_0$. Following the idea in Remark \ref{rem:JNF}, we would like to show that $N_0$ contains a vector subspace preserved by the operator $y$ such that the Jordan normal form of $y$ on it consists of exactly $e_j:=E_j\cdot \Sigma_\ell$ Jordan blocks of size $j$. As we run the construction over all $U_j$ for $j=1,\dots,\ell$, we get the desired Jordan normal form of $\Phi$ restricted to the whole $f_*L|_{x=0}$

        Consider $N_1=N_0/(u_j)\cong L_j/(u_{j-1})$ the restriction of $N_0$ to the $E_j\cap U_j$. For simplicity, we assume that $\sm_\ell$ does not pass through the intersection $E_j\cap E_{j+1}$ (otherwise, we will need to consider more charts). Since $L$ is a line bundle, the dimension of $N_1$ (as $\C$-vector space) is given exactly by $e_j$. 
        After dividing $F_j(u_{j-1},v_k)$ by its leading coefficient to make it monic, we can write 
        \begin{equation}\label{eq:local spectral curve}
            F_j(u_{j-1},v_j) = v_j^{e_j} +u_{j-1}f + c  
        \end{equation}
        where $c\in \C$ is non-zero and  $f$ is a polynomial in $v_j$ with coefficients in $\C[u_{j-1}]$ and degree $<e_j$. 
        Hence, $N_1$ is always spanned by the basis $1,v_j,\dots, v_j^{e_j-1}$.
         Now, consider the following set of vectors in $N_0$ by repeatedly applying $y$ to the basis vectors 
         $1,v_j,\dots, v_j^{e_j-1}$
         \begin{equation}\label{basis vectors}
         T= \{1,v_j,\dots, v_j^{e_j-1}, y,yv_j,\dots, yv_j^{e_j-1}, \dots, y^{j-1},y^{j-1}v_j,\dots, y^{j-1}v_j^{e_j-1}. \}
         \end{equation}
        We claim that this set of vectors is linearly independent. Suppose 
        \begin{equation}\label{linear expression}
        \sum_{a=0}^{j-1} \sum_{b=0}^{e_j-1} c_{ab} y^a v_j^b=\sum_{a=0}^{j-1} \sum_{b=0}^{e_j-1} c_{ab} u_{j-1}^a v_j^{a+b} =0 \in  N_0.
        \end{equation}
        The linear combination \eqref{linear expression} in $N_1$ is given by 
        \[ c_{00} + c_{01}v_j + \dots + c_{0(e_j-1)}v_j^{e_j-1}=0 \]
        which implies that $c_{00} = c_{01}= \dots = c_{0(e_j-1)}=0$ since $1,v_j,\dots, v_j^{e_j-1}$ is a basis in $N_1$. Since $y= u_{j-1}v_j$, the linear combination \eqref{linear expression} in $L_j/(u_{j-1}^2)$ is given by 
        \[ c_{10}y + c_{11}yv_j + \dots + c_{1(e_j-1)}yv_j^{e_j-1}= u_{j-1}(c_{10}v_j + c_{11}v_j^2 + \dots + c_{1(e_j-1)}v_j^{e_j})=  0\in L_j/(u_{j-1}^2) \]
        which implies that 
        \[ u_{j-1}(c_{10}v_j + c_{11}v_j^2 + \dots + c_{1(e_j-1)}v_j^{e_j}) = w u_{j-1}^2\in L_j\quad \textrm{for some }w\in L_j. \]
        Since the curve $\Sigma_\ell$ is assumed to be integral, $L_j$ is a torsion-free module over $R_j/(F_j(u_{j-1},v_j))$, so we obtain
        \[c_{10}v_j + c_{11}v_j^2 + \dots + c_{1(e_j-1)}v_j^{e_j} = w u_{j-1} \in L_j.\]
        Using the expression in \eqref{eq:local spectral curve}, we get 
        \[ c_{10}v_j + c_{11}v_j^2 + \dots + c_{1(e_j-2)}v_j^{e_j-1} - c_{1(e_j-1)}c = w'u_{j-1} \in L_j \quad \textrm{for some }w\in L_j. \]
        As the left-hand side is a polynomial in $v_j$ with degree $<e_j$ which is not divisble by $u_{j-1}$ in $L_j$, it must be zero. It follows that $c_{10}= c_{11}=\dots = c_{1(e_j-1)}c=0$, and hence $c_{1(e_j-1)}=0$. Proceed inductively for $L_j/(u_{j-1}^k)$, $k=3,\dots, e_{j-1}-1$, we see that all $c_{ab}=0.$ Then it follows from the linear independence of the set of vectors \eqref{basis vectors} that if we rearrange the order of the vectors in $T$ \eqref{basis vectors} and consider the subspace $\langle T\rangle \subset N_0$ they span, the operator $y$ restricted to $\langle T\rangle$ is in its Jordan normal form with exactly $e_j$ Jordan blocks of size $j$ with respect to the basis vectors in $T$.  
    \end{proof}

    \section{Comparison with Simpson's result}\label{comparison result}
    We shall now compare our result with Simpson's result \cite{simpson-productofmatrices} when one of the conjugacy class has distinct eigenvalues. 

    In the setup of DSP, we are  given a set of conjugacy classes $C_1,\dots,C_n$. 
    For each conjucacy class $C_i \in GL_r(\mathbb{C})$, we define
    \begin{itemize}
        \item $d_i$ is the dimension of $C_i$. Note that $d_i=r^2-\dim_\mathbb{C} Z(C_i)$ where $Z(C_i)$ is the group of centralizers of $C_i$.
        \item $R_i:=\min_{\lambda \in \mathbb{C}}\mathrm{rank}(Y-\lambda I)$ for a matrix $Y$ from $C_i$. Note that $r-R_i$ is the maximal number of Jordan blocks of $J(Y)$ with one and the same eigenvalue. 
    \end{itemize}
    \begin{theorem}\cite{simpson-productofmatrices}\label{thm:SimpsonDSP}
    For generic eigenvalues and when one of the conjugacy classes, say $C_{i_0}$, has distinct eigenvalues, then the DSP is solvable if and only if
    \begin{enumerate}
        \item $d_1+\cdots+d_n \geq  2r^2-2 $,
        
        \item $R_1+\cdots+\widehat{R_i}+\cdots R_n \geq r \quad \forall i $.
    \end{enumerate}
        
    \end{theorem}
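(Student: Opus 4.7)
My approach is to deduce Theorem~\ref{thm:SimpsonDSP} from Theorem~\ref{thm:DSP2} by establishing that the OK inequalities in condition (2) of Theorem~\ref{thm:DSP2} are numerically equivalent to Simpson's (S1) and (S2) under the distinct-eigenvalue hypothesis on $C_{i_0}$. The "if" direction of Simpson's theorem then follows from Theorem~\ref{thm:DSP2}, while the "only if" direction is Simpson's original rigidity/dimension-count argument which I would invoke directly.

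\textbf{Step 1 (Dictionary).} I would first translate Simpson's invariants into the partition language. Using the classical formula that the centralizer of a matrix with Jordan type $\lambda$ has dimension $\sum_k(\hat{\lambda}_k)^2$, together with the definition $P^i=\widehat{P}^{\lambda_{i,1}}\cup\cdots\cup\widehat{P}^{\lambda_{i,e(i)}}$, one gets
\[
d_i=r^2-\sum_{j=1}^{\ell(i)}m_{i,j}^2,\qquad R_i=r-m_{i,1},
\]
together with the combinatorial identity
\[
\sum_{\mu=1}^r\gamma_{P^i}(\mu)=\sum_{j=1}^{\ell(i)}\binom{m_{i,j}+1}{2}=\tfrac{1}{2}\bigl(r(r+1)-d_i\bigr).
\]
The hypothesis that $C_{i_0}$ has distinct eigenvalues forces $P^{i_0}=(1,\dots,1)$, hence $\gamma_{P^{i_0}}\equiv 1$, $R_{i_0}=r-1$, $d_{i_0}=r^2-r$.

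\textbf{Step 2 (Easy direction).} I would check (K2)~$\Rightarrow$~(S1)+(S2). Summing the OK inequality $\sum_i\gamma_{P^i}(\mu)\le(n-2)\mu+1$ over $\mu=2,\dots,r$ and substituting the identity from Step~1, the sum telescopes to $\sum_i d_i\ge 2r^2-2$, i.e.\ (S1). The $\mu=r$ instance gives $\sum_i m_{i,1}\le(n-2)r+1$, which after subtracting $m_{i_0,1}=1$ becomes $\sum_{i\neq i_0}R_i\ge r$, i.e.\ (S2) for $j=i_0$. For $j\neq i_0$, (S2) reads $\sum_{i\neq i_0,j}R_i\ge 1$, which is automatic once one excludes the trivial scalar case $C_i=\lambda\,\mathrm{Id}$ (for which the problem reduces to a smaller-rank DSP).

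\textbf{Step 3 (Hard direction).} The main obstacle is (S1)+(S2)~$\Rightarrow$~(K2), which requires recovering each individual OK inequality at a fixed $\mu$ from a single summed condition and one extremal condition. My plan is to pass to the dual description $\gamma_{P^i}(\mu)=\min\{k:\hat{m}_{i,1}+\cdots+\hat{m}_{i,k}\ge\mu\}$, rewrite (K2) as an upper bound on a partial sum of column sizes $\hat{m}_{i,k}$, and combine three ingredients: the monotonicity $\hat{m}_{i,k}\ge\hat{m}_{i,k+1}$, the linear bound $\sum_{i\neq i_0}m_{i,1}\le(n-2)r$ from (S2), and the quadratic bound $\sum_{i\neq i_0}\sum_j m_{i,j}^2\le(n-2)r^2-r+2$ from (S1). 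A majorization argument should then show that any configuration of partitions saturating (S1) and (S2) simultaneously must also saturate every (K2), and the discreteness of partition entries rules out intermediate fractional violations. I would carry this out by a short induction on $\mu$ moving from $\mu=r$ (base case, equivalent to (S2)) downward.

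\textbf{Step 4 (Conclusion).} Combining Steps~2 and~3 yields the claimed numerical equivalence, so Theorem~\ref{thm:DSP2} provides the sufficient ("if") direction under the multiplicatively generic assumption. The necessity ("only if") direction is \cite[Theorem~1.1]{simpson-productofmatrices}. The principal difficulty throughout is the majorization step in~Step~3; the rest is bookkeeping via the dictionary in Step~1.
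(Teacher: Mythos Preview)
The paper does not prove Theorem~\ref{thm:SimpsonDSP}: it is stated with the citation \cite{simpson-productofmatrices} and treated as a known result. What the paper actually proves is Proposition~\ref{prop:nodefect condition}, the numerical equivalence between Simpson's conditions and the OK inequalities. Your proposal is therefore not a proof of the cited theorem per se, but rather an outline of that numerical equivalence together with an appeal to Theorem~\ref{thm:DSP2} for sufficiency; the latter is somewhat circular in the context of the paper, since the whole point of Appendix~\ref{comparison result} is to check that the paper's new criterion matches Simpson's in this special case, not to re-derive Simpson's theorem from it.

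On the substantive content, your Steps~1--2 agree with the paper's proof of Proposition~\ref{prop:nodefect condition}: the identities $d_i=r(r+1)-2\sum_\mu\gamma_{P^i}(\mu)$ and $r-R_i=\gamma_{P^i}(r)$, summing the OK inequalities to obtain (S1), and reading off (S2) for $i_0$ from $\mu=r$. (Your worry about the scalar case in (S2) for $j\neq i_0$ is unnecessary: once (S2) holds for $i_0$, the inequality $\sum_{i\neq i_0,j}R_i\geq r-R_j\geq 1$ is automatic since $R_j\leq r-1$.)

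Your Step~3, however, is only a plan and diverges from the paper. The paper's argument for (S1)+(S2)~$\Rightarrow$~(K2) does not use majorization or backward induction on~$\mu$. Instead it argues by contradiction from the \emph{first} failure $\mu_0$: analyzing the increment $\sum_i\gamma_{P^i}(\mu_0)-\sum_i\gamma_{P^i}(\mu_0-1)$ forces $\gamma_{P^i}(\mu_0)-\gamma_{P^i}(\mu_0-1)=1$ for all $i\neq i_0$, and a further increment bound at $\mu_0-1$ gives the same at $\mu_0-2$ for at least $n-2$ indices. The key structural input is Lemma~\ref{lem:level function}: two consecutive unit jumps force the level function to become linear thereafter, so the failure propagates forward to $\mu=r$, contradicting (S2) (Case~1), or one lands in the boundary case $\mu_0=3$, handled by a direct computation showing (S1) is violated (Case~2). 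Your majorization idea may well work, but as written it is not a proof, and the paper's forward-propagation argument via Lemma~\ref{lem:level function} is what actually closes the gap.
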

    The folllowing numerical equivalence is observed in \cite{baladistlerdonagi}.\begin{prop}\label{prop:nodefect condition}
    When one of the conjugacy classes, say $C_{i_0}$, has distinct eigenvalues. Then the following inequalities 
        \begin{equation}
        \begin{aligned}
          &(\a)\quad   d_1+\cdots+d_n \geq  2r^2-2, \\
           & (\beta) \quad R_1+\cdots+\widehat{R_i}+\cdots +R_n \geq r \quad \forall i,
            \end{aligned}
            \label{simpson criterion}
        \end{equation}
    hold if and only if 
\begin{equation}\label{controllable criterion}
     \sum_{i=1}^n \gamma_{P^i}(\mu) <(n-2)\mu+ 2\quad  \text{for  } \mu=2,\dots, r
     \end{equation}
     where $P^i$'s are defined in \eqref{e:conj partition}.
    \end{prop}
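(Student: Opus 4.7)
The plan is to translate both sides of the biconditional into explicit inequalities on the partitions $\{P^i\}$ and then compare. Using the standard formula $\dim Z(Y) = \sum_k n_k^2$ for the centralizer of a conjugacy class with Jordan block data recorded by a partition with conjugate $(n_k)$, together with the definition \eqref{e:conj partition} of $P^i$ as a union of conjugate partitions, I get $d_i = r^2 - \sum_k m_{i,k}^2$ and $R_i = r - m_{i,1}$. The hypothesis that $C_{i_0}$ has distinct eigenvalues forces $P^{i_0} = (1,\dots,1)$, so $m_{i_0,1} = 1$ is the minimum of the $m_{i,1}$'s and $\gamma_{P^{i_0}} \equiv 1$. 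Under these identifications, $(\alpha)$ reads $\sum_i \sum_k m_{i,k}^2 \leq (n-2)r^2 + 2$, while $(\beta)$ (tightest at the index $i_0$) reduces to the single inequality $\sum_i m_{i,1} \leq (n-2)r + 1$.

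Controllability $\Rightarrow (\alpha) + (\beta)$ is the easy direction. Specializing the controllability inequality at $\mu = r$ and using $\gamma_{P^i}(r) = m_{i,1}$ recovers $(\beta)$ on the nose. Summing the controllability inequalities over $\mu = 2, \ldots, r$ and substituting the identity $\sum_{\mu=1}^r \gamma_{P^i}(\mu) = (\sum_k m_{i,k}^2 + r)/2$ from Section~\ref{sec:notation} yields $(\alpha)$ after a direct algebraic simplification in which the $nr$-terms cancel cleanly.

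The converse is the hard direction. I plan to argue by contradiction: suppose $(\alpha)$ and $(\beta)$ hold but controllability fails at some minimal $\mu_0 \in [2, r-1]$, so $\sum_i \gamma_{P^i}(\mu_0) \geq (n-2)\mu_0 + 2$. Minimality of $\mu_0$ and the $1$-Lipschitz property $\gamma_{P^i}(\mu) - \gamma_{P^i}(\mu-1) \in \{0,1\}$, combined with $\gamma_{P^{i_0}}$ being constant, force the jump at $\mu_0$ to saturate: every $\gamma_{P^i}$ with $i \neq i_0$ must jump by exactly one, pinning the conjugate cumulative sum $N^i_{k_i - 1} = \mu_0 - 1$ with $k_i := \gamma_{P^i}(\mu_0)$. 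I will then combine this rigid structural constraint with the Cauchy--Schwarz bound $\sum_k m_{i,k}^2 \geq r^2/\ell(P^i)$ and the partition inequality $\ell(P^i) \geq \mu_0/k_i$ to push $\sum_i \sum_k m_{i,k}^2$ past the $(\alpha)$-threshold. The boundary case $\mu_0 = 2$ is easier and handled separately: saturation then forces at least one $P^i = (r)$, already contributing $r^2$ to the sum and violating $(\alpha)$ for $r \geq 2$.

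The main obstacle I foresee is that a direct Cauchy--Schwarz estimate is not always sharp enough, because partitions attaining $\gamma_{P^i}(\mu_0) = k_i$ with $k_i$ small but $\ell(P^i)$ merely $\mu_0/k_i$ can be distributed in many ways. I expect the final converse argument to require a case split along the size of $k_i$ relative to $\mu_0$, exploiting that large $k_i$ forces partitions with very few rows (hence a disproportionately large $m_{i,1}^2$), while small-$k_i$ contributions can be controlled by the monotonicity of $\gamma_{P^i}$ over $[\mu_0, r]$ in conjunction with $(\beta)$.
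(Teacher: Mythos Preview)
Your forward direction (controllability $\Rightarrow (\alpha),(\beta)$) is correct and matches the paper exactly: specialize at $\mu=r$ for $(\beta)$, sum over $\mu$ and use $\sum_\mu \gamma_{P^i}(\mu)=\tfrac{1}{2}(\sum_k m_{i,k}^2+r)$ for $(\alpha)$.

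The converse has a real gap. Your proposed chain ``$\sum_k m_{i,k}^2\ge r^2/\ell(P^i)$ together with $\ell(P^i)\ge \mu_0/k_i$'' points the wrong way: Cauchy--Schwarz requires an \emph{upper} bound on $\ell(P^i)$ to produce a useful lower bound on $\sum_k m_{i,k}^2$, whereas $\ell(P^i)\ge \mu_0/k_i$ is a lower bound. The natural upper bound available from saturation is only $\ell(P^i)\le \mu_0-1$, giving $\sum_{i\neq i_0}\sum_k m_{i,k}^2\ge (n-1)r^2/(\mu_0-1)$, which is far too weak when $\mu_0$ is close to $r$. Your hoped-for case split on the size of $k_i$ does not by itself recover the lost strength, because nothing you have written prevents each $\gamma_{P^i}$ from flattening out immediately after $\mu_0$; you need a mechanism that propagates the constraint from $\mu_0$ forward to $\mu=r$. (Also, a small slip in your $\mu_0=2$ case: one $P^i=(r)$ is not enough to exceed the $(\alpha)$-threshold when $n\ge 4$; you need that at least $n-2$ of them equal $(r)$, which does follow from $\sum_i\gamma_{P^i}(2)\ge 2n-2$.)

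The paper's key device, which you are missing, is a purely combinatorial fact about level functions: if $\gamma_P$ jumps at two consecutive integers, it jumps at every subsequent integer (equivalently, once a column of the Young diagram has a single box, all later columns do too). From saturation at $\mu_0$ and $(\ast_{\mu_0-1})$ one shows that at least $n-2$ of the $\gamma_{P^i}$ jump at both $\mu_0-1$ and $\mu_0$; the lemma then forces $\gamma_{P^i}(r)=\gamma_{P^i}(\mu_0)+(r-\mu_0)$ for those indices, which pushes $\sum_i\gamma_{P^i}(r)\ge(n-2)r+2$ and contradicts $(\beta)$ directly. The small-$\mu_0$ case ($\mu_0=3$) is handled separately by the same lemma, which pins $n-3$ of the partitions to $(r)$ and leaves only two free; the extremal choice $\gamma=\lceil\mu/2\rceil$ for those two already violates $(\alpha)$. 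Replace the Cauchy--Schwarz plan with this propagation lemma and the argument closes.
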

    
\begin{proof}
(\eqref{controllable criterion} $\implies$ \eqref{simpson criterion}) 
    Suppose $C_{i_0}$ has distinct eigenvalues, which means that each $P^{\lambda_{i_0,j}}$ is the singleton partition of $1$, and the partition $P^{i_0}=(\underline{1})$. Moreover, the maximal number of Jordan blocks corresponding to the same eigenvalue $r-R_i$ is simply the number of columns of $P^{i}$, or equivalently, $\gamma_{P^{i}}(r)$.

    Since $\gamma_{P^{i_0}}(r)=1$, we have $R(p_{i_0})= r-1$ which is the maximal value for $R_i$, so the second condition in Theorem \ref{thm:SimpsonDSP} is equivalent to the single inequality
    \[R_1+\cdots+\widehat{R_{i_0}}+\cdots R_n \geq r.\]
    Using the identification $r- R_i = \gamma_{P^i}(r)$ and $\gamma_{P^{i_0}}=1$, we can write it as 
    \begin{equation*}
    (n-1)r - \sum ^n_{i=1}\gamma_{P^i}(r) + \gamma_{P^{i_0}}(r) \geq r \iff \sum_{i=1}^n\gamma_{P^{i}}(r)<(n-2)r+2.  
    \end{equation*}

    Note that $d_i$ can be computed in terms of the partition $P^i$ (e.g. using \cite[Remark 14]{kostovsurvey}),  we have $d_i = r(r+1)- 2\sum_{\mu=1}^r \gamma_{P^i}(\mu)$. Then the inequality $(\a)$ is simply 
    \[ nr(r+1) -2\sum_{i=1}^n \sum_{\mu=1}^r \gamma_{P^i}(\mu) \geq 2r^2-2. \]
    A direct computation shows that this is a consequence of summing up all the inequalities in \eqref{controllable criterion} for $\mu=2,\dots,r $ and the equality $\sum_{i=1}^n\gamma_{P^i}=n$ on both sides. 

\noindent(\eqref{simpson criterion} $\implies$ \eqref{controllable criterion}) For this direction, we will make use a property of level functions. 
    \begin{lem}\label{lem:level function}
        For any partition $P$ and any $2\leq \mu\leq r$, if  $\gamma_P(\mu-1) = \gamma_P(\mu)-1$ and $\gamma_P(\mu-2) = \gamma_P(\mu-1)-1$, then $\gamma_P(\mu+ \epsilon)= \gamma_P(\mu)+\epsilon$ for $\epsilon\geq 1$.  
    \end{lem}
    \begin{proof}
    Indeed, the number of columns is decreasing from left to right, so the number of columns will stabilize once if there is a column with only one block.     
    \end{proof}
    
Assume for simplicity that $i_0=n.$ 
            Suppose that there exists $\mu_0>1$ such that 
        \[\begin{aligned}
            \sum_{i=1}^n\gamma_{P^i}(\mu_0) &\geq (n-2)\mu_0+2 ,\\
            (\ast_\mu)\quad \sum_{i=1}^n\gamma_{P^i}(\mu) &\leq (n-2)\mu+1\quad  \text{ for } 1< \mu < \mu_0.
        \end{aligned}\]
        As each level function can only increase by $1$, so the total increment from $\mu_0-1$ to $\mu_0$ is bounded
        \[\sum_{i=1}^{n}\gamma_{P^i}(\mu_0) -  \sum_{i=1}^{n}\gamma_{P^i}(\mu_0-1) \leq n-1 \]
        and by combining with the inequality $(\ast_{\mu_0-1})$, we obtain
        \[ \sum_{i=1}^{n}\gamma_{P^i}(\mu_0) \leq (n-1) + \sum_{i=1}^{n}\gamma_{P^i}(\mu_0-1)\leq (n-2)\mu_0+2. \]
        Hence, $\sum_{i=1}^n \gamma_{P^i}(\mu_0) = (n-2)\mu_0+2$ in which case the total increment from $\mu_0-1$ to $\mu_0$ satisfies
        \[\sum_{i=1}^{n}\gamma_{P^i}(\mu_0) -  \sum_{i=1}^{n}\gamma_{P^i}(\mu_0-1)= n-1\]
        and $\sum _{i=1}^n \gamma_{P^i}(\mu_0-1) = (n-2)(\mu_0-1)+1 $.
        In other words, $\gamma_{P^i}(\mu_0-1) = \gamma_{P^i}(\mu_0)-1$ for $i=1,\dots, n-1$. 

        \noindent \textbf{Case 1}: Suppose $\mu_0-2>1.$ By the inequality $(\ast_{\mu_0-2})$, the total increment from $\mu_0-2$ to $\mu_0-1$ satisfies
        \[ \sum_{i=1}^{n}\gamma_{P^i}(\mu_0-1) -  \sum_{i=1}^{n}\gamma_{P^i}(\mu_0-2) \geq n-2.\]
        This implies that $\gamma_{P^i}(\mu_0-1) = \gamma_{P^i}(\mu_0-2)-1$ for at least $n-2$ indices $i\in \{1,\dots, n-1\}$. By Lemma \ref{lem:level function}, it follows that $\gamma_{P^i}(\mu_0+\epsilon)= \gamma_{P^i}(\mu_0)+\epsilon$ for at least $n-2$ indices $i$, so the total increment from $\mu_0$ to $\mu_0+\epsilon$ satisfies
        \[\sum_{i=1}^{n}\gamma_{P^i}(\mu_0+\epsilon) -  \sum_{i=1}^{n}\gamma_{P^i}(\mu_0) \geq (n-2) \epsilon \implies \sum_{i=1}^{n}\gamma_{P^i}(\mu_0+\epsilon) \geq (n-2)(\mu_0+\epsilon)+2. \]
        In particular, when $\mu_0+\epsilon =r $, the last inequality violates the hypothesis. 
        
        \noindent \textbf{Case 2}: Suppose $\mu_0-2=1.$ In this case, we have $\sum_{i=1}^n\gamma_{P^i}(\mu_0-2) =\sum_{i=1}^n\gamma_{P^i}(1) = n$ by the definition of a level function. So, the total increment from $\mu_0-2$ to $\mu_0-1$ is 
        \[ \sum_{i=1}^n\gamma_{P^i}(\mu_0-1) - \sum_{i=1}^n\gamma_{P^i}(\mu_0-2) =n-3. 
         \]
        By Lemma \ref{lem:level function}, it follows that $\gamma_{P^i}(\mu_0+\epsilon)= \gamma_{P^i}(\mu_0)+\epsilon$ for at least $n-3$ indices $i$, which we can assume to be $i=1,\dots, n-3$ and so  $\gamma_{P^i}(\mu) = \mu$ for $i=1,\dots, n-3$. The only freedom left are the choices of $\gamma_{P^i}(\mu)$ for $i=n-2,n-1$. We want to show that the sum of all $\gamma_{P^i}(\mu)$ violates $(\a)$. It suffices to consider the $\gamma_{P^i}(\mu)$ with the lowest growth for $i=n-1,n-2$. As $\gamma_{P^i}(1)=1$ and $\gamma_{P^i}(2)=1$ are fixed, the level function with the lowest growth is
        \[ \gamma_{P^i}(1) =1, \gamma_{P^i}(2)=1,\gamma_{P^i}(3)=2, \gamma_{P^i}(4)=2, \gamma_{P^i}(5)=3, \gamma_{P^i}(6)=3,\dots   \]
        i.e. the $\gamma_{P^i}(\mu) =\lceil \mu/2 \rceil.$ Then we see that 
        \[\sum_{i=1}^n\gamma_{P^i}(\mu) = (n-3)\mu+ 2\lceil \mu/2 \rceil + 1 \implies \sum_{i=1}^n\gamma_{P^i}(\mu) = \begin{cases}
            (n-2) \mu + 2&\text{if $\mu$ is odd} \\
            (n-2)\mu+ 1&\text{if $\mu$ is even}
        \end{cases} \]
        A direct computation shows that the sum of $\sum_{i=1}^n\gamma_{P^i}(\mu)$ violates $(\a)$.

\end{proof}
\section*{Acknowledgement}
We would like to thank Tony Pantev, Szil\'{a}rd Szab\'{o}, Bin Wang, Xueqing Wen, Qizheng Yin for many
useful discussions. S.L. was supported by the Institute for Basic Science (IBS-R003-D1).

\printbibliography

\end{document}